\newcommand{\N}{\mathbb{N}}
\newcommand{\cW}{\mathcal{W}}
\newcommand{\cK}{\mathcal{K}}
\newcommand{\cB}{\mathcal{B}}
\newcommand{\cS}{\mathcal{S}}
\newcommand{\C}{\mathbb{C}}
\newcommand{\R}{\mathbb{R}}
\newcommand{\E}{\mathbb{E}}
\newcommand\Tstrut{\rule{0pt}{2.6ex}}  
\newcommand\Bstrut{\rule[-0.9ex]{0pt}{0pt}}
\newcommand{\ru}{\mathrm{u}}
\newcommand{\rs}{\mathrm{s}}
\newcommand{\rme}{\mathrm{e}}
\newcommand{\rmi}{\mathrm{i}}
\newcommand{\GL}{\mathrm{GL}}
\newcommand{\nmu}{\hat{\mu}}
\newcommand{\nnu}{\hat{\nu}}
\newcommand{\changes}[1]{{\color{black}   {#1}}}
\newtheorem{thm}{Theorem}[section]
\newtheorem{theorem}[thm]{Theorem}
\newtheorem{corollary}[thm]{Corollary}
\newtheorem{lemma}[thm]{Lemma}  
\newtheorem{proposition}[thm]{Proposition}
\newtheorem{definition}[thm]{Definition}
\theoremstyle{remark}
\newtheorem{remark}[thm]{Remark}
\newtheorem*{lemma*}{Lemma}
\numberwithin{equation}{section}
\begin{document}

\title{Resonant vector bundles, conjugate points, and the stability of pulse solutions to the {S}wift-{H}ohenberg equation using validated numerics: Part I}

\author{Margaret Beck\footnote{Department of Mathematics and Statistics, Boston University; Boston, MA, USA; mabeck@bu.edu}, Jonathan Jaquette\footnote{Department of Mathematical Sciences, New Jersey Institute of Technology; Newark, NJ, USA; jcj@njit.edu; ORCID: 0000-0001-8380-3148}, Hannah Pieper\footnote{hpieper@bu.edu} 
}

\date{\today}
\maketitle

\begin{abstract} 
In this paper, we develop new theory connected with resonant vector bundles that will allow for the use of validated numerics to rigorously determine the stability of pulse solutions in the context of the Swift-Hohenberg equation. For many PDEs, the stability of stationary solutions is determined by the absence of point spectra in the open right half of the complex plane. Recently, theoretical developments have allowed one to use objects called conjugate points to detect such unstable eigenvalues for certain linearized operators. Moreover, in certain cases these conjugate points can themselves be detected using validated numerics. The aim of this work is to extend this framework to contexts where the vector bundles, which control the existence of conjugate points, have certain resonances. Such resonances can prevent the use of standard (though involved) techniques in computer assisted proofs, and in this paper we provide a method to overcome this obstacle. Due to its length, the analysis has been divided into two parts: Part I in the present work, and Part II in \cite{BeckJaquettePieperStorm25}. 
\end{abstract}

\noindent {\bf Keywords:} stability; conjugate points; Swift-Hohenberg equation; validated numerics; computer assisted proof. \\

\tableofcontents

\setlength{\headheight}{13.59999pt}.
\addtolength{\topmargin}{-1.59999pt}.


\section{Introduction}\label{sec:Intro}
 
In this paper, we are interested in developing new theory to allow for the broadening of the class of equations for which the stability of stationary solutions can be determined using a framework involving conjugate points and validated numerics. To achieve this, we focus on the Swift-Hohenberg equation, which is given by
\begin{equation}\label{E:SH} 
u_t = -(1+\partial_x^2)^2u + f(u), \qquad f(u) =\nnu u^2 - u^3 - \nmu u,
\end{equation}
where $\nmu$ and $\nnu$ are positive real parameters. There are several reasons why we focus on this equation. First, it is a fundamental model of pattern formation and thus relevant for a wide variety of applications. In addition, the conjugate point framework for detecting stability, which we will introduce in more detail below, has recently been extended to this equation \cite{BeckJaquettePieper24}. 

A key innovation we develop is a high-order parameterization of the unstable bundle over a stable manifold arising from the spatial dynamics. This treatment is somewhat intricate: a local stable manifold is only positively invariant, and consequently its unstable bundle is not unique. In the absence of eigenvalue resonances, a parameterization method to compute these bundles was developed in \cite{vandenBergJames16}. 

However, the Swift-Hohenberg equation exhibits a resonance in its vector bundles, which prevents the use of this technique. In fact, the resonance we encounter in the vector bundles is not a peculiarity of the Swift-Hohenberg equation, but rather a universal phenomenon that occurs in Hamiltonian systems. Since the first-order nonlinear ODE that governs the existence of stationary solutions to \eqref{E:SH} is Hamiltonian, it is thus not surprising that we encounter resonances. Because of this universality, our development to treat such resonances is a major contribution of the paper.

To begin, suppose that $\varphi$ is a steady-state pulse solution to \eqref{E:SH}, meaning that $u(x, t) = \varphi(x)$ satisfies \eqref{E:SH} with $\lim_{|x| \to \infty} \varphi(x) = 0$. Note this means that, if we write $(u_1, u_2, u_2, u_4) = (u, u_x, u_{xx}, u_{xxx})$, then $u = \varphi$ is a solution of
\begin{align}
u_1' &= u_2 \nonumber \\
u_2' &= u_3 \nonumber \\
u_3' &= u_4 \label{E:SH-vf} \\
u_4' &= -2u_3 -u_1 + f(u_1) \nonumber
\end{align}
that is asymptotic to the origin as $|x| \to \infty$. To investigate stability, we set $u(x, t) = \varphi(x) + v(x, t)$ and wish to determine the dynamics of the perturbation $v(x,t)$, which satisfies
\[
v_t = \mathcal{L} v + \mathcal{N}(v),
\]
where
\begin{equation}\label{E:defL}
\mathcal L  = -\partial_x^4 - 2  \partial_x^2 - 1+ f'(\varphi(x))
\end{equation}
and $\mathcal{N}(v)$ is nonlinear in $v$. It is known that, if $f'(0) < 0$, then the essential spectrum of $\mathcal{L}$ is in the open left half of the complex plane, and hence stability of $\varphi$ is determined by the point spectrum of $\mathcal{L}$ \cite{KapitulaPromislow13}. In general, rigorously determining the point spectrum of a differential operator is difficult. However, the eigenvalue problem associated with $\mathcal{L}$, $\lambda v = \mathcal{L} v$, has a useful structure when written as a first-order system. In particular, if we define  
\begin{align}\label{E:defq}
	Q &= \begin{pmatrix} v \\ v_{xx} \\ v_{xxx} + 2 v_{x} \\ v_x \end{pmatrix}=
	S \begin{pmatrix} v \\ v_x \\ v_{xx} \\ v_{xxx} \end{pmatrix}
	,
	&
	S &= \begin{pmatrix} 1 & 0 & 0 & 0 \\ 0 & 0 & 1 & 0 \\ 0 & 2 & 0 & 1 \\ 0 & 1 & 0 & 0 \end{pmatrix}
	\end{align}
then we obtain
\begin{equation}\label{E:eval}
Q' = B(x; \lambda) Q, \qquad B(x; \lambda) = J C(x; \lambda),
\end{equation}
 where we define the symplectic  matrix
 \begin{equation}\label{E:defJ}
 J = \begin{pmatrix} 0 & I_2 \\ -I_2 & 0 \end{pmatrix} 
 \end{equation}
 and
 \begin{equation*}
 C(x; \lambda) = \begin{pmatrix} \lambda + 1 - f'(\varphi(x)) & 0 & 0 & 0 \\0 & -1 & 0 & 0 \\ 0 & 0 & 0 & 1 \\ 0 & 0 & 1 & -2 \end{pmatrix}, \qquad B(x; \lambda) = \begin{pmatrix}0 & 0 & 0 & 1 \\   0 & 0 & 1 & -2 \\ - \lambda - 1 + f'(\varphi(x)) & 0 & 0 & 0 \\0 & 1 & 0 & 0 \end{pmatrix}.
\end{equation*}
We note in particular that $C(x; \lambda)$ is symmetric; this is due to the fact that $\mathcal{L}$ is self-adjoint, and hence $\lambda \in \R$. \changes{We also note that coordinates similar to those in \eqref{E:defq} have been used to illuminate the symplectic structure in other fourth-order systems; see, for example, \cite{ChardardDiasBridges09, ChardardBridgesDias09b}.}

In general, eigenvalues of a differential operator $\mathcal{L}$ are elements $\lambda \in \C$ for which solutions of $\lambda v = \mathcal{L} v$ exist in an appropriate function space. There is some freedom in choosing which space to work in, but essentially any reasonable choice would require that $v(x; \lambda) \to 0$ as $|x| \to \infty$\footnote{Some function spaces might only require the weaker condition that $v(x; \lambda)$ remain bounded as $|x| \to \infty$. However, for the system considered here, the asymptotic matrix $\lim_{|x| \to \infty} B(x; \lambda)$ is hyperbolic when $\lambda \geq 0$, and so in order to remain bounded at infinity, solutions must in fact decay to zero there.}. For this reason, the existence of eigenvalues can be understood by studying the so-called stable and unstable subspaces of \eqref{E:eval}. If we let $\Phi(x,y; \lambda)$ be the evolution associated with \eqref{E:eval}, then these are defined by
\begin{align}
\E^{\mathrm{u}}_-(x; \lambda) &= \{Q_0 \in \R^{2n}: |\Phi(x, y; \lambda)Q_0| \to 0 \mbox{ as } y \to -\infty\} \nonumber \\
\E^{\mathrm{s}}_+(x; \lambda) &= \{Q_0 \in \R^{2n}: |\Phi(y, x; \lambda)Q_0| \to 0 \mbox{ as } y \to +\infty\}. \label{E:stable-unstable}
\end{align} 
In other words, $\E^{\mathrm{u}}_-(x; \lambda)$ is the set of all initial conditions at time $x$ that lead to solutions that decay to zero in backwards time, and hence are asymptotic to the unstable eigenspace of 
\begin{equation}\label{E:asym}
B_\infty(\lambda) = \lim_{|x| \to \infty} B(x; \lambda).
\end{equation}
Similarly, $\E^{\mathrm{s}}_+(x; \lambda)$ is the set of all initial conditions at time $x$ that lead to solutions that decay to zero in forwards time, and hence are asymptotic to the stable eigenspace of $B_\infty(\lambda)$ at $+\infty$. We note that one can similarly define $\E^{\mathrm{s}}_-(x; \lambda)$ and $\E^{\mathrm{u}}_+(x; \lambda)$, which consist of solutions that grow exponentially in backwards and forwards time, respectively, although these are not unique. 

The requirement that an eigenfunction decay to zero as $|x| \to \pm \infty$ implies that a solution $Q(x; \lambda)$ to \eqref{E:eval} corresponds to an eigenfunction of $\mathcal{L}$ with eigenvalue $\lambda$ if and only if $Q(x; \lambda) \in \E^{\mathrm{u}}_-(x; \lambda) \cap \E^{\mathrm{s}}_+(x; \lambda)$ for any, and hence all, values of $x$. This is, for example, the idea behind the Evans function, which has been used to study stability in a variety of contexts \cite{Sandstede02}. What is perhaps surprising is that, for certain  fixed choices of two-dimensional subspaces $\ell_* \subseteq \mathbb{R}^4$, called reference planes, it is also the case that the number of intersections of $\E^{\mathrm{u}}_-(x; 0)$ with $\ell_*$, counted with sign and multiplicity as $x$ ranges in $\R$, is equal to the number of positive eigenvalues of $\mathcal{L}$. 

When $\lambda = 0$, \eqref{E:eval} is the variational equation associated with \eqref{E:SH-vf} and the solution $\varphi$, albeit written in terms of the coordinates given in \eqref{E:defq} instead of the more commonly used coordinates that lead to \eqref{E:SH-vf}. Nevertheless, because of this connection, when $\lambda = 0$ the unstable subspace $\E^\ru_-(x; 0)$ can also be viewed as the unstable vector bundle over the local unstable manifold of the origin of \eqref{E:SH-vf}. Similarly, $\E^\rs_+(x; 0)$ can be viewed as the stable bundle over the local stable manifold of the origin of \eqref{E:SH-vf}. This vector-bundle perspective will be utilized below.

The values of $x \in \R$ for which $\E^{\mathrm{u}}_-(x; 0) \cap \ell_* \neq \{0\}$ are called conjugate points. Recently, there has been a growing body of literature connecting the existence of conjugate points with the existence of unstable eigenvalues in systems with structure similar to \eqref{E:eval}. The ideas go back to classical Sturm-Liouville Theory, which can be found in many ODE textbooks; see also \cite{Arnold67,Arnold85, Bott56, Duistermaat76, Edwards64, Maslov65, Morse32, Samle65}. Their first use in a more modern dynamical systems context can be found in \cite{Jones88}. More recent works includes \cite{BairdCornwellCox21, Beck20, BeckCoxJones18, BeckJaquette22, BeckJaquettePieper24, ChardardBridges15,ChardardBridgesDias09b,ChardardDiasBridges09,ChenHu07, Cornwell19, CornwellJones20,CornwellJonesKiers21, CoxCurranLatushkinMarangell23, CoxJonesLatushkin16, CurranMarangell25, Howard21,HowardLatushkinSukhtayev17,HowardLatushkinSukhtayev18, JonesLatushkinMarangell13,slyman2025tipping}. 

This theory is based on the fact that equations of the form \eqref{E:eval} with $C(x; \lambda)$ symmetric lead to stable and unstable subspaces that are Lagrangian relative to the symplectic form $\omega$ defined by the skew-symmetric matrix $J$ given in \eqref{E:defJ}; this is connected with the fact that \eqref{E:SH-vf} is Hamiltonian. More precisely, let
\[
\omega: \R^4 \times \R^4 \to \R, \qquad \omega(P, Q) = \langle P, JQ \rangle_{\R^4}. 
\]
A Lagrangian plane is a two-dimensional subspace $\ell \subseteq \R^4$  on which $ \omega$ vanishes; i.e. such that, for any two vectors in the subspace, $P, Q \in \ell$, one has $\omega (P, Q) = 0$. In \S1.1 of \cite{BeckJaquettePieper24} it was shown that the stable and unstable subspaces associated with \eqref{E:eval} are Lagrangian for all $x \in \R$ and all $\lambda \geq 0$.   Moreover, in that work the reference plane was chosen to be the so-called sandwich plane
\begin{equation}\label{E:def-sand}
\ell_\mathrm{sand} = \left\{ \begin{pmatrix} 0 & 0 \\ 1 & 0 \\ 0 & 1 \\ 0 & 0 \end{pmatrix} U: U \in \R^2 \right \}.
\end{equation}
The modifier ``sandwich" refers to the fact that the nonzero entries are sandwiched between rows of zeros. 

The main result of \cite{BeckJaquettePieper24}, Theorem 1.5, states that, assuming a particular technical hypothesis is satisfied, then the number of unstable eigenvalues of $\mathcal{L}$ in $H^4(\R)$ is equal to the number of conjugate points of $\E^{\mathrm{u}}_-(x; 0)$ relative to the reference plane $\ell_\mathrm{sand}$. With this result in hand, we are now in a position to develop a theory that will allow us to use validated numerics \cite{tucker2011validated} to rigorously count the conjugate points (and to verify the technical hypothesis), and ultimately produce computer-assisted proofs of the stability of various pulse solutions to the Swift-Hohenberg equation. We note that the (nonvalidated) numerics that correspond to the computations we discuss here were already conducted in \cite{BeckJaquettePieper24}, and so we have strong evidence that the theoretical results in that work can be used to determine the stability of pulses.  However, to finalize this program we must synthesize a global error bound on all of these numerical computations. 

Our strategy to rigorously count the number of conjugate points for a particular pulse solution $\varphi$, and thus determine its stability, is as follows. Since we can only carry out computations on a finite domain, we first prove that there exist explicit constants $L^\pm_{\mathrm{conj}}$ such that all conjugate points lie in the compact interval $[-L^-_{\mathrm{conj}}, L^+_{\mathrm{conj}}]$. With this result in hand, the rest of the analysis is focused on counting the conjugate points in that interval. Doing so requires computing the path of the unstable subspace $\E^\ru_-(x; 0)$ and, since its basis vectors are solutions of the variational equation \eqref{E:eval}, doing so requires not only solving that variational equation, but also solving for the underlying pulse solution in a precise way. In order to develop higher-order approximations, we will utilize the parameterization method, which has been developed in \cite{CabreFontichdelaLlave03a,CabreFontichdelaLlave03b,CabreFontichdelaLlave05}. 

Briefly, the parameterization method provides a means for approximating the stable and unstable manifolds of fixed points of ODEs, and for approximating the vector bundles associated with these manifolds, in a neighborhood of the fixed point. As mentioned above, $\mathbb E^\ru_-(-L^-_{\mathrm{conj}}; 0)$ exactly corresponds to the unstable vector bundle over the unstable manifold at $\varphi(- L^-_{\mathrm{conj}})$, and because $L^-_{\mathrm{conj}}$ is large, it lies in a neighborhood of the fixed point at the origin of \eqref{E:eval}. Once we have characterized this object (and some additional related objects) via the parameterization method, we can use rigorous numerics to compute the path of $\E^\ru_-(x; 0)$ for all $x \in [-L^-_{\mathrm{conj}}, L^+_{\mathrm{conj}}]$ and look for conjugate points. 

This method is illustrated in Figure \ref{fig:computation-depiction}. The path of $\E^\ru_-(x; 0)$ on $(-\infty, -L^-_{\mathrm{conj}}]$ and $[L^+_{\mathrm{conj}}, \infty)$ corresponds to the dotted orange curve in Figure \ref{fig:computation-depiction}, which we obtain via the parameterization method. Between $x = -L^-_{\mathrm{conj}}$ (the blue dot) and $x = L^+_{\mathrm{conj}}$ (the green dot), we will use validated numerics to compute both the underlying pulse solution and the path of $\E^\ru_-(x; 0)$; this corresponds to the solid purple curve in Figure \ref{fig:computation-depiction}. The shaded plane represents the reference plane $\ell_{\mathrm{sand}}$, and the yellow star represents a conjugate point. 
\begin{figure}[H]
\centering \includegraphics[width=.8\textwidth]{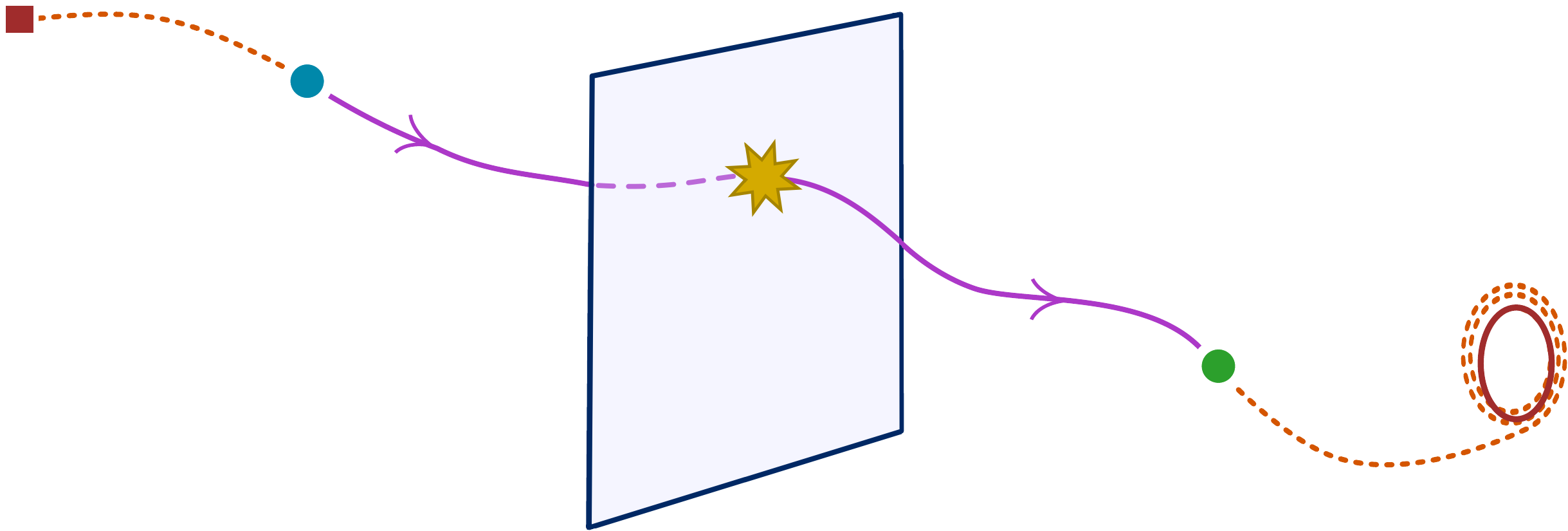}
\caption{Schematic depiction of the path of $\E^{\mathrm{u}}_-(x; 0)$. The plane $\ell_\mathrm{sand}$ is depicted as the shaded rectangle. The dotted orange line, light blue dot, green dot, purple trajectory, and yellow star all form various components of our computer assisted proof.  } \label{fig:computation-depiction}
\end{figure}

Overall this process is quite lengthy, and so we have divided things up into two papers: the present work (Part I) containing the analysis independent of the particular pulse $\varphi$ ; and a forthcoming one \cite{BeckJaquettePieperStorm25} (Part II) which synthesizes the entire analysis into a complete stability calculation. In the flowchart in Figure \ref{F:flowchart} we indicate which step is contained in which paper, and also how the different steps relate to each other. We first describe the steps that appear in this paper. 

First, the parameterization method will be used to explicitly characterize the local stable and unstable manifolds of the origin for \eqref{E:SH-vf}. These parameterizations will then enable us to also parameterize the corresponding local stable and unstable vector bundles over each manifold. Together, these steps will provide a detailed description of the asymptotic  dynamics of the underlying pulse solution $\varphi$ and of the associated stable and unstable subspaces $\E^\ru_-(x; 0)$ and $\E^{\rs, \ru}_+(x; 0)$ near $\pm \infty$. We will also need to prove a theorem that provides explicitly verifiable conditions for constants $L_{\mathrm{conj}}^\pm$ such that no conjugate points can occur on the intervals $(-\infty, -L_{\mathrm{conj}}^-]$ and $[L_{\mathrm{conj}}^+, \infty)$.
While $L^-_{\mathrm{conj}}$ can be determined from an essentially local calculation, the bound on $L^+_{\mathrm{conj}}$ is highly dependent on the specific pulse $\varphi$.

The remaining steps in the flowchart all correspond to work that will appear in Part II. In particular, Part II will begin by using validated numerics to compute the underlying pulse solution $\varphi$ on the compact interval $[-L_{\mathrm{bvp}}^-, L_{\mathrm{bvp}}^+]$; all other steps in Part II rely on the solution of this boundary value problem, and this has informed the way we have divided the analysis into Parts I and II. Once we have the pulse, we can use the variational equation, \eqref{E:eval} with $\lambda = 0$, to take the bundle $\E^\ru_-(-L_{\mathrm{bvp}}^-; 0)$, which we will have from the parameterizations near $-\infty$, and numerically propagate it forward to $x = L_{\mathrm{bvp}}^+$. 
Then, based on the specific calculation of  $\E^\ru_-(L_{\mathrm{bvp}}^+; 0)$, we will be able to calculate a bound on $L^+_{\mathrm{conj}}$. 

\begin{figure}[H]
\centering \includegraphics[width=1\textwidth]{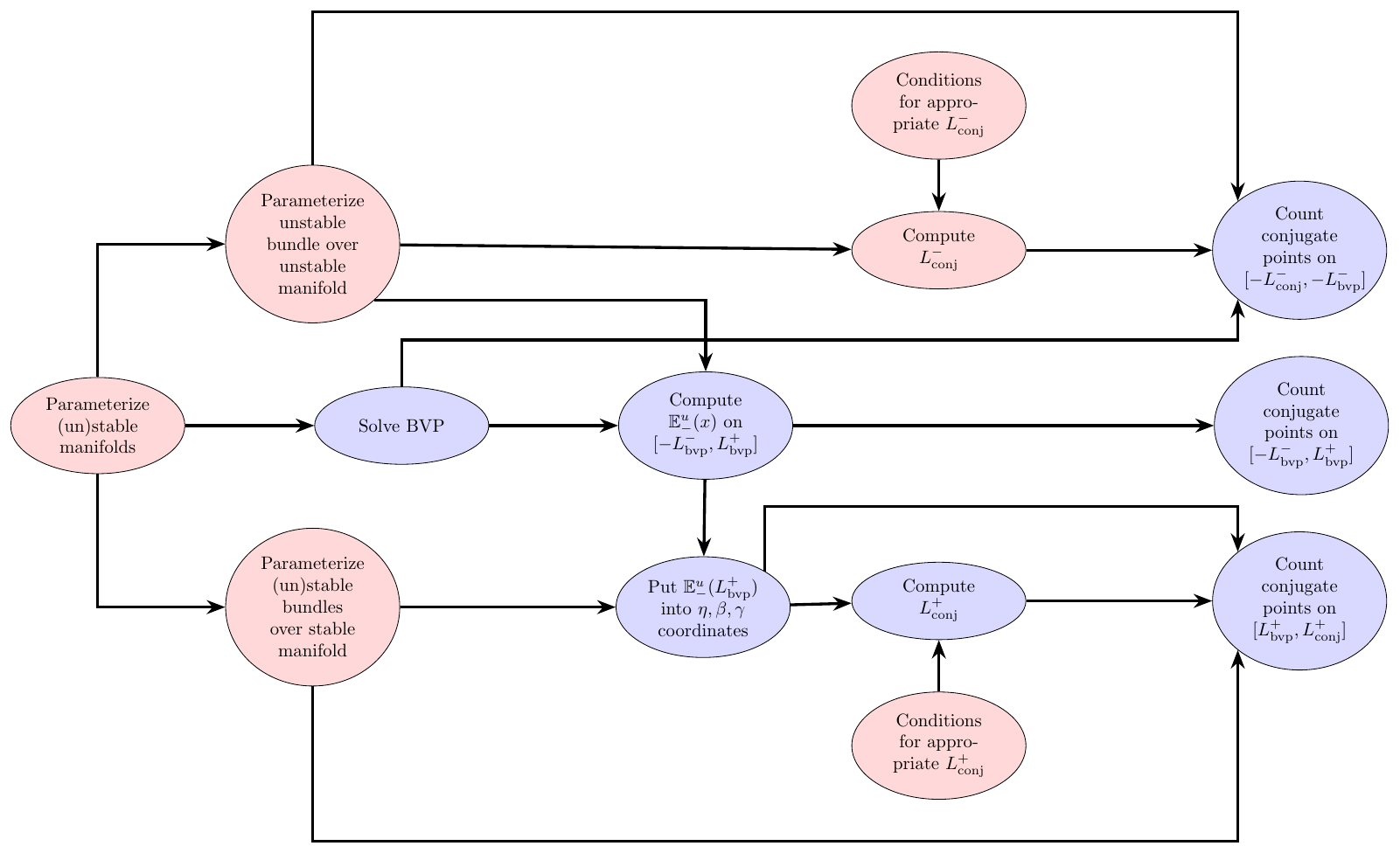}
\caption{Flowchart indicating which parts of the computer assisted proof are contained in the present paper, Part I (red), and which are contained in Part II (blue),  \cite{BeckJaquettePieperStorm25}.} \label{F:flowchart}
\end{figure}

At this point we will have an explicit, numerical representation of $\E^\ru_-(x; 0)$ on three domains: on $(-\infty, -L_{\mathrm{bvp}}^-]$, where the representation is given by the unstable bundle over the unstable manifold; on $[-L_{\mathrm{bvp}}^-,L_{\mathrm{bvp}}^+]$, where the representation is obtained by solving the variational equation; and on $[L_{\mathrm{bvp}}^+, \infty)$, where the representation is given in terms of the stable and unstable bundles over the stable manifold. With this explicit description of $\E^\ru_-(x; 0)$ in hand, we may then count the conjugate points (and verify the requisite technical hypotheses) on the three domains: $[-L_{\mathrm{conj}}^-, -L_{\mathrm{bvp}}^-]$, $[-L_{\mathrm{bvp}}^-, L_{\mathrm{bvp}}^+]$, and $[L_{\mathrm{bvp}}^+, L_{\mathrm{conj}}^+]$. 

This computational approach may be seen as an extension of \cite{BeckJaquette22}, with the difference there being that we essentially took $ L_{\mathrm{conj}}^\pm = L_{\mathrm{bvp}}^\pm$. There we used a 0th order approximation of  $\E^\ru_-(x; 0)$ for $x$ near $ \pm \infty$ (e.g. using just the eigenvectors of the asymptotic matrix), which thereby  required a long-time integration to obtain $\E^\ru_-(x; 0)$ on $[-L_{\mathrm{conj}}^-, L_{\mathrm{conj}}^+]$.  While the problem is dynamically (and thereby numerically) unstable, through the use of a high-order validated integrator \cite{kapela2021capd}, we were able to close the argument. 
In the present work, we represent $\E^\ru_-(x; 0)$ near $ \pm \infty$ over a much larger domain by using the high-order parameterized (un)stable bundles. This avoids the difficulties of long-time integration. 
While it would be fortuitous if we could also take $ L_{\mathrm{conj}}^\pm = L_{\mathrm{bvp}}^\pm$, this would be overly optimistic. 
 
We comment now on which parts of the analysis are similar to previous works, and which require the development of new theory. The determination of verifiable conditions for constants $L_{\mathrm{conj}}^\pm$ such that no conjugate points can occur on the intervals $(-\infty, -L_{\mathrm{conj}}^-]$ and $[L_{\mathrm{conj}}^+, \infty)$ is similar to what was done previously in \cite{BeckJaquette22}, but here we must deal with the additional complication that the eigenvalues of \eqref{E:asym} are complex; this is contained in \S\ref{Ch: Lpm} of this paper.  

Although the parameterization method for manifolds   \cite{CabreFontichdelaLlave03a,CabreFontichdelaLlave03b,CabreFontichdelaLlave05, haro2016parameterization}  and bundles    \cite{vandenBergJames16,van2023validated,delshams2008geometric,haro2006parameterization} has been used in many previous works, the types of resonance we encounter in the vector bundles has not been handled elsewhere, and it is here where the development of new theory is required. When employing the parameterization method, (un)stable bundles are typically assumed  to be both analytic and invariant, and this assumption yields a highly effective computational methodology. However, the bundle resonances force us to give up either analyticity or invariance, and we relinquish the latter. This introduces a nonlinear partial coupling on the bundle dynamics, similar to the torsion of an invariant torus  \cite{gonzalez2014singularity}. Nevertheless, after determining the coupling terms, the parameterization method may be employed to compute the Taylor coefficients of the bundles to high order.

Our method is described in \S\ref{S:background} and the parameterization method is applied to the manifolds and bundles for our problem in \S \ref{S:mflds} - \S\ref{Ch:res bundles}. Specifically, our new theory plays a key role in \S\ref{Ch:res bundles}. We note that our overall strategy can be adapted to non-validated numerics by skipping the validation steps. This is precisely what was done in \cite{BeckJaquettePieper24} for several pulse solutions as a proof of concept.   

Until recently, validated numerics have only been used in a handful of  instances to determine the stability of nonlinear waves. In particular, we are aware of two instances of validated numerics employing the Evans function \cite{ArioliKoch15, BarkerZumbrun16}, and one instance of validated numerics utilizing the conjugate point framework \cite{BeckJaquette22}. This latter work was conducted in the context of reaction-diffusion systems with gradient nonlinearity. Counting unstable eigenvalues with the Evans function necessitates computing a winding number as $\lambda$ varies along a contour in the complex plane, whereas counting conjugate points only requires computation for one value of $\lambda$, namely $\lambda = 0$. Thus, it is plausible that utilizing conjugate points to determine stability with validated numerics could be a more efficient strategy compared to using the Evans function. We also mention the recent success determining stability of nonlinear waves in  \cite{cadiot2025stability,cadiot2024rigorous,cadiot2025constructive,van2025existence}, where the waves are approximated by Fourier series on a large domain, and via  computer-assisted proofs the (in)stability of the waves may be obtain on the unbounded domain.  
Additionally we mention the paper \cite{BarkerBronskiHurYang25}, which focuses on the Kortegew-deVries-Burgers equation and in which a sufficient condition for stability is developed and then numerically verified. 

The Swift-Hohenberg equation has long served as a paradigmatic model for pattern formation, and it has a rich literature of study via computer-assisted proofs. On finite one-dimensional domains, early work developed validated parameter-continuation of equilibria \cite{DayLessardMischaikow07} and, using techniques based on the Conley index, demonstrated a semiconjugacy of the dynamics with explicit finite dimensional systems \cite{day2005rigorous}. More recent works have developed a  refined analysis of the infinite dimensional dynamics, constructing validated approximations of the stable manifolds of saddle equilibria  \cite{van2023validated} and validated numerical integrators \cite{duchesne2025rigorous}. In certain parameter regimes the spatial dynamics of the system have been shown to possess chaotic braided solutions \cite{vandenBergLessard08} and homoclinic snaking   \cite{vandenBergDuchesneLessard25}. Further work has studied localized solutions of the planar Swift-Hohenberg equation, such as the existence of radial solutions  \cite{vandenBergHenotLessard23}, hexagonal patterns \cite{cadiot2025stationary}, and their stability \cite{cadiot2025stability}.  

An outline of the remainder of the paper is as follows. In \S \ref{S:background} we provide the necessary background on the parameterization method and on the notion of resonance. We also illustrate our method for overcoming resonances in the simpler setting of the bistable equation. The reader who is primarily interested in our technique for overcoming vector bundle resonances, rather than the application to the Swift-Hohenberg equation, may wish to focus on this section. In \S \ref{Ch: Lpm} we determine numerically verifiable conditions for $L^\pm_{\mathrm{conj}}$ that ensure that no conjugate points exist for $x \leq -L^-_{\mathrm{conj}}$ or for $x \geq L^+_{\mathrm{conj}}$. We then apply the parameterization method to compute the stable and unstable manifolds, and their associated vector bundles, in the context of the Swift-Hohenberg equation, in \S \ref{S:mflds}, \S\ref{Ch:nonres bundles}, and \S\ref{Ch:res bundles}. Our new theory related to vector bundle resonances is made use of specifically in \S\ref{Ch:res bundles}. Finally, in \S \ref{S:CAP}, we describe the numerical implementation and computer-assisted proofs of these results.

\subsection*{Acknowledgments} We thank Jay Mireles James for helpful discussions regarding the parameterization method. \changes{We also thank Michael Storm for reading a preliminary draft and pointing out two errors.} This work supported in part by the National Science Foundation via award DMS-2205434.
  

\section{Background: analytic parameterizations of manifolds \& bundles}\label{S:background}

In order to understand the behavior of solutions to \eqref{E:eval}, which we rewrite here for convenience, 
\begin{equation}\label{E:eval2}
Q' = B(x; \lambda)Q, \qquad Q \in \R^4, \quad \lambda \in \R,
\end{equation}
and in order to in particular understand the existence (or not) of solutions that satisfy $Q(x; \lambda) \to 0$ as $x \to \pm\infty$, we must understand two classes of objects. First, since the matrix $B(x; \lambda)$ depends on the underlying pulse solution $\varphi(x)$, we must understand this solution. This function is a stationary solution of \eqref{E:SH} and hence satisfies
\[
0 = -(1+\partial_x^2)^2\varphi + f(\varphi).
\]
Equivalently, it satisfies the nonlinear ODE \eqref{E:SH-vf}, which we reproduce here
\begin{align}
u_1' &= u_2 \nonumber \\
u_2' &= u_3 \nonumber \\
u_3' &= u_4 \label{E:exist} \\
u_4' &= -2u_3 -u_1 + f(u_1). \nonumber
\end{align}
Since $\varphi$ is a pulse with $\lim_{|x| \to \infty} \varphi(x) = 0$, the vector-valued function $(\varphi, \varphi_x, \varphi_{xx}, \varphi_{xxx})$ must lie in the intersection of the stable and unstable manifolds of the origin that are associated with \eqref{E:exist}. Therefore, in order to have a good approximation of $\varphi$, we must have a good approximation of these stable and unstable manifolds.

Second, as mentioned above, the existence of solutions of \eqref{E:eval2} that decay to zero at infinity can be understood in terms of the stable and unstable subspaces defined in \eqref{E:stable-unstable}. When $\lambda = 0$, $\E^\mathrm{u}_-(x; 0)$ can be viewed as the tangent space to the unstable manifold of the origin for \eqref{E:exist}, which is the same as the unstable bundle of that manifold. Similarly, $\E^\mathrm{s}_+(x; 0)$ can be viewed as the tangent space to the stable manifold of the origin for \eqref{E:exist}, which is the same as the stable bundle of that manifold. Computing these objects is relatively straightforward. What is more involved is computing the unstable bundle of the stable manifold (or vice versa), and we will need to compute such objects. The difficulty is related to the fact that, for the stable manifold, only the stable bundle is uniquely defined, while the unstable bundle is not (and similarly for the unstable manifold). In terms of the stable and unstable subspaces, this is equivalent to the fact that $\E^\mathrm{s}_+(x; \lambda)$ is uniquely defined, while $\E^\mathrm{u}_+(x; \lambda)$ is not; similarly, $\E^\mathrm{u}_-(x; \lambda)$ is uniquely defined, while $\E^\mathrm{s}_-(x; \lambda)$ is not\footnote{These subspaces are also connected with the exponential dichotomies associated with \eqref{E:eval2}. The subspace $\E^\mathrm{u}_-(x; \lambda)$ is the range of the unstable part of the dichotomy on the negative half-line, and $\E^\mathrm{s}_+(x; \lambda)$ is the range of the stable part of the dichotomy on the positive half-line. Similarly, $\E^\mathrm{s}_-(x; \lambda)$ is the range of the stable part of the dichotomy on the negative half-line, and $\E^\mathrm{u}_+(x; \lambda)$ is the range of the unstable part of the dichotomy on the positive half-line, and in these latter two cases the ranges are not uniquely defined.}.  

\begin{remark}\label{rem:NonuniqueUnstableBundle} 
For an explicit two-dimensional example of this non-uniqueness, consider $Q' = BQ$ with $ B = \left( \begin{smallmatrix} -1  &0 \\ 0 & 1\end{smallmatrix} \right)$. This has solutions  $Q_1(x) =\left( \begin{smallmatrix} e^{-x}\\0 \end{smallmatrix} \right) $ and  $ Q_2(x) =\left( \begin{smallmatrix} 0\\e^{x} \end{smallmatrix} \right) $. While $ \E^\mathrm{s}_+(x) = \mathrm{span} \{ Q_1(x) \} $  is uniquely defined, we may define $ \E^\mathrm{u}_+(x) = \mathrm{span} \{ c Q_1(x) + Q_2(x)\} $ for any $c \in \R$. 
\end{remark}

In order to obtain high-order Taylor approximations of both the stable and unstable manifolds, and of the relevant bundles, we will use the parameterization method. Such approximations will converge in the analytic category, provided certain nonresonance conditions are met regarding the rational dependence of the (spatial) eigenvalues, which control the growth and decay rates of solutions in the manifolds and bundles\footnote{These nonresonance conditions are similar to those that appear in the Hartman-Grobman Theorem.}. We do not encounter resonances for the manifolds, but in the Hamiltonian case we find ourselves in, such a resonance is guaranteed for the bundles. In this paper we develop a novel method to overcome this obstacle. 

In this section we review the relevant literature on the parameterization method, and on invariant vector bundles. Then we present a method for overcoming resonance in the vector bundles, and we also apply this method to an explicit two-dimensional example. 

\begin{remark}
It may seem surprising that the eigenvalue equation \eqref{E:eval2} is formulated in terms of the coordinates \eqref{E:defq} that reveal the symplectic structure of the equation, while the parameterizations of the invariant manifolds and their bundles are carried out for \eqref{E:exist}, which uses different coordinates. It turns out to be convenient to compute the parameterizations in terms of the more standard coordinates. Once this is done for the Swift-Hohenberg equation, we will convert things over to the symplectic coordinates in order to compute $\mathbb{E}^\ru_-(x; 0)$ for $x \in [-L^-_{\mathrm{conj}}, L^+_{\mathrm{conj}}]$.
\end{remark}

\paragraph{Multi-index Notation} Throughout the remainder of this section, and in future sections, we make use of the following notation. An $m$-dimensional multi-index has the form $\alpha = (\alpha_1, \dots, \alpha_m ) \in \N^m$, where $\N = \{0, 1, 2, \dots\}$,   with order given by $|\alpha| = \alpha_1 + \dots + \alpha_m$. Given $\sigma \in \C^m$ and a multi-dimensional index $\alpha$, we can define the operations
\[
\sigma^\alpha = \sigma_1^{\alpha_1}\cdots \sigma_m^{\alpha_m}, \qquad \alpha \cdot \sigma = \alpha_1 \sigma_1 + \dots + \alpha_m \sigma_m.
\]
We also define the sequence space  
\[
\mathcal S_m = \big\{ b = \{b_\alpha\}\ : \ b_\alpha \in \R^n, \  \alpha = (\alpha_1, \dots, \alpha_m) \in \N^m \big\},
\] 
which is the set in which the coefficients for our series expansions, below, will lie. Note that $\R^n$ will sometimes be replaced by $\C^n$, $\R^{n \times n}$, or $\C^{n \times n}$, depending on what type of object we are formulating an expansion of. Additionally, define the Cauchy product $*:\mathcal S_m  \times \mathcal S_m  \to \mathcal S_m $ as
\begin{equation}\label{E:cprod}
(b * \tilde b)_\alpha =  \sum_{\beta \leq \alpha} b_{\alpha - \beta}\cdot \tilde b_\beta,
\end{equation}
where $\alpha - \beta = (\alpha_1 - \beta_1, \dots, \alpha_m - \beta_m)$, and 
$ \beta \leq \alpha $ iff $ \beta_1 \leq \alpha_1, \beta_2 \leq \alpha_2 , \dots, \beta_m \leq \alpha_m$.   

We note the Cauchy product in \eqref{E:cprod} defines a  product on sequences corresponding to the multiplication of scalar valued Taylor series in $m$-variables. 
If we have matrix or vector valued Taylor series, we can analogously define a Cauchy product corresponding to matrix-matrix or matrix-vector multiplication.
 
Finally, we introduce the product $\hat *: \mathcal S_m  \times \mathcal S_m  \to \mathcal S_m $ by 
\begin{equation}\label{eq: star hat def}
(b \hat * \tilde b)_\alpha  = \sum_{|\beta| \leq |\alpha|} \hat \delta^\alpha_\beta b_{\alpha - \beta} \tilde b_\beta, \qquad \qquad \hat\delta_\beta^\alpha = \begin{cases} 0 & \text{ if } \beta = 0 \\
0 & \text{ if } \beta = \alpha \\
1 & \text{ otherwise}
\end{cases}.
\end{equation}
The $\hat *$ is related to the usual Cauchy product $*$ via $ (b*\tilde b)_\alpha = b_0\tilde b_\alpha + b_\alpha \tilde b_0 + (b \hat * \tilde b)_\alpha.$ Below it will be important to place an appropriate norm on $\mathcal S_m$; see \S \ref{S:analytic-framework} for more details.


\subsection{Parameterization of an invariant manifold} \label{sec:Parameterization_Review}
 
We begin with a general discussion of the parameterization method applied to invariant manifolds of equilibria. Consider an ordinary differential equation of the form 
\begin{equation}\label{E:param}
U' = G(U), \quad G: \R^{n} \to \R^n,
\end{equation}
where $G$ is an analytic vector field that generates the nonlinear flow $\Phi(x; U)$.  We note that \eqref{E:param} may be viewed as a generalization of \eqref{E:exist}. 

Suppose that $U_* \in \R^n$ is a fixed point, and further suppose that $DG(U_*)$ is diagonalizable and hyperbolic with $m$ stable eigenvalues and $n-m$ unstable eigenvalues. The invariant manifold theorems imply that, since $U_*$ is a hyperbolic fixed point, there exist local stable and unstable manifolds in a neighborhood of $U_*$, which we denote by $\mathcal{M}^{\mathrm{s}}_{\mathrm{loc}}(U_*)$ and $\mathcal{M}^{\mathrm{u}}_{\mathrm{loc}}(U_*)$. Moreover, their dimensions are $m$ and $n-m$, respectively, and the growth/decay rates in them are determined by the corresponding sets of eigenvalues. 

There are several different approaches for proving the invariant manifold theorems and each leads to numerical methods for computing the manifolds. The parameterization method discussed here computes a numerical approximation of each invariant manifold and its internal dynamics, bounds the error in the computation, and proves each manifold exists. It was developed in \cite{CabreFontichdelaLlave03a,CabreFontichdelaLlave03b,CabreFontichdelaLlave05} and consists of finding a parameterization of the manifold that satisfies a functional equation, has a range that is invariant, and semi-conjugates the nonlinear dynamics of the ODE to the dynamics of a simpler map. Using the functional equation, we derive an approximate numerical expression for the invariant manifold. Without loss of generality, we discuss the formulation for the stable manifold. 

To begin, denote the eigenvalues of $DG(U_*)$ as $\mu_j$ and corresponding eigenvectors $\hat V_j$, for $j \in \{1, \dots, n\}$, and suppose the eigenvalues are real and ordered so they are stable for $j = 1, \dots, m$, and unstable otherwise.  Define the matrices $\Omega^\rs = \text{diag}(\mu_1 , \dots ,\mu_m)$ and $\mathcal V^\rs = \left[\hat V_1 | \cdots | \hat V_m\right]$. Consider the stable linear dynamics for equation \eqref{E:param}, $U' = \Omega^\rs U$, and note its flow is given by $e^{\Omega^\rs x}$. For all $\sigma \in [-\delta, \delta]^m =: B^m_\delta(0)$, with $\delta > 0$, the goal is to find a map $P: \R^m \to \R^n$ satisfying 
\begin{equation}\label{E:invar}
\Phi(x, P(\sigma)) = P \left(e^{\Omega^\rs x} \sigma \right), \quad P(0) = U_*, \quad DP(0) = \mathcal V^\rs.
\end{equation}
See Figure \ref{F:cong}. Requiring that fixed points are preserved under $P$ and enforcing the conjugacy relation in \eqref{E:invar} results in $P(B^m_\delta(0)) \subset \mathcal{M}^{\mathrm{s}}_{\mathrm{loc}}(U_*)$. The following lemma formulates an equivalent way to characterize the parameterization $P$ that is easier to work with computationally.

\begin{lemma}\cite{CabreFontichdelaLlave03a,CabreFontichdelaLlave03b,CabreFontichdelaLlave05}\label{lemma: param lemma manifolds} Let $P: B^m_\delta(0) \subset \R^m \to \R^n$ be a smooth function. Then for all $\sigma \in B^m_\delta(0)$, $P$ satisfies
\begin{equation}\label{E:param-1}
\Phi(x, P(\sigma)) = P\left(e^{\Omega^\rs x} \sigma\right)
\end{equation}
if and only if $P$ satisfies 
\begin{equation}\label{E:param-2}
G(P(\sigma)) = DP(\sigma)\Omega^\rs \sigma.
\end{equation}
\end{lemma}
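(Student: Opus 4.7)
My plan is to prove the two implications of the equivalence by exploiting the fact that both sides of \eqref{E:param-1} are, viewed as functions of $x$, solutions of \eqref{E:param} with the same initial data; the invariance equation \eqref{E:param-2} is precisely the condition that makes this work.

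For the forward direction ($\eqref{E:param-1} \Rightarrow \eqref{E:param-2}$), I would simply differentiate \eqref{E:param-1} in $x$ and evaluate at $x = 0$. On the left-hand side, since $\Phi$ is the flow of $G$, one has $\partial_x \Phi(x, P(\sigma))\big|_{x=0} = G(\Phi(0, P(\sigma))) = G(P(\sigma))$. On the right-hand side, the chain rule yields $\partial_x P(e^{\Omega^\rs x}\sigma)\big|_{x=0} = DP(\sigma)\,\Omega^\rs \sigma$. Equating the two gives \eqref{E:param-2}.

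For the reverse direction ($\eqref{E:param-2} \Rightarrow \eqref{E:param-1}$), fix $\sigma \in B^m_\delta(0)$ and define the two curves
\begin{equation*}
u_1(x) := \Phi(x, P(\sigma)), \qquad u_2(x) := P\!\left(e^{\Omega^\rs x}\sigma\right).
\end{equation*}
By construction, $u_1$ is the solution of $u' = G(u)$ with $u(0) = P(\sigma)$. For $u_2$, differentiating and using \eqref{E:param-2} applied at the point $e^{\Omega^\rs x}\sigma$ gives
\begin{equation*}
u_2'(x) = DP\!\left(e^{\Omega^\rs x}\sigma\right) \Omega^\rs e^{\Omega^\rs x}\sigma = G\!\left(P(e^{\Omega^\rs x}\sigma)\right) = G(u_2(x)),
\end{equation*}
with $u_2(0) = P(\sigma)$. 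By uniqueness of solutions to the analytic ODE \eqref{E:param}, $u_1(x) = u_2(x)$ for all $x$ for which both are defined, which is exactly \eqref{E:param-1}. The only subtlety here is a domain issue: one should note that the argument $e^{\Omega^\rs x}\sigma$ stays inside $B^m_\delta(0)$ for all $x \geq 0$ because $\Omega^\rs$ has strictly stable spectrum, so $u_2$ is well-defined for positive $x$; for negative $x$ one works locally in $x$, which is sufficient for the equivalence to hold pointwise.

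I do not anticipate a serious obstacle: the result is essentially the observation that the flow conjugacy is equivalent to a conjugacy of the infinitesimal generators, and the only things needed are the defining property of $\Phi$, the chain rule, and uniqueness of solutions to the ODE \eqref{E:param}. The mild technical point worth flagging in the write-up is that \eqref{E:param-2} is a pointwise algebraic relation while \eqref{E:param-1} is a statement about the flow, so verifying the latter from the former requires the ODE uniqueness step above rather than just direct substitution.
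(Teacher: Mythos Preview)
Your proposal is correct and matches the paper's approach: the paper only states that the essential idea is to differentiate \eqref{E:param-1} and evaluate at $x=0$ to obtain \eqref{E:param-2}, and you have supplied precisely this, together with the (standard) converse via uniqueness of ODE solutions that the paper leaves implicit.
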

The essential idea of the proof of this result is to differentiate \eqref{E:param-1} and evaluate it at $x=0$, which produces \eqref{E:param-2}.
 
 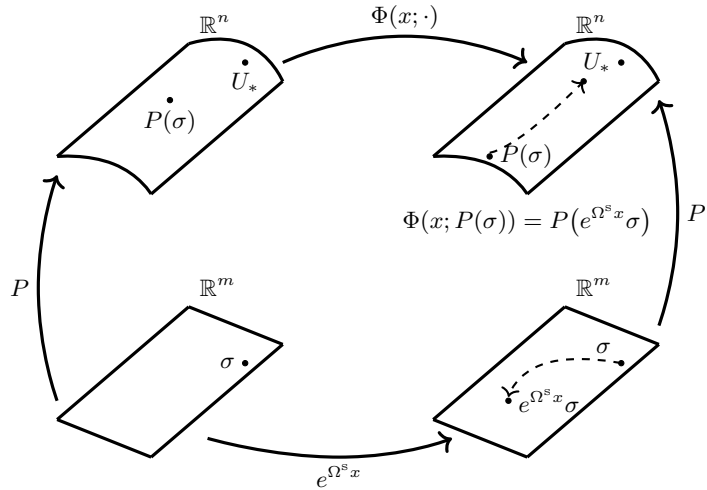
\begin{figure}
 	\centering
 	\begin{tikzpicture}[scale=.5]
 		\draw[black, very thick, -] (-9,-7) -- (-5.5,-4) node[font = \footnotesize, above right] {$\R^m$};
 		\draw[black, very thick, -] (-9,-7) -- (-6.5,-8);
 		\draw[black, very thick, -] (-6.5,-8) -- (-3,-5);
 		\draw[black, very thick, -] (-3,-5) -- (-5.5,-4);
 		\filldraw[black] (-4,-5.5) circle (2pt) node[font = \footnotesize, left] {$\sigma$};
 		
 		\draw [black, very thick, ->] plot [smooth, tension=1] coordinates { (-5,-7.5) (-1.5, -8) (-8 + 9.5,-7.5)};
 		\node[font = \footnotesize] at (-1.5, -8.5) {$e^{\Omega^\rs x}$};
 		
 		\draw[black, very thick, -] (-9 + 10,-7) -- (-5.5+ 10,-4) node[font = \footnotesize, above right] {$\R^m$};
 		\draw[black, very thick, -] (-9 + 10,-7) -- (-6.5+ 10,-8);
 		\draw[black, very thick, -] (-6.5 + 10,-8) -- (-3 + 10,-5);
 		\draw[black, very thick, -] (-3 + 10,-5) -- (-5.5 + 10,-4);
 		\filldraw[black] (-4 + 10,-5.5) circle (2pt) node[font = \footnotesize, above left] {$\sigma$};
 		\filldraw[black] (-4 + 7,-6.5) circle (2pt) node[font = \footnotesize, right] {$e^{\Omega^\rs x}\sigma$};
 		\draw [black, thick, dashed, ->] plot [smooth, tension=1] coordinates { (-4 + 10,-5.5) (-4 + 8, -5.5) (-4 + 7,-6.4)};
 		
 		\draw[black, very thick, -] plot [smooth, tension = 1] coordinates {(-9 + 10,-7 + 7)  (-5.5+ 10,-4 + 7)} node[font = \footnotesize, above right] {$\R^n$};
 		\draw[black, very thick, -] plot [smooth, tension = 1] coordinates {(-9 + 10,-7 + 7) (2.5, -.2) (-6.5+ 10,-8 + 7)} node[font = \footnotesize, below] {$\Phi(x; P(\sigma)) = P\big(e^{\Omega^\rs x} \sigma \big)$};
 		\draw[black, very thick, -] (-6.5 + 10,-8 + 7) -- (-3 + 10,-5 + 7);
 		\draw[black, very thick, -] plot [smooth, tension = 1] coordinates {(-3 + 10,-5 + 7) (-4 + 10, -5 + 8) (-5.5 + 10,-4 + 7)};
		 \filldraw[black] (-4 + 9,-5.5 + 7.5) circle (2pt); 
 		\filldraw[black] (-4 + 6.5,-6.5 + 6.5) circle (2pt) node[font = \footnotesize, right ] {$P(\sigma)$}; 
 		\filldraw[black] (-4+10,-5.5 + 8) circle (2pt) node[font = \footnotesize, left] {$U_*$};
 		\draw [black, thick, dashed, ->] plot [smooth, tension=1] coordinates { (-4 + 6.5,-6.4 + 6.5) (3.5, .65) (-4 + 9,-5.5 + 7.5) };
 				
 		\draw[black, very thick, -] plot [smooth, tension = 1] coordinates {(-9,-7 + 7)  (-5.5,-4 + 7)} node[font = \footnotesize, above right] {$\R^n$};
 		\draw[black, very thick, -] plot [smooth, tension = 1] coordinates {(-9,-7 + 7) (2.5 - 10, -.2) (-6.5,-8 + 7)};
 		\draw[black, very thick, -] (-6.5,-8 + 7) -- (-3,-5 + 7);
 		\draw[black, very thick, -] plot [smooth, tension = 1] coordinates {(-3,-5 + 7) (-4, -5 + 8) (-5.5,-4 + 7)};
 		\filldraw[black] (-4,-5.5 + 8) circle (2pt) node[font = \footnotesize, below] {$U_*$};
 		\filldraw[black] (-4 -2,-5.5 + 7) circle (2pt) node[font = \footnotesize, below] {$P(\sigma)$};
 		
 		\draw [black, very thick, ->] plot [smooth, tension=1] coordinates { (-4 + 1,-5.5 + 8) (.25, 3.2) (-5.5+ 9,-5.5 + 8)};
 		\node[font = \footnotesize] at (.25, 3.2 + .5)  {$\Phi(x; \cdot)$};
 		
 		\draw [black, very thick, ->] plot [smooth, tension=1] coordinates { (-9, -6.5) (-9.5, -3.5)  (-9, -.5)};
 		\node[font = \footnotesize] at (-10, -3.5) {$P$};
 		
 		\draw [black, very thick, ->] plot [smooth, tension=1] coordinates { (-3 + 10,-4.5) (-3 + 10.5, -5 + 3.5)  (-3 + 10,-5.5 + 7)};
 		\node[font = \footnotesize] at (-3 + 11, -5 + 3.5) {$P$};
 	\end{tikzpicture}
 	\caption{The conjugacy described by Equation \eqref{E:invar}.}\label{F:cong}
\end{figure}
 
Let $\sigma \in \R^m$, $\alpha \in \N^m$, and $P_\alpha \in \R^n$. Since $G$ is analytic, one can solve for $P$ written in the form 
\begin{equation}\label{E:Pform}
P(\sigma)  = \sum_{|\alpha| \geq 0} P_{\alpha} \sigma^\alpha, \qquad 
\end{equation}
where $P_{0} = U_*$, and the order one coefficients of $P$ are given by the eigenvectors of $DG(U_*)$, i.e. $P_{e_i} = \hat V_i$, with $ \{e_i\}_{i = 1}^m$ the standard basis vectors for $\R^m$. However, under the assumption that the nonlinear dynamics conjugate to the linear dynamics it is not always the case that one will be able to iteratively solve for $P_\alpha$ at successively higher orders in $\alpha$. This motivates the following. 
\begin{definition}\label{def:ManifoldResonance}
There is a manifold resonance of order $|\alpha|$ if, for any $j = 1, \dots n$,
\begin{equation}\label{E:manifold-resonance}
\alpha_1 \mu_1 + \dots + \alpha_m \mu_m - \mu_j = 0, \qquad |\alpha| \geq 2 .
\end{equation}
\end{definition}
 
\changes{Because the vector field $G$ is analytic, \eqref{E:invar} has an analytic solution $P$ if the eigenvalues are non-resonant in the sense of the above definition \cite{CabreFontichdelaLlave03a}.} Essentially, the reason for this is that, if one takes the expansion \eqref{E:Pform}, plugs it into \eqref{E:param-2}, and tries to solve for successively higher-order coefficients $P_{\alpha}$, then one ends up needing to divide by quantities of the form $\alpha_1 \mu_1 + \dots + \alpha_m \mu_m - \mu_j$.
\begin{remark}\label{R:man-res}
One can adapt this method to a system with an internal manifold resonance by modifying the conjugacy equation \eqref{E:param-1} so that the nonlinear dynamics are conjugated to a nonlinear normal form instead of to the linear dynamics \cite{vandenBergJamesReinhardt16}. Since we do not encounter a manifold resonance, we do not need to do this. \changes{For more information about resonances, see \cite{Homburg06}, and for more information about normal forms for Hamiltonian systems, see \cite{Moser58a, Moser58b}.} 
\end{remark}
\begin{remark}
If some of the eigenvalues are complex, it is easier to consider a power series $P$ with complex coefficients, $P_\alpha \in \C^n$ and $\sigma \in \C^m$. One can argue for a particular choice of complex $\sigma$ that $P$ will map into $\R^n$ \cite{LessardJamesReinhardt14,haro2016parameterization}. This the case that we will have for the Swift-Hohenberg equation.
\end{remark}


\subsection{Parameterization of bundles in the non-resonant case} \label{S:param method for bundles}
Again consider the setting of \eqref{E:param}. Let $U(x)$ be a solution to the initial value problem 
\begin{equation} \label{eq:NonRes_NonlinearDynamics}
U' = G(U), \qquad U(0) = U_0; 
\end{equation} 
in other words, $U(x) = \Phi(x; U_0)$. We are interested in solutions to the variational equation 
\begin{equation}\label{E:var}
V' = DG(U(x)) V.
\end{equation}
Although we present the key ideas in this general setting, it may be helpful to think of $U(x)$ as a solution to \eqref{E:exist}, and in particular to think of $U(x)$ as being the pulse solution that we linearize about. 

Denote the derivative of $\Phi$ with respect to $U$, evaluated at $U_0$, as $M(x):=D_U\Phi(x; U_0)$. The matrix $M(x)$ is the principal fundamental matrix solution of the initial value problem 
\begin{equation*}
M'(x) = DG(U(x)) M(x), \qquad M(0) = I_n. 
\end{equation*}
Suppose $\lim_{x \to \infty} U(x) = U_*$ and, as above, suppose $U_*$ is a hyperbolic equilibrium for $G$ with $DG(U_*)$ diagonalizable and eigenvalues and eigenvectors denoted by $\mu_j$ and $\hat V_j$, respectively, with $j = 1, \dots, n$. In this section we will discuss how to compute the invariant vector bundles over the stable manifold. The discussion for the unstable manifold is virtually identical. The computation over the stable manifold is relevant in \S \ref{Ch:res bundles} and the computation over the unstable manifold is relevant in \S \ref{Ch:nonres bundles}. 
 
As above, define $\Omega^\rs = \text{diag}(\mu_1 , \dots ,\mu_m)$ and $\mathcal V^\rs = \left[V_1 | \dots | V_m\right]$. Suppose that we have a parameterization for the stable manifold $P(\sigma)$ of $U_*$, meaning that $P(\sigma)$ satisfies \eqref{E:param-2}. We define an invariant vector bundle as follows.  
\begin{definition}[\cite{vandenBergJames16}]\label{def: rank 1 invariant vector bundle}
Fix $\delta> 0$ and let $W: B_\delta^m(0) \to \R^n$ be a smooth function. We say that $W$ parameterizes a rank one, forward invariant vector bundle over $P(B_\delta^m(0))$ with exponential rate $\mu$ if $W$ satisfies the equation 
\begin{equation}\label{E:def-bundle} 
M(x) W(\sigma) = e^{\mu x} W \left(e^{\Omega^\rs x} \sigma \right), \qquad \forall \ x > 0, \quad \sigma \in B_\delta^m(0).
\end{equation}
The vector bundle is exponentially contracting if $\mu < 0$ and exponentially expanding if $\mu > 0$. 
\end{definition}
This definition of invariant vector bundles is consistent with the definition formulated within the context of normally hyperbolic manifolds \cite{HirschPughShub77, Robinson95,BatesLuZeng00,eldering2018global}.  Note this is called a rank one bundle because there is one rate of contraction/expansion, given by $\mu$. 
\begin{definition}\label{D:stable-unstable-bundle}
If a rank one vector bundle $W_i(\sigma)$ has growth rate $\mu_i$ with $i = 1, \dots, m$, so that $\mu_i < 0$, we say that $W_i$ is a stable vector bundle over the stable manifold. On the other hand, if $W_i(\sigma)$ has growth rate $\mu_i$ with $i = m+1, \dots, n$, so that $\mu_i > 0$, we say that $W_i$ is an unstable vector bundle over the stable manifold. 
\end{definition}

Recall that Lemma \ref{lemma: param lemma manifolds} provided a way to relate the equation that defined the parameterized invariant manifold, \eqref{E:param-1}, with the more computable characterization \eqref{E:param-2}. We have a similar result for the vector bundles.  

\begin{lemma}[\cite{vandenBergJames16}]\label{lemma: parameterization for bundles} The smooth function $W: B^m_\delta (0) \to \R^n$ parameterizes an exponentially expanding (or contracting) rank one, forward invariant vector bundle with exponential rate $\mu \neq 0$ and satisfies the equation 
\[
M(x)W(\sigma) = e^{\mu x}W(e^{\Omega^\rs x}\sigma), \qquad x > 0
\]
if and only if $W$ is a solution to 
\begin{equation}\label{E:bundle-invar}
DG(P(\sigma)) W(\sigma) = \mu W(\sigma) + DW(\sigma)\Omega^\rs \sigma.
\end{equation}
\end{lemma}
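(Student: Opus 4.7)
The plan is to mirror the two-direction argument of Lemma \ref{lemma: param lemma manifolds}: necessity comes from differentiating the cocycle-type invariance relation in $x$ and evaluating at $x = 0$, and sufficiency comes from showing that both sides of that relation solve the same linear initial value problem, so ODE uniqueness forces them to agree.

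For necessity, assume $M(x)W(\sigma) = e^{\mu x} W(e^{\Omega^\rs x}\sigma)$ holds for all $x \geq 0$ and $\sigma \in B_\delta^m(0)$, and differentiate both sides in $x$. The left-hand side produces $DG(U(x)) M(x) W(\sigma)$ via the variational equation $M'(x) = DG(U(x)) M(x)$ along $U(x) = \Phi(x; P(\sigma))$, while the right-hand side produces $\mu e^{\mu x} W(e^{\Omega^\rs x}\sigma) + e^{\mu x} DW(e^{\Omega^\rs x}\sigma) \Omega^\rs e^{\Omega^\rs x}\sigma$ by the chain rule. Setting $x = 0$, where $M(0) = I_n$ and $U(0) = P(\sigma)$, collapses this identity to \eqref{E:bundle-invar}.

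For sufficiency, fix $\sigma \in B_\delta^m(0)$, write $\sigma(x) := e^{\Omega^\rs x}\sigma$, and define the two curves
\[
F(x) := M(x) W(\sigma), \qquad H(x) := e^{\mu x} W(\sigma(x)).
\]
Both satisfy $F(0) = H(0) = W(\sigma)$. Differentiating $H$ and substituting \eqref{E:bundle-invar} at the point $\sigma(x)$ yields
\[
H'(x) = e^{\mu x}\bigl[\mu W(\sigma(x)) + DW(\sigma(x))\Omega^\rs \sigma(x)\bigr] = e^{\mu x} DG(P(\sigma(x)))W(\sigma(x)) = DG(P(\sigma(x))) H(x).
\]
Now invoke Lemma \ref{lemma: param lemma manifolds}, whose conjugacy consequence is $P(\sigma(x)) = \Phi(x; P(\sigma)) = U(x)$, so that $H'(x) = DG(U(x)) H(x)$. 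Since $F$ satisfies the same linear equation with the same initial value, uniqueness gives $F \equiv H$ on $x \geq 0$.

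The main obstacle, modest as it is, lies in the sufficiency direction: one must tie the dynamics of the base (the stable manifold, parameterized by $P$) to the dynamics on the bundle, and this tie is furnished precisely by the manifold conjugacy of Lemma \ref{lemma: param lemma manifolds}. A small technical point worth checking is that $\sigma(x) \in B_\delta^m(0)$ for every $x \geq 0$, since $\Omega^\rs$ has stable spectrum and hence $|e^{\Omega^\rs x}\sigma| \leq |\sigma| \leq \delta$; this guarantees that $W(\sigma(x))$ and $P(\sigma(x))$ remain in their domains of definition throughout the argument, so no escape from the parameterization domain can occur.
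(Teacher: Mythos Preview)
Your proof is correct and follows exactly the approach the paper indicates: the paper defers to \cite{vandenBergJames16} and only sketches the necessity direction (differentiate the invariance relation and set $x=0$), while you supply both directions in full, correctly invoking the manifold conjugacy of Lemma~\ref{lemma: param lemma manifolds} to identify $P(e^{\Omega^\rs x}\sigma)$ with $U(x)$ in the sufficiency argument. There is nothing to add.
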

For the proof, see \cite[Lemma 2.4]{vandenBergJames16}. We note that, intuitively, this result holds because if we differentiate \eqref{E:def-bundle} and set $x = 0$, the properties of $M$ and the fact that $U(0) = U_0 = P(\sigma)$ lead to \eqref{E:bundle-invar}.
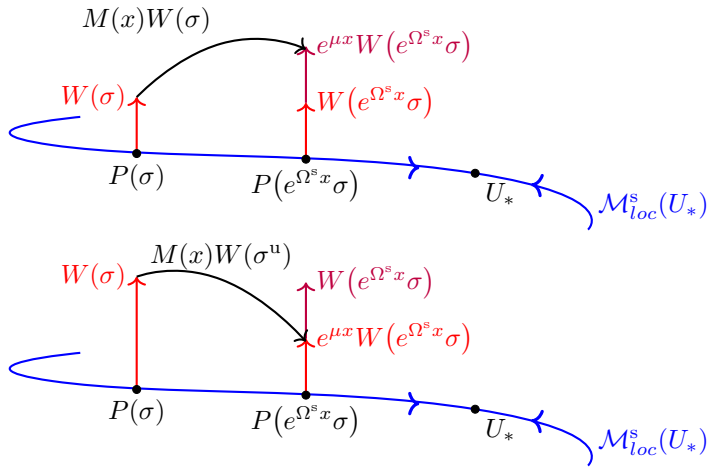
\begin{figure}[H]
	\centering
	\begin{subfigure}[b]{.6\textwidth}
		\centering
		\begin{tikzpicture}[scale=.75]
			\draw[blue, thick] plot [smooth, tension=1] coordinates { (-4,1) (-4.5,.5) (3,0) (5,-1)};
			\draw[blue, very thick, <-] (4,-.2) -- (4.001, -.2001);
			\draw[blue, very thick, ->] (2,.1) -- (2.01, .1);
			\filldraw[blue] (5,-1)  node[above right] {\small $\mathcal{M}_{loc}^\rs(U_*)$};
			
			\draw[red, thick, ->] (-3, .35) -- (-3, 1.35) node[left] {\small$W(\sigma)$};
			\draw[red, thick, ->] (0, .25) -- (0, 1.25) node[right] {\small$W\big(e^{\Omega^\rs x}\sigma\big)$};
			\draw[purple, thick, ->] (0, 1.25) -- (0, 2.25) node[right] {\small$e^{\mu x}W\big(e^{\Omega^\rs x}\sigma\big)$};
			\draw [ thick, ->] plot [smooth, tension=1] coordinates { (-3, 1.35) (-1.5, 1.5) (0, 2.2)};
			\filldraw (-.7, 1.9) node[above left] {\small$M(x)W(\sigma)$};

			\filldraw[black] (3,0) circle (2pt) node[below right] {\small$U_*$};
			\filldraw[black] (-3, .35) circle (2pt) node[below] {\small$P(\sigma)$};
			\filldraw[black] (0, .25) circle (2pt) node[below] {\small$P \big(e^{\Omega^\rs x}\sigma \big)$};
		\end{tikzpicture}
	\end{subfigure}
	
	\begin{subfigure}[b]{.6\textwidth}
		\centering
		\begin{tikzpicture}[scale=.75]
			\draw [blue, thick] plot [smooth, tension=1] coordinates { (-4,1) (-4.5,.5) (3,0) (5,-1)};
			\draw[blue, very thick, <-] (4,-.2) -- (4.001, -.2001);
			\draw[blue, very thick, ->] (2,.1) -- (2.01, .1);
			\filldraw[blue] (5,-1)  node[above right] {\small$\mathcal{M}_{loc}^\rs(U_*)$};
			
			\draw[red, thick, ->] (-3, .35) -- (-3, 2.35) node[left] {\small$W(\sigma)$};
			\draw[purple, thick, ->] (0, .25) -- (0, 1.25) node[right] {\small$e^{\mu x}W\big(e^{\Omega^\rs x}\sigma\big)$};
			\draw[red, thick, ->] (0, 1.25) -- (0, 2.25) node[right] {\small$W\big(e^{\Omega^\rs x}\sigma\big)$};
			\draw [ thick, ->] plot [smooth, tension=1] coordinates { (-3, 2.35) (-1.5, 2.3) (0, 1.2)};
			\filldraw (-1.5, 2.3) node[above] {\small$M(x)W(\sigma^\ru)$};
			
			\filldraw[black] (3,0) circle (2pt) node[below right] {\small$U_*$};
			\filldraw[black] (-3, .35) circle (2pt) node[below] {\small$P(\sigma)$};
			\filldraw[black] (0, .25) circle (2pt) node[below] {\small$P \big(e^{\Omega^\rs x}\sigma \big)$};
		\end{tikzpicture}
	\end{subfigure} 
	\caption{Expanding ($\mu > 0$) and contracting ($\mu < 0$) vector bundles.}\label{fig:expand-contract}
\end{figure}

As in the setting of Lemma \ref{lemma: param lemma manifolds} and Definition \ref{def:ManifoldResonance}, we also have to be aware of the possibility of resonances in the context of vector bundles. 
\begin{definition}\label{def: bundle resonance} There is a vector bundle resonance of order $|\alpha|$ if there exists an $\alpha \in \N^m$ with $|\alpha| \geq  1$ and such that 
\begin{equation}\label{E:vb-res}
\alpha_1 \mu_1 + \dots + \alpha_m \mu_m + \mu - \mu_j = 0
\end{equation}
for some $j \in \{1, \dots, n\}$. 
\end{definition}

Provided there are no resonances, one can find $n$ linearly independent, forward invariant vector bundles $W_i$ by assuming a power series expansion for $W_i$ in terms of $\sigma$, plugging that expansion in to equation \eqref{E:bundle-invar}, and matching orders in $\sigma$. When bundle resonances occur this need no longer be true; we will see that an invariant \emph{or} an analytic rank one bundle may still be obtained, but generically it cannot be both invariant and analytic; \changes{see Remark \ref{R:man-res}.} In the next subsection we describe our method for parameterizing resonant vector bundles, and we apply it to an example two-dimensional system in the following subsection. 


\subsection{Parameterization of bundles in the resonant case}\label{sec:Resonant-Vector-Bundle-Theory}

Consider again the real analytic vector field $G: \R^n \to \R^n$ as in \eqref{E:param} with hyperbolic equilibrium $ U_* \in \R^n$ such that $DG(U_*)$ is diagonalizable. Let $ \mu_1 \dots \mu_n \in \R$ denote the eigenvalues with associated eigenvectors $\hat V_1, \dots \hat V_n \in \R^n$ of $DG(U_*)$.  Assume they are labeled so that $ \mu_1 < \dots \mu_m <0 <\mu_{m+1} < \dots< \mu_n$ and recall $\Omega^\rs = \mathrm{diag}(\mu_1, \dots, \mu_m)$.
Suppose that $P :B^m_\delta(0) \to \R^n$ is a parameterization of the stable manifold which conjugates to the linear flow as in equation \eqref{E:param-1}. 
	
We aim to understand the linear dynamics along the stable manifold. That is, for $\sigma_0 \in B^m_\delta(0)$ and initial condition $P(\sigma_0) \in \mathcal{M}^\rs_{loc}(U_0)$, we aim to understand the solution of 
\begin{equation}
V'(x) = DG( P(e^{\Omega^\rs x} \sigma_0)) V(x) \label{eq:OriginalBundleEquation}
\end{equation}
for general initial condition $V(0) \in \R^n$.  On the one hand we can see this as a non-autonomous differential equation. Alternatively, we may view this as a differential equation on the tangent bundle\footnote{For a more abstract classification of this type of system, see for example the description of linear skew product systems in \cite[Cpt. 4.3]{SellYou02}.} 
\[
T_{\mathcal{M}^\rs}\R^n = \bigcup_{U \in \mathcal{M}^\rs} \{ U\} \times T_U\R^n,
\]
where for $(U,V) \in T_{\mathcal{M}^\rs}\R^n$ we have 
\begin{equation}\label{E:bundle-dynamics}
U'=G(U), \qquad V'= DG(U) V.
\end{equation}
In this context, we present a generalization of Definition \ref{def: rank 1 invariant vector bundle}. 

\begin{definition}\label{def:Rank1UnstableBundle} Let $ \mathcal M \subseteq \R^n$ be a forward invariant manifold of \eqref{eq:NonRes_NonlinearDynamics} and consider the dynamics on the tangent bundle $T_{\mathcal M}\R^n$ given by \eqref{E:bundle-dynamics}. We say that a continuous mapping $ W^{\ru} : \mathcal{M} \to T_\mathcal{M}\R^n $ parameterizes a rank one unstable bundle if there exist $C,\mu >0$ such that, for all $ U_0 \in \mathcal{M}$ and for all $V_0 \in \mathrm{span}\{W^\ru(U_0)\}$, the solution $V(x)$ of \eqref{E:var} with $ V(0) = V_0$ satisfies $ \| V(x) \| \geq C e^{ \mu x} \| V_0\|$ for all $ x \geq 0$. 
\end{definition}
	
We say a rank one unstable bundle is \emph{invariant} if the solution satisfies $ V(x ) \in  \mathrm{span}\{W^\ru(U(x))\}$ for all  $x \geq 0$. We say that a rank one unstable bundle is \emph{smooth/analytic} if $W^{\ru} : \mathcal{M} \to T_\mathcal{M}\R^n $ is smooth/analytic. If $d$ rank one unstable bundles $W^\ru_1 , \dots,  W^\ru_d$ are everywhere linearly independent, then the Whitney sum of their respective vector bundles defines a rank $d$ unstable bundle. 

We aim to parameterize the stable and unstable bundles of $\mathcal{M}^\rs$, and conjugate their linear dynamics to a type of normal form, while respecting the dynamics on the base space. Here the parameterization of the bundles will be given by a map $\mathcal{W}: T_{B^m_\delta(0)}\R^n \to T_{\mathcal{M}^\rs}\R^n$ and the normal form of the dynamics will be determined by $\mathcal{A}: T_{B^m_\delta(0)}\R^n  \to  T_{B^m_\delta(0)}\R^n $. This is summarized in the commutative diagram below,
\begin{center}
	\hfill
		\begin{tikzcd}
		 \mathcal{M}^\rs  \arrow[r, "\Phi^G"] &  \mathcal{M}^\rs  \\
	 B^m_\delta(0)  \arrow[r, "e^{\Omega^\rs x}"] \arrow[u, "P"] &  B^m_\delta(0) \arrow[u, "P"]
	\end{tikzcd}
\hfill
	\begin{tikzcd}
		T_{\mathcal{M}^\rs}\R^n \arrow[r, "\Phi^{DG}"] & T_{\mathcal{M}^\rs}\R^n \\
		T_{B^m_\delta(0)}\R^n \arrow[r, "\Phi^{\mathcal{A}}"] \arrow[u, "\mathcal{W}"] & 	T_{B^m_\delta(0)}\R^n \arrow[u, "\mathcal{W}"]
	\end{tikzcd}
\hfill \null
\end{center}
where the flows $\Phi^{G, DG, \mathcal{A}}$ are those generated by the differential equations $U' = G(U)$, $V' = DG(P(e^{\Omega^\rs x}\sigma_0))V$, and $\tilde V' = \mathcal{A}(e^{\Omega^\rs x}\sigma_0)) \tilde V$, respectively. We can write the domain and range of $\mathcal{W}$ and $\mathcal{A}$ in another format as follows. Let $\GL(\R^n)$ denote the general linear group of real $n\times n$ invertible matrices. Then we aim to find real analytic mappings
\begin{align*}
\mathcal{W}:B^m_\delta(0) \to \GL(\R^n)    && 	\mathcal{A}:B^m_\delta(0) \to \GL(\R^n)
\end{align*}
that satisfy the following conditions. 

\paragraph{Conjugacy Equation.} Suppose $\mathcal{W}$ and $\mathcal{A}$ are defined so that the dynamics commute in the following sense. Fix $\sigma_0 \in B^m_\delta(0)$ and initial condition $ P(\sigma_0) \in \mathcal{M}^\rs_{loc}(U_*)$, and suppose that $V(x)$ solves \eqref{eq:OriginalBundleEquation}  for $x\geq 0$. 
Suppose also that $\tilde{V}(x)$ is defined so that 
\begin{align}\label{eq:Vtilde_Def}
V(x) = \mathcal{W}(e^{\Omega^\rs x} \sigma_0) \tilde{V}(x)
\end{align}
Also suppose that $\tilde{V}(x)$ satisfies the differential equation 
\begin{align}
\tilde{V}'(x) = \mathcal{A}( e^{\Omega^\rs x} \sigma_0) \tilde{V}(x) , \qquad x\geq 0.
\label{eq:NormalFormBundleEquation}
\end{align}
We obtain the conjugacy equation by taking the derivative of \eqref{eq:Vtilde_Def} on both sides. From the left hand side of \eqref{eq:Vtilde_Def} and applying \eqref{eq:OriginalBundleEquation}, we obtain 
\begin{align*}
V'(x) &= DG( P(e^{\Omega^\rs x} \sigma_0)) V(x) \\
&= DG( P(e^{\Omega^\rs x} \sigma_0)) \mathcal{W}(e^{\Omega^\rs x} \sigma_0) \tilde{V}(x) .
\end{align*}
From the right hand side of \eqref{eq:Vtilde_Def}, we obtain 
\begin{align*}
\frac{d}{dx} [\mathcal{W}(e^{\Omega^\rs x} \sigma_0)\tilde{V}(x)]  &= D\mathcal{W}(  e^{\Omega^\rs x} \sigma_0 ) \left(
\frac{d}{dx}e^{\Omega^\rs x} \sigma_0 \right) \tilde{V}(x) +\mathcal{W}(  e^{\Omega^\rs x} \sigma_0 ) \tilde{V}'(x) \\
&= D\mathcal{W}(  e^{\Omega^\rs x} \sigma_0 ) \left(e^{\Omega^\rs x}\Omega^\rs \sigma_0 \right) \tilde{V}(x) +\mathcal{W}(e^{\Omega^\rs x} \sigma_0 )\mathcal{A}( e^{\Omega^\rs x} \sigma_0) \tilde{V}(x). 
\end{align*}
Setting these two sides equal and taking the limit as $ x \to 0$, we obtain the conjugacy equation
\[
DG(P(\sigma_0))\mathcal{W}(\sigma_0) \tilde{V}(0) = D\mathcal{W}(\sigma_0)\left(\Omega^\rs \sigma_0 \right)\tilde{V}(0) +\mathcal{W}(\sigma_0)\mathcal{A}(\sigma_0) \tilde{V}(0).
\]
We want this to hold for all vectors $\tilde{V}(0) \in \R^n$ and all $ \sigma_0 \in B^m_\delta(0)$, so we could equivalently write
\begin{equation}\label{eq:BundleConjugacy}
DG(P(\sigma)) \mathcal{W}(\sigma) = D\mathcal{W}(\sigma)  \Omega^\rs \sigma + \mathcal{W}(\sigma )\mathcal{A}(\sigma).  
\end{equation}
We formalize the above argument in the following.
\begin{lemma}\label{L:conjugacy} Let $\mathcal{A}: B_\delta^m(0) \to \GL(n)$ be given. The following two statements are equivalent.
\begin{enumerate}
\item There exists a function $\mathcal{W}: B_\delta^m(0) \to \GL(n)$ such that \eqref{eq:BundleConjugacy} holds for all $\sigma \in B_\delta^m(0)$.
\item There exists a function $\mathcal{W}: B_\delta^m(0) \to \GL(n)$ such that, if $\tilde M(x) \in \R^{n \times n}$ satisfies
\begin{equation}\label{E:Mtilde}
\tilde M'(x) = \mathcal{A}(e^{\Omega^\rs x} \sigma)\tilde M(x), \qquad \tilde M(0) = I
\end{equation}
for all $x \geq 0$, then $M(x) = \mathcal{W}(e^{\Omega^\rs x}\sigma)\tilde M(x)$ solves 
\begin{equation}\label{E:M}
M'(x) = DG(P(e^{\Omega^\rs x}\sigma))M(x), \qquad M(0) = \mathcal{W}(\sigma)
\end{equation}
for $x \geq 0$.
\end{enumerate}
\end{lemma}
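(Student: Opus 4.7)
The plan is to prove the two statements are equivalent by showing each direction via straightforward differentiation, using the chain rule and the defining matrix ODEs. The argument essentially formalizes the informal derivation of \eqref{eq:BundleConjugacy} that precedes the lemma statement. Both directions are computational, and neither presents a serious obstacle; the real content is just making sure the chain rule is applied correctly and that the initial conditions match.

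For the direction (1) $\Rightarrow$ (2), I would assume $\mathcal{W}$ satisfies \eqref{eq:BundleConjugacy} and, given a solution $\tilde M(x)$ to \eqref{E:Mtilde}, define $M(x) := \mathcal{W}(e^{\Omega^\rs x}\sigma)\tilde M(x)$. Differentiating via the product and chain rules yields
\[
M'(x) = D\mathcal{W}(e^{\Omega^\rs x}\sigma)\bigl(\Omega^\rs e^{\Omega^\rs x}\sigma\bigr)\tilde M(x) + \mathcal{W}(e^{\Omega^\rs x}\sigma)\mathcal{A}(e^{\Omega^\rs x}\sigma)\tilde M(x),
\]
where I have used \eqref{E:Mtilde} to replace $\tilde M'(x)$. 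Applying the conjugacy equation \eqref{eq:BundleConjugacy} with $\sigma$ replaced by $e^{\Omega^\rs x}\sigma$ identifies the right-hand side as $DG(P(e^{\Omega^\rs x}\sigma))\mathcal{W}(e^{\Omega^\rs x}\sigma)\tilde M(x) = DG(P(e^{\Omega^\rs x}\sigma))M(x)$, which is exactly \eqref{E:M}. The initial condition $M(0) = \mathcal{W}(\sigma)\tilde M(0) = \mathcal{W}(\sigma)$ is immediate since $\tilde M(0) = I$.

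For the converse (2) $\Rightarrow$ (1), I would fix an arbitrary $\sigma \in B_\delta^m(0)$ and run the same computation in reverse. Starting from $M(x) = \mathcal{W}(e^{\Omega^\rs x}\sigma)\tilde M(x)$ where $\tilde M$ solves \eqref{E:Mtilde} and $M$ solves \eqref{E:M}, I differentiate both expressions for $M(x)$ and evaluate at $x=0$. The product/chain rule gives
\[
M'(0) = D\mathcal{W}(\sigma)\,\Omega^\rs \sigma + \mathcal{W}(\sigma)\mathcal{A}(\sigma),
\]
while \eqref{E:M} directly yields $M'(0) = DG(P(\sigma))\mathcal{W}(\sigma)$. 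Equating these expressions produces \eqref{eq:BundleConjugacy} at the chosen $\sigma$, and since $\sigma$ was arbitrary the conjugacy equation holds on all of $B_\delta^m(0)$.

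The main subtlety, and the only thing worth checking carefully, is the chain rule for the $\sigma$-argument: the derivative of $e^{\Omega^\rs x}\sigma$ with respect to $x$ is $\Omega^\rs e^{\Omega^\rs x}\sigma$, which at $x=0$ contracts to the factor $\Omega^\rs \sigma$ that appears in \eqref{eq:BundleConjugacy}. There is no regularity issue because $\mathcal{W}$ is assumed to take values in $\GL(n)$ (hence is at least $C^1$ where needed for the conjugacy derivation), and existence/uniqueness for the linear matrix ODEs \eqref{E:Mtilde} and \eqref{E:M} is standard. No new results beyond the chain rule and linear ODE theory are required.
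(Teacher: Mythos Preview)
Your proposal is correct and follows essentially the same approach as the paper's own proof: differentiate $M(x) = \mathcal{W}(e^{\Omega^\rs x}\sigma)\tilde M(x)$ using the product and chain rules together with \eqref{E:Mtilde}, then invoke \eqref{eq:BundleConjugacy} at the point $e^{\Omega^\rs x}\sigma$ for the forward direction, and evaluate at $x=0$ for the converse. One small quibble: taking values in $\GL(n)$ does not by itself imply $C^1$ regularity of $\mathcal{W}$; the needed differentiability is a standing hypothesis (in the paper $\mathcal{W}$ is sought as a real analytic map), not a consequence of invertibility.
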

The proof of this Lemma is similar to that of \cite[Lemma 2.4 and Thm 2.6]{vandenBergJames16}. The key difference is that the result in \cite{vandenBergJames16} asserts that, in the absence of any vector bundle resonance, each column of $\mathcal{W}$ can be computed separately and independently of the others. Moreover, \eqref{eq:BundleConjugacy} differs from equation (14) of \cite{vandenBergJames16} in that the $\mathcal{W}(\sigma)\mathcal{A}(\sigma)$ term of the right hand side of \eqref{eq:BundleConjugacy} is replaced by $\mathcal{W}(\sigma)\Omega$, where $\Omega = \mathrm{diag}(\mu_1, \dots, \mu_n)$. In our case, due to the resonance, we cannot independently determine the columns of $\mathcal{W}(\sigma)$, and equation \eqref{E:Mtilde} that we conjugate the variational equation to cannot be taken to be $\tilde M' = \Omega \tilde M$. 
\begin{proof}
First suppose statement (1) holds. Fix $\hat \sigma \in B_\delta^m(0)$ and define $M(x) = \mathcal{W}(e^{\Omega^\rs x}\hat \sigma)\tilde M(x)$. Then $M(0) = \mathcal{W}(\hat \sigma)$. In addition, 
\[
M'(x) = D\mathcal{W}(e^{\Omega^\rs x}\hat \sigma) \Omega^\rs e^{\Omega^\rs x} \hat \sigma \tilde M(x) + \mathcal{W}(e^{\Omega^\rs x}\hat \sigma)\mathcal{A}(e^{\Omega^\rs x} \hat \sigma)\tilde M(x).
\]
On the other hand, by \eqref{eq:BundleConjugacy}, we find
\begin{align*}
DG(P(e^{\Omega^\rs x} \hat \sigma))M(x) & = DG(P(e^{\Omega^\rs x} \hat \sigma))\mathcal{W}(e^{\Omega^\rs x}\hat \sigma)\tilde M(x) \\
&= [D\mathcal{W}(e^{\Omega^\rs x} \hat \sigma)  \Omega^\rs e^{\Omega^\rs x} \hat \sigma + \mathcal{W}(e^{\Omega^\rs x} \hat \sigma )\mathcal{A}(e^{\Omega^\rs x} \hat \sigma) ] \tilde M(x),
\end{align*}
since $e^{\Omega^\rs x} \hat \sigma \in B_\delta^m(0)$ for all $x \geq 0$. 

Next, suppose statement  (2) holds. Differentiating $M(x) = \mathcal{W}(e^{\Omega^\rs x}\sigma)\tilde M(x)$ and using \eqref{E:Mtilde} and \eqref{E:M}, we find
\[
DG(P(e^{\Omega^\rs x}\sigma))M(x) = D\mathcal{W}(e^{\Omega^\rs x}\sigma) \Omega^\rs e^{\Omega^\rs x}\sigma \tilde M(x) + \mathcal{W}(e^{\Omega^\rs x}\sigma)\mathcal{A}(e^{\Omega^\rs x} \sigma)\tilde M(x)
\]
for all $x \geq 0$. Setting $x = 0$ and using the facts that $M(0) = \mathcal{W}(\sigma)$ and $\tilde M(0) = I$, we obtain \eqref{eq:BundleConjugacy}.
\end{proof}

\paragraph{Choice of normal form.} We are now free to choose any $\mathcal{A}: B_\delta^m(0) \to \GL(n)$ and apply Lemma \ref{L:conjugacy}. We want to choose $\mathcal{A}(\sigma)$ to be as simple as possible. In the nonresonant case, the choice 
$\mathcal{A}(\sigma) = \Omega$ for all $\sigma$, where $\Omega = \mathrm{diag}(\mu_1, \dots, \mu_n)$ is the Jordan normal form of $DG(U_*)$, is possible. In the resonant case, we will still be able to choose $\mathcal{A}(0) = \Omega$, but we will need to let $\mathcal{A}$ vary with $\sigma$. Recall that $\hat V_1, \dots, \hat V_n$ are the corresponding eigenvectors of $DG(U_*)$ and set
\begin{align*}
\mathcal{A}(0) &= \begin{pmatrix} \mu_1 &\cdots &0\\ \vdots &\ddots & \vdots \\ 0 & \cdots & \mu_n \end{pmatrix}
&
\mathcal{W}(0) &= \begin{pmatrix} \vdots & &\vdots\\ \hat V_1 &\cdots & \hat V_n \\ \vdots &  & \vdots \end{pmatrix}
\end{align*}
It turns out that we may take the form of $\mathcal{A}(\sigma)$ to be
\begin{equation}\label{E:Aform}
\mathcal{A}(\sigma) = \begin{pmatrix} \mu_1 &a^{1, 2}(\sigma) & a^{1,3}(\sigma) & \cdots & a^{1, n}(\sigma) \\  0 & \mu_2 & a^{2, 3}(\sigma) & \cdots &a^{2, n}(\sigma) \\ \vdots & \vdots &   \vdots & \ddots & \vdots \\ 0 & 0 & \cdots & 0 & \mu_n \end{pmatrix}.
\end{equation}

The upper triangular structure of $\mathcal{A}$ follows from the ordering of the eigenvalues.  To see this, let us define sets of multi-indices which give rise to resonances as per Definition \ref{def: bundle resonance}.  
\begin{definition}\label{def:bundle-resonance--Res_ij} For each fixed $i$ and $j$, define $\mathrm{Res}_{i,j} \subsetneq \N^m$ as the collection of resonances of order $\alpha \in \N^m$ which the $\mu_j$-bundle has in the $\mu_i$ location, formally given as 
\begin{align} \label{E:Res_def}
\mathrm{Res}_{i,j} = \{ \alpha \in \N^m :  | \alpha| \geq 1, \ \alpha_1 \mu_1 + \dots + \alpha_m \mu_m  + \mu_j - \mu_i = 0  \} .
\end{align}
\end{definition}

It follows from the ordering of eigenvalues (i.e. $i<j \implies \mu_i < \mu_j$) that if $ i \geq j$, then $ 	\mathrm{Res}_{i,j} = \emptyset$. If $\mathrm{Res}_{i,j} = \emptyset$, then we may take $ a^{i,j}(\sigma)= \mu_j \delta_{ij}$, where $\delta_{ij}$ denotes the Kronecker delta. However if there are resonances, then it will be necessary to solve for $a^{i,j}(\sigma)$ via
\begin{align*}
a^{i,j}(\sigma) = \sum_{  \alpha \in \mathrm{Res}_{i,j} } a^{i,j}_\alpha \sigma^\alpha. 
\end{align*} 
If the coefficients $a^{i,j}_\alpha$ are defined appropriately, then both $\mathcal{A}$ and $\mathcal{W}$ can be defined as convergent power series that satisfy the bundle conjugacy equation in \eqref{eq:BundleConjugacy}. Due to only a finite number of resonances being possible, then in fact $\mathcal{A}(\sigma)$ may be taken as a finite polynomial. Note that, whenever we are forced to define a particular $a^{i,j}_\alpha \neq 0$, this will lead to a free choice of a coefficient in the expansion of $ \mathcal{W}(\sigma)$.\footnote{This is related to the fact that the unstable bundle over a stable manifold is not unique, as you can just add a bit of the stable solution, and you still get solutions that grow exponentially.} The form of $\mathcal{A}$ given in \eqref{E:Aform} will be justified in Lemma \ref{L:solve-for-AQ} below. First, we introduce a bit more notation.

\paragraph{Homological equation.} We follow the presentation from \cite[\S 3]{vandenBergJames16}, modifying as necessary for our framework. Suppose that we have computed the matrix valued power series 
\begin{equation}\label{E:defB}
DG(P(\sigma)) = \sum_{|\alpha|=0}^\infty \hat {G}_\alpha \sigma^\alpha
\end{equation}
and that we may write $\mathcal{W}$ and $\mathcal{A}$ as a power series
\begin{equation}\label{E:defQA}
\mathcal{W}(\sigma) =  \sum_{|\alpha|=0}^\infty \mathcal{W}_\alpha \sigma^\alpha, \qquad \mathcal{A}(\sigma) =  \sum_{|\alpha|=0}^\infty \mathcal{A}_\alpha \sigma^\alpha.
\end{equation}
Let $\mathcal{W}_0=\mathcal{W}(0)$ be a matrix whose columns are eigenvectors of $DG(0)$, let $\mathcal{A}_0 = \Omega = \mathrm{diag}(\mu_1, \dots, \mu_n)$, and for each $\alpha$ let $\mathcal{A}_\alpha, \mathcal{W}_\alpha \in \R^{n \times n}$. Expanding right hand side of \eqref{eq:BundleConjugacy} we get 
\[
D\mathcal{W}(\sigma)  \Omega^\rs \sigma + \mathcal{W}(\sigma )\mathcal{A}(\sigma) = \sum_{|\alpha|=0}^\infty \left[ (\alpha_1 \mu_1 + \dots + \alpha_m \mu_m) \mathcal{W}_\alpha + (\mathcal{W}*\mathcal{A})_\alpha \right]  \sigma^\alpha,  
\]
where $\mathcal{W}*\mathcal{A}$ denotes a Cauchy product of matrices, see \eqref{E:cprod}. 
Expanding the left hand side of \eqref{eq:BundleConjugacy} we get 
\[
DG(P(\sigma)) \mathcal{W}(\sigma)  = \sum_{|\alpha|=0}^\infty (\hat{G}*\mathcal{W})_\alpha \sigma^\alpha. 
\]
By equating both sides and matching powers, we obtain 
\[
 (\alpha_1 \mu_1 + \dots + \alpha_m \mu_m) \mathcal{W}_\alpha + (\mathcal{W}*\mathcal{A})_\alpha =   (\hat{G}*\mathcal{W})_\alpha. 
\]
In order to isolate the $ \mathcal{W}_\alpha$ term, we write
\[
(\alpha_1 \mu_1 + \dots + \alpha_m \mu_m) \mathcal{W}_\alpha + \mathcal{W}_\alpha  \mathcal{A}_0 + \mathcal{W}_0  \mathcal{A}_\alpha  + (\mathcal{W}\hat{*}A)_\alpha  =  \hat{G}_\alpha  \mathcal{W}_0 + \hat{G}_0 \mathcal{W}_\alpha +  (\hat{G}\hat{*}\mathcal{W})_\alpha,
\]
where $ \hat{*}$ is defined in \eqref{eq: star hat def}. Define 
\begin{equation}\label{E:defS}
\cS_\alpha := \hat{G}_\alpha  \mathcal{W}_0 +  (\hat{G}\hat{*}\mathcal{W})_\alpha - (\mathcal{W}\hat{*}\mathcal{A})_\alpha
\end{equation}
From this we obtain the homological equation 
\begin{equation}\label{eq:ResonantHomologicalEquation}
(\alpha_1\mu_1 + \dots + \alpha_m \mu_m) \mathcal{W}_\alpha + \mathcal{W}_\alpha  \Omega -DG(U_*) \mathcal{W}_\alpha + \mathcal{W}_0  \mathcal{A}_\alpha  = \cS_\alpha
\end{equation}
where we recall that $\hat{G}_0 = DG(U_*)$ and $\Omega  = \mathcal{A}_0$ is the diagonal matrix of eigenvalues. Since $\mathcal{W}_0$ has as its columns the eigenvectors of $DG(U_*)$, we have $DG(U_*) = \mathcal{W}_0 \Omega \mathcal{W}_0^{-1}$, and so we define $\tilde{\mathcal{W}}_\alpha$ and $\tilde \cS_\alpha$ via the change of variables
\[
\mathcal{W}_\alpha = \mathcal{W}_0 \tilde{\mathcal{W}}_\alpha, \qquad \cS_\alpha = \mathcal{W}_0 \tilde \cS_\alpha.
\]
Multiplying \eqref{eq:ResonantHomologicalEquation} on the left by $\mathcal{W}_0^{-1}$, we obtain
\begin{equation}\label{eq:ResonantHomologicalEquation-2}
(\alpha_1\mu_1 + \dots + \alpha_m \mu_m) \tilde{\mathcal{W}}_\alpha + \tilde{\mathcal{W}}_\alpha  \Omega - \Omega \tilde{\mathcal{W}}_\alpha +   \mathcal{A}_\alpha  = \tilde \cS_\alpha .
\end{equation}
In the following lemma we summarize how solving this equation leads to a solution of the conjugacy equation. 

\begin{lemma}\label{L:solve-for-AQ}
Let $\hat G_\alpha$ be defined by \eqref{E:defB} for $|\alpha| \geq 0$. Then the expansions of $\mathcal{W}$ and $\mathcal{A}$ as in \eqref{E:defQA} satisfy \eqref{eq:BundleConjugacy} at each order in $\sigma$ if $\mathcal{A}_0 = \Omega$, $\mathcal{W}_0  = [\hat V_1| \dots | \hat V_n]$, and $\mathcal{W}_\alpha$ and $\mathcal{A}_\alpha$ are defined as follows:

Let $S_\alpha$ be defined in \eqref{E:defS}. Set $\mathcal{W}_\alpha = \mathcal{W}_0 \tilde{\mathcal{W}}_\alpha$ and $S_\alpha = \mathcal{W}_0 \tilde S_\alpha$, and let the entries of $\tilde S_\alpha$, $\tilde{\mathcal{W}}_\alpha$ and $\mathcal{A}_\alpha$ in row $i$ and column $j$ be denoted by $\tilde s_\alpha^{i,j}$, $\tilde w_\alpha^{i,j}$ and $a_\alpha^{i,j}$, respectively. 
Then we define
\[
\tilde w_\alpha^{i,j} =  \frac{1}{\alpha_1 \mu_1 + \dots + \alpha_m \mu_m  + \mu_j - \mu_i  } \tilde s_\alpha^{i,j}, \qquad    a_{\alpha}^{i,j} = 0,  \qquad |\alpha| \geq 1, \qquad \alpha \notin \mathrm{Res}_{i,j},
\]
and 
\[
\tilde w_\alpha^{i,j} = 0, \qquad  a_{\alpha}^{i,j}  =  \tilde s_\alpha^{i,j}, \qquad  |\alpha| \geq 1, \qquad \alpha \in \mathrm{Res}_{i,j}.
\]
Moreover, $a_\alpha^{i, j} = 0$ for all $i > j$, and $a_\alpha^{i,j} = 0$ whenever $|\alpha| > \max\{|\alpha|: \alpha \in \mathrm{Res}_{i,j}\}$.
\end{lemma}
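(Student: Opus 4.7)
The strategy is to work directly from the homological equation \eqref{eq:ResonantHomologicalEquation-2}, which has already been derived in the excerpt, and to solve it order by order in $|\alpha|$ in an entry-wise fashion. First, I would verify that the right-hand side $\tilde S_\alpha$ (defined via $S_\alpha = \mathcal{W}_0 \tilde S_\alpha$ and \eqref{E:defS}) depends only on $\{\hat G_\beta\}_{|\beta|\leq |\alpha|}$ and the lower-order coefficients $\{\mathcal{W}_\beta, \mathcal{A}_\beta : |\beta| < |\alpha|\}$; this is precisely what the truncated product $\hat *$ of \eqref{eq: star hat def} is designed to ensure, since the $\beta = 0$ and $\beta = \alpha$ terms are omitted, and any surviving contribution with $|\beta| = |\alpha|$ but $\beta \neq \alpha$ would require $\mathcal{W}_{\alpha - \beta}$ at a multi-index with a negative component and so vanishes. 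The homological equation is therefore a bona fide recursion in $|\alpha|$.

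Next, I would read \eqref{eq:ResonantHomologicalEquation-2} entry-wise. Because $\Omega = \mathrm{diag}(\mu_1, \dots, \mu_n)$ is diagonal, the commutator $\tilde{\mathcal{W}}_\alpha \Omega - \Omega \tilde{\mathcal{W}}_\alpha$ has $(i,j)$ entry $(\mu_j - \mu_i)\tilde w_\alpha^{i,j}$, so the matrix equation decouples into the scalar relations
\[
\bigl(\alpha_1\mu_1 + \dots + \alpha_m\mu_m + \mu_j - \mu_i\bigr)\,\tilde w_\alpha^{i,j} + a_\alpha^{i,j} = \tilde s_\alpha^{i,j},
\]
one for each $(i,j)$ and each $|\alpha| \geq 1$. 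If $\alpha \notin \mathrm{Res}_{i,j}$, the coefficient of $\tilde w_\alpha^{i,j}$ is nonzero, so I may take $a_\alpha^{i,j} = 0$ and solve uniquely for $\tilde w_\alpha^{i,j}$, which is the nonresonant formula in the lemma. If $\alpha \in \mathrm{Res}_{i,j}$, this coefficient vanishes, and the equation can only be solved by choosing $a_\alpha^{i,j} = \tilde s_\alpha^{i,j}$; the value of $\tilde w_\alpha^{i,j}$ is then free, and setting it to zero yields the resonant formula. This freedom is the algebraic shadow of the non-uniqueness of the unstable bundle over the stable manifold noted in Remark \ref{rem:NonuniqueUnstableBundle}.

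Finally, I would check the structural properties of $\mathcal{A}$ claimed at the end of the lemma. The ordering $\mu_1 < \dots < \mu_m < 0 < \mu_{m+1} < \dots < \mu_n$ gives $\mu_j - \mu_i \leq 0$ whenever $i \geq j$; combined with $\alpha_1\mu_1 + \dots + \alpha_m\mu_m < 0$ for every $|\alpha| \geq 1$ (each stable eigenvalue is strictly negative and $\alpha \neq 0$), this forces $\alpha_1\mu_1 + \dots + \alpha_m\mu_m + \mu_j - \mu_i < 0$, so $\mathrm{Res}_{i,j} = \emptyset$ and consequently $a_\alpha^{i,j} = 0$ whenever $i \geq j$. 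For $i < j$ the shift $\mu_i - \mu_j$ is a fixed finite number, whereas $\alpha_1\mu_1 + \dots + \alpha_m\mu_m \to -\infty$ as $|\alpha| \to \infty$ (bounded above by $|\alpha|\mu_m$), so the resonance identity can hold for only finitely many $\alpha$. This gives both the finiteness of $\mathrm{Res}_{i,j}$ and the vanishing of $a_\alpha^{i,j}$ beyond the maximal resonance order, and the resulting series automatically satisfy \eqref{eq:BundleConjugacy} at every order by construction. The only real subtlety, and thus the main potential pitfall, is keeping the bookkeeping clean enough to certify that the recursion for $\tilde S_\alpha$ closes strictly on lower-order data; once that is in hand, the rest of the proof is a direct unwinding of \eqref{eq:ResonantHomologicalEquation-2}.
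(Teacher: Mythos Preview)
Your proposal is correct and follows essentially the same route as the paper's proof: first verify that $\tilde S_\alpha$ depends only on strictly lower-order data via the definition of $\hat *$, then decouple \eqref{eq:ResonantHomologicalEquation-2} entry-wise using the diagonality of $\Omega$ to obtain the scalar relation $(\alpha\cdot\mu + \mu_j - \mu_i)\tilde w_\alpha^{i,j} + a_\alpha^{i,j} = \tilde s_\alpha^{i,j}$, solve it in the resonant and nonresonant cases exactly as stated, and finally use the sign pattern of the stable eigenvalues together with the ordering $\mu_1 < \dots < \mu_n$ to show $\mathrm{Res}_{i,j}=\emptyset$ for $i\geq j$ and finite otherwise. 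The paper's argument is the same in every essential step.
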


\begin{proof} First notice that $S_\alpha$ be defined in \eqref{E:defS} only depends on $\mathcal{W}_{\tilde \alpha}$ and $\mathcal{A}_{\tilde \alpha}$ for $|\tilde \alpha| < |\alpha|$. Thus, if we construct $\mathcal{W}_\alpha$ and $\mathcal{A}_\alpha$ iteratively at each order in $\alpha$, the procedure outlined in the statement of this Lemma is well defined.  Next, if we introduce the notation
\begin{align*}
\tilde{\mathcal{W}}_\alpha &=  [ \tilde w_\alpha^{*, 1} | \dots | \tilde w_\alpha^{*, j}| \dots \tilde w_\alpha^{*, n}], & \tilde S_\alpha &= [ \tilde s_\alpha^{*, 1} | \dots | \tilde s_\alpha^{*, j} | \dots |\tilde s_\alpha^{*, n}], & \mathcal{A}_\alpha &= [a_\alpha^{*, 1} | \dots | a_\alpha^{*, j} | \dots | a_\alpha^{*, n}],
\end{align*}
then we find the columns of \eqref{eq:ResonantHomologicalEquation-2} yield
\begin{equation} \label{eq:ColumnByColumn}
\left[ (\alpha_1 \mu_1 + \dots + \alpha_m \mu_m  + \mu_j)I -\Omega \right] \tilde w_\alpha^{*, j} + a_\alpha^{*, j} = \tilde s_\alpha^{*, j}.
\end{equation}
Therefore, the asserted formulas for $\tilde w_\alpha^{i,j}$ and $a_\alpha^{i,j}$ will indeed ensure that \eqref{eq:BundleConjugacy} is satisfied at each order in $\sigma$.

Next, recall that the eigenvalues are ordered so that $\mu_1 < \dots < \mu_m < 0 < \mu_{m+1} < \dots < \mu)n$. This implies via Definition \ref{def:bundle-resonance--Res_ij}  that we can only possibly have a resonance if $\alpha_1 \mu_1 + \dots + \alpha_m \mu_m = \mu_i - \mu_j \geq \mu_1 - \mu_n$. On the other hand, since $\alpha \in \N^m$, we find that, $\alpha_1 \mu_1 + \dots + \alpha_m \mu_m \leq |\alpha|\mu_m$. Thus, there cannot be a resonance at all if $|\alpha| > (\mu_1 - \mu_n)/\mu_m$. Moreover, $\max\{|\alpha|: \alpha \in \mathrm{Res}_{i,j}\} \leq (\mu_i - \mu_j)/\mu_m$, which is finite. By definition, $a_\alpha^{i,j} = 0$ whenever $|\alpha| > \max\{|\alpha|: \alpha \in \mathrm{Res}_{i,j}\}$. Moreover, if $i > j$, then $\mu_i - \mu_j > 0$, and since $\alpha_1 \mu_1 + \dots + \alpha_m \mu_m < 0$ we cannot have a resonance for that choice of $i$ and $j$. Thus, $a_\alpha^{i, j} = 0$ for all $i > j$.
\end{proof}

This Lemma justifies the form of $\mathcal{A}(\sigma)$ given in \eqref{E:Aform}. In particular, the entries below the diagonal are all zero, since $i > j$ there. In addition, when $i = j$ the resonance condition becomes $\alpha_1 \mu_1 + \dots + \alpha_m \mu_m = 0$, which does not hold if $|\alpha| > 1$. Thus, $a_{i, i}(\sigma) = \mu_i$ since $\mathcal{A}_0 = \Omega$. Finally, the fact that $\mathcal{A}_0 = \Omega$ implies that $a_{i,j} = \mathcal{O}(\sigma)$ for $i < j$, and the lemma also implies that the expansions of $a_{i, j}(\sigma)$ are finite for $i < j$.  

\begin{remark}
The form of $\mathcal{A}(\sigma)$ given in \eqref{E:Aform} together with \eqref{E:Mtilde}, which describes the fundamental matrix solution corresponding to the normal form for the bundle, implies that in the resonant case we expect the bundles to be sums of both exponentially growing/decaying solutions, plus terms that look like polynomials times exponentially growing/decaying terms. See the example in \S\ref{S:bistable} for an explicit example of this.
\end{remark}


\subsection{Example with vector bundle resonance: the bistable equation}\label{S:bistable}

\changes{In this section we present an example where one can explicitly compute the pulse, the parameterizations of the stable and unstable manifolds, and the solutions to the variational equation about the pulse. We choose this particular example because it has a vector bundle resonance, and we use this to illustrate how such a resonance can prevent one from computing the vector bundles via Lemma \ref{lemma: parameterization for bundles}. In particular, we see below in equation \eqref{E:choice-bad} that one would encounter an inconsistent system of equations if one tries to implement Lemma \ref{lemma: parameterization for bundles}. We note this reflects the fact that, in this example, and more generally in the presence of a resonance, one cannot expect to have a bundle that is both invariant and analytic.}

Consider the scalar bistable equation
\[
u_t = u_{xx} + h(u), \qquad h(u) = u(2u-1)(1-u).
\]
This has three spatially homogeneous solutions $ u \equiv 0,1/2,1$. 
Moreover, stationary solutions satisfy 
\begin{equation}\label{E:nonlin-stationary}
0 = \varphi_{xx} + h(\varphi). 
\end{equation}
Two explicit solutions of \eqref{E:nonlin-stationary} are given by 
\begin{equation}\label{E:explicit} 
\varphi_\pm(x) = \frac{1}{1 + e^{\pm x}},
\end{equation}
which each correspond to a standing front of the original PDE. 

We rewrite equation \eqref{E:nonlin-stationary} using the change of coordinates $U = (u_1, u_2)$ and arrive at the first-order nonlinear, Hamiltonian ODE
\begin{equation}\label{E:bistable}
\begin{pmatrix}  u_1' \\  u_2'
\end{pmatrix} = G(U) := \begin{pmatrix} u_2 \\ -h(u_1) .
\end{pmatrix}.
\end{equation}

\subsubsection{Parameterization of the stable manifold}
We first consider the stable manifold of the fixed point $U_* = (1,0)$ of \eqref{E:bistable}. One can compute the linearization 
\[
DG(U_*) = \begin{pmatrix} 0 & 1 \\ 1 & 0 \end{pmatrix}.
\]
This fixed point is hyperbolic with eigenvalues $\pm 1$ and it has a local one-dimension stable and a local one-dimensional unstable manifold.  Note that $\varphi_-$ coincides with the stable manifold of $(1, 0)$, whereas $\varphi_+$ coincides with the unstable manifold. Since we are interested in the stable manifold  and $\Omega^\rs = \mu_1=-1$, the manifold resonance condition \eqref{E:manifold-resonance} in this case is given by
\[
\alpha_1 (-1) - ( \pm 1) = 0, \qquad |\alpha | > 1.
\]
We see that this condition is never satisfied, and so there is no manifold resonance. Since $n = 2$ and $m = 1$, the expansion \eqref{E:Pform} is given by
\[
P(\sigma) = \sum_{j = 0}^\infty \begin{pmatrix} p_j^1 \\ p_j^2 \end{pmatrix} \sigma^j,
\]
and when this is plugged into \eqref{E:param-2} one obtains
\[
\begin{pmatrix} \sum_{j \geq 0} p_j^2 \sigma^j \\ -\left( \sum_{j \geq 0} p_j^1 \sigma^j \right)\left(2\sum_{k \geq 0} p_k^1 \sigma^k  - 1\right) \left(1 - \sum_{m \geq 0} p_m^1 \sigma^m  \right) \end{pmatrix} = -\sum_{j = 0}^\infty j \begin{pmatrix} p_j^1 \\ p_j^2 \end{pmatrix} \sigma^j.
\] 
At $\mathcal{O}(\sigma^0)$, we find 
\[
\begin{pmatrix} p_0^2 \\ -p_0^1 \left(2p_0^1   - 1\right) \left(1 -  p_0^1  \right) \end{pmatrix} = \begin{pmatrix} 0 \\ 0 \end{pmatrix}. 
\]
Thus, we can set $P_0 = U_* = (1, 0)$ as expected. At $\mathcal{O}(\sigma^1)$, we find $(p_1^1, p_1^2) = c(1, -1)$ for any constant $c$, and so we can choose $c = -1$; see Remark \ref{rem:levers}. This exactly corresponds to the eigenvector associated with the eigenvalue $-1$ for $DG(U_*)$, as expected. At $\mathcal{O}(\sigma^2)$, we have $(p_2^1, p_2^2) = (1, -2)$. In other words, we have found
\begin{equation}\label{E:bistable-mnfld-exp}
P(\sigma) = \begin{pmatrix}1 -  \sigma + \sigma^2 + \mathcal{O}(\sigma^3) \\ \sigma - 2\sigma^2 + \mathcal{O}(\sigma^3) \end{pmatrix}.
\end{equation}
Above it was noted that the stable manifold of $U_*$ corresponds exactly to 
\begin{equation}\label{E:y-}
\begin{pmatrix} \varphi_-(x) \\ \varphi_-'(x) \end{pmatrix} = \begin{pmatrix} \frac{1}{1 + e^{-x}} \\ \frac{e^{-x}}{(1+e^{-x})^2} \end{pmatrix}.
\end{equation}
As $x \to \infty$, $e^{-x}$ is small, and so we notice that if we set $\sigma = e^{-x}$ the above can be expanded as
\begin{equation}\label{E:relate-varphi-P}
\begin{pmatrix} \frac{1}{1 + e^{-x}} \\ \frac{e^{-x}}{(1+e^{-x})^2} \end{pmatrix} = \begin{pmatrix} \frac{1}{1+\sigma}\\ \frac{\sigma}{(1+\sigma)^2} \end{pmatrix} =  \begin{pmatrix} 1 - \sigma + \sigma^2 + \mathcal{O}(\sigma^3) \\ \sigma - 2\sigma^2 + \mathcal{O}(\sigma^3) \end{pmatrix},
\end{equation}
which matches \eqref{E:bistable-mnfld-exp}, as desired.

\begin{remark}\label{rem:levers}
There appear to be three degrees of freedom so far in this example. One is the choice of constant $c$ that was made in arriving at \eqref{E:bistable-mnfld-exp}, and which is connected with the fact that eigenvectors are not uniquely determined (as they can be multiplied by any nonzero scalar). Another is that any fixed translate of $\varphi_-(x)$, given in \eqref{E:explicit}, is also a solution and also corresponds to an explicit formula for the stable manifold. Finally, there is the choice of the parameterization of the stable manifold, which is reflected in the choice of definition for $\sigma$. It turns out that, in fact, these are all in some sense equivalent. To see this, first suppose we were to have made a different choice $c \neq -1$. We would then arrive at 
\[
P_c(\sigma) = \begin{pmatrix}1 + c \sigma + c^2 \sigma^2 + \mathcal{O}(\sigma^3) \\ -c \sigma - 2 c^2 \sigma^2 + \mathcal{O}(\sigma^3) \end{pmatrix}
\]
instead of \eqref{E:bistable-mnfld-exp}. This is equivalent to rescaling the parameter $\sigma$ using $\sigma = -c \tilde \sigma$. Similarly, in \eqref{E:relate-varphi-P} the parameterization  \eqref{E:bistable-mnfld-exp} is related to the explicit formula for $\varphi_-(x)$ in \eqref{E:explicit}. If we were to instead choose a different translate, say $\tilde \varphi_-(x) = \varphi_-(x + \tau)$, and still relate $x$ and $\sigma$ via $\sigma = e^{-x}$, then we would have
\[
\begin{pmatrix} \tilde \varphi_-(x)  \\ \tilde{\varphi}'_-(x) \end{pmatrix} = \begin{pmatrix} \frac{1}{1 + e^{-(x+\tau)}} \\ \frac{e^{-(x+\tau)}}{(1+e^{-(x+\tau)})^2} \end{pmatrix} = \begin{pmatrix} \frac{1}{1+e^{-\tau}\sigma}\\ \frac{e^{-\tau}\sigma}{(1+e^{-\tau}\sigma)^2} \end{pmatrix} =  \begin{pmatrix} 1 - e^{-\tau}\sigma + e^{-2\tau}\sigma^2 + \dots \\ e^{-\tau}\sigma - 2e^{-2\tau}\sigma^2 + \dots \end{pmatrix},
\]
which is equivalent to the choice $c = -e^{-\tau}$. 

This relationship will become useful below. For example, if we want an approximate invariant manifold
\[
P^N(\sigma) = \sum_{j = 0}^N \begin{pmatrix} p_j^1 \\ p_j^2 \end{pmatrix} \sigma^j,
\]
to be a good (in some sense to be made precise below) approximation in a ball of radius $\delta$, meaning for $\sigma \in B_\delta(0) = (-\delta, \delta) \subseteq \R$, then we could either (i) make $\delta$ small; (ii) make $c$ small and allow $\tilde \sigma \in B_1(0)$; or choose $\tau$ large and allow $\tilde \sigma \in B_1(0)$. Below, this will allow us to make convenient but otherwise arbitrary choices of $c$ and $\delta$ (specifically, $\delta = 1$), and then choose an appropriate translate of the underlying wave so that, a posteriori, the previous two choices lead to a good approximation of the manifold. 
\end{remark}

\subsubsection{Parameterization of the stable/unstable bundles}

If we linearize about $U(x):= (\varphi_-(x), \varphi_-'(x))$, we obtain the following scalar variational equation 
\begin{equation}\label{eq: bistable scalar variational equation}
v_{xx} + h'(\varphi_-(x))v = 0
\end{equation}
and corresponding first-order variational system
\begin{equation}\label{eq: bistable variational equation}
\begin{pmatrix}v_1' \\ v_2' \end{pmatrix} = DG(U(x)) \begin{pmatrix} v_1 \\ v_2 \end{pmatrix} = \begin{pmatrix} 0 & 1 \\ -h'(\varphi_-(x)) & 0 \end{pmatrix} \begin{pmatrix} v_1 \\ v_2 \end{pmatrix}.
\end{equation}  
To investigate whether or not there is a vector bundle resonance, we need to investigate equation \eqref{E:Res_def},  which reduces to $0= \alpha_1 \mu_1  + \mu_j - \mu_i$. 
Since $m = 1$ and the two eigenvalues are $\pm 1$, we see that this condition is satisfied with the choice $\alpha = 2$ when $\mu_i = -1$ and $\mu_j = 1$. This is a resonance of order $2$ since $\alpha = 2$.

\begin{remark} We can observe the consequence of this resonance if we attempt to solve the conjugacy equation \eqref{E:bundle-invar} which assumes invariance of the bundle. To see this, we use the expansion for the stable manifold given in \eqref{E:bistable-mnfld-exp} and we attempt to solve for the unstable (since we found the resonance when $\mu_i = 1 > 0$) bundle $W$ via the expansion
\[
W(\sigma) = \sum_{j = 0}^\infty \begin{pmatrix} w_j^1 \\ w_j^2 \end{pmatrix} \sigma^j.
\]
Plugging these into \eqref{E:bundle-invar}, which in this case is given by
\[
DG(P(\sigma))W(\sigma) = W(\sigma) - DW(\sigma)\sigma,
\]
at $\mathcal{O}(\sigma^0)$ we find that $(w_0^1, w_0^2) = (1, 1)$ (or any constant multiple of this), which is exactly the eigenvector associated with the unstable eigenvalue $1$ of $DG(U_*)$, as expected. Subtracting $DG(P(\sigma))W(\sigma)$ from each side and expanding to  first order in $\sigma$ we get  
\[
\begin{pmatrix} 0\\0 \end{pmatrix} = \begin{pmatrix} (-w^2_1)\sigma + \mathcal{O}(\sigma^2)\\ (6-w_1^1 )\sigma + \mathcal{O}(\sigma^2) \end{pmatrix},
\]
which we can solve by taking $w^1_1=6$ and $w^2_1=0$. Expanding to second order in $\sigma$ we then find
\begin{equation}\label{E:choice-bad}
\begin{pmatrix} 0\\0 \end{pmatrix} = \begin{pmatrix} (-w^1_2-w^2_2)\sigma^2 + \mathcal{O}(\sigma^3)\\ (24-w^1_2-w^2_2)\sigma^2 + \mathcal{O}(\sigma^3) \end{pmatrix},
\end{equation}
which yields an inconsistent system of equations. The fact that we run into trouble at $\mathcal{O}(\sigma^2)$ is exactly due to the fact that the resonance occurred at order $2$.
\end{remark}

\begin{figure}
\centering \includegraphics[width=1\linewidth]{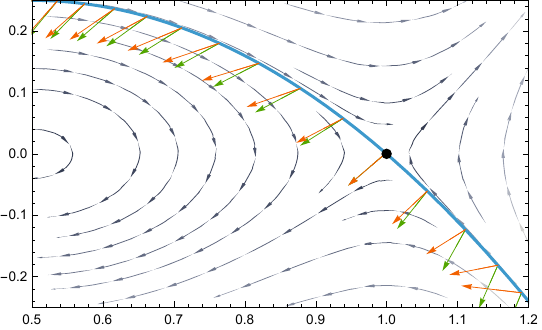}
\caption{Plot of the stable manifold (blue), the analytic bundle (green) and the invariant bundle (orange).}
\label{fig:bundleplot}
\end{figure}

To overcome this obstacle, we aim to find $2 \times 2$ real matrices $\mathcal{W}$, $\mathcal{A}$ that satisfy \eqref{eq:BundleConjugacy}, which we reproduce here:
\[
DG(P(\sigma)) \mathcal{W}(\sigma) = D\mathcal{W}(\sigma)  \Omega^\rs \sigma  + \mathcal{W}(\sigma )\mathcal{A}(\sigma).  
\]
We can write 
\[
\mathcal{W}(\sigma) = \begin{pmatrix} | & | \\ W^\rs(\sigma) & W^\ru(\sigma) \\ | & |  \end{pmatrix}, \qquad \mathcal{A}(\sigma) = \begin{pmatrix} -1 & a^{1,2}(\sigma) \\ 0&1 \end{pmatrix},
\]
where the superscripts $\rs, \ru$ indicate the columns that correspond to the stable and unstable bundle, respectively. We can recover the stable bundle as the derivative of the parameterization of the stable manifold: $W^\rs(\sigma) = \partial_\sigma P(\sigma)$.
Hence  
\begin{equation}\label{E:exp-Ws}
W^\rs(\sigma) =  \begin{pmatrix} \frac{-1}{(1+\sigma)^2} \\ \frac{1-\sigma }{(1+\sigma)^3} \end{pmatrix} =
\begin{pmatrix} -1 +2\sigma -3 \sigma^2 +\mathcal{O}(\sigma^3)\\ 1 -4 \sigma + 9\sigma^2 +\mathcal{O}(\sigma^3) \end{pmatrix}.
\end{equation}
Also note that we have 
\[
DG(P(\sigma)) = \begin{pmatrix} 0&1\\ 1-6 \sigma +12 \sigma^2 +\mathcal{O}(\sigma^3) &0 \end{pmatrix}.
\]
Solving then for the unstable bundle $W^\ru$, we obtain 
\begin{equation}\label{eq:BistableConjugacy}
0=- DG(P(\sigma)) W^\ru(\sigma)   + \partial_\sigma W^\ru(\sigma)(-\sigma)  + W^\ru(\sigma)  + W^\rs(\sigma)a^{1,2}(\sigma).  
\end{equation}
Recall that $W^\ru(0)$ should equal the eigenvector. So let us write 
\[
W^\ru(\sigma) = \begin{pmatrix} 1+ w_1^1 \sigma + w^1_2 \sigma^2 + \mathcal{O}(\sigma^3) \\ 1+ w^2_1 \sigma + w_2^2 \sigma^2 + \mathcal{O}(\sigma^3) \\ \end{pmatrix}, \qquad
a^{1,2}(\sigma) = a^{1,2}_1 \sigma + a^{1,2}_2 \sigma^2 + \mathcal{O}(\sigma^3).
\]
Plugging this into \eqref{eq:BistableConjugacy} and expanding to first order in $\sigma$ we get  
\[
\begin{pmatrix} 0\\0 \end{pmatrix} = \begin{pmatrix} (-w^2_1-a_1^{1,2})\sigma + \mathcal{O}(\sigma^2)\\ (6-w_1^1 +a_1^{1,2})\sigma + \mathcal{O}(\sigma^2) \end{pmatrix},
\]
which we can solve by taking $w^1_1=6$, $w^2_1=0$ and $a^{1,2}_1 = 0$. Note that the choice $a^{1,2}_1 = 0$ implies that $a^{1,2}(\sigma) = \mathcal{O}(\sigma^2)$, which is the order of the resonance. To second order in $\sigma$ we then find
\begin{equation}\label{E:choice}
\begin{pmatrix} 0\\0 \end{pmatrix} = \begin{pmatrix} (-w^1_2-w^2_2-a^{1,2}_2)\sigma^2 + \mathcal{O}(\sigma^3)\\ (24-w^1_2-w^2_2+a^{1,2}_2)\sigma^2 + \mathcal{O}(\sigma^3) \end{pmatrix}.
\end{equation}
The solution to this requires $a^{1,2}_2 = - 12$ and $ w^1_2 + w^2_2 =12 $.  We are free to make a choice here, so we choose $w^1_2 =6 $ and $w^2_2 = 6$; we comment on this choice just after  \eqref{eq:Explicit_Wu}. Then we have that 
\begin{equation}\label{E:exp-Wu}
W^\ru(\sigma) = \begin{pmatrix} 1+ 6 \sigma +  6 \sigma^2 + \mathcal{O}(\sigma^3) \\  1+ 0\sigma+ 6 \sigma^2 + \mathcal{O}(\sigma^3) \\  \end{pmatrix}.
\end{equation}
Since no more resonances may occur, we may take all the higher order coefficients of $a^{1,2}(\sigma)$ to be zero. Thereby $a^{1,2}(\sigma) = -12 \sigma^2$, and hence 
\[
\mathcal{A}(\sigma)= \begin{pmatrix} -1 & -12 \sigma^2 \\ 0&1 \end{pmatrix}.
\]

\subsubsection{Closed forms of the stable/unstable bundles}
We can relate this back to the general framework that was presented above, and to the fact that we can explicitly solve the variational equation for this example. 

First, we  connect the stable solutions $V^\rs(x)$ of \eqref{eq: bistable variational equation} to the stable bundle $ W^\rs(\sigma)$. 
We know that $\varphi_-$ solves \eqref{E:bistable}, so its derivative $(\varphi_-'(x), \varphi_-''(x))$ is a solution of \eqref{eq: bistable variational equation}, and it is given explicitly by
\begin{equation}\label{E:first-var-soln} 
V^\rs(x)= \begin{pmatrix}\varphi_-'(x) \\ \varphi_-''(x) \end{pmatrix} = e^{-x}\begin{pmatrix} \frac{1}{(1+e^{-x})^2} \\  \frac{e^{-x} - 1}{(1+e^{-x})^3} \end{pmatrix}. 
\end{equation}
Thus, ignoring the factor $e^{-x}$, this solution converges as $x \to +\infty$ to an eigenvector of $DG(U_*)$ associated with the eigenvalue $-1$. Up to scalar multiples, this is the unique solution with this property, and so we denote  
\begin{equation}\label{E:first-var-soln-new}
V^\rs(x)=  \begin{pmatrix} v^\rs(x) \\ (v^\rs)'(x) \end{pmatrix}, \qquad v^\rs(x) = \varphi_-'(x).
\end{equation}
We expect this to correspond to the stable bundle $W^\rs(\sigma)$ that we found above. 

In equation \eqref{E:exp-Ws} we saw that
\[
W^\rs(\sigma) =  \begin{pmatrix} -1 +2\sigma -3 \sigma^2 +\mathcal{O}(\sigma^3)\\ 1 -4 \sigma + 9\sigma^2 +\mathcal{O}(\sigma^3) \end{pmatrix}.
\]
This exactly corresponds to the expansion of the solution given in \eqref{E:first-var-soln}, if we ignore the prefactor $e^{-x}$ and recall that $\sigma = e^{-x}$. In particular, 
\begin{equation}\label{E:Ws-reln}
W^\rs(\sigma) = e^{x}V^\rs(-\log \sigma).
\end{equation}

Next, we can solve for a second independent solution $ v^{\ru}(x)$ of the variational equation such that $V^\ru = (v^\ru, (v^\ru)')$ converges as $x \to +\infty$ (after ignoring the ambient exponential growth factor) to an eigenvector of $DG(U_*)$ associated with the eigenvalue $+1$. Doing so, for example using the reduction of order method, produces $(\tilde v^\ru(x), (\tilde v^\ru)'(x))$, where  
\begin{align*}
\tilde v^{\ru}(x) &= \frac{1}{(1+e^{-x})^2} \left[ e^{x} +8  -8e^{-2x} - e^{-3x} + 12 x e^{-x} \right] \\
&=  \frac{e^x}{(1+e^{-x})^2} \left[1 +8e^{-x}  -8e^{-3x} - e^{-4x} \right] + 12x \varphi_-'(x).
\end{align*}
Again we can multiply this by any scalar and preserve this desired asymptotic property, but we can also add to it any scalar multiple of $(v^\rs(x), (v^\rs)'(x))$ and preserve this property; see Remark \ref{rem:NonuniqueUnstableBundle}. Thus, we denote
\begin{equation} \label{E:second-var-soln}
V^\ru(x)   = \begin{pmatrix} v^\ru(x) \\ (v^\ru)'(x) \end{pmatrix}, \qquad v^\ru(x) = \tilde v^\ru(x) + c v^\rs(x).
\end{equation}

This unstable solution in  \eqref{E:second-var-soln} may be used to define an  unstable bundle, similar to that found in \eqref{E:exp-Wu}. 
Indeed, notice that $V^\ru$ can be expanded via $ \sigma = e^{-x}$  as   
\begin{align} \label{eq:SecondSolutionExpand}
V^\ru(x) &=  e^x \left\{ \begin{pmatrix} \frac{1+8 \sigma - 8 \sigma^3 - \sigma^4-12 \sigma^2 \log \sigma }{(1+\sigma)^2} \\ \frac{1+3\sigma +28 \sigma ^2+28 \sigma ^3+3 \sigma ^4+\sigma ^5+12 \sigma ^2 \log (\sigma ) -12 \sigma ^3 \log(\sigma )}{ (1+\sigma)^3} \end{pmatrix} + c \sigma^2 \begin{pmatrix} \frac{1}{(1+\sigma)^2} \\ \frac{\sigma - 1}{(1+\sigma)^3} \end{pmatrix}\right\}\\
&=  e^x\left\{ \begin{pmatrix}1 + 6 \sigma + (c - 13) \sigma^2 - 12 \sigma^2 \log \sigma + \mathcal{O}(\sigma^3) \\ 1 + 0 \sigma + (-c + 25) \sigma^2 + 12 \sigma^2 \log \sigma + \mathcal{O}(\sigma^3)\end{pmatrix}\right\}  \nonumber
\end{align}
The free constant $c$ in the definition of $v^\ru$ can be chosen (e.g. $c=19$) so that, with the exception of the logarithmic term,  and the growth factor $e^x$, this exactly matches the first component in 
$W^\ru$ from \eqref{E:exp-Wu}. The presence of the logarithmic term means that the expansion of $V^\ru$ with respect to $\sigma$ is not analytic. 
However, as this unstable bundle was constructed from a solution to the variational equation, it is by construction invariant. 
This illustrates the incompatibility of invariance and analyticity of vector bundles in the presence of a bundle resonance.

Finally, we aim to obtain a closed form expression for an unstable bundle $ W^\ru$ satisfying the conjugacy equation \eqref{eq:BundleConjugacy}. 
To that end,  recall from \eqref{eq:Vtilde_Def} that any solution $V(x)$ of \eqref{eq: bistable variational equation}  may be rewritten using the change of variables $V(x) = \mathcal{W}(e^{\Omega^\rs x} \sigma_0) \tilde{V}(x)$,  where for this example
\[
\mathcal{W}(e^{-x} \sigma_0) = \begin{pmatrix} | & | \\ W^\rs(e^{-x}\sigma_0) & W^\ru(e^{-x} \sigma_0) \\ | & | \end{pmatrix}.
\]
The function $\tilde{V}(x)$ is itself a solution of the differential equation  \eqref{eq:NormalFormBundleEquation}, which for the bistable equation is given by
\begin{equation}\label{E:Vtilde-ex}
\tilde V' = \begin{pmatrix} -1 & -12(e^{-x}\sigma_0)^2 \\ 0 & 1 \end{pmatrix} \tilde V.
\end{equation}

Set $\sigma_0 = 1$; the computations for general $\sigma_0$ are similar. One solution of \eqref{E:Vtilde-ex} is any scalar multiple of $\tilde V^\rs(x) = (e^{-x}, 0)$, and this corresponds to 
\[
\mathcal{W}(e^{-x})(-\tilde V^\rs(x)) = - e^{-x} W^\rs(e^{-x}) =  V^\rs(x)
\]
via \eqref{E:exp-Ws} and \eqref{E:Ws-reln}.  Another linearly independent solution of \eqref{E:Vtilde-ex} is  $\tilde V^\ru(x) = ( - 12 x   e^{-x},   e^{x})$. 

Now we solve for $W^\ru$. First, by \eqref{eq:Vtilde_Def} we may write $V^\ru$ from \eqref{E:second-var-soln} as the linear combination 
\begin{align} \label{eq:Wu-LinearExpansion}
V^\ru(x) &=\mathcal{W}(e^{-x} ) \left( k_1 \tilde V^\rs(x)  +k_2 \tilde V^\ru(x) \right)
\end{align}
Multiplying the right hand side of \eqref{eq:Wu-LinearExpansion}  by $e^{-x}$ and substituting in $e^{-x} = \sigma$, we obtain   
\begin{align} \nonumber
e^{-x}		\mathcal{W}(e^{-x} ) \left( k_1 \tilde V^\rs(x)  +k_2 \tilde V^\ru(x) \right) &= 
\begin{pmatrix} | & | \\ W^\rs( \sigma) & W^\ru(  \sigma) \\ | & | \end{pmatrix}
\begin{pmatrix}
	k_1 \sigma^2 -12 k_2 \sigma^2 (-\log \sigma )  \\
	k_2 
\end{pmatrix} \\
&=  \sigma^2 \left( k_1 + 12 k_2 \log \sigma  \right)
\begin{pmatrix} 
	\frac{-1}{(1+\sigma)^2} \\
	\frac{1-\sigma }{(1+\sigma)^3} 
\end{pmatrix} 
+ k_2 W^\ru(\sigma),
\label{eq:WU-solve-RHS}
\end{align}
where we used the explicit form of  $W^\rs( \sigma)$ from  \eqref{E:exp-Ws}. 
Likewise, multiplying the left hand side of \eqref{eq:Wu-LinearExpansion}  by $e^{-x}$ and substituting in $e^{-x} = \sigma$, as in  \eqref{eq:SecondSolutionExpand}  we obtain 
\begin{align}\label{eq:WU-solve-LHS}
		e^{-x} V^{\ru}(x)&=
		\begin{pmatrix}
			\frac{1	+8 \sigma - 8 \sigma^3 - \sigma^4-12 \sigma^2 \log \sigma }{(1+\sigma)^2} \\
			\frac{
				1+3\sigma +28 \sigma ^2+28 \sigma ^3+3 \sigma ^4+\sigma ^5+12 \sigma ^2 \log (\sigma ) -12 \sigma ^3 \log
				(\sigma )
			}{ (1+\sigma)^3}
		\end{pmatrix}
		+ c \sigma^2 
		\begin{pmatrix} \frac{1}{(1+\sigma)^2} \\ \frac{\sigma - 1}{(1+\sigma)^3} \end{pmatrix}.
\end{align}
Equating \eqref{eq:WU-solve-RHS} and \eqref{eq:WU-solve-LHS}, we see that we need to take $k_2 = 1$ in order to cancel the log terms.  
Moreover, equating \eqref{eq:WU-solve-RHS} and \eqref{eq:WU-solve-LHS}  gives us an explicit form of $W^\ru(\sigma)$:
 \begin{align} \label{eq:Explicit_Wu}
 		\begin{pmatrix}
 		\frac{1	+8 \sigma - 8 \sigma^3 - \sigma^4  }{(1+\sigma)^2} \\
 		\frac{
 			1+3\sigma +28 \sigma ^2+28 \sigma ^3+3 \sigma ^4+\sigma ^5 
 		}{ (1+\sigma)^3}
 	\end{pmatrix}
 + (c+k_1)\sigma^2 
 \begin{pmatrix} \frac{1}{(1+\sigma)^2} \\ \frac{-1+\sigma }{(1+\sigma)^3} \end{pmatrix}
 &= W^\ru(\sigma)
 \end{align}
Note that the free parameters $ k_1$ and $ c$ allow for the addition of the stable bundle, as described in Remark  \ref{rem:NonuniqueUnstableBundle}.

Furthermore, we can confirm that this solves the conjugacy equation  \eqref{eq:BundleConjugacy}. With the explicit form of $P(\sigma)$ given in \eqref{E:relate-varphi-P}, equation  \eqref{eq:BundleConjugacy} becomes
\begin{equation}\label{E:explicit-1}
\begin{pmatrix} 0 & 1 \\ \frac{\sigma^2 - 4 \sigma + 1}{(1+\sigma)^2} & 0 \end{pmatrix} \mathcal{W}(\sigma) = - \sigma \partial_{\sigma} \mathcal{W}(\sigma) + \mathcal{W}(\sigma)\begin{pmatrix} -1 & a^{1,2}(\sigma) \\ 0 & 1 \end{pmatrix}
\end{equation}
with $a^{1,2}(\sigma) = -12 \sigma^2$. This is satisfied for solutions $ W^\rs(\sigma) ,W^\ru(\sigma)$ as in \eqref{E:exp-Ws}   and  \eqref{eq:Explicit_Wu}. 

\begin{remark} \label{rem:Invariant-Not-Analytic} Despite all the freedom one has in choosing the various scalar constants, one is always forced to include a term of the form $x \varphi'_-(x)$ in the solution to the variational equation that corresponds to the unstable bundle. This term is not analytic in $\sigma$. However, under the change of coordinates given in Lemma \ref{L:conjugacy}, this lack of analyticity is removed from the bundle $\mathcal{W}$ and placed instead into the solution $\tilde M$. In the bistable example, this appears via the fact that the solution $\tilde V^\ru$ of \ref{E:Vtilde-ex} is not analytic in $\sigma$. 

One can also think about this in terms of whether one wants to prioritize an invariant unstable bundle, or an analytic one. This is perhaps best illustrated the example from Remark \ref{rem:NonuniqueUnstableBundle} with $\lambda = 0$, which demonstrates the choice of invariant versus non-invariant bundles, without the complication of analyticity versus non-analyticity. Here the stable and unstable manifolds of the origin are given exactly by the eigendirections, and the variational equation about the origin is the same as the original equation. As $x \to + \infty$, there is a unique solution (up to scalar multiple) that converges to an eigenvector associated with the eigenvalue $-1$ (after removing the ambient decay $e^{-x}$) and it is given by $(e^{-x}, 0)$. Hence, this is the stable bundle, and for all $x$ we see that it lies in $\mathrm{span}\{(1, 0)\}$. But any solution of the form $(c_1 e^{-x}, e^{x})$ is independent and, after removing the ambient growth of $e^x$, converges to an eigenvector associated with the eigenvalue $+1$. But only for the choice $c_1 = 0$ is this invariant in the sense that for any $x$ we would find that it lies in $\mathrm{span}\{(0, 1)\}$. Otherwise, it lies in $\mathrm{span}\{(c_1 e^{-2x}, 1)\}$, which varies as $x$ varies.
	
In the bistable example, choosing an invariant unstable bundle is tantamount to choosing $c_1 = 0$, but with the consequence that the bundle is then not analytic. On the other hand, one can require that the bundle be analytic, which corresponds to choosing $c_1 \neq 0$. For our purposes, we must prioritize analyticity.
\end{remark}


\section{Proving the conjugate points lie in a compact domain}\label{Ch: Lpm}

In this section we identify numerically computable constants $L^\pm_\mathrm{conj}$ and prove that $\mathbb{E}^\mathrm{u}(x; 0) \cap \ell_\mathrm{sand} = \{0\}$ for all $x \notin [-L^-_{\mathrm{conj}}, L^+_{\mathrm{conj}}]$. We begin by recalling some facts about our Lagrangian path $\E^{\mathrm{u}}_-(x; \lambda)$. We will be primarily interested in the case where $\lambda = 0$, and we denote this case via $\mathbb{E}^{\mathrm{u}}_-(x; 0) = \mathbb{E}^{\mathrm{u}}_-(x)$. This is the unstable subspace of solutions to  \eqref{E:eval} with $\lambda = 0$, which we reproduce here for convenience:
\begin{equation}\label{E:eval-0}
Q' = B(x)Q, \qquad B(x) = \begin{pmatrix} 0 & 0 & 0 & 1 \\ 0 & 0 & 1 & - 2\\ -1 + f'(\varphi(x)) & 0 & 0 & 0 \\ 0 & 1 & 0 & 0
\end{pmatrix}.
\end{equation}
We will also be interested in the dynamics of the asymptotic system $Q' = B_\infty Q$ with 
\begin{equation}\label{E:eval-0-infty}
B_\infty := \lim_{|x| \to \infty} B(x) = \begin{pmatrix}0 & 0 & 0 & 1 \\ 0 & 0 & 1 & - 2\\ -1 + f'(0) & 0 & 0 & 0 \\ 0 & 1 & 0 & 0
\end{pmatrix}.
\end{equation}
To determine $L^\pm_\mathrm{conj}$, we will characterize the asymptotic behavior of solutions of \eqref{E:eval-0}. For large $|x|$, the evolution of $\E^{\mathrm{u}}_-(x)$ is determined by the asymptotic matrix $B_\infty$, because of the fact that 
\begin{equation}\label{E:B-decay}
\|B(x) - B_\infty \| \leq K_Be^{-C_B|x|}
\end{equation}
for some $K_B, C_B > 0$. We note that these constants can be determined explicitly from the underlying solution $\varphi$, which we are able to do using validated numerics; see also Theorem \ref{thm: compute L-}.

Since we are interested in obtaining numerically verifiable bounds, we would like to be clear about which norms are being used. For vectors $z \in \C^d$, we use the standard norm $\|z\|^2 = z \cdot \bar z$. For matrices, we use the norm $\|A\| = \sqrt{\mu_\mathrm{max}(A^*A)}$, where $\mu_\mathrm{max}$ denotes the largest eigenvalue of the matrix $A^*A$, which is real and nonnegative. Note that this matrix norm coincides with the operator norm relative to the vector norm we are using, in the sense that $\|A\| = \sup\{ \|Az\|: \|z\| = 1\}$, and so $\|Az\| \leq \|A\| \|z\|$ for all $z$. 

Since $B_\infty$ has complex eigenvalues, it will be convenient to work with complex-valued solutions of \eqref{E:eval-0}. Therefore, we define
\[
\C \E^{\mathrm{u}}_-(x):= \big\{U(x) \in C^1(\R, \C^4) \ : \ U(x) \text{ solves \eqref{E:eval-0} and } \|U(x)\| \to 0 \text{ as } x \to - \infty \big\}
\]
and
\begin{equation}\label{E:sand-c}
\C\ell_\mathrm{sand}  = \left\{ \begin{pmatrix}  0 & 0 \\ 1 & 0 \\ 0 & 1 \\ 0 & 0 \end{pmatrix}u \ : \ u \in \C^2 \right\}. 
\end{equation}
 
\begin{lemma}\label{lem:real-complex-sand} Let $\E^{\mathrm{u}}_-(x)$ and $\C \E^{\mathrm{u}}_-(x)$ be the real- and complex-valued subspaces of unstable solutions to \eqref{E:eval-0}, respectively. Then $\E^{\mathrm{u}}_-(x) \cap \ell_\mathrm{sand}  \neq \{0\}$ if and only if $\C \E^{\mathrm{u}}_-(x) \cap \C\ell_\mathrm{sand} \neq \{0\}$.  
\end{lemma}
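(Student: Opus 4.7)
The plan is to exploit the fact that the coefficient matrix $B(x)$ in \eqref{E:eval-0} is real, so that complex conjugation preserves the space of solutions, and likewise that the sandwich plane is defined by real equations. This will let us identify $\C\E^{\mathrm{u}}_-(x)$ with the complexification of $\E^{\mathrm{u}}_-(x)$, and similarly for $\C\ell_{\mathrm{sand}}$, at which point the equivalence becomes the standard fact that real intersections complexify correctly.

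First I would unpack the spaces in terms of real and imaginary parts. Writing any complex-valued function $U(x) = U_R(x) + i\,U_I(x)$ with $U_R, U_I$ real, the reality of $B(x)$ immediately gives that $U$ solves \eqref{E:eval-0} if and only if $U_R$ and $U_I$ each solve it. For the decay condition, the bound $\max\{\|U_R(x)\|, \|U_I(x)\|\} \leq \|U(x)\| \leq \|U_R(x)\|+\|U_I(x)\|$ shows that $\|U(x)\|\to 0$ as $x\to-\infty$ if and only if both $\|U_R(x)\|\to 0$ and $\|U_I(x)\|\to 0$. Combining these two observations yields
\[
\C\E^{\mathrm{u}}_-(x) = \E^{\mathrm{u}}_-(x) + i\,\E^{\mathrm{u}}_-(x),
\]
meaning each element of $\C\E^{\mathrm{u}}_-(x)$ decomposes uniquely as $V+iW$ with $V,W \in \E^{\mathrm{u}}_-(x)$ (viewed as vectors in $\R^4$ via evaluation at $x$), and conversely every such combination lies in $\C\E^{\mathrm{u}}_-(x)$. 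An identical argument for the reference plane, using the explicit real description in \eqref{E:def-sand} and \eqref{E:sand-c}, shows $\C\ell_{\mathrm{sand}} = \ell_{\mathrm{sand}} + i\,\ell_{\mathrm{sand}}$.

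With these two decompositions in hand, the equivalence follows in two short steps. For the forward direction, any nonzero $V \in \E^{\mathrm{u}}_-(x)\cap\ell_{\mathrm{sand}}$ sits inside $\C\E^{\mathrm{u}}_-(x)\cap\C\ell_{\mathrm{sand}}$ as a real vector, and is still nonzero there. For the backward direction, given nonzero $U = U_R + i\,U_I \in \C\E^{\mathrm{u}}_-(x)\cap\C\ell_{\mathrm{sand}}$, the decompositions above yield $U_R, U_I \in \E^{\mathrm{u}}_-(x)$ and $U_R, U_I \in \ell_{\mathrm{sand}}$; since $U\neq 0$, at least one of $U_R, U_I$ is a nonzero element of $\E^{\mathrm{u}}_-(x)\cap\ell_{\mathrm{sand}}$.

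There is no real obstacle here; the only thing to be slightly careful about is verifying that the complex subspace $\C\ell_{\mathrm{sand}}$ defined in \eqref{E:sand-c} really is the complexification of $\ell_{\mathrm{sand}}$ (which is immediate from comparing the two parameterizations), and that the decay characterization of $\C\E^{\mathrm{u}}_-(x)$ as a complexification genuinely respects the asymptotic condition at $-\infty$. Both are routine, so the entire argument should be compact.
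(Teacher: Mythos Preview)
Your proposal is correct and follows essentially the same approach as the paper: both arguments exploit that $B(x)$ is real-valued to identify $\C\E^{\mathrm{u}}_-(x)$ with the complexification of $\E^{\mathrm{u}}_-(x)$ via real and imaginary parts, and then read off the equivalence. The paper phrases this by choosing a complex basis $\{U_1,\bar U_1\}$ and passing to the real basis $\{\mathrm{Re}\,U_1,\mathrm{Im}\,U_1\}$, whereas you work directly with the complexification, but the content is identical.
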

\begin{proof} 
Since $B(x)$ is real-valued, this follows from writing $\C \E^{\mathrm{u}}_-(x) = \text{span} \big\{U_1(x), \bar U_1(x) \big\}$ for an appropriate solution $U_1$ of \eqref{E:eval-0}, noting that a basis for $\E^{\mathrm{u}}_-(x)$ is then given by 
\[
\text{span}\{V_1(x), V_2(x)\} = \text{span}\left\{\text{Re}\big(U_1(x) \big), \text{Im}\big( U_1(x) \big) \right\},
\]
and a direct calculation.
\end{proof}
Thus, we can move freely between complex-valued solutions and real-valued solutions when discussing intersections of the complex and real valued unstable subspaces with the reference planes $\C\ell_\mathrm{sand} $ and $\ell_\mathrm{sand}$ respectively.

We also note that the (complex) eigenvalues and (complex-valued) eigenvectors of the asymptotic matrix $B_\infty$ are given by 
\begin{equation}\label{E:B-infty-evals}
\mu_1^\rs = -\sqrt{\rho}e^{i\theta/2}, \qquad \mu_1^\ru = \sqrt{\rho}e^{i\theta/2},  \qquad 
\mu_2^\rs = -\sqrt{\rho}e^{-i\theta/2}, \qquad \mu_2^\ru = \sqrt{\rho}e^{-i\theta/2},
\end{equation}
and
\begin{equation}\label{E:B-infty-evecs}
\begin{split} \check{\mathcal V}^\ru & = \begin{pmatrix} | & | \\ \check V^\ru_1 & \check V^\ru_2 \\ | & | 
\end{pmatrix} =  \begin{pmatrix}\frac{1}{\rho} e^{-i\theta} & \frac{1}{\rho} e^{i\theta} \\ 1 & 1 \\ \sqrt \rho e^{i\theta/2} + \frac{2}{\sqrt \rho} e^{-i\theta/2} &  \sqrt \rho e^{-i\theta/2} + \frac{2}{\sqrt \rho} e^{i\theta/2} \\  \frac{1}{\sqrt \rho} e^{-i\theta/2} &  \frac{1}{\sqrt \rho} e^{i\theta/2}
\end{pmatrix}\\ 
\check{\mathcal V}^\rs & = \begin{pmatrix} | & | \\ \check V^\rs_1 & \check V^\rs_2 \\ | & | 
\end{pmatrix}= \begin{pmatrix}\frac{1}{\rho} e^{-i\theta} & \frac{1}{\rho} e^{i\theta} \\ 1 & 1 \\  -\left( \sqrt \rho e^{i\theta/2} + \frac{2}{\sqrt \rho} e^{-i\theta/2} \right) &  -\left( \sqrt \rho e^{-i\theta/2} + \frac{2}{\sqrt \rho} e^{i\theta/2} \right)  \\  -\frac{1}{\sqrt \rho} e^{-i\theta/2} &  -\frac{1}{\sqrt \rho} e^{i\theta/2}
\end{pmatrix},
\end{split}
\end{equation}
where
\begin{equation}\label{E:def-r-theta}
\tan\theta = - \sqrt{-f'(0)}, \qquad \theta \in \left(\frac{\pi}{2}, \pi \right), \qquad \rho = \sqrt{1 + f'(0)} = \sqrt{1 + \hat \mu}.
\end{equation}
The superscripts $\mathrm{s}$ and $\mathrm{u}$ indicate those that correspond to the stable and unstable subspaces, respectively, and we use the notation $\check V^{\ru, \rs}$ to distinguish from the eigenvectors $\hat V^{\ru, \rs}$ in \S\ref{S:background} that correspond to the system \eqref{E:exist}, which is written in different coordinates.  We note that for $ i=1,2$ then 
\begin{equation}\label{E:evec-bound}
\| \check V_i^{\ru/\rs} \|^2 = 1 + 4 \cos \theta + \frac{1}{\rho^2} +  \frac{5}{\rho} + \rho \leq 1 + \frac{1}{\rho^2} +  \frac{5}{\rho} + \rho,
\end{equation}
since $\theta \in ( \pi/2,\pi)$. 

In \S\ref{S:L-} we determine $L^-_{\mathrm{conj}}$, and in \S\ref{S:L+} we determine $L^+_{\mathrm{conj}}$.


\subsection{Determination of $L^-_{\mathrm{conj}}$}\label{S:L-}

Recall that we seek an $L^-_{\mathrm{conj}}$ such that for all $x < -L^-_{\mathrm{conj}}$, we have that $\mathbb E^\ru_-(x) \cap \ell_\mathrm{sand} = \{0\}$. We derive a numerical condition on $L^-_{\mathrm{conj}}$ such that if this condition is met, then the previous statement is true. We will construct this numerical condition by characterizing the basis solutions of $\C\E^{\mathrm{u}}_-(x)$ that limit to the unstable eigendirections of $B_\infty$. First, we rewrite equation \eqref{E:eval-0} as 
\begin{equation}\label{E:decomp-eval}
 Q' = \big[B_\infty + \tilde B(x)\big]Q
\end{equation}
where 
\begin{equation}\label{E:Btilde}
\tilde B(x) := B(x) - B_\infty = \begin{pmatrix} 0 & 0 & 0 & 0\\ 0 & 0 & 0 & 0 \\ f'(\varphi(x)) - f'(0)& 0 & 0 & 0 \\
0 & 0 & 0 & 0
\end{pmatrix}.
\end{equation}
We seek two solutions, $V^\ru_{1,2}: \R \to \C^4$ of \eqref{E:decomp-eval} that are asymptotic to the complex valued vectors $\check V^\ru_{1,2}$ in \eqref{E:B-infty-evecs} as $x \to -\infty$. Fix $j$ and suppose that $V_j^\ru(x)$ is a solution to \eqref{E:decomp-eval}. Recall that $\mu_{1,2}^\ru$ are defined in \eqref{E:B-infty-evals} and write 
\begin{equation}\label{eq: unstable solns L-}
V_j^\ru(x) = e^{\mu_j^\ru x}W_j^{-,\ru}(x), \qquad j = 1,2.
\end{equation}
The goal is to decompose $W^{-,\ru}_j(x)$ into $\check V^\ru_j$ plus a part that decays exponentially as $x \to - \infty$. To this end, note that $W_j^{-,\ru}(x)$ satisfies the ODE
\begin{equation}\label{eq: decomposed and scaled first order system} 
(W^{-,\ru})' = \big[ B_\infty - \mu_j^\ru I \big]W^{-,\ru} + \tilde B(x)W^{-,\ru}.
\end{equation}
We will use an exponential dichotomy associated with the constant matrix $B_\infty - \mu_j^\ru I$ to construct $W_j^{-,u}$, similar to that done in \cite{SandstedeKapitula98,BeckJaquette22}. The eigenvalues of $B_\infty - \mu_j^\ru I$ are those of $B_\infty$ but shifted by $\mu_j^\ru$. Thus, there are now two eigenvalues with zero real part, and two with negative real part. This will allow us to isolate the solution to \eqref{E:eval-0} characterized by growth rate $\mu_j^\ru$.

Let $\mathcal{P}^{\mathrm{cu}}$ be the projection onto the eigenspace of $B_\infty - \mu_j^\ru$ corresponding to eigenvalues with real equal to zero, and let $\mathcal{P}^\rs$ the projection onto the eigenspace corresponding to eigenvalues of $B_\infty - \mu_j^\ru$ with real part strictly less than zero. For any fixed $L^-_{\mathrm{conj}} > 0$, define the Banach space $X = L^\infty((-\infty, -L^-_{\mathrm{conj}}], \C^4)$ and consider the map $\mathcal{F}_-: X \to X$ defined by
\begin{equation}\label{E:A-}
 \begin{split}
 \mathcal F_-(W)(x) := \check V_j^\ru & + \int_{-\infty}^x \exp \big\{ (B_\infty - \mu_j^\ru I)(x-y) \} \mathcal{P}^{\rs} \tilde B(y) W(y) \ dy \\
& \qquad - \int_{x}^{-L^-_{\mathrm{conj}}} \exp \big\{ (B_\infty - \mu_j^\ru I) (x-y) \big\} \mathcal{P}^{\mathrm{cu}} \tilde B(y) W(y) \ dy.
\end{split}
\end{equation}
\begin{lemma} Any fixed point of $\mathcal F_-$ is a solution to Equation \eqref{eq: decomposed and scaled first order system}. 
\end{lemma}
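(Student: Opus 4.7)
The plan is a direct differentiation argument: differentiate $\mathcal F_-(W)$ in $x$ using the fundamental theorem of calculus and Leibniz's rule on the two variation-of-constants integrals, then substitute the fixed-point identity $W(x) = \mathcal F_-(W)(x)$ to recover \eqref{eq: decomposed and scaled first order system}.

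More concretely, write $\mathcal F_-(W)(x) = \check V_j^\ru + I_1(x) - I_2(x)$ with
\begin{align*}
I_1(x) &= \int_{-\infty}^x e^{(B_\infty - \mu_j^\ru I)(x-y)} \mathcal P^{\rs}\tilde B(y) W(y)\, dy,\\
I_2(x) &= \int_{x}^{-L^-_{\mathrm{conj}}} e^{(B_\infty - \mu_j^\ru I)(x-y)} \mathcal P^{\mathrm{cu}}\tilde B(y) W(y)\, dy.
\end{align*}
Differentiating $I_1$ gives $\mathcal P^{\rs}\tilde B(x)W(x) + (B_\infty-\mu_j^\ru I)I_1(x)$, and differentiating $I_2$ gives $-\mathcal P^{\mathrm{cu}}\tilde B(x)W(x) + (B_\infty-\mu_j^\ru I)I_2(x)$. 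The boundary evaluations use $e^{(B_\infty-\mu_j^\ru I)\cdot 0}=I$, and the limit of the integrand of $I_1$ as $y\to-\infty$ is zero by the decay \eqref{E:B-decay} combined with $W\in L^\infty$, which justifies differentiating under the integral.

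Combining these, and using $\mathcal P^{\rs}+\mathcal P^{\mathrm{cu}} = I$, one obtains
\begin{equation*}
\frac{d}{dx}\mathcal F_-(W)(x) = \tilde B(x) W(x) + (B_\infty - \mu_j^\ru I)\bigl[\mathcal F_-(W)(x) - \check V_j^\ru\bigr].
\end{equation*}
Since $\check V_j^\ru$ is an eigenvector of $B_\infty$ with eigenvalue $\mu_j^\ru$, we have $(B_\infty - \mu_j^\ru I)\check V_j^\ru = 0$, so the last bracket collapses to $(B_\infty-\mu_j^\ru I)\mathcal F_-(W)(x)$. Substituting the fixed-point relation $\mathcal F_-(W)(x)=W(x)$ on both sides yields
\begin{equation*}
W'(x) = (B_\infty - \mu_j^\ru I)W(x) + \tilde B(x) W(x),
\end{equation*}
which is precisely \eqref{eq: decomposed and scaled first order system}.

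I do not expect any real obstacle here; the only minor technical point is justifying differentiation under the integral on the half-line $(-\infty,x]$, but this is immediate because $\|\tilde B(y)\|$ decays exponentially at $-\infty$ by \eqref{E:B-decay} and $\mathcal P^{\rs} e^{(B_\infty-\mu_j^\ru I)(x-y)}$ is uniformly bounded for $y\le x$ (the relevant eigenvalues of $B_\infty-\mu_j^\ru I$ have strictly negative real part on the range of $\mathcal P^{\rs}$), so the integrand and its $x$-derivative are dominated by an integrable function.
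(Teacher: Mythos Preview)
Your proposal is correct and follows exactly the approach the paper indicates: differentiate $\mathcal F_-(W)(x)$, use $\mathcal P^{\rs}+\mathcal P^{\mathrm{cu}}=I$ and $(B_\infty-\mu_j^\ru I)\check V_j^\ru=0$, and substitute the fixed-point identity. The paper states this in a single sentence; you have simply written out the details.
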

\begin{proof}
This follows directly by differentiating $\mathcal F_-(W)(x)$ and using the facts that $\mathcal{P}^{\mathrm{cu}} + \mathcal{P}^{\mathrm{cs}} = I$ and $(B_\infty - \mu_j^\ru I)V_j^\ru = 0$.
\end{proof}

Let $\check{\mathcal{V}}$ be a matrix whose columns are comprised of the eigenvectors of $B_\infty$, which we note are also the eigenvectors of $B_\infty - \mu_j^\ru I$, and define 
\begin{equation}\label{E:KQ-}
K = \|\check{\mathcal{V}}^{-1}\|\|  \check{\mathcal{V}}\|.
\end{equation}
Recall from the properties of exponential dichotomies \cite{Coppel78} that there exists an $\hat \eta > 0$ such that
\begin{equation}\label{eq: exp dich decay}
\begin{split} \left\| e^{(B_\infty- \mu_j^\ru)(x-y)}\mathcal P^{\rs} \right\| & \leq K e^{-\hat \eta (x-y)}, \quad y \leq x \leq 0 \\
\left\|e^{(B_\infty - \mu_j^\ru I)(x-y)} \mathcal P^{\mathrm{cu}} \right\| & \leq K , \quad x \leq y \leq 0.
\end{split}
\end{equation}

\begin{proposition}\label{prop: unstable vec convergence} Fix $L^-_{\mathrm{conj}} > 0$ and define 
\[
\tau_{\mathcal F_-}(L^-_{\mathrm{conj}}) = \frac{K K_B}{C_B} e^{-C_B L^-_{\mathrm{conj}}},
\]
where $C_B$ and $K_B$ are as in \eqref{E:B-decay}. If $\tau_{\mathcal F_-}(L^-_{\mathrm{conj}}) < 1$, $\mathcal F_-$ is a contraction on $X = L^\infty((-\infty, -L^-_{\mathrm{conj}}], \C^4)$ and its unique fixed point $W^{-, \ru}_j(x)$ satisfies 
\[
\|W^{-, \ru}_j(\cdot) - \check V_j^{\ru}\|_X \leq \frac{\tau_{\mathcal F_-}(L^-_{\mathrm{conj}})}{1-\tau_{\mathcal F_-}(L^-_{\mathrm{conj}})}\| \check V_j^{\ru} \|.
\]
\end{proposition}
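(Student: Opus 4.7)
The plan is to apply the Banach fixed point theorem to $\mathcal{F}_-$ on the complete metric space $X = L^\infty((-\infty, -L^-_{\mathrm{conj}}], \C^4)$. First I would observe that the constant map $W \equiv 0$ yields $\mathcal{F}_-(0)(x) = \check V_j^\ru$, which lies in $X$; once $\mathcal{F}_-$ is shown to be a contraction, this together implies $\mathcal{F}_-$ maps $X$ to itself, so I will not treat self-mapping as a separate step.

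The main work is the contraction estimate. For $W_1, W_2 \in X$, the difference $\mathcal{F}_-(W_1)(x) - \mathcal{F}_-(W_2)(x)$ has two integral pieces, and on them I combine the decay rate for $\tilde B$ from \eqref{E:B-decay} (using $|y| = -y$ since $y \leq -L^-_{\mathrm{conj}}<0$) with the exponential dichotomy bounds \eqref{eq: exp dich decay}. Specifically, the $\mathcal{P}^\rs$-piece is bounded by
\[
\int_{-\infty}^x K e^{-\hat\eta(x-y)} K_B e^{C_B y} \, dy \; \|W_1 - W_2\|_X = \frac{KK_B}{\hat\eta + C_B} e^{C_B x}\|W_1 - W_2\|_X,
\]
and the $\mathcal{P}^{\mathrm{cu}}$-piece is bounded by
\[
\int_x^{-L^-_{\mathrm{conj}}} K K_B e^{C_B y} \, dy \; \|W_1 - W_2\|_X = \frac{KK_B}{C_B}\bigl(e^{-C_B L^-_{\mathrm{conj}}} - e^{C_B x}\bigr)\|W_1-W_2\|_X.
\]
Using the elementary inequality $\tfrac{1}{\hat\eta + C_B}\leq \tfrac{1}{C_B}$ together with $e^{C_B x} \leq e^{-C_B L^-_{\mathrm{conj}}}$ for $x \leq -L^-_{\mathrm{conj}}$, the sum is dominated by $\frac{KK_B}{C_B} e^{-C_B L^-_{\mathrm{conj}}} \|W_1 - W_2\|_X = \tau_{\mathcal{F}_-}(L^-_{\mathrm{conj}})\|W_1-W_2\|_X$. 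Taking the supremum over $x$ on the left yields the contraction estimate, so the hypothesis $\tau_{\mathcal{F}_-}(L^-_{\mathrm{conj}})<1$ furnishes a unique fixed point $W_j^{-,\ru}\in X$.

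For the final quantitative bound on $\|W_j^{-,\ru}-\check V_j^\ru\|_X$, I would write
\[
W_j^{-,\ru} - \check V_j^\ru = \mathcal{F}_-(W_j^{-,\ru}) - \mathcal{F}_-(0),
\]
apply the contraction estimate with $W_1 = W_j^{-,\ru}$, $W_2 = 0$, and insert the trivial bound $\|W_j^{-,\ru}\|_X \leq \|W_j^{-,\ru}-\check V_j^\ru\|_X + \|\check V_j^\ru\|$. Solving the resulting scalar inequality
\[
\|W_j^{-,\ru}-\check V_j^\ru\|_X \leq \tau_{\mathcal{F}_-}\bigl(\|W_j^{-,\ru}-\check V_j^\ru\|_X + \|\check V_j^\ru\|\bigr)
\]
for $\|W_j^{-,\ru}-\check V_j^\ru\|_X$ yields the stated bound. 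The only part that requires care, rather than routine manipulation, is combining the dichotomy estimates with the sharpest possible dependence on $L^-_{\mathrm{conj}}$ so that the resulting constant $\tau_{\mathcal{F}_-}$ depends only on $K_B, C_B, K$ (and not on $\hat\eta$), which is needed downstream for it to be verifiable by validated numerics; the inequality $\tfrac{1}{\hat\eta+C_B}\leq \tfrac{1}{C_B}$ is what makes this work, at the cost of a mild loss in sharpness.
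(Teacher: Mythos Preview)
Your argument is correct and is precisely the standard contraction-mapping computation that the paper defers to by citing \cite{SandstedeKapitula98} and \cite{BeckJaquette22}; the paper gives no details beyond this reference. One small remark: after you apply $\tfrac{1}{\hat\eta+C_B}\le\tfrac{1}{C_B}$ to the $\mathcal{P}^\rs$-piece, the resulting $\tfrac{KK_B}{C_B}e^{C_Bx}$ cancels exactly against the negative term from the $\mathcal{P}^{\mathrm{cu}}$-piece, so the additional inequality $e^{C_Bx}\le e^{-C_BL^-_{\mathrm{conj}}}$ is not actually needed to reach $\tau_{\mathcal{F}_-}$.
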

\begin{proof} The proof is similar to that of \cite{SandstedeKapitula98}[Lemma 2.2] and \cite{BeckJaquette22}[Proposition 2.3].
\end{proof}

Note that, although there are two eigenvalues of $B_\infty - \mu_j^\ru I$ with zero real part, the fact that $\mathcal{F}_-$ is defined using $\check V_j^\ru$ allows us to pick out the unique solution that is asymptotic to that direction. Doing so with $j = 1,2$ then allows us to construct the solutions 
\[
V^{-,\ru}_j (x) = e^{\mu_j^{\ru} x} W^{-,\ru}_j(x) = e^{\mu_j^\ru x}[\check V_j^\ru + (W^{-,\ru}_j(x) -\check V_j^\ru)], \qquad j = 1,2
\]
and see that $V^{-,\ru}_j(x)$ are asymptotic to $e^{\mu_j^\ru x}\check V_j^\ru$ as $ x \to -\infty$. Moreover, 
\begin{equation}\label{E:span-reduce}
\C \E^{\mathrm{u}}_-(x) = \text{span} \left\{ V_1^{-,\ru}(x), V_2^{-,\ru}(x) \right\} = \text{span} \left\{ W^{-,\ru}_1(x), W^{-,\ru}_2(x) \right\},
\end{equation}
and Proposition \ref{prop: unstable vec convergence} tells us that $\C \E^{\mathrm{u}}_-(-L^-_{\mathrm{conj}})$ will be close to the asymptotic unstable subspace 
\[
\C \E_{-\infty}^\ru = \text{span} \left\{\check V^\ru_1, \check V^\ru_2 \right\}.
\]
A direct calculation shows that the real valued subspace corresponding to the above does not intersect $\ell_\mathrm{sand}$, $\E^{\ru}_{-\infty} \cap \ell_\mathrm{sand} = \{0\}$, and so we expect $\E^{\mathrm{u}}_-(-L^-_{\mathrm{conj}}) \cap \ell_\mathrm{sand} = \{0\}$ for sufficiently large $L^-_{\mathrm{conj}}$. Via Lemma \ref{lem:real-complex-sand}, we see we can also expect $\C \E^{\mathrm{u}}_-(-L^-_{\mathrm{conj}}) \cap \C\ell_\mathrm{sand} = \{0\}$ for sufficiently large $L^-_{\mathrm{conj}}$, as well. Before proving Proposition \ref{prop: L-_alt}, which assigns a numerical condition to this, we first introduce some matrix notation. 

As with \eqref{E:B-infty-evecs}, we will denote matrices with a calligraphic letter that corresponds to the nomenclature of the columns. For example, denote
\begin{equation}\label{eq: definition of matrix notation}
\mathcal W^{-,\ru}(x)  = \begin{pmatrix} | & | \\ W_1^{-,\ru}(x) & W_2^{-,\ru}(x) \\ | & |
\end{pmatrix} \in \C^{2 \times 4}, \qquad \check{\mathcal V}^\ru  = \begin{pmatrix} | & | \\ 
\check V^\ru_1 &  \check V^\ru_2 \\ | & | \end{pmatrix} \in \C^{2 \times 4}.
\end{equation}
Additionally, we will be particularly interested in the first and fourth row of matrices such as those in \eqref{eq: definition of matrix notation}. This is because the first and fourth components of a frame matrix in $\Lambda(2)$ determine if a particular Lagrangian plane intersects $\ell_\mathrm{sand}$ (see \ref{E:sand-c}). For any matrix $\mathcal{M}$,  we denote the matrix consisting of the first and fourth rows of $\mathcal{M}$ as $\mathcal{M}_{1;4}$. For example, 
\[
\check{\mathcal V}^\ru_{1;4} = \begin{pmatrix} (\check V_1^\ru)_1 & (\check V_2^\ru)_1 \\ (\check V_1^\ru)_4 & (\check V_2^\ru)_4
\end{pmatrix} = \begin{pmatrix} \frac{1}{\rho}e^{-i\theta} & \frac{1}{\rho}e^{i\theta} \\  \frac{1}{\sqrt \rho} e^{-i\theta/2} &  \frac{1}{\sqrt \rho} e^{i\theta/2} \end{pmatrix} \in \C^{2 \times 2}.
\]
With this notation in place, we are now ready to prove the following proposition. Recall the definitions of $\rho$ and $\theta$ in \eqref{E:def-r-theta}. 
\begin{proposition}\label{prop: L-_alt} Suppose that 
\[
\|W_j^{-,\ru}(x) - \check V_j^\ru\|_X \leq \epsilon \|\check V_j^\ru\|.
\]
If $L^-_{\mathrm{conj}}$ is chosen so that 
\begin{equation}\label{eq: Determination of L- condition}
\epsilon < \frac{1}{8 \rho^{3/2}},
\end{equation}
then $\E^{\mathrm{u}}_-(x) \cap \ell_\mathrm{sand} = \{0\}$ for all $x \leq - L^-_{\mathrm{conj}}$.  
\end{proposition}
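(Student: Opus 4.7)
The plan is to reduce the non-intersection condition to the invertibility of a specific $2 \times 2$ matrix, then use the smallness of the perturbation $W_j^{-,\ru}(x) - \check V_j^\ru$ to establish that invertibility via a Neumann-series argument around the explicitly computable asymptotic matrix $\check{\mathcal V}^\ru_{1;4}$.

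First, by Lemma \ref{lem:real-complex-sand} it suffices to show that $\C\E^{\mathrm{u}}_-(x) \cap \C\ell_\mathrm{sand} = \{0\}$ for all $x \leq -L^-_{\mathrm{conj}}$. By \eqref{E:span-reduce}, any element of $\C\E^{\mathrm{u}}_-(x)$ can be written as $c_1 W_1^{-,\ru}(x) + c_2 W_2^{-,\ru}(x)$, and by the definition of $\C\ell_\mathrm{sand}$ in \eqref{E:sand-c}, such a vector lies in $\C\ell_\mathrm{sand}$ if and only if its first and fourth components both vanish. In matrix form this reads $\mathcal{W}^{-,\ru}_{1;4}(x)(c_1,c_2)^T = 0$, so the intersection is trivial precisely when the $2\times 2$ matrix $\mathcal{W}^{-,\ru}_{1;4}(x)$ is invertible.

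Next I would split $\mathcal{W}^{-,\ru}_{1;4}(x) = \check{\mathcal V}^\ru_{1;4} + E(x)$, where $E(x)$ has columns given by the first and fourth entries of $W_j^{-,\ru}(x) - \check V_j^\ru$. From the explicit formulas in \eqref{E:B-infty-evecs} a direct computation gives
\[
\det \check{\mathcal V}^\ru_{1;4} = \frac{-2i\sin(\theta/2)}{\rho^{3/2}},
\]
and the constraint $\theta \in (\pi/2,\pi)$ from \eqref{E:def-r-theta} yields $\sin(\theta/2) > 1/\sqrt{2}$; hence $\check{\mathcal V}^\ru_{1;4}$ is invertible with an explicit bound on $\|[\check{\mathcal V}^\ru_{1;4}]^{-1}\|$ proportional to $\rho^{3/2}$ (using the identity $\|M^{-1}\| = \|M\|/|\det M|$ valid for $2\times 2$ matrices). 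Combining the hypothesis $\|W_j^{-,\ru}(x) - \check V_j^\ru\|_X \leq \epsilon\|\check V_j^\ru\|$ with the componentwise bound and the eigenvector estimate \eqref{E:evec-bound} provides an explicit bound $\|E(x)\| \leq C_0(\rho)\,\epsilon$, with $C_0(\rho)$ an expression involving $\sqrt{\rho}$ for $\rho > 1$.

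The final step is the standard Neumann-series argument: writing $\mathcal{W}^{-,\ru}_{1;4}(x) = \bigl(I + E(x)[\check{\mathcal V}^\ru_{1;4}]^{-1}\bigr)\check{\mathcal V}^\ru_{1;4}$, invertibility follows whenever $\|E(x)\|\cdot\|[\check{\mathcal V}^\ru_{1;4}]^{-1}\| < 1$, which translates into a sufficient condition of the form $\epsilon < C\rho^{-3/2}$. The main obstacle is simply the bookkeeping needed to obtain the stated constant $C = 1/8$: one must use $\rho > 1$ (which holds since $\nmu > 0$) to simplify the terms in \eqref{E:evec-bound}, and the lower bound on $\sin(\theta/2)$ to control the determinant from below, all without losing powers of $\rho$ along the way. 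Once the linear-algebraic reduction to the invertibility of $\mathcal{W}^{-,\ru}_{1;4}(x)$ is in place, however, the remainder of the proof is short.
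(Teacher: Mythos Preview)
Your proposal is correct and follows essentially the same route as the paper: reduce to invertibility of $\mathcal W^{-,\ru}_{1;4}(x)$, view it as a small perturbation of $\check{\mathcal V}^\ru_{1;4}$, and quantify via the smallest singular value of the latter. The only cosmetic difference is that you extract $\sigma_{\min}(\check{\mathcal V}^\ru_{1;4})$ through the $2\times 2$ identity $\|M^{-1}\|=\|M\|/|\det M|$ and a Neumann factorization, whereas the paper computes the eigenvalues of $(\check{\mathcal V}^\ru_{1;4})(\check{\mathcal V}^\ru_{1;4})^*$ directly and phrases the perturbation step as a contradiction argument; the two are algebraically equivalent, and the bookkeeping you flag (using $\rho>1$ and $\theta\in(\pi/2,\pi)$ to reach $C=1/8$) is exactly what the paper carries out.
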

\begin{proof}
Recall from \eqref{E:span-reduce} that $\C \E^{\mathrm{u}}_-(x) = \text{span} \big\{ W_1^{-,u}(x),  W_2^{-,u}(x)\}$. Suppose by way of contradiction that $x_* \in (-\infty, -L^-_{\mathrm{conj}}]$ is a conjugate point, so there exist $c_1, c_2 \in \C$ such that
\begin{equation}\label{eq: conj pt condition L-}
\mathcal W^{-,u}(x_*) \begin{pmatrix} c_1 \\ c_2 \end{pmatrix} = \begin{pmatrix} | & | \\ W_1^{-,u}(x_*) & W_2^{-,u}(x_*) \\ | & |\end{pmatrix} \begin{pmatrix} c_1 \\ c_2\end{pmatrix} \in \text{span}\left\{ \begin{pmatrix} 0 & 0 \\ 1 & 0 \\ 0 & 1 \\ 0 & 0\end{pmatrix} \right\} = \C\ell_\mathrm{sand}.
\end{equation}
Define $\mathcal{\tilde U} = \mathcal W^{-,\ru} - \check{\mathcal V}^\ru$. By isolating the first and fourth components of \eqref{eq: conj pt condition L-} we find 
\[
\begin{pmatrix} 0 \\ 0 \end{pmatrix}  = \mathcal W^{-,u}_{1; 4}(x_*) \begin{pmatrix} c_1 \\ c_2 \end{pmatrix}  = \left(\check{\mathcal V}^\ru_{1; 4} +  \mathcal {\tilde U}_{1; 4}(x_*) \right)  \begin{pmatrix} c_1  \\ c_2\end{pmatrix}\\
\]
and so
\[
\check{\mathcal V}^\ru_{1; 4} \begin{pmatrix} c_1  \\ c_2 \end{pmatrix}  =  -\mathcal {\tilde U}_{1; 4}(x) \begin{pmatrix} c_1  \\ c_2\end{pmatrix}
\]
Using \eqref{E:B-infty-evecs}, \eqref{E:evec-bound}, and and Proposition \ref{prop: unstable vec convergence}, this implies
\begin{align}
\left \|\check{\mathcal V}^\ru_{1; 4} \begin{pmatrix} c_1  \\ c_2 \end{pmatrix} \right \| &= \left\| \mathcal {\tilde U}_{1; 4}(x) \begin{pmatrix} c_1  \\ c_2 \end{pmatrix}\right\| \leq \| \mathcal {\tilde U}_{1; 4}(x) \| \|c\| \leq \|\mathcal {\tilde U}(x)\| \|c\| \nonumber \\
&\leq \epsilon \left[ \|\check V_1^\ru\| + \|\check V_2^\ru\|\right]\|c\| \nonumber \\
&\leq  2 \epsilon \left( 1    + \rho + \frac{5}{\rho} + \frac{1}{\rho^2} \right)^{1/2} \|c\|. \label{E:L-est1} 
\end{align}
Also note that 
\[
\left \|\check{\mathcal V}^\ru_{1; 4} \begin{pmatrix} c_1  \\ c_2 \end{pmatrix} \right \|^2 \geq  \| c \| ^2  \min_{c \in \C^2}  \frac{\|\check{\mathcal V}^\ru_{1; 4} c\|^2 }{\| c \| ^2  }  =  \| c \| ^2  \min_{c \in \C^2}  \frac{   (\check{\mathcal V}^\ru_{1; 4}c )^*  (\check{\mathcal V}^\ru_{1; 4} c)     }{\| c \| ^2  } = \| c \| ^2   \min_{ \lambda \in \sigma( A) } \lambda, 
\]
where $\mbox{}^*$ is the conjugate transpose and $A=(\check{\mathcal V}^\ru_{1; 4} c)^*  (\check{\mathcal V}^\ru_{1; 4} c)$. 
It follows (in general) from the singular value decomposition of the matrix that $(\check{\mathcal V}^\ru_{1; 4}  )^*  (\check{\mathcal V}^\ru_{1; 4}  )$ and $(\check{\mathcal V}^\ru_{1; 4}  )  (\check{\mathcal V}^\ru_{1; 4}  )^*$ have the same eigenvalues. The latter is easier to compute, with 
\[
(\check{\mathcal V}^\ru_{1; 4}  )  (\check{\mathcal V}^\ru_{1; 4} c)^* = \begin{pmatrix} \frac{2}{\rho^{2}} & 	\frac{2 \cos(\theta/2)}{\rho^{3/2}} \\ \frac{2  \cos(\theta/2)}{\rho^{3/2}} & 	\frac{2}{\rho} \end{pmatrix}
\]
This has eigenvalues $[1+\rho  \pm  \sqrt{1+\rho ^2+2 \rho  \cos (\theta )}]/\rho ^2$. Thereby we obtain 
\begin{align*}
\left \|\mathcal V^\ru_{1; 4} \begin{pmatrix} c_1  \\ c_2 \end{pmatrix} \right \| &\geq \frac{\sqrt{1+\rho  -  \sqrt{1+\rho ^2+2 \rho  \cos (\theta )}}}{\rho }\| c \|  \\
&\geq  \frac{\sqrt{1+\rho  -  \sqrt{1+\rho ^2}}}{\rho } \| c \|.  
\end{align*}
Combining this with \eqref{E:L-est1}, we find that at any conjugate point we necessarily have
\[
\frac{\sqrt{1+\rho  -  \sqrt{1+\rho ^2}}}{\rho }\| c \|     \leq 2 \epsilon \left( 1    + \rho + \frac{5}{\rho} + \frac{1}{\rho^2} \right)^{1/2} \|c\| .
\]
As a result, a conjugate point cannot occur if
\[
\frac{1+\rho  -  \sqrt{1+\rho ^2}}{\rho^2 } >4 \epsilon^2 \left( 1    + \rho + \frac{5}{\rho} + \frac{1}{\rho^2} \right).
\]
Rearranging, we find that we require
\[
\epsilon^2	\leq \frac{1+\rho  -  \sqrt{1+\rho ^2 }}{4( \rho^2    +\rho^3+5 \rho + 1)}
\]
Using the fact that $\rho > 1$, we see that
\[
 \frac{1+\rho  -  \sqrt{1+\rho ^2 }}{4( \rho^2    +\rho^3+5 \rho + 1)} \geq  \frac{2 - \sqrt{2}}{32\rho^3} \geq \frac{1}{64 \rho^3}
\]
and so it suffices to require
\[
\epsilon^2 \leq \frac{1}{64 \rho^3}
\]
Taking a square root yields the result.
\end{proof}


\subsection{Determination of $L^+_{\mathrm{conj}}$}\label{S:L+}

When characterizing $L^-_{\mathrm{conj}}$, we were only interested in the solutions of the ODE that limited to the unstable eigendirections of $B_\infty$ in backward time. When characterizing $L^+_{\mathrm{conj}}$, things are complicated by the presence of $\varphi'$ as an eigenfunction. Because of this, we seek to characterize two solutions: $\varphi'$ and one that limits to an unknown unstable direction. After incorporating this complication, we characterize $L^+_{\mathrm{conj}}$ in a similar fashion as $L^-_{\mathrm{conj}}$. Namely, we seek a numerical condition on $L^+_{\mathrm{conj}}$ such that, if this condition holds, then $\E^{\mathrm{u}}_-(x) \cap \ell_\mathrm{sand} = \{0\}$ for all $x > L^+_{\mathrm{conj}}$. 

To determine $L^+_{\mathrm{conj}}$, we begin with a similar approach as above and consider \eqref{E:decomp-eval}, which we rewrite here,
\[
Q' = [B_\infty + \tilde B(x)]Q
\]
and note that $\tilde B(x) = B(x) - B_\infty$ is given in \eqref{E:Btilde}. We know that $\lambda = 0$ is an eigenvalue of $\mathcal{L}$, defined in \eqref{E:defL}, with eigenfunction $\varphi'$. Moreover, it will follow from our existence proof that $\varphi$ is constructed in the transverse intersection of the stable and unstable manifolds within the zero-energy level set (see, eg \cite{LessardJamesReinhardt14,VANDENBERG2020310}), and hence $\lambda = 0$ is a simple eigenvalue. As a result, 
\[
\C \E^{\mathrm{u}}_-(x) \cap \C \E_+^\rs(x) = \text{span} \left\{(\varphi', \varphi''', \varphi'''' + 2 \varphi'', \varphi'')\right\},
\]
where we have used the symplectic change of variables given in \eqref{E:defq}. Denote $U_{\varphi'}(x) = (\varphi', \varphi''', \varphi'''' + 2 \varphi'', \varphi'')$, where we note that this is real-valued, and write 
\[
\C \E^{\mathrm{u}}_-(x) = \text{span}\big\{U_{\varphi'}(x), U_1(x) \big\},
\]
where $U_1(x)$ is another (possibly complex-valued) basis solution that we will characterize shortly. 

\begin{lemma}\label{lem:lag}
The solutions $U_{\varphi'}(x) \in \R^4$ and $U_1(x) \in \C^4$ satisfy $\langle U_{\varphi'}(x), J U_1(x) \rangle_{\R^4} = 0$.
\end{lemma}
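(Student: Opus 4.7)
The plan is to use the symplectic structure of \eqref{E:eval-0}. Since $B(x) = J C(x)$ with $C(x)$ symmetric, the form $\omega(P,Q) := \langle P, JQ \rangle$ is conserved along any pair of solutions of \eqref{E:eval-0}: extending $\omega$ bilinearly to $\R^4 \times \C^4$ and differentiating,
\begin{equation*}
\tfrac{d}{dx}\langle P, JQ \rangle = \langle JCP, JQ \rangle + \langle P, J \cdot JCQ \rangle = P^\top C^\top J^\top J Q - P^\top C Q = 0,
\end{equation*}
using $C^\top = C$, $J^\top J = I$, and $J^2 = -I$. Applying this to the two solutions $P(x) = U_{\varphi'}(x)$ and $Q(x) = U_1(x)$ of \eqref{E:eval-0}, the quantity $x \mapsto \langle U_{\varphi'}(x), J U_1(x)\rangle$ is constant on $\R$.

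To evaluate the constant, I would take the limit $x \to -\infty$. By construction, $U_1(x) \in \C\E^{\mathrm{u}}_-(x)$, so $\|U_1(x)\| \to 0$ as $x \to -\infty$. Similarly, $U_{\varphi'}(x) = (\varphi', \varphi''', \varphi'''' + 2\varphi'', \varphi'')(x)$, and since $\varphi$ is a pulse asymptotic to the origin (a hyperbolic fixed point of \eqref{E:SH-vf}), each of the entries $\varphi^{(k)}(x)$ decays exponentially as $x \to -\infty$, so $\|U_{\varphi'}(x)\| \to 0$ as $x \to -\infty$ as well. Therefore
\begin{equation*}
\langle U_{\varphi'}(x), J U_1(x)\rangle = \lim_{x\to -\infty}\langle U_{\varphi'}(x), J U_1(x)\rangle = 0,
\end{equation*}
which gives the claim.

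There is no substantive obstacle: the argument is the standard Lagrangian/symplectic preservation under a Hamiltonian flow, combined with the exponential decay of both solutions at $-\infty$. The only point worth recording carefully is that $U_1(x)$ is complex-valued, so $\langle \cdot, \cdot \rangle_{\R^4}$ is understood as the bilinear (not Hermitian) extension to $\C^4$; the preservation computation above is purely algebraic and is unchanged under this extension.
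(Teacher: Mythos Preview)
Your proof is correct. It differs from the paper's argument in presentation but uses the same underlying mechanism. The paper does not redo the conservation computation; instead it invokes the already established fact that the real subspace $\E^{\mathrm{u}}_-(x)$ is Lagrangian, splits $U_1$ into its real and imaginary parts (which, together with their conjugate, form a basis of $\E^{\mathrm{u}}_-(x)$), and applies the Lagrangian property to $U_{\varphi'}$ paired with $\mathrm{Re}\,U_1$ and $\mathrm{Im}\,U_1$ separately. Your route is more self-contained: you prove directly that $\omega$ is conserved along any pair of solutions of \eqref{E:eval-0} and then evaluate the constant by sending $x\to-\infty$, using that both solutions lie in $\C\E^{\mathrm{u}}_-(x)$ and hence decay. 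This is exactly the standard argument by which the Lagrangian property of $\E^{\mathrm{u}}_-(x)$ is proved in the first place, so the two arguments are equivalent in content; yours simply unpacks the step rather than citing it. The one point you flagged yourself---that the pairing must be the bilinear (not sesquilinear) extension---is indeed the only subtlety, and it is handled correctly.
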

\begin{proof} If it happens that $U_1(x) \in \R^4$, then $U_1(x) \in \E^\ru_-(x)$, and so this follows from the fact that $\E^\ru_-(x)$ is Lagrangian. Similarly, if $U_1(x)$ is purely imaginary, then $\rmi U_1(x) \in \E^\ru(x)$ and the result follows similarly. So suppose $U_1(x)$ is neither real nor imaginary. This in fact means that $U_1(x)$ and $\bar U_1(x)$ form another basis for $\C\E^\ru_-(x)$, since they are independent solutions that decay to zero as $x \to -\infty$. Hence, $\mathrm{Re}U_1(x)$ and $\mathrm{Im}U_1(x)$ form a basis for $\E^\ru_-(x)$. This means that $0 = \langle U_{\varphi'}(x), J\mathrm{Re}U_1(x) \rangle =  \langle U_{\varphi'}(x), J\mathrm{Im}U_1(x) \rangle$, which implies that $\langle U_{\varphi'}(x), J U_1(x) \rangle_{\R^4} = 0$.
\end{proof}

The basis function $U_{\varphi'}$ will converge to something in the stable subspace of $B_\infty$ as $x \to +\infty$, while $U_1$ will converge to something in the unstable subspace (since we know it does not correspond to another eigenfunction). We do not need to know exactly what these basis functions converge to, but we need enough control of them to be able to show that $\C \E^{\mathrm{u}}_-(x)$ is bounded away from $\C\ell_\mathrm{sand}$ for $x \geq L^+_{\mathrm{conj}}$. 

Similar to what was done above for $L^-_{\mathrm{conj}}$, we can characterize the complex-valued solutions that are asymptotic to the eigendirections at $+\infty$ as 
\begin{align*}
V^{+,\rs}_j(x) & = e^{\mu_j^\rs x} W^{+, \rs}_j(x) = e^{\mu_j^\rs x}\big(\check V_j^{\rs} + \tilde W^{+, \rs}_j(x) \big) \\
V^{+,\ru}_j(x) & = e^{\mu_j^\ru x} W^{+, \ru}_j(x) = e^{\mu_j^\ru x}\big(\check V_j^{\ru} + \tilde W^{+, \ru}_j(x) \big), 
\end{align*}
for $j = 1, 2$, where $\check V_j^{\rs/\ru}$ are the eigenvectors of $B_\infty$ defined in \eqref{E:B-infty-evecs}, $W_j^{+, \rs/\ru}(x) := \check V_j^{\rs/\ru} + \tilde W_j^{+, \rs/\ru}(x)$ and $\tilde W^{+, \rs/\ru}(x)$ decay to $0$ as $x \to \infty$. Define $\mathcal F_+$ analogously to \eqref{E:A-}. In addition, recall the definition of $K$ in \eqref{E:KQ-}
We have a similar result to Proposition \ref{prop: unstable vec convergence} characterizing the convergence rate of $W^{+, \rs/\ru}(x)$ to the asymptotic stable eigenvectors, as well as an associated corollary. 
\begin{proposition}\label{prop: contraction W+} Fix $L^+_{\mathrm{conj}} > 0$ and let $K_B$ and $C_B$ be positive constants as in \eqref{E:B-decay}. Define 
\[
\tau_{\mathcal{F}_+}(L^+_{\mathrm{conj}}) = \frac{KK_B}{C_B}e^{-C_BL^+_{\mathrm{conj}}}.
\]
If $\tau_{\mathcal{F}_+}(L^+_{\mathrm{conj}}) < 1$, then $\mathcal F_+$ is a contraction on $X = L^\infty([L^+_{\mathrm{conj}}, \infty), \C^4)$. The functions $W_j^{+,\rs/\ru}(x)$ satisfy
\[
\|W_j^{+,\rs/\ru}(\cdot) - \check V_j^{\rs/\ru}\|_X \leq \frac{\tau_{\mathcal F_+}(L^+_{\mathrm{conj}})}{1-\tau_{\mathcal F_+}(L^+_{\mathrm{conj}})}\|\check V_j^{\rs/\ru}\|.
\]
\end{proposition}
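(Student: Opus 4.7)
The plan is to mirror the proof of Proposition \ref{prop: unstable vec convergence}, making the minor modifications needed to handle the fact that we are working on $[L^+_{\mathrm{conj}}, \infty)$ rather than $(-\infty, -L^-_{\mathrm{conj}}]$, and that for $\mu = \mu_j^{\rs}$ the shifted asymptotic matrix $B_\infty - \mu_j^\rs I$ has two eigenvalues with zero real part and two with \emph{positive} real part (rather than negative, as was the case for $\mu_j^\ru$ at $-\infty$). First I would define $\mathcal{F}_+$ acting on $X = L^\infty([L^+_{\mathrm{conj}}, \infty), \C^4)$. For the unstable case, denoting by $\mathcal{P}^\rs$ the projection onto the stable subspace of $B_\infty - \mu_j^\ru I$ and by $\mathcal{P}^{\mathrm{cu}}$ the complementary projection, set
\[
\mathcal{F}_+^{\ru}(W)(x) := \check V_j^{\ru} + \int_{L^+_{\mathrm{conj}}}^x e^{(B_\infty - \mu_j^\ru I)(x-y)} \mathcal{P}^\rs \tilde B(y) W(y)\, dy - \int_x^\infty e^{(B_\infty - \mu_j^\ru I)(x-y)} \mathcal{P}^{\mathrm{cu}} \tilde B(y) W(y)\, dy .
\]
For the stable case, the shifted matrix has center (real part zero) and unstable (positive real part) pieces; writing $\mathcal{P}^{\mathrm{cs}}, \mathcal{P}^\ru$ for the corresponding spectral projections, one analogously sets
\[
\mathcal{F}_+^{\rs}(W)(x) := \check V_j^{\rs} + \int_{L^+_{\mathrm{conj}}}^x e^{(B_\infty - \mu_j^\rs I)(x-y)} \mathcal{P}^{\mathrm{cs}} \tilde B(y) W(y)\, dy - \int_x^\infty e^{(B_\infty - \mu_j^\rs I)(x-y)} \mathcal{P}^{\ru} \tilde B(y) W(y)\, dy.
\]
A direct differentiation (using $\mathcal{P}^\rs + \mathcal{P}^{\mathrm{cu}} = I$, respectively $\mathcal{P}^{\mathrm{cs}} + \mathcal{P}^\ru = I$, together with $(B_\infty - \mu_j^{\rs/\ru} I)\check V_j^{\rs/\ru} = 0$) shows, exactly as in the analog of the lemma preceding Proposition \ref{prop: unstable vec convergence}, that a fixed point of $\mathcal{F}_+$ solves the ODE obtained by factoring $e^{\mu_j^{\rs/\ru} x}$ out of \eqref{E:decomp-eval}.

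Next I would establish the contraction estimate. The exponential dichotomy for the constant-coefficient shifted matrix yields, with $K$ as in \eqref{E:KQ-} and some $\hat\eta>0$,
\[
\|e^{(B_\infty - \mu_j^{\rs/\ru} I)(x-y)} \mathcal{P}^{\mathrm{decay}}\| \le K e^{-\hat\eta |x-y|}, \qquad \|e^{(B_\infty - \mu_j^{\rs/\ru} I)(x-y)} \mathcal{P}^{\mathrm{center}}\| \le K,
\]
with the appropriate sign convention in each case. Combining this with the decay $\|\tilde B(y)\| \le K_B e^{-C_B y}$ for $y \ge L^+_{\mathrm{conj}} > 0$, a direct computation bounds
\[
\|\mathcal{F}_+(W)(x) - \mathcal{F}_+(\tilde W)(x)\| \le K K_B \|W-\tilde W\|_X \left[\int_{L^+_{\mathrm{conj}}}^x e^{-C_B y}\, dy + \int_x^\infty e^{-C_B y}\, dy\right] = \frac{K K_B}{C_B} e^{-C_B L^+_{\mathrm{conj}}} \|W-\tilde W\|_X ,
\]
where in the first integral I discard the harmless factor $e^{-\hat\eta(x-y)} \le 1$ (in the stable-bundle case the discarded factor is $e^{-\hat\eta(y-x)}$ in the second integral instead). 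Hence $\mathcal{F}_+$ is a contraction with constant $\tau_{\mathcal{F}_+}(L^+_{\mathrm{conj}})$ whenever this quantity is $<1$, and the Banach fixed point theorem produces the unique $W_j^{+,\rs/\ru} \in X$.

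Finally, the error bound follows from the linearity of the integral part of $\mathcal{F}_+$. Writing $\mathcal{L}(W) := \mathcal{F}_+(W) - \check V_j^{\rs/\ru}$, we have $\|\mathcal{L}(W)\|_X \le \tau_{\mathcal{F}_+} \|W\|_X$, and since $W_j^{+,\rs/\ru} - \check V_j^{\rs/\ru} = \mathcal{L}(W_j^{+,\rs/\ru})$, the triangle inequality $\|W_j^{+,\rs/\ru}\|_X \le \|\check V_j^{\rs/\ru}\| + \|W_j^{+,\rs/\ru} - \check V_j^{\rs/\ru}\|_X$ gives, after rearrangement,
\[
\|W_j^{+,\rs/\ru}(\cdot) - \check V_j^{\rs/\ru}\|_X \le \frac{\tau_{\mathcal{F}_+}(L^+_{\mathrm{conj}})}{1 - \tau_{\mathcal{F}_+}(L^+_{\mathrm{conj}})} \|\check V_j^{\rs/\ru}\|.
\]
The only real subtlety compared with Proposition \ref{prop: unstable vec convergence} is keeping straight the four distinct spectral configurations of $B_\infty - \mu_j^{\rs/\ru} I$ (stable versus unstable shift, and $+\infty$ versus $-\infty$ integration domain), in order to place each integral over the correct interval and with the correct projection; the analytic content of each estimate is identical to the $L^-$ case.
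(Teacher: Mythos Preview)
Your proposal is correct and is precisely the argument the paper has in mind: the paper does not give a proof of this proposition at all, merely saying ``Define $\mathcal F_+$ analogously to \eqref{E:A-}'' and stating the result, just as Proposition~\ref{prop: unstable vec convergence} itself is proved only by reference to \cite{SandstedeKapitula98,BeckJaquette22}. Your sketch correctly tracks the four spectral configurations of $B_\infty - \mu_j^{\rs/\ru}I$, places the projections and integration limits accordingly, and recovers the same contraction constant $\tau_{\mathcal F_+}$ and error bound; there is nothing to add.
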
 
\begin{corollary}\label{corr: bounding tilde W}
Let 
\begin{equation}\label{eq: eps 0}
\epsilon_0 : = \frac{\tau_{\mathcal F_+}(L^+_{\mathrm{conj}})}{1-\tau_{\mathcal F_+}(L^+_{\mathrm{conj}})}\max_{j,s,u}\|\check V_j^{\rs/\ru}\|.
\end{equation}
Then 
\[
\left\| \tilde W_j^{+, \rs/\ru}(x) \right\| \leq \epsilon_0, \qquad \forall x \geq L^+_{\mathrm{conj}}, \quad \forall j,s,u.
\]
\end{corollary}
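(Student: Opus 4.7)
My plan is that this corollary follows essentially immediately by unwinding the definition of $\tilde W_j^{+,\rs/\ru}$ and invoking the bound from Proposition \ref{prop: contraction W+}. Recall that just before the statement, $W_j^{+,\rs/\ru}(x)$ was defined via $W_j^{+,\rs/\ru}(x) := \check V_j^{\rs/\ru} + \tilde W_j^{+,\rs/\ru}(x)$, so rearranging gives $\tilde W_j^{+,\rs/\ru}(x) = W_j^{+,\rs/\ru}(x) - \check V_j^{\rs/\ru}$ for each fixed choice of index $j$ and decoration $\rs$ or $\ru$.

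The first step is therefore to fix such a choice and apply Proposition \ref{prop: contraction W+}, which yields
\[
\| \tilde W_j^{+,\rs/\ru} \|_X \;=\; \| W_j^{+,\rs/\ru}(\cdot) - \check V_j^{\rs/\ru} \|_X \;\leq\; \frac{\tau_{\mathcal F_+}(L^+_{\mathrm{conj}})}{1-\tau_{\mathcal F_+}(L^+_{\mathrm{conj}})}\,\|\check V_j^{\rs/\ru}\|,
\]
where $X = L^\infty([L^+_{\mathrm{conj}}, \infty), \C^4)$. Since the $X$-norm is the essential supremum (and $\tilde W_j^{+,\rs/\ru}$ is continuous), this in particular implies the pointwise bound $\|\tilde W_j^{+,\rs/\ru}(x)\| \leq \frac{\tau_{\mathcal F_+}(L^+_{\mathrm{conj}})}{1-\tau_{\mathcal F_+}(L^+_{\mathrm{conj}})}\|\check V_j^{\rs/\ru}\|$ for every $x \geq L^+_{\mathrm{conj}}$.

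The last step is to take a uniform bound over the four choices of index and decoration by replacing $\|\check V_j^{\rs/\ru}\|$ with $\max_{j,s,u} \|\check V_j^{\rs/\ru}\|$, which by \eqref{eq: eps 0} is precisely $\epsilon_0$. There is no real obstacle here: the proof is a direct corollary of the preceding proposition, and the only content is converting the four separate $L^\infty$ estimates into a single uniform pointwise estimate via the elementary bound $\|\check V_j^{\rs/\ru}\| \leq \max_{j,s,u} \|\check V_j^{\rs/\ru}\|$. One should, however, verify that the hypothesis $\tau_{\mathcal F_+}(L^+_{\mathrm{conj}}) < 1$ of Proposition \ref{prop: contraction W+} is in force (it is implicit in the statement, since otherwise the ratio in \eqref{eq: eps 0} is ill-defined or negative), so in a formal write-up I would note this assumption explicitly before invoking the proposition.
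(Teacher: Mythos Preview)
Your proposal is correct and matches the paper's approach: the paper states the corollary without proof immediately after Proposition~\ref{prop: contraction W+}, treating it as a direct consequence, and your argument makes explicit exactly the intended deduction (apply the proposition for each $j$ and each choice of $\rs/\ru$, convert the $L^\infty$ bound to a pointwise bound, and then take the maximum over the four eigenvector norms).
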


The four solutions $V_{1,2}^{+,\rs/\ru}$ form a basis for $\C^4$, and so we can use them to express arbitrary solutions to equation \eqref{E:eval-0} on the interval $[0, + \infty)$. Thus, we can write the basis functions for $\C \E^{\mathrm{u}}_-(x)$ as 
\begin{equation}\label{eq: unstable basis sols as lin comb}
\begin{split} U_{\varphi'}(x) & = \sum_{j = 1}^2 \tilde\eta_j e^{\mu^\rs_j x}  \big(\check V_j^\rs + \tilde W_j^{+,\rs}(x)\big) \\
& =  \sum_{j = 1}^2 \eta_j  \big(\check V_j^\rs + \tilde W_j^{+,\rs}(x)\big)\\
U_1(x) & =  \sum_{j=1}^2 \bigg[\tilde \gamma_j e^{\mu^\ru_j x} \big(\check V_j^\ru + \tilde W_j^{+,\ru}(x) \big) + \tilde\beta_j e^{\mu^\rs_j x} \big(\check V_j^\rs + \tilde W_j^{+,\rs}(x)\big)  \bigg] \\
& = \sum_{j=1}^2 \bigg[\gamma_j \big(\check V_j^\ru + \tilde W_j^{+,\ru}(x) \big) + \beta_j \big(\check V_j^\rs + \tilde W_j^{+,\rs}(x)\big)  \bigg],
\end{split}
\end{equation}
where $\eta_j: = \tilde \eta_j e^{\mu^\rs_j x}, \beta_j: = \tilde \beta_j e^{-\mu^\rs_j x}, \gamma_j: = \tilde \gamma_j e^{\mu^\ru_jx}$ and $\tilde \gamma_j, \tilde \eta_j$ and $\tilde\beta_j$ are constants that are uniquely determined. 

Note that $U_{\varphi'}(x)$ is real valued but $\check V_j^\rs$ and $\tilde W_j^{+,\rs}(x)$ are complex-valued. The coefficients $\tilde \eta_j \in \C$ can be chosen appropriately so that the linear combination of the complex vectors on the right is real-valued. Additionally, since $U_\varphi'(x) \in \C\E^{\mathrm{s}}_+(x)$, we do not need to include the solutions $V_j^{+,\ru}$ in its basis expansion.

To work with a more compact notation, we will write the above in matrix notation. Denote 
\[
\check{\mathcal V}^{\rs/\ru} = \begin{pmatrix} | & | \\ V_1^{\rs/\ru} & V_2^{\rs/\ru} \\ | & | 
\end{pmatrix}, \qquad \tilde{\mathcal W}^{+,\rs/\ru}(x) = \begin{pmatrix} | & | \\ \tilde W_1^{+,\rs/\ru}(x) & \tilde W_2^{+,\rs/\ru}(x) \\ | & | 
\end{pmatrix}.
\]
Let us denote $\Omega^\rs = \mathrm{diag}(\mu_1^\rs, \mu_2^\rs)$ and $\Omega^\ru = \mathrm{diag}(\mu_1^\ru, \mu_2^\ru) = -\Omega^\rs$. Then $e^{\Omega^\ru x} = \text{diag} \big(e^{\mu_1^\ru x},e^{\mu_2^\ru x}\big) = e^{-\Omega^\rs x}$. In addition, for each $x \geq L^+_{\mathrm{conj}}$  denote 
\begin{align}
\eta = (\eta_1, \eta_2), \quad \gamma &= (\gamma_1, \gamma_2), \quad \beta = (\beta_1, \beta_2), \nonumber \\
\tilde \eta = (\tilde \eta_1, \tilde \eta_2), \quad \tilde \gamma &= (\tilde\gamma_1, \tilde\gamma_2), \quad \tilde \beta = (\tilde\beta_1, \tilde\beta_2). \label{E:agb}
\end{align}
Then we can write
\begin{equation}\label{eq: unstable basis sols as lin comb MAT NOTATION} 
\begin{split} U_{\varphi'}(x) & = \big(\check{\mathcal V}^\rs + \tilde{\mathcal W}^{+,s}(x) \big) \eta = \big(\check{\mathcal V}^\rs + \tilde{\mathcal W}^{+,s}(x) \big) e^{\Omega^\rs x} \tilde\eta \\
U_1(x) & =  \big(\check{\mathcal V}^\rs + \tilde{\mathcal W}^{+,s}(x) \big) \beta + \big(\check{\mathcal V}^\ru + \tilde{\mathcal W}^{+,u}(x) \big) \gamma  \\
& = \big(\check{\mathcal V}^\rs + \tilde{\mathcal W}^{+,s}(x) \big) e^{\Omega^\rs x} \tilde \beta + \big(\check{\mathcal V}^\ru + \tilde{\mathcal W}^{+,u}(x) \big) e^{\Omega^\ru x} \tilde\gamma.
\end{split}
\end{equation}
\begin{remark} Since $U_1(x) \notin \C \E^{\mathrm{s}}_+(x)$, it must be the case that $\tilde \gamma \neq 0$, and so $\gamma(x) \neq 0$ for all $x$.
\end{remark}

Using this characterization of the basis vectors, we seek to show there is an $L^+_{\mathrm{conj}}$ such that $\C E_u^-(x) \cap \C\ell_\mathrm{sand} = \{0\}$ for all $x \geq L^+_{\mathrm{conj}}$. We will do this by constructing two linear subspaces; one determined by the Lagrangian condition and the other determined by the geometric implications of $\C \E^{\mathrm{u}}_-(x) \cap \C\ell_\mathrm{sand} \neq \{0\}$. Then, we will use these two subspaces to construct an analytic inequality that holds if $\C \E^{\mathrm{u}}_-(x) \cap \C\ell_\mathrm{sand} \neq \{0\}$. 

First, notice the Lagrangian condition from Lemma \ref{lem:lag} says 
\begin{align*}0 & = \omega(U_{\varphi'}(x), U_1(x)) \\
&= \omega\left(\big(\check{\mathcal V}^\rs + \tilde{\mathcal W}^{+,s}(x) \big) \eta, \big(\check{\mathcal V}^\rs + \tilde{\mathcal W}^{+,s}(x) \big) \beta + \big(\check{\mathcal V}^\ru + \tilde{\mathcal W}^{+,u}(x) \big) \gamma \right) \\
& = \omega\left(\big(\check{\mathcal V}^\rs + \tilde{\mathcal W}^{+,s}(x) \big) \eta, \big(\check{\mathcal V}^\rs + \tilde{\mathcal W}^{+,s}(x) \big) \beta \right)   + \omega\left(\big(\check{\mathcal V}^\rs + \tilde{\mathcal W}^{+,s}(x) \big) \eta, \big( \check{\mathcal V}^\ru + \tilde{\mathcal W}^{+,u}(x) \big) \gamma \right) \\
& = \omega\left(\big(\check{\mathcal V}^\rs + \tilde{\mathcal W}^{+,s}(x) \big) \eta, \big(\check{\mathcal V}^\ru + \tilde{\mathcal W}^{+,u}(x) \big) \gamma \right),
\end{align*}
where the last line follows from the fact that $\big(\check{\mathcal V}^\rs + \tilde{\mathcal W}^{+,s}(x) \big) \eta, \big(\check{\mathcal V}^\rs + \tilde{\mathcal W}^{+,s}(x) \big) \beta  \in \C \E^{\mathrm{s}}_+(x)$, and so using an argument as in the proof of Lemma \ref{lem:lag}, the symplectic form is zero when applied to these two vectors. 

For any matrix $\mathcal M$, recall that we denote the submatrix consisting of rows $i$ and $j$ of $\mathcal M$ as $\mathcal M_{i; j}$. Using this notation and \eqref{E:B-infty-evecs}, we have
\begin{equation}\label{eq: L+ block notation for basis sols}
\begin{split}\check{\mathcal V}_{1;2}^\ru & = \check{\mathcal V}_{1;2}^\rs = \begin{pmatrix} \frac{1}{\rho}e^{-i\theta} & \frac{1}{\rho}e^{i\theta} \\ 1 & 1
\end{pmatrix} \\
\check{ \mathcal V}_{3;4}^\ru & = -\check{\mathcal V}_{3;4}^\rs = \begin{pmatrix} \sqrt \rho e^{i\theta/2} + \frac{2}{\sqrt \rho} e^{-i\theta/2} & \sqrt \rho e^{-i\theta/2} + \frac{2}{\sqrt \rho} e^{i\theta/2} \\
 \frac{1}{\sqrt \rho}e^{-i\theta/2} & \frac{1}{\sqrt \rho} e^{i\theta/2}
\end{pmatrix}.
\end{split}
\end{equation}

With this block notation, we can write the Lagrangian condition derived above as 
\begin{align*} 0 & = \omega\left(\big(\check{\mathcal V}^\rs + \tilde{\mathcal W}^{+,s}(x) \big) \eta,\big( \check{\mathcal V}^\ru + \tilde{\mathcal W}^{+,u}(x) \big) \gamma \right) \\
& = \left\langle \begin{pmatrix} \check{\mathcal V}^\rs_{1;2} + \tilde{\mathcal W}_{1;2}^{+,s}(x)\\ \check{\mathcal V}^\rs_{3;4} + \tilde{\mathcal W}_{3;4}^{+,s}(x)
\end{pmatrix}\eta, \begin{pmatrix} 0 & -I_2 \\ I_2 & 0
\end{pmatrix} \begin{pmatrix} \check{\mathcal V}_{1;2}^\ru + \tilde{\mathcal W}_{1;2}^{+,u}(x) \\ \check{\mathcal V}_{3;4}^\ru + \tilde{\mathcal W}_{3;4}^{+,u}(x)
\end{pmatrix} \gamma \right\rangle  \\
& = -\left\langle \big(\check{\mathcal V}^\rs_{1;2} + \tilde{\mathcal W}_{1;2}^{+,s}(x) \big) \eta, \big(\check{\mathcal V}_{3;4}^\ru + \tilde{\mathcal W}_{3;4}^{+,u}(x) \big) \gamma \right\rangle + \left\langle \big(\check{\mathcal V}^\rs_{3;4} + \tilde{\mathcal W}_{3;4}^{+,s}(x)\big)\eta , \big(\check{\mathcal V}_{1;2}^\ru + \tilde{\mathcal W}_{1;2}^{+,u}(x) \big) \gamma \right\rangle \\
& = -\left\langle\eta,  \big(\check{\mathcal V}^\rs_{1;2} + \tilde{\mathcal W}_{1;2}^{+,s}(x) \big)^* \big(\check{\mathcal V}_{3;4}^\ru + \tilde{\mathcal W}_{3;4}^{+,u}(x) \big) \gamma \right\rangle  + \left\langle\eta,  \big(\check{\mathcal V}^\rs_{3;4} + \tilde{\mathcal W}_{3;4}^{+,s}(x)\big)^* \big(\check{\mathcal V}_{1;2}^\ru + \tilde{\mathcal W}_{1;2}^{+,u}(x) \big) \gamma \right\rangle \\
& = \left\langle \eta, \left[\big(\check{\mathcal V}^\rs_{3;4} + \tilde{\mathcal W}_{3;4}^{+,s}(x)\big)^* \big(\check{\mathcal V}_{1;2}^\ru + \tilde{\mathcal W}_{1;2}^{+,u}(x) \big)  - \big(\check{\mathcal V}^\rs_{1;2} + \tilde{\mathcal W}_{1;2}^{+,s}(x) \big)^* \big(\check{\mathcal V}_{3;4}^\ru + \tilde{\mathcal W}_{3;4}^{+,u}(x) \big) \right] \gamma \right\rangle.
\end{align*}
Expanding and using the fact that $\check{\mathcal V}_{1;2}^\rs = \check{\mathcal V}_{1;2}^\ru$ and $\check{\mathcal V}_{3;4}^\rs = - \check{\mathcal V}_{3;4}^\ru$, we can rewrite the previous expression to obtain
\changes{
\begin{align}
0 & = \bigg\langle \eta, \left[ \left(\check{\mathcal V}^\rs_{3;4}\right)^*\check{\mathcal V}^\ru_{1;2} + \left(\check{\mathcal V}^\ru_{1;2}\right)^*\check{\mathcal V}^\rs_{3;4} \right]\gamma \bigg\rangle \nonumber  \\
& \qquad + \bigg\langle \eta,  \left[ \left(\check{\mathcal V}_{3;4}^\rs \right)^* \tilde{\mathcal W}^{+,u}_{1;2}(x) - \left(\check{\mathcal V}_{1;2}^\rs \right)^* \tilde{\mathcal W}^{+,u}_{3;4}(x) \right. \label{E:lag-cond}\\
& \qquad \qquad \left. + \left(\tilde{\mathcal W}_{3;4}^{+,s} \right)^*\big(\check{\mathcal V}_{1;2}^\ru + \tilde{\mathcal W}_{1;2}^{+,u}(x) \big) - \left(\tilde{\mathcal W}_{1;2}^{+,s} \right)^*\big(\check{\mathcal V}_{3;4}^\ru + \tilde{\mathcal W}_{1;2}^{+,s}(x) \big)  \right]\gamma \bigg\rangle. \nonumber
\end{align}
}
Let us write
\changes{
\begin{align}
M_1 & = \left(\check{\mathcal V}^\rs_{3;4}\right)^*\check{\mathcal V}^\ru_{1;2} + \left(\check{\mathcal V}^\ru_{1;2}\right)^*\check{\mathcal V}^\rs_{3;4} \label{E:def-A1-Q} \\ 
M_3 & = \left(\check{\mathcal V}_{3;4}^\rs \right)^* \tilde{\mathcal W}^{+,u}_{1;2}(x) - \left(\check{\mathcal V}_{1;2}^\rs \right)^* \tilde{\mathcal W}^{+,u}_{3;4}(x) +  \left(\tilde{\mathcal W}_{3;4}^{+,s} \right)^*\big(\check{\mathcal V}_{1;2}^\ru + \tilde{\mathcal W}_{1;2}^{+,u}(x) \big)  - \left(\tilde{\mathcal W}_{1;2}^{+,s} \right)^*\big(\check{\mathcal V}_{3;4}^\ru + \tilde{\mathcal W}_{1;2}^{+,s}(x) \big),
\nonumber
\end{align}
}
where we note that $M_1$ is constant whereas $M_3$ depends on $x$. Thus, \eqref{E:lag-cond} implies
\begin{equation}\label{E:alpha-perp-1}
\eta(x) \perp \Gamma_1, \qquad \Gamma_1 = \{ (M_1 + M_3(x)) \gamma(x)\}.
\end{equation}

Suppose now there is an intersection of $\C \E^{\mathrm{u}}_-(x)$ with the subspace $\C\ell_\mathrm{sand}$. Since this intersection requires that the first and fourth components of the vector vanish, we introduce the following projection operator onto these coordinates: 
\[
 \pi: \C^{4} \to \C^4, \quad \pi(x_1, x_2, x_3, x_4) = (x_1, 0, 0, x_4).
\]
Using this notation, we see that, if $\C \E^{\mathrm{u}}_-(x) \cap \C\ell_\mathrm{sand} \neq \{0\}$, then 
\begin{align}
0 & = \det \big[\pi U_{\varphi'}(x), \pi U_1(x)\big] \nonumber \\
& = \det \left[ \big(\check{\mathcal V}_{1;4}^\rs + \tilde{\mathcal W}_{1;4}^{+,s}(x)\big)\eta, \big(\check{\mathcal V}_{1;4}^\ru + \tilde{\mathcal W}_{1;4}^{+,u}(x)\big)\gamma + \big(\check{\mathcal V}_{1;4}^\rs + \tilde{\mathcal W}_{1;4}^{+,s}(x) \big)\beta \right] \label{E:det-cond}\\
& = \det\big[\check{\mathcal V}_{1;4}^\rs + \tilde{\mathcal W}_{1;4}^{+,s}(x) \big] \det \left[\eta, \big(\check{\mathcal V}_{1;4}^\rs + \tilde{\mathcal W}_{1;4}^{+,s}(x) \big)^{-1}\big(\check{\mathcal V}_{1;4}^\ru + \tilde{\mathcal W}_{1;4}^{+,u}(x)\big)\gamma + \beta  \right]. \nonumber
\end{align}
Using the definition of $\check{\mathcal V}^{\rs/\ru}$ and the fact that $\theta \in \left(\frac{\pi}{2}, \pi \right)$, one can see that $\check{\mathcal V}_{1;4}^\rs$ and $\check{\mathcal V}_{1;4}^\ru$ are invertible. In particular, we can write $\check{\mathcal V}_{1;4}^\rs + \tilde{\mathcal W}_{1;4}^{+,\rs}(x) = \check{\mathcal V}_{1;4}^\rs[I + (\check{\mathcal V}_{1;4}^\rs)^{-1}\tilde{\mathcal W}_{1;4}^{+,s}(x)]$. By Corollary \ref{corr: bounding tilde W}, we can assume that $x$ is sufficiently large so that $ \| (\check{\mathcal V}^\rs_{1;4})^{-1} \tilde{\mathcal W}_{1;4}^{+,\rs}(x) \|< 1$. (Note this assumption is precisely captured in equation \eqref{E:epsilon0-cond} of Proposition \ref{prop: L+ intersection condition 1} below.) Hence, in order for \eqref{E:det-cond} to hold, it must be the case that
\begin{equation}\label{E:det-cond-mod}
0 = \det \left[\eta, \big(\check{\mathcal V}_{1;4}^\rs + \tilde{\mathcal W}_{1;4}^{+,s}(x) \big)^{-1}\big(\check{\mathcal V}_{1;4}^\ru + \tilde{\mathcal W}_{1;4}^{+,u}(x)\big)\gamma + \beta  \right].
\end{equation}
Next, we define 
\begin{equation}\label{E:def-A2}
M_2 = \left( \check{\mathcal V}_{1;4}^\rs\right)^{-1} \check{\mathcal V}_{1;4}^\ru
\end{equation}
and write
\begin{align*}
\big(\check{\mathcal V}_{1;4}^\rs + & \tilde{\mathcal W}_{1;4}^{+,s}(x) \big)^{-1}  \left(\check{\mathcal V}_{1;4}^\ru + \tilde{\mathcal W}_{1;4}^{+,u}(x)\right) \\
 & = \left(I_2 + \left( \check{\mathcal V}_{1;4}^\rs\right)^{-1} \tilde{\mathcal W}_{1;4}^{+,s}(x)  \right)^{-1} M_2 \left(I_2 + \left(\check{\mathcal V}_{1;4}^\ru\right)^{-1}\tilde{\mathcal W}^{+,u}_{1;4}(x) \right) \\
& = \left( I_2 + \sum_{k=1}^\infty (-1)^k \left(\left( \check{\mathcal V}_{1;4}^\rs\right)^{-1} \tilde{\mathcal W}_{1;4}^{+,s}(x) \right)^k  \right)M_2 \left(I_2 + \left(\mathcal V_{1;4}^\ru\right)^{-1}\tilde{\mathcal W}^{+,u}_{1;4}(x) \right) \\ 
& = M_2 +  M_2 \left(\check{\mathcal V}_{1;4}^\ru\right)^{-1}\tilde{\mathcal W}^{+,u}_{1;4}(x) \\
& \qquad +\left(\sum_{k=1}^\infty (-1)^k \left(\left( \check{\mathcal V}_{1;4}^\rs\right)^{-1} \tilde{\mathcal W}_{1;4}^{+,s}(x) \right)^k  \right)M_2 \left(I_2 + \left(\check{\mathcal V}_{1;4}^\ru\right)^{-1}\tilde{\mathcal W}^{+,u}_{1;4}(x) \right).
\end{align*}
Define 
\begin{equation}\label{eq: frame Gamma2}
M_4 = M_2 \left(\check{\mathcal V}_{1;4}^\ru\right)^{-1}\tilde{\mathcal W}^{+,u}_{1;4}(x)  +\left(\sum_{k=1}^\infty (-1)^k \left(\left( \check{\mathcal V}_{1;4}^\rs\right)^{-1} \tilde{\mathcal W}_{1;4}^{+,s}(x) \right)^k  \right)M_2 \left(I_2 + \left(\check{\mathcal V}_{1;4}^\ru\right)^{-1}\tilde{\mathcal W}^{+,u}_{1;4}(x) \right)
\end{equation}
and, due to \eqref{E:det-cond-mod}, we see that if $\C \E^{\mathrm{u}}_-(x) \cap \C\ell_\mathrm{sand} \neq \{0\}$ it must be the case that
\begin{equation}\label{E:alpha-perp-2}
\eta(x) \in \mathrm{span} \Gamma_2, \qquad \Gamma_2  = \left\{\beta(x) + (M_2 + M_4)\gamma(x) \right\}.
\end{equation}
To summarize, at a conjugate point we must have both \eqref{E:alpha-perp-1} and \eqref{E:alpha-perp-2}. This can only happen if $\Gamma_1 \perp \Gamma_2$. In other words, we have the following lemma. 

\begin{lemma} \label{lem:gammas-perp}Let the vectors $\Gamma_1$ and $\Gamma_2$ in $\C^2$ be defined as in \eqref{E:alpha-perp-1} and \eqref{E:alpha-perp-2}. Then $\C \E^{\mathrm{u}}_-(x) \cap \C\ell_\mathrm{sand} \neq \{0\}$ if and only if $\Gamma_1 \perp \Gamma_2$. 
\end{lemma}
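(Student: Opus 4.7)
The plan is to tie together the two necessary conditions derived above: the Lagrangian identity \eqref{E:alpha-perp-1} and the determinantal conjugacy identity \eqref{E:det-cond-mod}. Both were obtained as equivalences in the text leading up to the statement, so the proof reduces to a short linear-algebra argument in $\C^2$ applied to the vectors $\eta(x)$, $\Gamma_1$, $\Gamma_2$.

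For the forward direction, I would begin by recalling that the chain of equalities from \eqref{E:det-cond} through \eqref{E:det-cond-mod} shows $\C \E^{\mathrm{u}}_-(x) \cap \C\ell_\mathrm{sand} \neq \{0\}$ is equivalent to $\det[\eta(x), \Gamma_2] = 0$ in $\C^2$, i.e., $\eta(x)$ and $\Gamma_2$ are linearly dependent. Since $U_{\varphi'}$ is a nontrivial solution, its coefficient vector $\eta(x)$ is nonzero, so either $\Gamma_2 = 0$ (in which case $\Gamma_1 \perp \Gamma_2$ is immediate) or there exists a scalar $c \neq 0$ with $\Gamma_2 = c\,\eta(x)$. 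In the latter case the Lagrangian identity \eqref{E:alpha-perp-1} yields $\langle \Gamma_1, \Gamma_2\rangle = c\,\langle \Gamma_1, \eta(x)\rangle = 0$, giving $\Gamma_1 \perp \Gamma_2$.

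For the reverse direction, I would assume $\Gamma_1 \perp \Gamma_2$. The Lagrangian identity \eqref{E:alpha-perp-1} unconditionally gives $\eta(x) \perp \Gamma_1$. When $\Gamma_1 \neq 0$ its orthogonal complement in $\C^2$ is one-dimensional and contains both $\eta(x)$ and $\Gamma_2$, forcing these two vectors to be collinear and hence $\det[\eta(x), \Gamma_2] = 0$. Reversing the computation in \eqref{E:det-cond}--\eqref{E:det-cond-mod} (which uses only the invertibility of $\check{\mathcal V}_{1;4}^\rs + \tilde{\mathcal W}_{1;4}^{+,\rs}(x)$, itself guaranteed by Corollary \ref{corr: bounding tilde W} once $L^+_\mathrm{conj}$ is large enough) produces $\det[\pi U_{\varphi'}(x), \pi U_1(x)] = 0$, from which a nontrivial linear combination $c_1 U_{\varphi'}(x) + c_2 U_1(x)$ lying in $\C \E^{\mathrm{u}}_-(x) \cap \C\ell_\mathrm{sand}$ can be extracted.

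The main obstacle is the edge case $\Gamma_1 = 0$ in the reverse direction, which the one-dimensional complement argument does not cover. I would dispose of it a priori by showing $\Gamma_1(x) \neq 0$ for $x \geq L^+_\mathrm{conj}$ whenever $L^+_\mathrm{conj}$ is chosen large enough: a direct computation of $M_1$ from \eqref{E:def-A1-Q} using the explicit blocks in \eqref{eq: L+ block notation for basis sols} together with $\theta \in (\pi/2, \pi)$ shows $M_1$ is invertible; Corollary \ref{corr: bounding tilde W} makes $\|M_3(x)\|$ arbitrarily small on $[L^+_\mathrm{conj}, \infty)$; and $\gamma(x) \neq 0$ because $U_1(x) \notin \C \E^{\mathrm{s}}_+(x)$. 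Hence $\Gamma_1(x) = (M_1 + M_3(x))\gamma(x) \neq 0$, and the argument closes.
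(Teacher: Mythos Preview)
Your proposal is correct and follows the same approach the paper uses: combine the unconditional Lagrangian identity \eqref{E:alpha-perp-1} with the determinantal conjugacy identity \eqref{E:det-cond}--\eqref{E:det-cond-mod} and do linear algebra in $\C^2$. The paper essentially leaves the proof to the discussion preceding the lemma, which only spells out the forward implication (``at a conjugate point we must have both \eqref{E:alpha-perp-1} and \eqref{E:alpha-perp-2}, this can only happen if $\Gamma_1\perp\Gamma_2$''); you supply the missing details for the converse and the edge case $\Gamma_1=0$, which the paper does not treat. Your disposal of that edge case via invertibility of $M_1$, smallness of $\|M_3\|$ from Corollary~\ref{corr: bounding tilde W}, and $\gamma\neq 0$ is sound and makes the biconditional rigorous on $[L^+_{\mathrm{conj}},\infty)$.
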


We will use this Lemma to derive an inequality. Then, we can take the contrapositive to conclude that if the inequality does not hold, then there is not an intersection between $\C \E^{\mathrm{u}}_-(x)$ and $\C\ell_\mathrm{sand}$. 

\begin{proposition}\label{prop: L+ intersection condition 2}
Recall the definitions of $M_1, M_2, M_3$, and $M_4$ given in \eqref{E:def-A1-Q}, \eqref{E:def-A2}, and \eqref{eq: frame Gamma2}, and let
\begin{equation}\label{eq: sigma min def}
\sigma_{\min}  = \min \bigg\{ \sqrt{|\mu|} \ : \mu \mbox{ is an eigenvalue of } M_1^*M_2M_2^*M_1 \bigg\}.
\end{equation}
If $L^+_{\mathrm{conj}}$ is sufficiently large so that for any $x \geq L^+_{\mathrm{conj}}$ we have
\begin{equation}\label{eq: Determination of L+ condition}
\sigma_{\min} > \left(\|M_1\| + \|M_3\| \right) [\|\beta\|\|\gamma\|^{-1} + \|M_4\|]+ \|M_2\|\|M_3\|,
\end{equation}
then a conjugate point cannot occur for $x \geq L^+_{\mathrm{conj}}$.
\end{proposition}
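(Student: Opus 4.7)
The plan is to argue by contrapositive using Lemma \ref{lem:gammas-perp}: I will show that if the inequality \eqref{eq: Determination of L+ condition} holds, then $\Gamma_1$ cannot be perpendicular to $\Gamma_2$, so no conjugate point can occur. Suppose for contradiction that a conjugate point exists at some $x_* \geq L^+_{\mathrm{conj}}$. Then Lemma \ref{lem:gammas-perp} forces $\Gamma_1^* \Gamma_2 = 0$. Substituting the explicit forms $\Gamma_1 = (M_1 + M_3)\gamma$ and $\Gamma_2 = (M_2 + M_4)\gamma + \beta$ and expanding in the cross terms, one rearranges the identity $\Gamma_1^*\Gamma_2 = 0$ to isolate the ``main'' symplectic-type pairing $\gamma^* M_1^* M_2 \gamma$ on one side and collect every term involving at least one of the perturbations $M_3, M_4, \beta$ on the other side.

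Next, I would bound each perturbation term by the submultiplicativity of the operator norm, using $|\gamma^* A^* B \gamma| \leq \|A\| \|B\| \|\gamma\|^2$ for the $M_i^* M_j$ pieces and $|\gamma^* A^* \beta| \leq \|A\| \|\gamma\| \|\beta\|$ for the terms containing $\beta$. The triangle inequality then produces
\[
|\gamma^* M_1^* M_2 \gamma| \leq (\|M_1\| + \|M_3\|)\bigl(\|M_4\| \|\gamma\|^2 + \|\gamma\| \|\beta\|\bigr) + \|M_2\| \|M_3\| \|\gamma\|^2,
\]
and after dividing by $\|\gamma\|^2$ the right-hand side is exactly the expression appearing on the right of \eqref{eq: Determination of L+ condition}. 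The last step is to match this upper bound with the lower bound $|\gamma^* M_1^* M_2 \gamma| \geq \sigma_{\min} \|\gamma\|^2$, justified by the definition of $\sigma_{\min}$ as the smallest singular value of $M_2^* M_1$ (equivalently $\sqrt{|\mu|}$ for the smallest eigenvalue of $M_1^* M_2 M_2^* M_1$). Combining these two bounds yields $\sigma_{\min} \leq (\|M_1\| + \|M_3\|)[\|\beta\|\|\gamma\|^{-1} + \|M_4\|] + \|M_2\|\|M_3\|$, which directly contradicts \eqref{eq: Determination of L+ condition}, completing the proof.

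The principal obstacle is passing from the singular-value inequality $\|M_2^* M_1 \gamma\| \geq \sigma_{\min}\|\gamma\|$ (which is immediate) to the quadratic-form bound $|\gamma^* M_1^* M_2 \gamma| \geq \sigma_{\min}\|\gamma\|^2$. The former controls the norm of a vector but Cauchy--Schwarz only supplies the upper bound $|\gamma^* M_1^* M_2 \gamma| \leq \|M_2^* M_1 \gamma\|\|\gamma\|$; a general indefinite Hermitian matrix can have $\gamma^* H \gamma = 0$ for nonzero $\gamma$ even when $\sigma_{\min}(H) > 0$. Closing this gap will require exploiting the specific structure of $M_1$ and $M_2$ coming from the symplectic geometry of \eqref{E:eval-0}: $M_1$ is anti-Hermitian (in fact, as one computes from \eqref{E:B-infty-evecs} and the relations $\check{\mathcal V}^\rs_{1;2} = \check{\mathcal V}^\ru_{1;2}$, $\check{\mathcal V}^\rs_{3;4} = -\check{\mathcal V}^\ru_{3;4}$, it is diagonal with purely imaginary entries), which forces $M_1^* M_2 = M_2^* M_1$, and the resulting Hermitian matrix must be paired with $\gamma$ through the Lagrangian identity relating the numerical range to the singular values. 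This is the step where the symplectic/Hamiltonian structure of the Swift--Hohenberg spatial dynamics enters the argument in an essential way.
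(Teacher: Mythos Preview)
Your approach is essentially identical to the paper's: start from Lemma \ref{lem:gammas-perp}, expand $\Gamma_2^*\Gamma_1 = 0$, isolate $\gamma^* M_2^* M_1 \gamma$, bound the remaining terms by submultiplicativity and the triangle inequality, and divide by $\|\gamma\|^2$. The paper carries out exactly these steps in the same order.

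Regarding the step you flag as the principal obstacle: the paper's proof simply asserts ``Notice that $|\gamma^* M_2^* M_1 \gamma| \geq \sigma_{\min}\|\gamma\|^2$'' and proceeds, without invoking any of the symplectic or structural facts you sketch. So your concern is well placed---this inequality is not a general fact about singular values, and your counterexample reasoning is correct for arbitrary matrices---but the paper does not supply the additional argument you outline. In that sense your proposal is already more careful than the paper's own proof: you have reproduced the paper's argument and, in addition, isolated a step that the paper treats as obvious but which genuinely requires the specific form of $M_1$ and $M_2$ (or a direct computation for the explicit $2\times 2$ matrices in \eqref{eq: L+ block notation for basis sols}) to justify.
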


\begin{proof}
First notice that, by Lemma \ref{lem:gammas-perp}, we have a conjugate point if and only if 
\[
0  = (\beta + M_2\gamma + M_4\gamma)^* (M_1\gamma + M_3\gamma). 
\]
Expanding and rearranging, we have,
\[
-\gamma^*M_2^*M_1\gamma  = (\beta + M_4\gamma)^*(M_1\gamma + M_3\gamma) + (M_2\gamma)^* M_3\gamma.
\]
Using the triangle and Cauchy-Schwarz inequalities, we obtain
\begin{align*}
| \gamma^*M_2^* M_1\gamma | &= | \beta^*(M_1\gamma +M_3\gamma) + \gamma^* [M_4^* (M_1 + M_3) + M_2^* M_3]\gamma |\\
& \leq \left\| \beta^*(M_1 + M_3) \right\| \|\gamma\| + \|\gamma\|^2 \|M_2^* M_3 + M_4^*(M_1 + M_3)\|_M.
\end{align*}
Notice that $|\gamma^* M_2^*M_1\gamma| \geq \sigma_{\min}\|\gamma\|^2$, and so
\[
\sigma_{\min}\|\gamma\|^2 \leq  \left\| \beta^*(M_1 + M_3) \right\| \|\gamma\| + \|\gamma\|^2 \|M_2^* M_3 + M_4^*(M_1 + M_3)\|_M.
\]
Since $\|\gamma\| \neq 0$, we can divide to obtain 
\begin{equation}\label{eq: L+ intersection condition 1}
\begin{split} \sigma_{\min}& \leq  \left\| \beta^*(M_1 + M_3) \right\| \|\gamma\|^{-1} +  \|M_2^* M_3 + M_4^*(M_1 + M_3)\| \\
& \leq \|\beta\|\big(\|M_1\| + \|M_3\|  \big) \|\gamma\|^{-1} + \|M_2^*\| \|M_3\| + \|M_4^*\|\big(\|M_1\| + \|M_3\|\big)
\end{split}
\end{equation}
and the desired result.
\end{proof}

Note that all of the norms on the right hand side of \eqref{eq: Determination of L+ condition} depend on $x$, except for $\|M_1\|$ and $\|M_2\|$. Furthermore, each of $\|\beta^*\|, \ \|M_3\|$ and $\|M_4\|$ can be made small for large $x$. Lastly, $\|\gamma\|$ grows with $x$, but the negative exponent means that the entire term gets small, as well. 
Using these ideas, we now bound the right hand side of equation \eqref{eq: Determination of L+ condition}.

\begin{proposition}\label{prop: L+ intersection condition 1}
Recall $\epsilon_0 = \epsilon_0(L^+_{\mathrm{conj}})$ given in \eqref{eq: eps 0} and the definitions of $M_1, M_2$, $M_3$ and $M_4$ given in \eqref{E:def-A1-Q}, \eqref{E:def-A2}, and \eqref{eq: frame Gamma2}. Define the following constants: 
\begin{align} \epsilon_\beta & = e^{-\mathrm{Re}(\mu_1^\ru)L^+_{\mathrm{conj}}}\|\tilde \beta\|, \label{eq: epsilon beta} \\
C_{M_4} & = \|M_2\|\left[ \|(\check{\mathcal V}_{1;4}^\ru)^{-1}\| + \frac{\|(\check{\mathcal V}_{1;4}^\rs)^{-1}\| (1 + \epsilon_0 \|(\check{\mathcal V}_{1;4}^\ru)^{-1}\|)}{1-\epsilon_0\|(\check{\mathcal V}_{1;4}^\rs)^{-1}\| }\right]  , \label{eq: CP} \\
\epsilon_\gamma & = e^{-\mathrm{Re}(\mu_1^\ru) L^+_{\mathrm{conj}}} \|\tilde \gamma\|^{-1}, \label{eq: epsilon gamma} \\
C_{M_3} & = 2 \left( \|\check{\mathcal V}_{3;4}^\rs \| + \|\check{\mathcal V}_{1;2}^\rs \| \right) + 2\epsilon_0. \label{eq: CQ}   
\end{align}
If $L^+_{\mathrm{conj}}$ is chosen so that 
\begin{equation}\label{E:epsilon0-cond}
\epsilon_0 \|(\check{\mathcal V}^\rs_{1;4})^{-1}\| < 1
\end{equation}
and so that
\begin{equation}\label{E:sigma-cond}
\sigma_\mathrm{min} >  \epsilon_0C_{M_3} \|M_2\| + (\epsilon_0C_{M_3} + \|M_1\|)(\epsilon_0C_{M_4} + \epsilon_\beta \epsilon_\gamma)
\end{equation}
then there cannot be any conjugate points for $x \geq L^+_{\mathrm{conj}}$.
\end{proposition}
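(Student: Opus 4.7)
The plan is to apply Proposition \ref{prop: L+ intersection condition 2} after replacing the three $x$-dependent quantities on the right-hand side of \eqref{eq: Determination of L+ condition}—namely $\|M_3(x)\|$, $\|M_4(x)\|$, and $\|\beta(x)\|\|\gamma(x)\|^{-1}$—by explicit upper bounds valid uniformly in $x \geq L^+_{\mathrm{conj}}$. Substituting these bounds into \eqref{eq: Determination of L+ condition} and expanding the product $(\|M_1\| + \|M_3\|)(\|\beta\|\|\gamma\|^{-1} + \|M_4\|) + \|M_2\|\|M_3\|$ recovers exactly \eqref{E:sigma-cond}, so the hypothesis $\sigma_{\min} > \epsilon_0 C_{M_3}\|M_2\| + (\epsilon_0 C_{M_3} + \|M_1\|)(\epsilon_0 C_{M_4} + \epsilon_\beta\epsilon_\gamma)$ suffices to rule out conjugate points beyond $L^+_{\mathrm{conj}}$.

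The first bound concerns $\|\beta\|\|\gamma\|^{-1}$. From \eqref{E:B-infty-evals} the two unstable eigenvalues $\mu_1^\ru, \mu_2^\ru$ share a common real part, and $\mathrm{Re}(\mu_j^\rs) = -\mathrm{Re}(\mu_j^\ru)$. Consulting the definitions in \eqref{E:agb} and reading the exponential prefactors directly off of \eqref{eq: unstable basis sols as lin comb}, one obtains $\|\beta(x)\| = e^{-\mathrm{Re}(\mu^\ru_1)x}\|\tilde\beta\|$ and $\|\gamma(x)\|^{-1} = e^{-\mathrm{Re}(\mu^\ru_1)x}\|\tilde\gamma\|^{-1}$. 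Hence $\|\beta(x)\|\|\gamma(x)\|^{-1}$ is monotone decreasing in $x$ and bounded by $\epsilon_\beta\epsilon_\gamma$ for every $x \geq L^+_{\mathrm{conj}}$.

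For $\|M_3(x)\|$, Corollary \ref{corr: bounding tilde W} bounds each column of $\tilde{\mathcal W}^{+,\rs/\ru}(x)$ by $\epsilon_0$, so the same bound (up to an absolute constant that I absorb into $C_{M_3}$) holds for the sub-blocks $\tilde{\mathcal W}^{+,\rs/\ru}_{1;2}(x)$ and $\tilde{\mathcal W}^{+,\rs/\ru}_{3;4}(x)$. Applying the triangle inequality and submultiplicativity to the four summands of $M_3$ in \eqref{E:def-A1-Q}, together with the identities $\|\check{\mathcal V}^\rs_{1;2}\| = \|\check{\mathcal V}^\ru_{1;2}\|$ and $\|\check{\mathcal V}^\rs_{3;4}\| = \|\check{\mathcal V}^\ru_{3;4}\|$ evident from \eqref{eq: L+ block notation for basis sols}, each summand contributes a factor of $\epsilon_0$, and one collects $\|M_3(x)\| \leq \epsilon_0\bigl[2(\|\check{\mathcal V}^\rs_{3;4}\| + \|\check{\mathcal V}^\rs_{1;2}\|) + 2\epsilon_0\bigr] = \epsilon_0 C_{M_3}$.

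Finally, for $\|M_4(x)\|$ the hypothesis \eqref{E:epsilon0-cond} together with $\|\tilde{\mathcal W}^{+,\rs}_{1;4}(x)\| \leq \epsilon_0$ ensures $\|(\check{\mathcal V}^\rs_{1;4})^{-1}\tilde{\mathcal W}^{+,\rs}_{1;4}(x)\| < 1$, so the Neumann series in \eqref{eq: frame Gamma2} converges absolutely, with its tail controlled by the geometric estimate $\epsilon_0\|(\check{\mathcal V}^\rs_{1;4})^{-1}\|/(1-\epsilon_0\|(\check{\mathcal V}^\rs_{1;4})^{-1}\|)$. Combining this with $\|\tilde{\mathcal W}^{+,\ru}_{1;4}(x)\| \leq \epsilon_0$ and factoring $\|M_2\|$ out of both summands in \eqref{eq: frame Gamma2} produces $\|M_4(x)\| \leq \epsilon_0 C_{M_4}$ with $C_{M_4}$ matching \eqref{eq: CP}. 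The main obstacle is the Neumann-series bound for $M_4$ and keeping careful track of the constants there; the remaining estimates are direct triangle-inequality manipulations, and the final substitution into Proposition \ref{prop: L+ intersection condition 2} is purely algebraic.
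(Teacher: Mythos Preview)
Your proposal is correct and follows essentially the same approach as the paper: bound $\|\beta\|\|\gamma\|^{-1}$, $\|M_3\|$, and $\|M_4\|$ uniformly for $x\ge L^+_{\mathrm{conj}}$ via Corollary~\ref{corr: bounding tilde W} and the Neumann-series estimate, then feed these bounds into Proposition~\ref{prop: L+ intersection condition 2}. Your observation that $\|\beta(x)\|$ and $\|\gamma(x)\|$ are exact equalities (because $\mu_1^\ru,\mu_2^\ru$ share a real part) is in fact slightly sharper than the paper's presentation, which records them as inequalities; otherwise the arguments coincide.
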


\begin{proof}
From \eqref{eq: unstable basis sols as lin comb MAT NOTATION}, recall that 
 \[
 \gamma = e^{\Omega^\ru x} \tilde \gamma = \begin{pmatrix} e^{\mu_1^\ru x} & 0 \\ 0 & e^{\mu_2^\ru x} \end{pmatrix}\begin{pmatrix} \tilde \gamma_1 \\ \tilde \gamma_2
 \end{pmatrix}, \qquad 
 \beta = e^{-\Omega^\ru x}\tilde \beta = \begin{pmatrix} e^{-\mu_1^\ru x} & 0 \\ 0 & e^{-\mu_2^\ru x} \end{pmatrix}\begin{pmatrix} \tilde \beta_1 \\ \tilde \beta_2
 \end{pmatrix}.
 \]
It follows that
 \[
 \|\beta\| = \|e^{-\Omega^\ru x} \tilde \beta\|  \leq \left\|e^{-\Omega^\ru x} \right\|\|\tilde \beta\| = e^{-\text{Re}(\mu_1^\ru)x}\|\tilde \beta\| = \epsilon_B.
 \]
and
\[
\|\gamma \| \geq e^{\text{Re}(\mu_1^\ru)x} \|\tilde\gamma\| = \epsilon_\gamma^{-1}.
\]
Next, we want to bound the norms of $M_3$ and $M_4$. To do so, note that $\epsilon_0$ defined in \eqref{eq: eps 0} also provides an upper bound for $\tilde{\mathcal W}^{+, s/u}_{i,j}(x)$, and so we find
\[
\|M_3\|  \leq 2\epsilon_0\left(\|\check{\mathcal V}_{3;4}^\rs\| + 2\sqrt{2}\|\check{\mathcal V}^\rs_{1;2}\| + \epsilon_0\right) = \epsilon_0 C_{M_3}. 
\]
Similarly, 
\[
\|M_4\| \leq \epsilon_0\|M_2\|\left[ \|(\check{\mathcal V}_{1;4}^\ru)^{-1}\| + \frac{\|(\check{\mathcal V}_{1;4}^\rs)^{-1}\| (1 + \epsilon_0 \|(\check{\mathcal V}_{1;4}^\ru)^{-1}\|)}{1-\epsilon_0\|(\check{\mathcal V}_{1;4}^\rs)^{-1}\| }\right] = \epsilon_0 C_{M_4}.
\]
Thus, we find
\begin{align*}
\left(\|M_1\|_M + \|M_3\| \right) &[\|\beta\|\|\gamma\|^{-1} + \|M_4\|]+ \|M_2\|\|M_3\| \\
&\qquad \leq \epsilon_0C_{M_3} \|M_2\| + (\epsilon_0C_{M_3} + \|M_1\|)(\epsilon_0C_{M_4} + \epsilon_\beta \epsilon_\gamma)
\end{align*}
Comparing with Proposition \ref{prop: L+ intersection condition 2}, this completes the proof. 
\end{proof}

In order to verify conditions \eqref{E:epsilon0-cond} and \eqref{E:sigma-cond} for $L^+_{\mathrm{conj}}$, several quantities must be determined numerically. We summarize these in Table \ref{table: Lplus quantities necessary}. 

\begin{table}[H]
\centering
\begin{tabular}{|c|c|c|} 
\hline
Term & Definition (Eq \#) & Necessary Quantities \Tstrut\Bstrut \\
\hline 
\hline 
$M_1$ & \eqref{E:def-A1-Q} & $\check{\mathcal V}^{\rs, \ru}$ \Tstrut\Bstrut \\
\hline
$M_2$ & \eqref{E:def-A2} &  $\check{\mathcal V}^{\rs, \ru}$  \Tstrut\Bstrut \\
\hline
$\sigma_\mathrm{min}$ & \eqref{eq: sigma min def}  & $M_1, M_2$ \Tstrut\Bstrut \\ 
\hline
$\epsilon_0$ & \eqref{eq: eps 0} & $\check{\mathcal V}^{\rs, \ru}$, $K$, $C_B$, $K_B$  \Tstrut\Bstrut \\ 
\hline
$\tilde\gamma$ & \eqref{eq: unstable basis sols as lin comb MAT NOTATION} & $U_1(x)$ \Tstrut\Bstrut \\
\hline
$\tilde \beta $ &  \eqref{eq: unstable basis sols as lin comb MAT NOTATION}  & $U_1(x)$ \Tstrut\Bstrut \\ 
\hline
$C_{M_3}$ & \eqref{eq: CQ} & $\epsilon_0, M_1, M_2$\Tstrut\Bstrut \\ 
\hline
$C_{M_4}$ & \eqref{eq: CP} & $\epsilon_0, M_1, M_2$  \Tstrut\Bstrut \\
\hline
\end{tabular}
\caption{The quantities used to determine the constant $L^+_{\mathrm{conj}}$.}\label{table: Lplus quantities necessary}
\end{table}
Recall that $\check{\mathcal V}^{\rs, \ru}$ are defined in \eqref{E:B-infty-evecs} and depend only on the choice of the parameters $\nmu$ and $\nnu$ in the nonlinearity of the Swift-Hohenberg equation \eqref{E:SH}. The quantities $M_1, M_2$ and $\sigma_\mathrm{min}$ in turn only depend on $\check{\mathcal{V}}^{\rs, \ru}$. The quantities $\epsilon_0$ depends on  
$\check{\mathcal V}^{\rs, \ru}$ and on $K$ from \eqref{E:KQ-}, which can be determined via the asymptotic matrix $B_\infty$ that is known explicitly; it also depends on the constants $K_B$ and $C_B$ from \eqref{E:B-decay}, which can be computed when one carries out a validated numerical existence proof of the underlying pulse. The constants $C_{M_3,M_4}$ depend only on quantities we have already described. This leaves us with just $\tilde \beta$ and $\tilde \gamma$, which are defined in \eqref{E:agb}.
Computing these requires computing the second basis solution $U_1(x)$ of $\mathbb{E}^\ru_-(x)$. This will be the subject of \S \ref{Ch:res bundles}. 


\section{Parameterization of Invariant Manifolds}\label{S:mflds}

As discussed in \S\ref{S:background}, we will use the parameterization method to compute the Taylor coefficients of the stable and unstable manifolds associated with the Swift-Hohenberg equation. Carrying this out will be the focus of this section. We note that we do not encounter any resonances in the manifolds, and so the computations in this section are somewhat standard, although lengthy. They follow the framework originally developed in \cite{CabreFontichdelaLlave03a,CabreFontichdelaLlave03b,CabreFontichdelaLlave05}.


\subsection{Analytic framework}\label{S:analytic-framework}

First, we introduce some additional notation that builds on the notation introduced in \S\ref{S:background}. Given $\delta > 0$, define the normed space $\ell^1_{\delta,m}$ as
\[
\ell^1_{\delta,m} = \{ b \in \mathcal{S}_m: \|b\|_{\delta, m}^1 < \infty\}, \qquad \|b\|^1_{\delta,m}   = \sum_{|\alpha| = 0}^\infty |b_\alpha| \delta^{|\alpha|}= \sum_{\alpha_1 = 0}^\infty \cdots \sum_{\alpha_m = 0}^\infty \big|b_{\alpha_1,\dots,\alpha_m} \big| \delta^{\alpha_1 + \dots + \alpha_m}. 
\]
One can check the set $\ell_{\delta,m}^1$ with the product $*$ defined in \eqref{E:cprod} is a Banach algebra in the norm $\| \cdot \|_{\delta,m}^1$. In other words, $\|b \ast \tilde b\|_{\delta,m}^1 \leq \|b\|_{\delta,m}^1\|\tilde b\|_{\delta,m}^1$ for all $b, \tilde b \in \ell_{\delta,m}^1$. 

This is the function space that the coefficients for our parameterizations of the invariant manifolds and vector bundles will live in. For example, if we have
\[
P(\sigma) = \sum_{|\alpha| \geq 0} P_\alpha \sigma^\alpha, \qquad \sigma \in \C^m, \qquad \alpha \in \N^m, \qquad P_\alpha \in \C^n,
\]
then $\{P_\alpha\} \in \mathcal{S}_m$ and we can require also that $\{P_\alpha\} \in \ell^1_{\delta, m}$. This implies that the radius of convergence of the above power series is $\delta$, and this requires a corresponding decay rate in the coefficients $P_\alpha$ as $|\alpha| \to \infty$. Heuristically, multiplication in function spaces corresponds to the Cauchy product in the sequence space that is related to the Taylor expansions. Below it will be convenient to choose $\delta = 1$ (see Remark \ref{rmk: scaling the coefficients}), and in the setting of the Swift-Hohenberg equation we will have $m = 2$ and $n = 4$. 

In practice, we will only compute finitely many coefficients of the Taylor expansion, usually up to order $N$ for some fixed $N <\infty$. To construct bounds on the error between the truncated series and the infinite series, we need to define additional sequence spaces and products. Define the linear subspaces 
\begin{align*} X_{\delta,m}^N & = \big\{b = \{b_\alpha\} \in \ell^1_{\delta,m} \ : \ b_\alpha = 0 \text{ when } |\alpha| \geq N+1 \big\} \\
X^\infty_{\delta, m} & = \big\{ b = \{b_\alpha\} \in \ell^1_{\delta,m} \ : \ b_\alpha = 0 \text{ when } 0 \leq |\alpha| \leq N \big\}.
\end{align*}
Additionally, define the projection operators associated to these spaces as $\Pi^N: \ \ell^1_{\delta, m} \to X_{\delta,m}^N $ and $\Pi^\infty: \ell^1_{\delta,m} \to X^\infty_{\delta,m}$ by 
\begin{align*} 
\Pi_N (b)_\alpha  = \begin{cases} b_\alpha & \text{ if } 0 \leq |\alpha| \leq N \\
0 & \text{ if } |\alpha| \geq N+1
\end{cases}\quad \text{ and } \quad \Pi_\infty(b)_\alpha & = \begin{cases} 0 & \text{ if } 0 \leq |\alpha| \leq N \\ 
b_\alpha & \text{ if } |\alpha| \geq N+1
\end{cases}.
\end{align*}

Since $\ell^1_{\delta,m} = X^N_{\delta,m} \oplus X^\infty_{\delta , m}$, every element $b \in \ell^1_{\delta,m}$ can be written as $b = \Pi_N(b) + \Pi_\infty(b)$. Furthermore $X^N_{\delta,m}$ and $X^\infty_{\delta,m}$ are closed linear subspaces of $\ell^1_{\delta,m}$, so they are Banach spaces and inherit the norm $\|\cdot\|^1_{\delta,m}$. While $X^N_{\delta,m}$ is a Banach space, it does not inherit the Banach algebra structure of $\ell^1_{\delta,m}$. This is because an arbitrary pair of sequences $b,\tilde b\in X^N_{\delta,m}$ has product $b*\tilde b \in \ell^1_{\delta,m}$ with nonzero projection into $X^\infty_{\delta,m}$. This motivates us to define a truncated and tail Cauchy product on $X^N_{\delta,m}$. For $b^N, \tilde b^N \in X^N_{\delta,m}$, the truncated Cauchy product $(\cdot * \cdot)^N : X_{\delta,m}^N \oplus X^N_{\delta, m} \to X^N_{\delta, m}$ is defined component-wise by 
\[
(b^N * \tilde b^N)^N_\alpha  = \begin{cases} \sum_{\beta \leq \alpha} b_{\alpha - \beta} \tilde b_{\beta} & \text{ if } 0 \leq |\alpha| \leq N \\
0 & \text{ if } |\alpha| \geq N+1
\end{cases}. 
\]
The space $X^N_{\delta,m}$ is a Banach algebra when equipped with this operation. We define the tail Cauchy product $(\cdot * \cdot)^\infty : X_{\delta,m}^N \oplus X^N_{\delta, m} \to X^\infty_{\delta, m}$ component-wise  via 
\[
(b^N * \tilde b^N)^\infty_\alpha   = \begin{cases} 0 & \text{ if } 0 \leq |\alpha| \leq N \\ 
\sum_{\beta \leq \alpha} b_{\alpha - \beta} \tilde b_\beta & \text{ if } N+1 \leq |\alpha| \leq 2N \\
0 & \text{ if } |\alpha| \geq 2N+1
\end{cases}.
\]
Finally, since we will be solving for the coefficients of the Taylor series recursively, we introduce the products $(\cdot \ \hat * \ \cdot)^N: X^N_{\delta, m} \oplus X^N_{\delta,m} \to X^N_{\delta,m}$ and $(\cdot \ \hat * \ \cdot)^\infty: X^N_{\delta,m} \oplus X^N_{\delta,m} \to X^\infty_{\delta,m}$, which are defined analogously to $(\cdot * \cdot)^N$ and $(\cdot * \cdot )^\infty$ via $(\cdot \hat * \cdot)$ in \eqref{eq: star hat def}.


\subsection{Coefficients of the Taylor expansion via recursion}\label{Ss: derivation of functional eq}

Recall that $\Phi$ is the flow associated with the vector field \eqref{E:exist}, which we reproduce here:
\begin{align}
u_1' &= u_2 \nonumber \\
u_2' &= u_3 \nonumber \\
u_3' &= u_4 \label{E:exist2} \\
u_4' &= -2u_3 -u_1 + f(u_1). \nonumber
\end{align}
Also recall that, since $\varphi$ is a pulse with $\lim_{|x| \to \infty} \varphi(x) = 0$, and so its corresponding vector-valued function $(\varphi, \varphi_x, \varphi_{xx}, \varphi_{xxx})$ is bi-asymptotic to the origin, we are interested in computing the local stable and unstable manifolds of the origin. Moreover, in the notation of \S\ref{S:background}, the right-hand side of \eqref{E:exist2} is exactly the vector field $G$, and we have  
\[
DG(0) = \begin{pmatrix}0 & 1 & 0 & 0 \\ 0 & 0 & 1 & 0\\ 0 & 0 & 0 & 1 \\ -1+f'(0) & 0 & -2 & 0 \end{pmatrix}.
\]
Note that by the change of variables in \eqref{E:defq}  it follows that  $DG(0)$ has the same eigenvalues as $B_\infty$, given in \eqref{E:eval-0-infty}, and moreover   $B_\infty = S DG(0) S^{-1}$. 
The eigenvalues can be found in \eqref{E:B-infty-evals} - \eqref{E:def-r-theta} and are denoted $\mu_{1,2}^{\rs, \ru}$. However, the eigenvectors are different. Since the eigenvectors of $B_\infty$ were denoted $\check V_{1,2}^{\rs, \ru}$, we denote the eigenvectors of $DG(0)$ as 
\begin{equation}\label{E:dg-evecs}
\hat V_{1,2}^{\rs, \ru} = S^{-1} \check V_{1,2}^{\rs, \ru} =  \begin{pmatrix} 1 \\ \mu_{1,2}^{\rs, \ru} \\ (\mu_{1,2}^{\rs, \ru})^2 \\ (\mu_{1,2}^{\rs, \ru})^3 \end{pmatrix},
\end{equation}
which is consistent with the general notation of \S \ref{S:background}. 

We seek an expression for $P(\sigma)$ having the form given in \eqref{E:Pform} that satisfies \eqref{E:param-2}. In particular, since we have
\[
\sigma \in B_\delta^2(0) \subset \C^2, \qquad \alpha \in \N^2, \qquad P_\alpha \in \C^4, \qquad P: B_\delta^2(0)\subset \C^2 \to \C^4,
\]
we can write
\begin{equation}\label{E:Pform-2}
P(\sigma_1,\sigma_2) = \sum_{m=0}^\infty \sum_{n=0}^\infty P_{mn} \sigma_1^m \sigma_2^n = \sum_{m=0}^\infty \sum_{n=0}^\infty \begin{pmatrix} p^1_{mn} \\ p^2_{mn} \\ p^3_{mn} \\ p^4_{mn} 
\end{pmatrix} \sigma_1^m \sigma_2^n,
\end{equation}
where 
\[
P(0) = 0, \qquad DP(0) = [\hat V_1^\rs, \hat V_2^\rs] = [P_{10}, P_{01}], \qquad \Omega^\rs = \mathrm{diag}(\mu_1^\rs, \mu_2^\rs).
\]
We define a recursive formula for the coefficients $P_{mn}$ that allows for the computation of these coefficients up to any order. We utilize the fact that the nonlinear terms in $G$ are polynomials. By plugging the above expansion into the invariance equation \eqref{E:param-2}, we obtain 
\[
\sum_{m=0}^\infty \sum_{n=0}^\infty (m\mu_1^\rs + n\mu_2^\rs) P_{mn} \sigma_1^m \sigma_2^n = G\left( \sum_{m=0}^\infty \sum_{n=0}^\infty P_{mn} \sigma_1^m \sigma_2^n \right).
\]
In particular,
\[
\big( m \mu_1^\rs + n \mu_2^\rs) \begin{pmatrix} p^1_{mn} \\ p^2_{mn} \\ p^3_{mn} \\ p^4_{mn}  \end{pmatrix} = \begin{pmatrix} p^2_{mn} \\ p^3_{mn} \\ p^4_{mn}  \\ - 2 p^3_{mn}- (\nmu + 1)p^1_{mn}+ \nnu (p^1 *p^1)_{mn} - (p^1 *p^1 *p^1)_{mn} \end{pmatrix}.
\]
Note that as $ p^1_{00} = 0$, it follows that $  0= \frac{\partial}{\partial p_{mn}} \left( \nnu (p^1 *p^1)_{mn} - (p^1 *p^1 *p^1)_{mn} \right)$, and furthermore $ p^1  * p^1 = p^1 \hat * p^1$. Hence we may solve for the coefficients $p_{mn}$ in terms of the lower order coefficients as follows: 
\begin{equation}\label{E:rec-mnfld}
p_{mn} =  \begin{pmatrix} p^1_{mn} \\ p^2_{mn} \\ p^3_{mn} \\ p^4_{mn}  \end{pmatrix}  = \left(DG(0)  - (m\mu_1^\rs + n\mu_2^\rs)I\right)^{-1}  \begin{pmatrix} 0 \\ 0 \\ 0 \\
-\nnu (p^1 * p^1)_{mn} + (p^1 * p^1 * p^1)_{mn}
\end{pmatrix}.
\end{equation}
Note that the matrix on the right hand side that is being inverted is invertible exactly when there is no manifold resonance. Moreover, the vector the inverse acts on is a nonlinear function of the coefficients $P_{m'n'}$ with $m' + n' < m + n$, and it depends on the nonlinearities in the vector field $G$ and on $\mu_{1,2}^\rs$. 
We can solve this equation to any finite order to obtain a polynomial approximation for the local stable manifold given by $P^{N}(\sigma) = \sum_{0 \leq m+n \leq N} P_{mn} \sigma_1^{m}\sigma_2^n$.   We are able to obtain a real-submanifold from the complex manifold we parameterize. 
	
	This is summarized in Lemma \ref{lem:real-valued}, see  \cite{mireles2013rigorous,james2017validated,vandenBergJamesReinhardt16}, which  is essentially due to the fact that the coefficients our our parameterization satisfy $ p_{mn}= (p_{nm})^*$. 

\begin{lemma} \label{lem:real-valued}
Let $P$ be the parameterization for the invariant manifold of the Swift-Hohenberg equation whose coefficients are determined by \eqref{E:Pform-2}, and let $\tilde P(\mathrm{Re}(\sigma), \mathrm{Im}(\sigma)) = P(\sigma, \bar \sigma)$. Then $\tilde P: \R^2 \to \R^4$.  
\end{lemma}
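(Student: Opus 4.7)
The plan is to prove by induction on $|\alpha| = m+n$ that the Taylor coefficients satisfy the conjugation symmetry $P_{mn} = \overline{P_{nm}}$ for all $m, n \geq 0$, and then deduce that $P(\sigma, \bar\sigma)$ is real-valued. The underlying reason this symmetry holds is that the two stable eigenvalues are complex conjugates of each other, $\mu_2^\rs = \overline{\mu_1^\rs}$, and consequently so are the chosen eigenvectors, $\hat V_2^\rs = \overline{\hat V_1^\rs}$ by \eqref{E:dg-evecs}. The recursion \eqref{E:rec-mnfld} preserves this conjugate-pairing because $DG(0)$ has real entries and the nonlinearity has real coefficients.

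For the base cases, note that $P_{00} = 0 \in \R^4$, which is trivially invariant under conjugation, while $P_{10} = \hat V_1^\rs$ and $P_{01} = \hat V_2^\rs = \overline{\hat V_1^\rs}$ by \eqref{E:dg-evecs} together with $\mu_2^\rs = \overline{\mu_1^\rs}$. For the inductive step, assume $P_{m'n'} = \overline{P_{n'm'}}$ for all $m' + n' < m+n$. Taking complex conjugates of the recursion \eqref{E:rec-mnfld} and using that $DG(0)$ is real together with the identity $\overline{m\mu_1^\rs + n\mu_2^\rs} = n\mu_1^\rs + m\mu_2^\rs$, we obtain
\begin{equation*}
\overline{P_{mn}} = \bigl(DG(0) - (n\mu_1^\rs + m\mu_2^\rs)I\bigr)^{-1} \begin{pmatrix} 0 \\ 0 \\ 0 \\ -\nnu\, \overline{(p^1 * p^1)_{mn}} + \overline{(p^1 * p^1 * p^1)_{mn}} \end{pmatrix}.
\end{equation*}
The key computation is then to verify that $\overline{(p^1 * p^1)_{mn}} = (p^1 * p^1)_{nm}$, which follows by writing out the Cauchy product, applying the inductive hypothesis componentwise, and re-indexing the sum via the swap $(j, l) \leftrightarrow (l, j)$. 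The same argument applied to the triple Cauchy product gives $\overline{(p^1 * p^1 * p^1)_{mn}} = (p^1 * p^1 * p^1)_{nm}$. Comparing with \eqref{E:rec-mnfld} applied at multi-index $(n,m)$ yields $\overline{P_{mn}} = P_{nm}$, closing the induction.

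Having established this symmetry, the proof concludes with a short calculation: using the symmetry,
\begin{equation*}
\overline{P(\sigma, \bar\sigma)} = \sum_{m,n \geq 0} \overline{P_{mn}}\, \bar\sigma^{m} \sigma^{n} = \sum_{m,n \geq 0} P_{nm}\, \sigma^{n} \bar\sigma^{m},
\end{equation*}
and relabeling $(m,n) \leftrightarrow (n,m)$ in the final sum recovers $P(\sigma, \bar\sigma)$. Therefore $P(\sigma, \bar\sigma) \in \R^4$ for every $\sigma$ in the complex disk of convergence, which upon substituting $\sigma = x + iy$ gives $\tilde P(x, y) \in \R^4$ for all $(x,y) \in \R^2$ in the domain. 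The only delicate step is the combinatorial verification of the conjugation-swap identity for the Cauchy products; this is straightforward but is the place where the sum-ordering argument must be carried out cleanly. Everything else (hyperbolicity, absence of manifold resonance guaranteeing invertibility of $DG(0) - (n\mu_1^\rs + m\mu_2^\rs)I$, convergence of the series) was already dealt with when the recursion was set up.
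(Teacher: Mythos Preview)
Your proposal is correct and follows precisely the approach the paper indicates: the paper does not give a detailed proof but states that the result ``is essentially due to the fact that the coefficients of our parameterization satisfy $p_{mn}= (p_{nm})^*$'' and cites \cite{mireles2013rigorous,james2017validated,vandenBergJamesReinhardt16}. You have supplied exactly the inductive verification of this symmetry via the recursion \eqref{E:rec-mnfld}, together with the short computation deducing $P(\sigma,\bar\sigma)\in\R^4$, which is what those references contain.
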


\begin{remark}\label{rmk: scaling the coefficients}
In general, there are two ways to control the size of the region of the invariant manifold that we parameterize. The first is by changing the size of the radius of the ball $B_\delta(0)$ in parameter space in which we allow $\sigma$ to vary. The second method fixes $\delta = 1$ so $\sigma \in B_1(0)$ and scales the coefficients of the parameterization with a diagonal matrix. See \cite{BredenLessardJames16,james2017validated} for more details. We note this corresponds to Remark \ref{rem:levers}, with the diagonal matrix playing the role that $c$ played previously. 
\end{remark}


\subsection{Construction of radii polynomials and a posteriori validation}\label{Ss: construction of rad poly}

In practice, we only compute finitely many coefficients of $P$, meaning that our Taylor expansion is just an approximation. We will use a fixed point argument to bound the error incurred by this truncation. Recalling that the stable manifold is $2$ dimensional and the phase space is $4$ dimensional, endow $(\ell^1_{\delta,2})^4 = \ell^1_{\delta,2} \oplus \ell^1_{\delta,2} \oplus \ell^1_{\delta,2} \oplus  \ell^1_{\delta,2}$ with the norm  
\[
\|(b_1, \dots, b_4) \|_{(\ell^1_{\delta,2})^4} = \sum_{1 \leq i \leq 4} \|b_i\|^1_{\delta,2}.
\]
Recalling that the coefficients of $P$ are given by the recursive relationship \eqref{E:rec-mnfld}, define the operator $\Psi:(\ell^1_{\delta,2})^4  \to (\ell^1_{\delta,2})^4$ via
\begin{equation}\label{E:def-psi}
\Psi(p^1, \dots, p^4)_{mn} := \big[DG(0)  - (m\mu_1^\rs + n\mu_2^\rs)I\big]^{-1} \begin{pmatrix} 0 \\ 0 \\ 0 \\ -\nnu (p^1 * p^1)_{mn} + (p^1 * p^1 * p^1)_{mn} \end{pmatrix}.
\end{equation}
By definition, a fixed point $P = (p^1, \dots, p^4)$ of $\Psi$ solves the recursive relation at all orders; keep in mind that this fixed point is really an entire sequence, $\{P_\alpha\} = \{(p_\alpha^1, p_\alpha^2, p_\alpha^3, p_\alpha^4)\}$. To prove that $\Psi$ has a fixed point, we consider the terms of order less than or equal to $N$ and the tail terms separately. Using the projection operators defined in \S \ref{S:analytic-framework}, we define the two operators
\begin{align*}
\Psi^N&: (X^N_{\delta,2})^4  \to (X^N_{\delta,2})^4 \qquad &\Psi^N & = \Pi^N \circ \Psi \\
\Psi^\infty&: (X^N_{\delta,2})^4 \oplus (X^\infty_{\delta,2})^4 \to (X^\infty_{\delta,2})^4 \qquad &\Psi^\infty & = \Pi^\infty \circ \Psi.
\end{align*}
Using these operators, we can decompose $\Psi$ as
\begin{equation}\label{E:separate}
\Psi(p^1, \dots, p^4) = \Psi^N\left((p^1)^N, \dots, (p^4)^N\right) + \Psi^\infty\left((p^1)^N, \dots, (p^4)^N, (p^1)^\infty, \dots, (p^4)^\infty \right).
\end{equation}
Note that the nonzero components of $\Psi^N$ are the recursive relations for $0 \leq |\alpha| \leq N$. 

Suppose that $\bar P^N = ((\bar p^1)^N, \dots, (\bar p^4)^N)$ is obtained by solving these equations up to order $N$. We note that this can be done with interval arithmetic to enclose the true solution to $ \Psi^N(P^N)=P^N$. We define the map $T: (X^\infty_{\delta,2})^4 \to (X^\infty_{\delta,2})^4$ via 
\begin{equation}\label{eq: tail operator T}
T\left((p^1)^\infty, \dots, (p^4)^\infty\right) =  \Psi^\infty\left((\bar p^1)^N, \dots, (\bar p^4)^N, (p^1)^\infty, \dots, (p^4)^\infty \right).
\end{equation}
Since solving the recursive relations to order $N$ gives a fixed point of $\Psi^N$, we only need to prove the existence of a fixed point of the map $T$ to prove there is a fixed point of the full map $\Psi$. We do this via the following so-called radii polynomial theorem (see, for example,  \cite{DayLessardMischaikow07,hungria2016rigorous,vandenBergJamesReinhardt16}). 

\begin{theorem}\label{thm: Radii Polynomial approach for Contraction Mapping Theorem} 
Suppose that $X$ is a Banach space, $T: X \to X$ is a Fr\'echet differentiable mapping, and $\bar x \in X$. Let $Y_0 > 0$ and $Z: (0, \infty) \to [0, \infty)$ satisfy
\[
\|T(\bar x) - \bar x \|_X  \leq Y_0, \qquad \sup_{x \in \overline{B_r(\bar x)}}\| DT(x)\|_{L(X)}  \leq Z(r).
\]
If there exists an $r_0 > 0$ such that
$\mathfrak p(r_0) < 0$, where
\begin{equation}\label{def: general radii poly}
\mathfrak p(r) = Z(r)r - r + Y_0,
\end{equation}
then there exists a unique $x_* \in \overline{B_{r_0}(\bar x)}$ so that $T(x_*) = x_*$. 
\end{theorem}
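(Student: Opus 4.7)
The plan is to apply the Banach fixed point theorem to the restriction of $T$ to the closed ball $\overline{B_{r_0}(\bar x)}$, which is itself a complete metric space as a closed subset of the Banach space $X$. The two things that need to be verified are (i) self-mapping, i.e.\ $T(\overline{B_{r_0}(\bar x)}) \subseteq \overline{B_{r_0}(\bar x)}$, and (ii) strict contractivity on this ball. Both will follow directly from the hypothesis $\mathfrak p(r_0) < 0$ together with a mean-value-type estimate coming from the bound on $DT$.

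For self-mapping, I would take any $x \in \overline{B_{r_0}(\bar x)}$ and estimate
\begin{equation*}
\|T(x) - \bar x\|_X \leq \|T(x) - T(\bar x)\|_X + \|T(\bar x) - \bar x\|_X.
\end{equation*}
Since $\overline{B_{r_0}(\bar x)}$ is convex and $T$ is Fr\'echet differentiable with $\|DT(y)\|_{L(X)} \leq Z(r_0)$ for all $y$ in the ball, the standard mean value inequality in Banach spaces (applied to $t \mapsto T(\bar x + t(x-\bar x))$) yields $\|T(x) - T(\bar x)\|_X \leq Z(r_0)\|x-\bar x\|_X \leq Z(r_0) r_0$. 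Combining with $\|T(\bar x) - \bar x\|_X \leq Y_0$ and the definition of $\mathfrak p$, I get
\begin{equation*}
\|T(x) - \bar x\|_X \leq Z(r_0) r_0 + Y_0 = \mathfrak p(r_0) + r_0 < r_0,
\end{equation*}
so $T(x) \in \overline{B_{r_0}(\bar x)}$.

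For contractivity, the same mean-value bound gives $\|T(x) - T(y)\|_X \leq Z(r_0)\|x - y\|_X$ for every $x, y \in \overline{B_{r_0}(\bar x)}$. From $\mathfrak p(r_0) < 0$ and $Y_0 > 0$ we deduce $Z(r_0) < 1 - Y_0/r_0 < 1$, so $T$ is a strict contraction on the ball with Lipschitz constant $\kappa := Z(r_0) < 1$. The Banach fixed point theorem then produces a unique $x_* \in \overline{B_{r_0}(\bar x)}$ with $T(x_*) = x_*$, which is the desired conclusion.

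The argument is essentially routine once one has the mean value inequality in hand, so there is no serious obstacle; the only small subtlety is to make sure the mean value estimate is applied on a convex set (which $\overline{B_{r_0}(\bar x)}$ is) so that the line segment from $\bar x$ to $x$, and more generally from $x$ to $y$, stays inside the region where $\|DT\| \leq Z(r_0)$. Uniqueness within the ball is automatic from the contraction property, and there is no claim of global uniqueness on $X$, so no further work is needed.
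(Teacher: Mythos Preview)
Your proof is correct and follows the standard contraction-mapping argument. The paper itself does not supply a proof of this theorem; it states the result and cites \cite{DayLessardMischaikow07,hungria2016rigorous,vandenBergJamesReinhardt16}, where exactly the argument you wrote (mean value inequality on the convex ball to establish self-mapping and contractivity, then Banach fixed point) is the standard route.
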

In our setting, we will have  $X = (X^\infty_{\delta,2})^4$ and $\bar x = 0$, since our initial guess at a solution is $\bar P^N + 0$, and $T$ will be as defined in \eqref{eq: tail operator T}. One important consequence of this Theorem is that it tells us that the tail of the power series that we didn't compute, $x_*$, lives in the ball of radius $r_0$, and hence is small in this sense. This will allow us to quantify the error made in only computing the first $N$ terms of the series. We also note that $\mathfrak p$ is often referred to as a radii polynomial.

To apply this Theorem to our context, suppose we use \eqref{E:rec-mnfld} to calculate the coefficients $P_{mn}$ up to order $m+n = N$. The next step is to bound the error incurred by this truncation. First, we formulate the Newton-like operator for the stable manifold of this system. Define 
\[
\hat G^0_{mn} = \big[Dg(0)  - (m\mu_1^\rs+n\mu_2^\rs)I\big]^{-1}
\]
so that \eqref{E:rec-mnfld} becomes 
\[
P_{mn}  = \hat G^0_{mn}  \big[0, 0, 0, -\nnu (p^1 * p^1)_{mn} + (p^1 * p^1 * p^1)_{mn} \big]^T 
\]
and we can write $\Psi$ from \eqref{E:def-psi} as
\[
\Psi(a,b,c,d)_{mn} = \hat G^0_{mn}  \big[0, 0, 0, -\nnu (p^1 * p^1)_{mn} + (p^1 * p^1 * p^1)_{mn} \big]^T.
\]
Focusing on the tail terms, we can write our Newton-like operator from \eqref{eq: tail operator T} as
\begin{equation}\label{eq: T} 
T\left((p^1)^\infty, \dots, (p^4)^\infty\right)_{mn} =  \hat G^0_{mn}\begin{pmatrix} 0 \\ 0 \\ 0 \\ \hat T^4((\bar p^1)^N, (p^1)^\infty)_{mn} \end{pmatrix},
\end{equation}
where
\begin{align*}
& \hat T^4((\bar p^1)^N, (p^1)^\infty)_{mn} = \\
& \quad -\nnu[(\bar p^1)^N +(p^1)^\infty] * [(\bar p^1)^N +(p^1)^\infty] _{mn} + [(\bar p^1)^N +(p^1)^\infty] * [(\bar p^1)^N +(p^1)^\infty]   * [(\bar p^1)^N +(p^1)^\infty] _{mn}
\end{align*}
Given the data $((\bar p^1)^N, (\bar p^2)^N, (\bar p^3)^N, (\bar p^4)^N) \in \overline{B_\delta(0)} \subset (X^N_{1,2})^4$, we wish to show that $T$ is a contraction in a small neighborhood of the origin. We have the following lemma that characterizes the bounds given in Theorem \ref{thm: Radii Polynomial approach for Contraction Mapping Theorem}. We fix $\delta = 1$ as per Remark \ref{rmk: scaling the coefficients}. 
\begin{lemma}\label{lemma: SH radii poly for mflds}
Define  
\begin{align*}
 K_N & := 
 \frac{\|\hat{\mathcal{V}}\|}{N\sqrt{\rho} \cos(\theta/2)}
\left( 
\max_{1 \leq i \leq 4 }
\big| (
\hat{\mathcal{V}}^{-1})^{i,4} \big| 
\right) \\
Y_0 & := K_N \left( \nnu\sum_{m+n = N+1}^{2N}  \big| ((\bar p^1)^N * (\bar p^1)^N)_{mn} \big| + \sum_{m+n =N+1}^{3N} \big| ((\bar p^1)^N  * (\bar p^1)^N  * (\bar p^1)^N)_{mn} \big| \right) \\
Z_1 & = K_N \left( 2 \nnu \sum_{1 \leq m+n \leq N} \left|(\bar p^1)^N_{mn} \right| 
+ 3\sum_{1 \leq m+n \leq 2 N} \left| \left( (\bar p^1)^N  * (\bar p^1)^N\right)_{mn} \right|   \right) \\
Z_2(r) & = K_N \left(2\nnu  + 3r + 6 \sum_{1 \leq m+n \leq N } \left|(\bar p^1)^N \right| \right),
\end{align*}
where $\hat{\mathcal{V}}$ denotes the matrix whose columns are the eigenvectors of $DG(0)$. If $T$ is given by \eqref{eq: T}, then 
\begin{align}
\|T(0) - 0\|_{(X_{1,2}^\infty)^4} &  \leq Y_0 \nonumber  \\
\sup_{P^\infty \in \overline{B_\delta(0)}} \| DT\left(P^\infty\right)\|_{L((X_{1,2}^\infty)^4)} & \leq Z_1 + Z_2(r)r \label{E:radii-bounds}
\end{align}
\end{lemma}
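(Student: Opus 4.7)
The plan is to extract both estimates from a single uniform bound on the linear operator $\hat G^0_{mn}$ appearing in \eqref{eq: T}, then to treat the two required bounds by direct algebra on the Cauchy products. First I would estimate $K_N$ by exploiting that each $\hat G^0_{mn}$ is applied only to a vector with support in the last coordinate, so only the fourth column matters. Diagonalising via $DG(0)=\hat{\mathcal V}\,\mathrm{diag}(\mu_1^\rs,\mu_2^\rs,\mu_1^\ru,\mu_2^\ru)\,\hat{\mathcal V}^{-1}$ gives
\[
\hat G^0_{mn}\,e_4=\hat{\mathcal V}\,\mathrm{diag}\!\left(\tfrac{1}{\mu_j-(m\mu_1^\rs+n\mu_2^\rs)}\right)(\hat{\mathcal V}^{-1})_{\cdot,4}.
\]
Using \eqref{E:B-infty-evals}--\eqref{E:def-r-theta}, $m\mu_1^\rs+n\mu_2^\rs=-\sqrt{\rho}(m e^{\mathrm{i}\theta/2}+n e^{-\mathrm{i}\theta/2})$, whose real part equals $-\sqrt{\rho}\cos(\theta/2)(m+n)$. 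Since the real parts of the eigenvalues $\mu_j$ are $\pm\sqrt{\rho}\cos(\theta/2)$, a case split on stable/unstable $\mu_j$ yields $|\mu_j-(m\mu_1^\rs+n\mu_2^\rs)|\ge N\sqrt{\rho}\cos(\theta/2)$ whenever $m+n\ge N+1$. Combined with the sub-multiplicativity of the matrix norm and taking the $\max$ over $i$ of $|(\hat{\mathcal V}^{-1})^{i,4}|$, this yields $\|\hat G^0_{mn}e_4\|\le K_N$ uniformly on $\{m+n\ge N+1\}$, which is the key bound powering everything that follows.

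For $Y_0$, evaluate $T(0)$: the only nonzero component of the inner vector is the fourth, equal to $\hat T^4((\bar p^1)^N,0)_{mn}=-\nnu\bigl((\bar p^1)^N*(\bar p^1)^N\bigr)_{mn}+\bigl((\bar p^1)^N*(\bar p^1)^N*(\bar p^1)^N\bigr)_{mn}$. Because the Cauchy product of two (resp.\ three) polynomials of total degree $\le N$ vanishes for $m+n>2N$ (resp.\ $3N$), projecting onto $(X_{1,2}^\infty)^4$ restricts the indices to $N+1\le m+n\le 2N$ and $N+1\le m+n\le 3N$ respectively. Summing those contributions in the $\|\cdot\|_{1,2}^1$ norm with $\delta=1$ and applying the uniform bound by $K_N$ gives exactly the stated expression for $Y_0$.

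For the Lipschitz-type bound, observe that $\hat T^4$ depends only on the first coordinate, so for any admissible direction $(h^1,\dots,h^4)$,
\[
D_{p^1}\hat T^4((\bar p^1)^N,(p^1)^\infty)\cdot h^1=-2\nnu\bigl[(\bar p^1)^N+(p^1)^\infty\bigr]*h^1+3\bigl[(\bar p^1)^N+(p^1)^\infty\bigr]^{*2}*h^1.
\]
I would split this into the part at $(p^1)^\infty=0$ and the remainder, using $\|(p^1)^\infty\|_{1,2}^1\le r$ and the Banach-algebra property of $\ell^1_{1,2}$ (and hence of its tail). The term at $(p^1)^\infty=0$ contributes $2\nnu\|(\bar p^1)^N\|+3\|(\bar p^1)^N*(\bar p^1)^N\|$, which after multiplication by $K_N$ is precisely $Z_1$. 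The remainder produces $2\nnu r+6\|(\bar p^1)^N\|r+3r^2$ after expanding $[(\bar p^1)^N+(p^1)^\infty]^{*2}-((\bar p^1)^N)^{*2}=2(\bar p^1)^N*(p^1)^\infty+((p^1)^\infty)^{*2}$ and estimating in $\ell^1$; factoring one $r$ out and multiplying by $K_N$ gives $Z_2(r)r$. Taking the supremum over $P^\infty\in\overline{B_r(0)}$ and over unit directions $h$ yields \eqref{E:radii-bounds}.

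The main obstacle is the uniform estimate on $\|\hat G^0_{mn}\|$: everything downstream is a routine Banach-algebra computation, but one must be careful that the lower bound $N\sqrt{\rho}\cos(\theta/2)$ for the spectral shift is genuinely $N$-linear (rather than collapsing near the resonance set), and that one only picks up the fourth column of $\hat{\mathcal V}^{-1}$ so that the constant in $K_N$ is as sharp as stated rather than the worst-case $\|\hat{\mathcal V}^{-1}\|$.
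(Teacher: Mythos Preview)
Your proposal is correct and follows essentially the same route as the paper: diagonalise $DG(0)$ via $\hat{\mathcal V}$, use the real-part lower bound $|\mu_j-(m\mu_1^\rs+n\mu_2^\rs)|\ge N\sqrt{\rho}\cos(\theta/2)$ on the tail indices, observe that only the fourth column of $\hat G^0_{mn}$ enters, and then push the resulting $K_N$ through the Banach-algebra estimates for $Y_0$ and the Fr\'echet derivative. The one detail you glossed over is how the sum $\sum_j\max_k|(\hat{\mathcal V})^{j,k}|$ collapses to exactly $\|\hat{\mathcal V}\|$ (the $\ell^1$ operator norm): the paper uses that, by \eqref{E:dg-evecs}, the $j$th component of each eigenvector has the same absolute value, so the sum over $j$ of the row-wise maxima equals the column-wise maximum of the row sums.
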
 
\begin{proof} Denote the entry of the matrix $\hat G^0_{mn}$ in row $i$ and column $j$ as $(\hat G^0_{mn})^{i,j}$ and define the operator $(\hat G^0)^{ij}: X_{1,2}^\infty \to X_{1,2}^\infty$ componentwise via 
\[
((\hat G^0)^{i,j}b)_{mn} = \begin{cases} 0 & \text{ if } 0 \leq m+n \leq N \\ 
(\hat G^0_{mn})^{i,j} b_{mn} & \text{ if } m+n \geq N+ 1
\end{cases}.
\]
We can then write $T = (T^1, T^2, T^3, T^4)$ as 
\begin{align*}
&T^j \left((p^1)^\infty, \dots, (p^4)^\infty\right) = (\hat G^0)^{j,4}  \hat T^4((\bar p^1)^N, (p^1)^\infty)_{mn} =   \\
& \quad  (\hat G^0)^{j,4} \bigg( -\nnu \big((\bar p^1)^N \hat * (\bar p^1)^N \big)^\infty - 2\nnu \big( (\bar p^1)^N \hat * (p^1)^\infty \big) - \nnu\big((p^1)^\infty \hat * (p^1)^\infty \big) + \big( (\bar p^1)^N \hat * (\bar p^1)^N \hat * (\bar p^1)^N \big)^\infty \\
		& \qquad \quad +3\big( (\bar p^1)^N \hat *(\bar p^1)^N \hat * (p^1)^\infty \big) + 3 \big( (\bar p^1)^N \hat * (p^1)^\infty \hat * (p^1)^\infty \big) + \big( (p^1)^\infty \hat * (p^1)^\infty \hat * (p^1)^\infty \big) \bigg).
\end{align*} 
 
In addition, let $\hat{\mathcal{V}}$ denote the matrix of eigenvectors associated with $DG(0)$, so that $DG(0) = \hat{\mathcal{V}} \mathrm{diag}(\mu_1^\rs, \mu_2^\rs, \mu_1^\ru, \mu_2^\ru)\hat{\mathcal{V}}^{-1}$. Then we obtain 
\[
\hat G^0_{mn} = [DG(0) - (m \mu_1^\rs + n \mu_2^\rs)I]^{-1} = \hat{\mathcal{V}}[\mathrm{diag}(\mu_1^\rs, \mu_2^\rs, \mu_1^\ru, \mu_2^\ru) - (m \mu_1^\rs + n \mu_2^\rs)I]^{-1} \hat{\mathcal{V}}^{-1}.
\]
To bound $(\hat G^0)^{j,4}$ we may write
	\[
	(\hat G^0_{mn})^{j,4}
	=
	\left(
	\begin{smallmatrix}
		0\, \cdots \,1  \,\cdots \,0
	\end{smallmatrix}
	\right)
	\hat{\mathcal{V}}[\mathrm{diag}(\mu_1^\rs, \mu_2^\rs, \mu_1^\ru, \mu_2^\ru) - (m \mu_1^\rs + n \mu_2^\rs)I]^{-1} \hat{\mathcal{V}}^{-1}
	\left(\begin{smallmatrix}
		0 \\ 
		0 \\
		0 \\
		1
	\end{smallmatrix}\right),
	\]
where $\left(\begin{smallmatrix} 0\, \cdots \,1  \,\cdots \,0 \end{smallmatrix} \right)$ denotes the row vector with a $1$ in the $j^{\mathrm{th}}$ spot, and zeros elsewhere. One may then compute  
	\[
	\big|(\hat G^0_{mn})^{j,4} \big| \leq 
	\left( 
	\max_{1 \leq k \leq 4 }
	\big| (
	\hat{\mathcal{V}})^{j,k} \big| 
	\right)
	\left(
		\sup_{m+n \geq N+1} \max_{i, \rs, \ru} 
		\frac{1}{|\mu_i^{\rs, \ru} - (m \mu_1^\rs + n \mu_2^\rs)| }
		\right)
		\left( 
		\max_{1 \leq i \leq 4 }
		\big| (
		\hat{\mathcal{V}}^{-1})^{i,4} \big| 
		\right).
	\]
To bound the middle diagonal matrix, recall that $\mu_1^\rs = -\sqrt{\rho}e^{i\theta/2}$, $ \mu_2^\rs =  -\sqrt{\rho}e^{-i\theta/2}$, and  $\mu_i^\ru = - \mu_i^\rs $ for $ i=1,2$. Using the fact that $|z|^2 \geq (\mathrm{Re}z)^2$, we find
	\begin{align}
		\inf_{m+n \geq N+1} \min_{i, \rs, \ru} |\mu_i^{\rs, \ru} - (m \mu_1^\rs + n \mu_2^\rs)| &\geq \inf_{m+n \geq N+1} |\sqrt{\rho}(m+n)\cos (\theta/2) \pm \sqrt \rho \cos(\theta/2)|
		\nonumber \\
		& \geq N\sqrt{\rho} \cos(\theta/2). \label{eq:K_N_bound}
	\end{align}
We thereby obtain

\begin{align*}
		\sum_{1\leq j \leq 4} \|(\hat G^0)^{j,4}\|_{L( X^\infty_{1,2})} 
	&\leq 
	\sum_{1\leq j \leq 4} 
	\left( 
	\max_{1 \leq k \leq 4 }
	\big| (
	\hat{\mathcal{V}})^{j,k} \big| 
	\right)
	\frac{1}{N\sqrt{\rho} \cos(\theta/2)}
	\left( 
	\max_{1 \leq i \leq 4 }
	\big| (
	\hat{\mathcal{V}}^{-1})^{i,4} \big| 
	\right) \\
	& = \frac{\|\hat{\mathcal{V}}\|}{N\sqrt{\rho} \cos(\theta/2)}
		\left( 
	\max_{1 \leq i \leq 4 }
	\big| (
	\hat{\mathcal{V}}^{-1})^{i,4} \big| 
	\right) = K_N,
\end{align*}
where we recall the $\ell^1$ operator norm of a finite matrix $ A$ is given as $ \| A \|  = \max_j \sum_i  |A^{i,j}|$, and we can interchange the sup and the sum because, as given in \eqref{E:dg-evecs}, the   $j^{\mathrm{th}}$ component of each eigenvector in  has the same absolute value.

We can now prove the first bound in \eqref{E:radii-bounds}. We have  
\begin{align*} 
&\|T(0,0,0,0)  - 0\|_{(X_{1,2}^\infty)^4} = \sum_{1 \leq j \leq 4} \| (\hat G^0)^{j,4} \hat T^4(0,0,0,0)\|_{X^\infty_{1,2}} \\
&  \qquad  =  \sum_{1 \leq j \leq 4} \left\| (\hat G^0)^{j,4} \right\|
 \left\|    -\nnu \big( (\bar p^1)^N  * (\bar p^1)^N \big)^\infty + \big( (\bar p^1)^N * (\bar p^1)^N * (\bar p^1)^N \big)^\infty   \right\|_{X^\infty_{1,2}} 
 \\
& \qquad \leq K_N
\left( \nnu \left\| \left((\bar p^1)^N * (\bar p^1)^N \right)^\infty \right\|_{1,2}^1 + \left\| 
\left((\bar p^1)^N  * (\bar p^1)^N  * (\bar p^1)^N \right)^\infty \right\|_{1,2}^1  \right) \\
& \qquad = K_N \left( \nnu\sum_{|\alpha| = N+1}^{2N}  \big| ((\bar p^1)^N  * (\bar p^1)^N)_{\alpha} \big| + \sum_{|\alpha| =N+1}^{3N} \big| ((\bar p^1)^N  * (\bar p^1)^N  * (\bar p^1)^N)_{\alpha} \big| \right) \\
& \qquad =Y_0.
\end{align*}

To prove the second bound in \eqref{E:radii-bounds}, we must characterize the Fr\'echet derivatives of $T$. Looking at the components of $T^j$, the partial derivatives corresponding to $(\bar p^{2,3,4})^N$ will all vanish, so all we have to do is compute the partial derivative associated with $(\bar p^1)^N$. If $h \in X_{1,2}^\infty$, this partial derivative will have the action
\begin{align*}
& D_1 T^j \left((p^1)^\infty, \dots, (p^4)^\infty\right)  h = \\
& \qquad  (\hat G^0)^{j,4} \bigg( -2\nnu((\bar p^1)^N * h) - 2\nnu (h * (p^1)^\infty) + 3\big( (\bar p^1)^N * (\bar p^1)^N * h \big) +6\big((\bar p^1)^N * h  * (p^1)^\infty \big)  \\
& \qquad \qquad \qquad \qquad +3(h * (p^1)^\infty * (p^1)^\infty) \bigg),
\end{align*}
where we have used the fact that $*$ is a commutative operation. In addition, note that the partials are all continuous, since they quadratic in the operation $\ast$. For any $ \left((p^1)^\infty, \dots, (p^4)^\infty\right)  \in \overline{B_r(0)} \subseteq \left( X_{1,2}^\infty \right)^4$, we have 
\begin{align*}
& \| DT\left((p^1)^\infty, \dots, (p^4)^\infty\right) \|_{L((X_{1,2}^\infty)^4)}  = \max_{1 \leq k \leq 4} \sum_{j=1}^4 \|D_k T^j\left((p^1)^\infty, \dots, (p^4)^\infty\right)\|_{L(X_{1,2}^\infty)} \\
& \qquad  = \sum_{1 \leq j \leq 4} \sup_{\|h\| = 1} \|D_1T^j\left((p^1)^\infty, \dots, (p^4)^\infty\right)h \|_{X_{1,2}^\infty}\\
& \qquad \leq K_N \Big( 2 \nnu \| (\bar p^1)^N\|_{X_{1,2}^N}+ 2\nnu \|(p^1)^\infty\|_{X_{1,2}^\infty} + 3 \| (\bar p^1)^N *  (\bar p^1)^N \|_{X_{1,2}^N}  \\
& \qquad \qquad \qquad \qquad \qquad \qquad \qquad  +6\|(p^1)^\infty\|_{X_{1,2}^\infty}\| (\bar p^1)^N \|_{X_{1,2}^N} + 3 \|(p^1)^\infty\|_{X_{1,2}^\infty}^2 \Big) \\
& \qquad \leq  K_N \left( 2 \nnu \| (\bar p^1)^N \|_{X_{1,2}^N}+ 2\nnu r + 3 \| (\bar p^1)^N *  (\bar p^1)^N  \|_{X_{1,2}^N}   + 6r \| (\bar p^1)^N \|_{X_{1,2}^N} + 3 r^2 \right) \\
& \qquad = Z_1 + Z_2(r)r.
\end{align*} 
\end{proof}

We now explain how this Lemma allows us to conclude that the assumptions of Theorem \ref{thm: Radii Polynomial approach for Contraction Mapping Theorem} hold, and hence why we can conclude the existence of a fixed point of the map $T$, and thus also of the full map $\Psi$. With $Y_0$ and $Z(r)$ as in the statement of Lemma \ref{lemma: SH radii poly for mflds}, we can define
\[
\mathfrak p(r) = (Z_1+Z_2r)r -r+Y_0 = Z_2r^2 + (Z_1-1)r + Y_0.
\]
Notice that $K_N \to 0$ as $N \to \infty$. Making $K_N$ small will also make $Z_1$, $Y_0$, and the scaling factor in $Z_2$ small. In other words, 
\[
\mathfrak p(r) = K_N\left( \frac{Z_2(r)}{K_N} r^2 + \frac{Z_1}{K_N} r + \frac{Y_0}{K_N} \right) - r,
\]
where everything inside the parentheses stays bounded as $K_N$ gets small. This allows us to force the existence of an $r_0 > 0$ such that $\mathfrak p(r_0) < 0$, and hence also force the existence of the desired fixed point, by choosing $N$ sufficiently large. Moreover, by making $N$ larger, we can make $r_0$ smaller, and thus ensure that our truncated Taylor series is as good an approximation as we wish. 


\section{The nonresonant unstable bundle over the unstable manifold}
\label{Ch:nonres bundles}
\label{S:nonres methods}

In this section we focus on the blue dot in Figure \ref{fig:computation-depiction}. In particular, we seek to determine a high-order approximation of $\mathbb{E}^\ru_-(-L^-_{\mathrm{conj}}; 0)$. This subspace is exactly the unstable vector bundle over the unstable manifold, evaluated along the solution $\varphi$ that lies in the unstable manifold. To compute its Taylor expansion, we will use the parameterization method as outlined in \S\ref{S:param method for bundles}. We will break things up into two steps. First, we will determine how to choose $L^-_{\mathrm{conj}}$ sufficiently large so that both the conditions of \S\ref{S:L-} are satisfied. Next, we will determine a Taylor expansion for the vector bundle at $-L^-_{\mathrm{conj}}$. 


\subsection{Determining $L^-_{\mathrm{conj}}$}

There were two key requirements on $L^-_{\mathrm{conj}}$ that were obtained in \S\ref{S:L-}, and we review them now. First, in Proposition \ref{prop: unstable vec convergence}, the quantity
\[
\tau_{\mathcal F_-}(L^-_{\mathrm{conj}}) = \frac{K K_B}{C_B} e^{-C_B L^-_{\mathrm{conj}}},
\]
was defined, where $C_B$ and $K_B$ are as in \eqref{E:B-decay} and $K$ was defined in \eqref{E:KQ-}. Moreover, this Proposition required that $L^-_{\mathrm{conj}}$ was chosen sufficiently large so that $\tau_{\mathcal F_-}(L^-_{\mathrm{conj}}) < 1$. Second, in Proposition \ref{prop: L-_alt} it was required that $L^-_{\mathrm{conj}}$ was chosen sufficiently large so that 
\[
\frac{\tau_{\mathcal F_-}(L^-_{\mathrm{conj}})}{1-\tau_{\mathcal F_-}(L^-_{\mathrm{conj}})} <  \frac{1}{8 \rho^{3/2}}.
\]
In this section, we determine a computationally tractable way to bound $\tau_{\mathcal F_-}(L^-_{\mathrm{conj}})$, so that it can be verified that both of these conditions hold. 

In the below, $\bm{\varphi}$ denotes the vector-valued pulse, which is a solution of \eqref{E:SH-vf}.

\begin{theorem}\label{thm: compute L-}
Suppose that $\mathcal{M}^\ru_{loc}(0)$ is parameterized by the function $P(\sigma)$ with $P:B_1(0) \subseteq \C^2 \to \C^4$ and that there exists a $\sigma \in B_1(0)$ such that $\bm{\varphi}(0) = P(\sigma)$. Then 
\begin{equation}\label{E:tau-bound}
\tau_{\mathcal F_-}(L^-_{\mathrm{conj}})  \leq  \frac{Ke^{-|\mathrm{Re}\mu_1^\ru|L^-_{\mathrm{conj}}}}{|\mathrm{Re}\mu_1^\ru|} \|P\|_{1,2}^1\left(2|\nnu| + 6  \|P\|_{1,2}^1\right),
\end{equation}
where $\|P\|_{1,2}^1$ is the norm of the sequence of coefficients for the Taylor series of $P$. 
\end{theorem}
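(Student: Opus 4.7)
The plan is to identify explicit computable constants $K_B$ and $C_B$ realizing the decay bound \eqref{E:B-decay} in terms of the parameterization $P$, and then substitute them into the formula $\tau_{\mathcal{F}_-}(L^-_{\mathrm{conj}}) = (KK_B/C_B)\, e^{-C_B L^-_{\mathrm{conj}}}$ from Proposition \ref{prop: unstable vec convergence}. The heart of the argument is translating the analytic estimate on $P$ into an exponential decay estimate on $\bm{\varphi}(x)$, and hence on $B(x) - B_\infty$, valid for all $x \le 0$.

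First I would use the conjugacy satisfied by the parameterization of $\mathcal{M}^\ru_{\mathrm{loc}}(0)$, which says that if $\sigma_0 \in B_1(0)$ is chosen so that $\bm{\varphi}(0) = P(\sigma_0)$, then $\bm{\varphi}(x) = P(e^{\Omega^\ru x}\sigma_0)$ for all $x \leq 0$ (this stays in $B_1(0)$ because $\mathrm{Re}\,\mu_j^\ru > 0$ for $j=1,2$). Since $\mathrm{Re}\,\mu_1^\ru = \mathrm{Re}\,\mu_2^\ru$, we have $|e^{\mu_j^\ru x}\sigma_{0,j}| \leq e^{\mathrm{Re}\,\mu_1^\ru x}$ for $x \leq 0$ and thus $|(e^{\Omega^\ru x}\sigma_0)^\alpha| \leq e^{\mathrm{Re}\,\mu_1^\ru x |\alpha|}$. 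Using $P_0 = 0$ and expanding $P$ as a Taylor series, I factor out a single copy of $e^{\mathrm{Re}\,\mu_1^\ru x}$:
\[
|\bm{\varphi}(x)| \leq \sum_{|\alpha|\geq 1} |P_\alpha|\, e^{\mathrm{Re}\,\mu_1^\ru x|\alpha|} \leq e^{\mathrm{Re}\,\mu_1^\ru x}\sum_{|\alpha|\geq 1} |P_\alpha| = \|P\|_{1,2}^1\, e^{-|\mathrm{Re}\,\mu_1^\ru|\,|x|},
\]
where the remaining exponential factors $e^{\mathrm{Re}\,\mu_1^\ru x(|\alpha|-1)}$ are bounded by $1$ for $x \leq 0$. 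In particular, this bounds $|\varphi(x)|$ (the first component) by the same quantity.

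Next, since $\tilde B(x) = B(x) - B_\infty$ has the single nonzero entry $f'(\varphi(x)) - f'(0)$ (see \eqref{E:Btilde}) and $f''(v) = 2\nnu - 6v$, applying the mean value theorem on $f'$ and then bounding $|v| \leq \|P\|_{1,2}^1$ for $v$ on the line segment between $0$ and $\varphi(x)$ gives
\[
\|\tilde B(x)\| = |f'(\varphi(x)) - f'(0)| \leq |\varphi(x)|\bigl(2|\nnu| + 6\|P\|_{1,2}^1\bigr) \leq \|P\|_{1,2}^1\bigl(2|\nnu| + 6\|P\|_{1,2}^1\bigr) e^{-|\mathrm{Re}\,\mu_1^\ru|\,|x|}.
\]
Thus the choices $C_B = |\mathrm{Re}\,\mu_1^\ru|$ and $K_B = \|P\|_{1,2}^1(2|\nnu| + 6\|P\|_{1,2}^1)$ are admissible in \eqref{E:B-decay}. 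Substituting these into the definition of $\tau_{\mathcal{F}_-}$ yields precisely \eqref{E:tau-bound}.

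The main obstacle is the first step: turning an analytic/sequence-space statement about $P$ into a pointwise exponential estimate on the underlying orbit $\bm{\varphi}$. The clean factorization above works only because $P_0 = 0$ and because the two unstable eigenvalues share a common real part, so that a single exponential rate governs both directions in parameter space; verifying that $e^{\Omega^\ru x}\sigma_0$ remains in $B_1(0)$ for all $x \leq 0$ is what makes the identification $\bm{\varphi}(x) = P(e^{\Omega^\ru x}\sigma_0)$ globally valid on the half-line. After that, the mean-value bound on $f'$ and the mechanical substitution into the formula for $\tau_{\mathcal{F}_-}$ are routine.
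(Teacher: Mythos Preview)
Your proposal is correct and follows essentially the same route as the paper: use the conjugacy $\bm{\varphi}(x)=P(e^{\Omega^\ru x}\sigma_0)$ for $x\le 0$, exploit $P_0=0$ together with $\mathrm{Re}\,\mu_1^\ru=\mathrm{Re}\,\mu_2^\ru>0$ to pull out a single factor $e^{-|\mathrm{Re}\,\mu_1^\ru|\,|x|}$ bounded by $\|P\|_{1,2}^1$, apply the mean value theorem to $f'$ with $|f''(v)|\le 2|\nnu|+6\|P\|_{1,2}^1$, and substitute the resulting $K_B,C_B$ into the definition of $\tau_{\mathcal F_-}$. Your write-up is in fact slightly cleaner than the paper's, which introduces the shift $\tilde x = x - L^-_{\mathrm{conj}}$ into the intermediate estimates before arriving at the same $K_B$ and $C_B$.
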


Note that, similar to what was discussed in Remarks \ref{rem:levers} and \ref{rmk: scaling the coefficients}, we have the freedom to choose the translate of the pulse that we work with. This allows us to choose a translate so that we have $\bm{\varphi}(0) \in P(B_1(0))$, and hence a $\sigma$ as required in the above Theorem exists. This fixes the translate of the pulse. The point of the above theorem is that, once we fix such a translate, we can further choose $L^-_{\mathrm{conj}}$ large enough so that we can make the right hand side of \eqref{E:tau-bound} as small as we need to ensure that any conditions on $\tau_{\mathcal{F}_-}$ are met. We also note that \cite{BeckJaquettePieperStorm25} will contain a validated existence proof of $\bm{\varphi}$ using a boundary value problem that will enforce that $\bm{\varphi}(0) = P(\sigma)$ for some $\sigma \in B_1(0)$. 

Recall that $\Phi(x; P(\sigma)) = P\left(e^{\Omega^\rs x} \sigma \right)$, where $\Phi$ is the flow associated with \eqref{E:exist2}. If $\sigma_0$ is chosen so that $P(\sigma_0) = \bm{\varphi}(x_0)$ for some $x_0$, meaning we have chosen a point on the manifold that coincides with a point on the pulse, then by the properties of the flow, for any $\tilde x \leq 0$, we have
\[
\bm{\varphi}(x_0 + \tilde x) = \Phi(\tilde x, \bm{\varphi}(x_0)) = \Phi(\tilde x, P(\sigma_0)) = P\left(e^{\Omega^\rs \tilde x} \sigma_0 \right) = \sum_{m = 0}^\infty\sum_{n = 0}^\infty P_{mn} \left(e^{\mu_1^\ru \tilde x} \sigma_{0, 1} \right)^m \left(e^{\mu_2^\ru \tilde x} \sigma_{0, 2} \right)^n.
\]
Note that the sum on the right has no explicit dependence on $x_0$; the value of $x_0$ is encoded in $\sigma_0 = (\sigma_{0, 1}, \sigma_{0, 2})$. Let $x_0 = 0$ and $\tilde x = -L^-_{\mathrm{conj}} + x$, for $x \leq 0$. This gives
\begin{equation}\label{E:param-pulse}
\bm{\varphi}(\tilde x) = \sum_{m = 0}^\infty\sum_{n = 0}^\infty P_{mn} \left(e^{\mu_1^\ru (x - L^-_{\mathrm{conj}})} \sigma_{0, 1} \right)^m \left(e^{\mu_2^\ru (x - L^-_{\mathrm{conj}})} \sigma_{0, 2} \right)^n.
\end{equation}
This will allow us to view $L^-_{\mathrm{conj}}$ as setting some initial bound on the size of the solution, with $x \leq 0$ providing further decay.  

\begin{proof} First recall that $C_B$ and $K_B$ are as in \eqref{E:B-decay}. We are interested in determining values of these constants such that in particular
\[
\| B(x) - B_\infty\| \leq K_B e^{-C_B|x|}, \qquad x \leq -L^-_{\mathrm{conj}}.
\]
Using the definition of $B(x)$ and $B_\infty$ in \eqref{E:eval-0} and \eqref{E:eval-0-infty}, we find
\[
\| B(x) - B_\infty \|  = | f'(\varphi(x)) - f'(0)|  \leq \sup_{|c| \leq |\varphi(x)|} |f''(c)||\varphi(x)| \leq (2|\nnu| + 6|\varphi(x)|)|\varphi(x)|. 
\]
To bound $|\varphi(x)|$, we use \eqref{E:param-pulse}. We then find
\[
|\varphi(x)| \leq \|\bm{\varphi}(x)\| = \left| \sum_{m = 0}^\infty\sum_{n = 0}^\infty P_{mn} \left(e^{\mu_1^\ru (x + L^-_{\mathrm{conj}})} \sigma_{0, 1} \right)^m \left(e^{\mu_2^\ru (x + L^-_{\mathrm{conj}})} \sigma_{0, 2} \right)^n\right|.
\]
Recall that $p_{00} = 0$, so each term in the above sum has at least one factor of $e^{\mu_1^\ru x}$ or $e^{\mu_2^\ru x}$. The real parts of $\mu_1^\ru, \mu_2^\ru$ are equal and positive, and $|\sigma| \leq 1$, so we can pull out a factor of $e^{\mathrm{Re}\mu_1^\ru x}$ and the remaining terms will all be bounded. In other words, 
\[
|\varphi(x)| \leq \left| e^{\mathrm{Re}\mu_1^\ru (x+L^-_{\mathrm{conj}})}\sum_{m = 0}^\infty\sum_{n = 0}^\infty P_{mn}\right|  \leq e^{\mathrm{Re}\mu_1^\ru (x+L^-_{\mathrm{conj}})} \|P\|_{1,2}^1.
\]
Thus, we have 
\[
\| B(x) - B_\infty \| \leq e^{\mathrm{Re}\mu_1^\ru (x+L^-_{\mathrm{conj}})} \|P\|_{1,2}^1\left(2|\nnu| + 6  e^{\mathrm{Re}\mu_1^\ru (x+L^-_{\mathrm{conj}})} \|P\|_{1,2}^1\right).
\]
Therefore, if we define $C_B = \mathrm{Re}\mu_1^\ru$ and 
\[
K_B = \|P\|_{1,2}^1\left(2|\nnu| + 6  \|P\|_{1,2}^1\right),
\]
we have
\[
\|B(x) - B_\infty\| \leq K_B e^{-C_B |x+L^-_{\mathrm{conj}}|}, \qquad x \leq -L^-_{\mathrm{conj}}.
\]
Finally, from Proposition \ref{prop: unstable vec convergence}, we see that 
\[
\tau_{\mathcal A_-}(L^-_{\mathrm{conj}})  \leq \frac{K K_B}{C_B}e^{-C_B L^-_{\mathrm{conj}}}  = \frac{Ke^{-\mathrm{Re}\mu_1^\ru L^-_{\mathrm{conj}}}}{\mathrm{Re}\mu_1^\ru} \|P\|_{1,2}^1\left(2|\nnu| + 6  \|P\|_{1,2}^1\right).
\]
\end{proof}


\subsection{$\mathbb E^\ru_-(-L^-_{\mathrm{conj}})$ and its associated contraction operator}\label{sec: param method for Eu-}

We can now use the parameterization method for invariant vector bundles described in \S \ref{S:param method for bundles} to obtain $\E^{\mathrm{u}}_-(-L^-_{\mathrm{conj}})$. Recall that $P: \C^2 \to \C^4$ is the parameterization of the unstable manifold,
\[
P(\sigma) = \sum_{m, n = 0}^\infty P_{mn} \sigma_1^{m}\sigma_2^n, \qquad P_{mn} = \begin{pmatrix} p^1_{mn} \\ p^2_{mn} \\ p^3_{mn} \\ p^4_{mn} \end{pmatrix}.
\]
This parameterizes the unstable manifold of the vector field in \eqref{E:exist2}, and for the associated flow $\Phi$, we have $\Phi(x; P(\sigma)) = P\left(e^{\Omega^\rs x} \sigma\right)$.  

The unstable bundle of the unstable manifold corresponds to solutions of the variational equation that grow exponentially fast as $x \to -\infty$. We can obtain an expansion for this bundle simply by differentiating the above expansion for the manifold itself. More specifically, we have that the two-dimensional unstable bundle is spanned by $W_i^{u,-}(\sigma): \C^2 \to \C^4$ with expansions 
\begin{align*}
W_1^{\ru, -}(\sigma)  &= \partial \sigma_1 P(\sigma)  & W_2^{\ru,-}(\sigma) &= \partial \sigma_2 P(\sigma).\\
&=\sum_{m, n = 0}^\infty mP_{mn} \sigma_1^{m-1}\sigma_2^n & &=\sum_{m, n = 0}^\infty n P_{mn} \sigma_1^{m}\sigma_2^{n-1}
\end{align*} 
Then we may define independent real spanning vectors $ W_\mathrm{Re}^{\ru, -}(\sigma) = \mathrm{Re}( W_1^{\ru, -}(\sigma))$ and $ W_\mathrm{Im}^{\ru, -}(\sigma) = \mathrm{Im}( W_1^{\ru, -}(\sigma))$.
\begin{remark}
We note that the above notation is consistent with the notation of \S\ref{S:background} for rank one vector bundles; see, for example, Definition \ref{D:stable-unstable-bundle}. We use similar notation in \S\ref{Ch: Lpm}, for example in \eqref{eq: unstable solns L-}. This is a slight abuse of notation since, we are using different coordinates and parameterizations in this section and in \S\ref{Ch: Lpm}. Essentially, however, these objects are the same. 
\end{remark}

Using these expressions and the conjugacy relation $\Phi(x; P(\sigma)) = P\left(e^{\Omega^\rs x} \sigma\right)$, we can determine the evolution in $x$ of the unstable bundle, which corresponds to the evolution of solutions to the variational equation. We note that this method will in general work to determine the parameterization of an unstable bundle over a parameterized unstable manifold, or to determine the parameterization of a stable bundle over a parameterized stable manifold. However, to determine the parameterization of an unstable bundle over a parameterized stable manifold (or vice versa), one must use another method. This will be relevant in \S \ref{Ch:res bundles}, below. 

\section{The nonresonant stable bundle and resonant unstable bundle over the stable manifold} \label{Ch:res bundles}

In this section we aim to understand the linear dynamics about the stable manifold in a neighborhood of the origin. In particular, we use the parameterization method to compute its stable and unstable bundles. 

Recall that, if we write $\C \E^{\mathrm{u}}_-(x) = \text{span}\big\{U_{\varphi'}(x), U_1(x) \big\},$ then our $L^+_{\mathrm{conj}}$ calculation requires us to write $U_1(x)$ in eigen-coordinates, as per \eqref{eq: unstable basis sols as lin comb}. That is, 
\begin{align} \label{eq:EigenCoordinates-Application}
U_1(x)  &=  \sum_{j=1}^2 \left( \tilde \gamma_j   e^{\mu^\ru_j x}  W_j^{+,\ru}(x)+ \tilde\beta_j  e^{\mu^\rs_j x}  W_j^{+,\rs}(x) \right) ,
\end{align}
where each $V_j^{+,\ru/\rs}(x) = e^{\mu^{\ru/\rs}_j x}  W_j^{+,\ru/\rs}(x)$ is a solution to the linearized equation about $\bm{\varphi}$,  and each $W_j^{+,\ru/\rs}(x)$ limits to the eigenvector $\check V_j^{\ru/\rs}$  as $ x \to +\infty$. 
Proposition \ref{prop: L+ intersection condition 1} requires an explicit calculation of the coefficients  $\tilde\beta_j, \tilde\gamma_j$.  This requires us to, for a given $ x_0 \in \R$, write $U_1(x_0) $ in the basis given by $\{ W_j^{+,\ru/\rs}(x_0)\}_{j=1,2}$. 
This section details how we may calculate this basis with high precision. 

We apply the methodology in Section \ref{sec:Resonant-Vector-Bundle-Theory} to compute both the stable and unstable bundles, and obtain \emph{a posteriori} bounds in a manner largely analogous to Section \ref{S:mflds}. We note that the computation of the unstable bundle depends on the stable bundle. While it is possible to apply the methodology in Section \ref{sec: param method for Eu-} to compute the (non-resonant) stable bundle, the approach we take here computes and validates all bundles at once.

This section is where we will encounter a resonance when computing the unstable vector bundle.  Like in Section \ref{S:mflds}, we work in $(u, u_x, u_{xx}, u_{xxx})$ coordinates, and then at the end we convert to $(u, u_{xx}, u_{xxx} + 2 u_x, u_x)$ coordinates.


\subsection{Computing the bundles}

Consider the vector field $ \dot{U} = G(U)$ in \eqref{E:exist2}, and fix a parameterization $ P:B^2_\delta(0) \to \R^4$ of the stable manifold. (We acknowledge a slight abuse of notation in using $P$ for both the stable and unstable manifolds.) For the flow $\Phi$ associated with \eqref{E:exist2}, we have $\Phi(x; P(\sigma_0)) = P\left(e^{\Omega^\rs x} \sigma_0 \right)$. For solutions $U(x) =  \Phi(x,P(\sigma))$ on the stable manifold, we wish to solve the variational equation 
\begin{align} \label{eq:VariationalEq-Resonance}
V'(x) = DG(U(x)) V(x) .
\end{align}
This generalizes the problem from just the linearization about the pulse, to the linearization about any solution on the stable manifold. 
 
To solve the linearized equation \eqref{eq:VariationalEq-Resonance} we shall use Lemma \ref{L:conjugacy}. That is, we find a bundle $\mathcal{W}:B^2_\delta(0) \to \GL_4(\R)$ and a normal form of the linear dynamics $ \mathcal A:B^2_\delta(0) \to \GL_4(\R)$ which solve the bundle conjugacy equation \eqref{eq:BundleConjugacy}, repeated below:
\begin{align} \label{eq:BundleConjugacy-SwiftHohenberg}
DG(P(\sigma)) \mathcal{W}(\sigma) = D\mathcal{W}(\sigma)  \Omega^\rs \sigma + \mathcal{W}(\sigma )\mathcal A(\sigma)  .
\end{align}
Note that $\mathcal{W} = [W_1|W_2|W_3|W_4]$ describes a frame bundle over the stable manifold, where we fix $ \cW(0) = \hat{\mathcal{V}}$ as a matrix of eigenvectors of $ DG(0)$. 
The stable bundle is given by $ [W_1|W_2]$ and an unstable bundle is given by $ [W_3|W_4]$. Furthermore  Lemma \ref{L:conjugacy} ensures that if \eqref{eq:BundleConjugacy-SwiftHohenberg} is satisfied, then the dynamics of \eqref{eq:VariationalEq-Resonance} are conjugate (via $\cW$) to the linear system of $ \tilde{V}'(x)  = \mathcal A(e^{\Omega^\rs x}\sigma) \tilde{V}(x)$.  In the case where there are no resonances $\mathcal A$ is a constant coefficient matrix. Otherwise,  $\mathcal A$ is a matrix-valued polynomial in $\sigma$. 

To solve the conjugacy equation we employ the parameterization method, solving for the Taylor coefficients of $ \cW$ and $\mathcal A$ order by order. Suppose that we may write $\cW$ and $\mathcal A$ as power series
\begin{equation}\label{E:defWA_InPractice}
	\cW(\sigma) =  \sum_{|\alpha|=0}^\infty \cW_\alpha \sigma^\alpha, \qquad \mathcal A(\sigma) =  \sum_{|\alpha|=0}^\infty \mathcal A_\alpha \sigma^\alpha .
\end{equation}
Similarly, fix coefficients $ P = \{ P_\alpha \} \in (\ell^1_{\delta,2})^4$ of the stable manifold, and let us write the  Taylor series   
\[
	DG(P(\sigma)) = \sum_{|\alpha|=0}^\infty \hat {G}_\alpha \sigma^\alpha.
\]
As detailed in Section \ref{sec:Resonant-Vector-Bundle-Theory}, matching Taylor coefficients in the conjugacy equation yields the homological equation \eqref{eq:ResonantHomologicalEquation}  repeated below: 
\begin{equation}\label{eq:ResonantHomologicalEquation2}
	(\alpha_1\mu_1 + \alpha_2 \mu_2) \mathcal{W}_\alpha + \mathcal{W}_\alpha  \Omega -DG(0) \mathcal{W}_\alpha + \mathcal{W}_0  \mathcal A_\alpha  = \cS_\alpha,
\end{equation}
where we define  
\begin{equation}\label{E:defS-Swift-Hohenberg}
	\mathcal{S}_\alpha := \hat {G}_\alpha  \cW_0 +  (\hat {G}\hat{*}\cW)_\alpha - (\cW\hat{*}\mathcal A)_\alpha 
\end{equation}
as in \eqref{E:defS}. Since $\cW_0 = \cW(0)$ has as its columns the eigenvectors of $DG(0)$, $DG(0) = \cW_0 \Omega \cW_0^{-1}$, and  we define $\tilde \cW_\alpha$ and $\tilde \cS_\alpha$ via the change of variables
\[
\cW_\alpha = \cW_0 \tilde \cW_\alpha, \qquad \cS_\alpha = \cW_0 \tilde \cS_\alpha.
\]
as we did in \S\ref{sec:Resonant-Vector-Bundle-Theory}. Multiplying the homological equation  \eqref{eq:ResonantHomologicalEquation2} on the left by $\cW_0^{-1}$ yields equation \eqref{eq:ResonantHomologicalEquation-2}, and then restricting to the $j$th column yields the equation 
\begin{equation} \label{eq:ColumnByColumn_InPractice}
	\left[ (\alpha_1 \mu_1^\rs + \alpha_2 \mu_2^\rs) + \mu_j)I -\Omega \right] \tilde w_\alpha^{*, j} + a_\alpha^{*, j} = \tilde s_\alpha^{*, j},
\end{equation}
as in \eqref{eq:ColumnByColumn}. As detailed in Lemma \ref{L:solve-for-AQ}, we are able to solve this equation  order by order for $  \tilde w_\alpha^{*, j} $ and $  a_\alpha^{*, j}$. In the case where there is no resonance, we set $  a_\alpha^{i, j}=0$  and solve for $ \tilde w_\alpha^{i, j} $. In the case where there is a resonance, this is done \emph{visa versa}. 

As in Definition \ref{def:bundle-resonance--Res_ij}, for each fixed $i$ and $j$ we define $\mathrm{Res}_{i,j}$ as the collection of resonances of order $\alpha \in \N^2$ which the $j$th bundle has with the $ i$th bundle, formally given as 
\begin{align}
	\mathrm{Res}_{i,j} = \{ \alpha \in \N^2| \alpha_1 \mu_1^\rs +   \alpha_2 \mu_2^\rs  + \mu_j - \mu_i = 0 \} .
\end{align}
In general there can only be finitely many resonances. For the Swift-Hohenberg equation in the parameter regime we consider  there are just four \footnote{This follows from our eigenvalues coming in a complex quartet of $ \{ \mu_1, \mu_2 ,\mu_3 , \mu_4 \} = \{ \mu ^\rs_1 , (\mu ^\rs_1)^*,-\mu ^\rs_1 , -(\mu ^\rs_1)^*\}$.}; the only non-trivial resonances we have are 
\begin{align*}
	\mathrm{Res}_{1,3} &= \{ (2,0) \} & 	
	\mathrm{Res}_{1,4} &= \{ (1,1) \} \\
	\mathrm{Res}_{2,3} &= \{ (1,1) \} & 	
	\mathrm{Res}_{2,4} &= \{ (0,2) \}. 
\end{align*}
 
For any given order, we only need to solve for  one of   $ \tilde w_\alpha^{i, j} $ or $  a_\alpha^{i, j}$, and set the other to zero. For economy of notation we can represent them both by a single series $ b_\alpha^{i, j}$. 
For each $j$ and $b^{i,j} \in \ell^1_{\delta,2}$ we  associate  $ b^{i,j} \mapsto ( w^{i, j}, a^{i,j}) \in \ell^1_{\delta,2} \times \ell^1_{\delta,2}$ by 
\begin{align} \label{eq:Resonant_Split_Space}
	\tilde w_\alpha^{i,j} &= 
	\begin{dcases}
		0 & \mbox{ if } \alpha \in \mathrm{Res}_{i,j} \\
		b_\alpha^{i,j} & \mbox{ if } \alpha \notin \mathrm{Res}_{i,j} 
	\end{dcases} ,
&
a_{\alpha}^{i,j} & =
	\begin{dcases}
	b_\alpha^{i,j} & \mbox{ if } \alpha \in \mathrm{Res}_{i,j} \\
	0 & \mbox{ if } \alpha \notin \mathrm{Res}_{i,j} \\
\end{dcases} 
\end{align}

As described in Lemma \ref{L:solve-for-AQ}, the solutions for $ \tilde w_\alpha^{i, j} $ or $  a_\alpha^{i, j}$ may be solved recursively in terms of the lower order terms.  
Moreover, if $\mathrm{Res}_{*,j} = \emptyset$ then $b^{*,j}$ only depends on the lower order terms of $ b^{*,j}$. Whereas if  $\mathrm{Res}_{*,j} \neq  \emptyset$, then $b^{*,j}$ may also depend on the lower order terms of $ b^{*,j'}$ for $ j' < j$. 
In particular, we already know from \eqref{E:Aform} that $\mathcal{A}$ will have a  special form, namely
\[
\mathcal{A}(\sigma) = \begin{pmatrix} \mu_1^\rs & a^{1,2}(\sigma) & a^{1,3}(\sigma) & a^{1,4}(\sigma) \\ 0 & \mu_2^\rs & a^{2,3}(\sigma) & a^{2,4}(\sigma) \\ 0 & 0 & \mu_1^\ru & a^{3,3}(\sigma) \\ 0 & 0 & 0 & \mu_2^\ru \end{pmatrix}.
\]
Furthermore, by Lemma \ref{L:solve-for-AQ} and our enumeration of  the resonances listed above, $\mathcal{A}$ may be more explicitly given as  
\begin{align} \label{eq:SH_A_form}
	\mathcal{A}(\sigma_1, \sigma_2) = \begin{pmatrix} \mu_1^\rs & 0 & a^{1,3}_{2,0} \sigma_1^2 & a^{1,4}_{1,1}\sigma_1\sigma_2 \\ 0 & \mu_2^\rs & a^{2,3}_{1,1} \sigma_1 \sigma_2 & a^{2,4}_{0,2} \sigma_2^2 \\ 0 & 0 & \mu_1^\ru & 0 \\ 0 & 0 & 0 & \mu_2^\ru \end{pmatrix}.
\end{align}
Thus, if $ j=3,4$ then $(\cW\hat{*}\mathcal A)^j$ will only depend on $(\cW\hat{*}\mathcal A)^{j'}$ for $j'=1,2$.

These solutions to the homological equation \eqref{E:defS-Swift-Hohenberg} may be seen to be the unique fixed point of the  operator $\Psi: (\ell^1_{\delta,2})^{4\times 4}  \to (\ell^1_{\delta,2})^{4 \times 4}$ we present in Definition   \ref{def:BundlePsi}. 
Before defining $\Psi$ we first define the following linear operators.  
\begin{definition}
For $ i,j \in \{1,\dots 4\}$, we define the linear operators $\kappa^{i,j}:\ell^1_{\delta,2} \to \ell^1_{\delta,2}$, $\cK^j:  (\ell^1_{\delta,2})^{4 }  \to (\ell^1_{\delta,2})^{4 }$, and $\cK:(\ell^1_{\delta,2})^{4\times 4}  \to (\ell^1_{\delta,2})^{4 \times 4}$ as follows. 
\begin{itemize}
\item Fix $ i,j \in \{1,\dots 4\}$.  We define $\kappa^{i,j}(b)$ for $ b \in\ell^1_{\delta,2}$ and $ (m,n) \in \N^2$ by  
\begin{align} \label{eq:KappaDef}
\left(\kappa^{i,j} b\right)_{m,n}
&:= \begin{dcases}
b_{m,n} & \mbox{ if } (m,n) \in \mathrm{Res}_{i,j} \\
\frac{b_{m,n}}{m \mu_1^\rs + n \mu_2^\rs  + \mu_j - \mu_i  } & \mbox{ if } (m,n) \notin \mathrm{Res}_{i,j} 
\end{dcases}.
\end{align} 
\item Fix $ j \in \{ 1, \dots, 4\}$. We define $\cK^j(B^j)$ for $B^j = ( b^{1,j} , \dots , b^{4,j})^T \in (\ell^1_{\delta,2})^{4 }$ by  
\begin{align*}
\cK^j B^j  &:= ( \kappa^{1,j} b^{1,j} , \dots , \kappa^{4,j}  b^{4,j})^T.
\end{align*}
\item We define $\cK(\cB)$ for $\mathcal{B}= (B^1 , \dots , B^4) =  \{b^{i,j}\}_{i,j=1}^4 \in  (\ell^1_{\delta,2})^{4\times 4} $ by 
\begin{align*}
\cK  \odot \cB := (	\cK^1	 B^1 , \dots, \cK^4	 B^4   )  =  \{\kappa^{i,j} b^{i,j}\}_{i,j=1}^4.
\end{align*}
\end{itemize}
\end{definition}
Note that $\kappa^{i,j}$ acts diagonally on a sequence $ b \in \ell^1_{\delta,2}$. For each $ \alpha \in \N^2$, we may represent $ \cK^j_\alpha$ as a diagonal matrix, and its action $\cK^jB^j $ is given by a matrix -- column vector product. To describe the action of $ \cK$, consider some matrix valued sequence $ \mathcal{B} \in  (\ell^1_{\delta,2})^{4\times 4} $  and fix $ \alpha \in \N^2$. Then we may consider both $\cK_\alpha$ and $\cB_\alpha$ as $4 \times 4$ matrices and the action $ \cB_\alpha \mapsto (\cK  \odot \cB)_\alpha$ is given by their element-wise multiplication. 

\begin{definition} \label{def:BundlePsi}
Define a map $ \Psi = ( \Psi^1,\dots \Psi^4)= \{\psi^{i,j}\}_{i,j=1}^4 : (\ell^1_{\delta,2})^{4\times 4}  \to (\ell^1_{\delta,2})^{4 \times 4}$ for $ \mathcal{B}= \{b^{i,j}\}_{i,j=1}^4 \in  (\ell^1_{\delta,2})^{4\times 4} $ via
\begin{align} \label{eq:Bundle_FixedPoint}
\Psi (\cB)_{\alpha} :=  
\begin{dcases}
I & \mbox{if } | \alpha|=0 \\
\cK_\alpha \odot   \left( \cW_0^{-1} \mathcal{S}_\alpha \right) 
& \mbox{if }  | \alpha| \geq 1.
\end{dcases}
\end{align}
where $I$ is the $ 4 \times 4 $ identity matrix, $\cS_\alpha = \cS_\alpha(\mathcal B)$ is given as in \eqref{E:defS-Swift-Hohenberg}, we associate  $ b^{i,j} \mapsto ( \tilde w^{i, j}, a^{i,j})$  as in \eqref{eq:Resonant_Split_Space}, and $\cW_\alpha = \cW_0 \tilde \cW_\alpha$. 
\end{definition}

Note we may expand $ \cW_0^{-1} \cS_\alpha $ more explicitly in terms of $\cW$ below
\begin{align}   
\cW_0^{-1} \cS_\alpha  &=   \cW_0^{-1} \left( \hat {G}_\alpha  \cW_0 +  (\hat {G}\hat{*}\cW)_\alpha - (\cW\hat{*} \mathcal A)_\alpha \right) \nonumber \\
&=  \left( \cW_0^{-1}	\hat {G}_\alpha  \cW_0 +    ( \cW_0^{-1} \hat {G} \cW_0\hat{*}   \tilde \cW )_\alpha -  (\tilde \cW\hat{*}\mathcal A)_\alpha \right) \label{eq:StildeExpand}
\end{align}

\begin{theorem} \label{thm:Swift-Hohenberg-Bundle-Fixedpoint} For $\mathcal{B}= \{b^{i,j}\}_{i,j=1}^4 \in  (\ell^1_{\delta,2})^{4\times 4}$ write $ b^{i,j} \mapsto ( w^{i, j}, a^{i,j})$  as per \eqref{eq:Resonant_Split_Space} and $\mathcal{W},A:B^m_\delta(0) \to \GL_4(\R)$ as per \eqref{E:defWA_InPractice}. If $ \cB = \Psi( \cB)$,   then  the conjugacy equation \eqref{eq:BundleConjugacy-SwiftHohenberg} is satisfied. 
\end{theorem}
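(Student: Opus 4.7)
The plan is to prove this by matching Taylor coefficients on both sides of the conjugacy equation \eqref{eq:BundleConjugacy-SwiftHohenberg} at each multi-index order $\alpha \in \N^2$. The operator $\Psi$ is engineered so that a fixed point solves the homological equation \eqref{eq:ResonantHomologicalEquation2} at every order, which in turn is equivalent to \eqref{eq:BundleConjugacy-SwiftHohenberg} being satisfied order by order; since both sides live in $\ell^1_{\delta, 2}$, agreement of all Taylor coefficients gives equality.

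First I would handle the base case $|\alpha| = 0$. Since $\Psi(\mathcal{B})_0 = I$, the fixed point relation, together with the convention $\cW_\alpha = \cW_0 \tilde{\cW}_\alpha$ and the definition of $\cW_0$ as the matrix of eigenvectors of $DG(0)$, fixes $\tilde{\cW}_0 = I$ and $\mathcal{A}_0 = \Omega$. The $\sigma^0$ coefficient of \eqref{eq:BundleConjugacy-SwiftHohenberg} reduces to $DG(0)\cW_0 = \cW_0 \Omega$, which is nothing but the eigenvalue/eigenvector relation and so holds automatically.

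For $|\alpha| \geq 1$, expanding both sides of \eqref{eq:BundleConjugacy-SwiftHohenberg} as power series and matching the coefficient of $\sigma^\alpha$ produces the homological equation \eqref{eq:ResonantHomologicalEquation2}. Left-multiplying by $\cW_0^{-1}$ and using $DG(0) = \cW_0 \Omega \cW_0^{-1}$ gives the column-wise form \eqref{eq:ColumnByColumn_InPractice}; taking the $i$-th entry of the $j$-th column and abbreviating $\tilde{s}^{i,j}_\alpha := (\cW_0^{-1}\cS_\alpha)^{i,j}$ yields the scalar identity
\begin{equation} \label{eq:ScalarHomological-Proof}
(\alpha_1\mu_1^\rs + \alpha_2\mu_2^\rs + \mu_j - \mu_i)\tilde{w}^{i,j}_\alpha + a^{i,j}_\alpha = \tilde{s}^{i,j}_\alpha.
\end{equation}
I would then verify \eqref{eq:ScalarHomological-Proof} directly from the fixed point relation $\mathcal{B} = \Psi(\mathcal{B})$, which by \eqref{eq:Bundle_FixedPoint} and \eqref{eq:KappaDef} says $b^{i,j}_\alpha = \kappa^{i,j}(\tilde{s}^{i,j})_\alpha$. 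In the resonant case $\alpha \in \mathrm{Res}_{i,j}$, the prefactor on the left of \eqref{eq:ScalarHomological-Proof} vanishes, the split \eqref{eq:Resonant_Split_Space} gives $\tilde{w}^{i,j}_\alpha = 0$, and $a^{i,j}_\alpha = b^{i,j}_\alpha = \tilde{s}^{i,j}_\alpha$; in the non-resonant case $a^{i,j}_\alpha = 0$ and $\tilde{w}^{i,j}_\alpha = b^{i,j}_\alpha = \tilde{s}^{i,j}_\alpha / (\alpha_1\mu_1^\rs + \alpha_2\mu_2^\rs + \mu_j - \mu_i)$. Either way \eqref{eq:ScalarHomological-Proof} holds, and reassembling in $(i,j)$ recovers the full homological equation at order $\alpha$.

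The main subtlety, and the step I would take most care with, is well-posedness of this recursive construction: $\cS_\alpha$ as defined in \eqref{E:defS-Swift-Hohenberg} depends on $\cW_\beta, \mathcal{A}_\beta$ with $|\beta| < |\alpha|$ through the hatted Cauchy products $(\hat G \hat{*} \cW)_\alpha$ and $(\cW \hat{*} \mathcal{A})_\alpha$, but one must also check that there is no circular dependence across columns at the same order $|\alpha|$. This is precisely where the upper triangular structure of $\mathcal{A}$ recorded in \eqref{eq:SH_A_form} becomes essential: because $a^{i,j}_\alpha = 0$ whenever $i > j$, the $j$-th column of $(\cW \hat{*} \mathcal{A})_\alpha$ only couples to columns $1, \dots, j$ of $\cW$ at strictly lower orders together with the $j$-th column of $\mathcal{A}_\alpha$ itself, so \eqref{eq:ScalarHomological-Proof} determines $\tilde{w}^{*,j}_\alpha$ and $a^{*,j}_\alpha$ in terms of already-computed data. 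This triangular structure was justified earlier in Lemma \ref{L:solve-for-AQ} via the ordering of eigenvalues, and in the Swift--Hohenberg regime considered here only the four resonances listed before \eqref{eq:SH_A_form} arise, so $\mathcal{A}$ is a finite polynomial and the iteration terminates in the sense that $a^{i,j}_\alpha = 0$ for all sufficiently large $|\alpha|$.
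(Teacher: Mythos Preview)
Your proof is correct and follows the same underlying logic as the paper, which simply cites Lemma~\ref{L:solve-for-AQ}; you have effectively unpacked that lemma in the Swift--Hohenberg setting by verifying the scalar homological equation \eqref{eq:ScalarHomological-Proof} in both the resonant and non-resonant cases. One minor remark: your final paragraph on well-posedness of the recursion is not needed here, since the theorem only asserts that \emph{if} $\cB = \Psi(\cB)$ then the conjugacy equation holds, and $\Psi$ is already a well-defined map on $(\ell^1_{\delta,2})^{4\times 4}$; the existence of a fixed point (and hence the recursive solvability) is addressed separately in \S\ref{sec:ValidateBundles}.
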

\begin{proof}
This follows from Lemma \ref{L:solve-for-AQ}. 
\end{proof}

By calculating the fixed points $ \cB = ( \cB^1 , \dots , \cB^4)$ of  $ \Psi^j$ for $1\leq j \leq 4$, we are thus able to compute the frame bundle $ \cW$ about the stable manifold, which then yields the stable and unstable bundles. It is important to note that, while the stable bundle is an invariant bundle, due to the resonances the unstable bundle, which depends analytically on $ \sigma$, is not invariant; see Remark \ref{rem:Invariant-Not-Analytic}. 

We now explain how the bundle $\cW$ allows us to compute the coefficients $\tilde \beta_j$ and $\tilde \gamma_j$ in \eqref{eq:EigenCoordinates-Application}. We can write this as 
\[
U_1(x) = \tilde \gamma_1 V_1^{+, \ru}(x) + \tilde \gamma_2V_2^{+, \ru}(x) + \tilde \beta_1 V_1^{+, \rs}(x) + \tilde \beta_2 V_2^{+, \rs}(x), 
\]
where $V_{1,2}^{+, \rs, \ru}$ are solutions of the variational equation written in the symplectic coordinates that are asymptotic to the eigendirections in those coordinates. In other words, $V_{1,2}^{+, \rs, \ru}(x) \sim \rme^{\mu_{1,2}^{\rs, \ru}x} \check V_{1,2}^{\rs, \ru}$ as $x \to +\infty$. Once we know the bundle $\mathcal W$, we can use the relation $V(x) = \cW(\rme^{\Omega^\rs x}\sigma) \tilde{V}(x)$ (and a change of coordinates putting everything into the symplectic coordinates) to relate solutions $V(x)$ to solutions $\tilde V$ of $ \tilde{V}'(x)  = \mathcal A(\rme^{\Omega^\rs x}\sigma) \tilde{V}(x)$; see Lemma \ref{L:conjugacy}. 

First, let us charecterize a basis of solutions $\{ \tilde{V}_i\}$. Furthermore, assume that $ \bm{ \varphi}(x_0) = P(\sigma_0)$ and write $\sigma_0 = (\sigma_1, \sigma_2)$. We wish to find a fundamental matrix solution such that 
\[
\tilde M'(x) = \mathcal{A}(\rme^{\Omega^\rs x}\sigma_0)\tilde M, \qquad \tilde M(x_0) = I.
\]
Using the explicit form of $\mathcal A$ given in \eqref{eq:SH_A_form}, we find the columns of $\tilde M$ to be
\begin{align*}
\tilde{V}_1^{\rs}(x) =	&\begin{pmatrix}e^{\mu ^\rs_1 (x-x_0)} \\ 0\\ 0\\ 0 \end{pmatrix},&
\tilde{V}_2^{\rs}(x) =	&\begin{pmatrix} 0\\ e^{\mu ^\rs_2 (x-x_0)} \\ 0\\ 0 \end{pmatrix}, \\
\tilde{V}_1^{\ru}(x) =	&\begin{pmatrix} a^{1,3}_{2,0} (x-x_0) e^{  \mu ^\rs_1 (x-x_0)}\sigma_1^2\\ a^{2,3}_{1,1} (x-x_0)  e^{ \mu^\rs_2 (x-x_0)} \sigma_1\sigma_2\\ e^{\mu ^\ru_1 (x-x_0)} \\ 0 \end{pmatrix},&
\tilde{V}_2^{\ru}(x) =	&\begin{pmatrix} a^{1,4}_{1,1} (x-x_0) e^{ \mu ^\rs_1 (x-x_0)}\sigma_1\sigma_2\\ a^{2,4}_{2,0} (x-x_0) e^{  \mu^\rs_2 (x-x_0) }\sigma_2^2\\ 0\\ e^{\mu ^\ru_2 (x-x_0)} \\ \end{pmatrix}.
\end{align*}
Note the first two exponentially decay, and the second two exponentially grow. 

Next we describe how  we obtain the functions $ V_{1,2}^{+, \rs, \ru}(x) $ from this basis of solutions. Recall the relation $V(x) = \cW(\rme^{\Omega^\rs x}\sigma_0) \tilde{V}(x)$. As $x \to \infty$, $\rme^{\Omega^\rs x}\sigma_0 \to (0, 0)$. Since $\cW(0)$ has columns given by the eigenvectors of $DG(0)$, ie $\cW(0) = [\hat V_1^\rs | \cdots | \hat V_2^\ru]$, then as $x \to \infty$, the solutions $V^{\ru, \rs}_{1,2}(x) =\cW(\rme^{\Omega^\rs x}\sigma_0) \tilde V^{\ru, \rs}_{1,2}(x)$ will be given to leading order by $\rme^{\mu^{\ru, \rs}_{1,2}x} \hat V^{\ru, \rs}_{1,2}$. To write this in symplectic coordinates, recall the change of variables $S$ from \eqref{E:defq}, by which $ \check V_{1,2}^{\rs, \ru} = S \hat V_{1,2}^{\rs, \ru}$. 
Hence, if we let 
\[
V_{1,2}^{+, \rs/\ru}(x) = S\cW(\rme^{\Omega^\rs x}\sigma_0) \tilde V_{1,2}^{\rs/\ru}(x)
\]
then we obtain exactly the asymptotic behavior that we need.  Thus, we may write $U_1$ in $ \tilde \beta , \tilde \gamma$ coordinates as follows
\begin{align}
U_1(x) &= \tilde \beta_1 V_1^{+,\rs}(x) + \tilde \beta_2 V_2^{+,\rs}(x) + 
\tilde \gamma_1 V_1^{+,\ru}(x) + \tilde \gamma_2 V_2^{+,\ru}(x) \nonumber \\
&=
\tilde \beta_1 S\cW(\rme^{\Omega^\rs \tilde x}\sigma_0) \tilde V_1^\rs(x) + 
\tilde \beta_2S\cW(\rme^{\Omega^\rs \tilde x}\sigma_0) \tilde V_2^\rs(x)+ 
\tilde \gamma_1  S\cW(\rme^{\Omega^\rs \tilde x}\sigma_0) \tilde V_1^\ru(x) + 
\tilde \gamma_2 S\cW(\rme^{\Omega^\rs \tilde x}\sigma_0) \tilde V_2^\ru(x) \nonumber \\
&= S\mathcal{W}(\rme^{\Omega^\rs \tilde x}\sigma_0) \tilde{M}(x) \begin{pmatrix} \tilde \beta_1 \\ \tilde \beta_2 \\ \tilde \gamma_1 \\ \tilde \gamma_2 \end{pmatrix} \label{E:U1-form}.
\end{align}

Theorem \ref{thm:Swift-Hohenberg-Bundle-Fixedpoint} asserts that, if we prove that a fixed point of $\Psi$ exists, this fixed point will correspond to the bundle $\mathcal{W}$, and to the matrix $\mathcal{A}$ that leads to the solutions that comprise the columns of $\tilde M$. The matrix $\mathcal{A}$ is a polynomial in $\sigma$, so it can be compute explicitly, and we can compute $\mathcal{W}$ to any order of accuracy we wish. Thus, if we have numerically computed $U_1(x)$ for large $x$, we can compute the coefficients $\tilde \gamma_j$ and $\tilde \beta_j$ via. \eqref{E:U1-form}, or localized at $ x_0$ via
\changes{
\[
\begin{pmatrix} \tilde \beta_1 \\ \tilde \beta_2 \\ \tilde \gamma_1 \\ \tilde \gamma_2 \end{pmatrix} = \mathcal{W}( \sigma_0) ^{-1} S^{-1} U_1(x_0).
\]
}

\subsection{\emph{A posteriori} validation of the bundles}\label{sec:ValidateBundles} 

In practice, we only compute finitely many coefficients of $\cW$, meaning that our Taylor expansion is just an approximation. We will use a fixed point argument to bound the error incurred by this truncation.
Like in Section \ref{S:mflds}, we will do this with computer-assisted proofs to show that  $\Psi$ has a fixed point, by considering the terms of order less than or equal to $N$ and the tail terms separately. 

First let us imbue $ (\ell^1_{\delta,2})^{4\times 4}$ with the ``$p=1$'' operator norm; e.g. for $ \cB\in  (\ell^1_{\delta,2})^{4\times 4}$ then we define 
\[
\| \cB\| = \max_{1 \leq j \leq 4} \sum_{1\leq i \leq 4} \| b^{i,j} \|_{\ell^1_{\delta,2}}
\] 
Using the projection operators defined in \S \ref{S:analytic-framework}, we define the two operators
\begin{align*}\Psi^N: (X^N_{\delta,2})^{4\times4} & \to (X^N_{\delta,2})^{4\times4} 
	& \Psi^\infty:  (\ell^1_{\delta,2})^{4\times 4} &  \to (X^\infty_{\delta,2})^{4\times4}  \\
	\Psi^N & = \Pi_N \circ \Psi 
	& \Psi^\infty & = \Pi_\infty \circ \Psi.
\end{align*}
Note $\Pi_N \circ \Psi = \Pi_N \circ \Psi \circ \Pi_N$. 
Using these operators, for $\cB \in (\ell^1_{\delta,2})^{4\times 4} $ we can decompose $\Psi$ as
\begin{equation}\label{E:separate_bundle}
	\Psi(\cB) = \Psi^N\left( \cB^N\right) + \Psi^\infty\left(  \cB^N + \cB^\infty\right).
\end{equation}

Now suppose that $\bar \cB^N  \in (X^N_{\delta,2})^{4\times 4}$ is a fixed point of $ \Psi^N$. This may be solved numerically by solving finitely many equations; here interval arithmetic is able to bound all of the numerical error. Define the map $T: (X^\infty_{\delta,2})^{4 \times 4} \to (X^\infty_{\delta,2})^{4 \times 4}$ via 
\begin{equation}\label{eq: tail operator T_bundle}
	T(\cB^\infty ) :=  \Psi^\infty\left( \bar \cB^N + \cB^\infty\right).
\end{equation}
Note for $N>2$ there are no more resonant terms, so by \eqref{eq:Resonant_Split_Space} we have  $ \cB_\alpha \equiv \tilde \cW_\alpha$. Let us also write $ \tilde{\cW}^N  = \bar  \cB^N$ and $ \bar{\cW}^N  =\cW_0 \bar  \cB^N$.
 
Hence, we have for $\cW^\infty\in (X^\infty_{\delta,2})^{4\times 4}$ that 
\[
T(\tilde \cW^\infty) = \Pi_\infty \circ \Psi( \tilde{\mathcal{W}}^N + \tilde{\cW}^\infty)= \Pi_\infty \mathcal{K} \odot   \left( \cW_0^{-1} \, \mathcal{S}    \right).
\]
Using \eqref{eq:StildeExpand} to expand $\mathcal{S} \equiv\mathcal{S} ( \tilde{\mathcal{W}}^N + \tilde{\cW}^\infty ) $, we obtain 
\begin{align*}
T(\tilde \cW^\infty) &=   \Pi_\infty \mathcal{K} \odot  \left( \cW_0^{-1}	\hat {G}  \cW_0 +     \cW_0^{-1} \hat {G} \cW_0\hat{*} ( \tilde{\mathcal{W}}^N + \tilde{\cW}^\infty)  -    \tilde{\mathcal{W}} \hat{*}\mathcal A \right)   \\
&= \Pi_\infty \mathcal{K} \odot  \left( \cW_0^{-1}	\hat {G}  \cW_0 + \cW_0^{-1} \hat {G} \hat{*} \bar{\cW}^N +  \cW_0^{-1} \hat {G} \cW_0\hat{*} \tilde{\cW}^\infty  -  ( \tilde{\mathcal{W}}^N + \tilde{\cW}^\infty)\hat{*}\mathcal A \right) .
\end{align*}
To simplify, note that  $\bar{\cW}^N_0 =\cW_0 $ and $\Pi_\infty \bar{\cW}^N  = 0$, whereby
\[
\Pi_\infty \left( \hat {G}  \cW_0 +   \hat {G} \hat{*} \bar{\cW}^N \right)_\alpha = \Pi_\infty \left( \hat G_\alpha  \bar{\cW}^N_0 + \hat G_0  \bar{\cW}^N_\alpha +(\hat G \hat* \bar{\cW}^N)_\alpha \right)
= \Pi_\infty ( \hat G * \bar{\cW}^N)_\alpha	 .
\]
Thereby, we obtain
\begin{align}
	\label{eq:T_expand}
T(\tilde \cW^\infty) &=    \Pi_\infty \mathcal{K} \odot  \left( 
	\cW_0^{-1}  \hat G * \bar{\cW}^N	
	+( \cW_0^{-1} \hat G \cW_0 ) \hat{*}\tilde \cW^\infty  -  ( \bar{\mathcal{W}}^N + \tilde{\cW}^\infty)\hat{*}\mathcal A \right) 
\end{align}
Since $\mathcal A$ is fixed after $|\alpha| >2$, $T$ is an affine linear mapping on $ \tilde\cW^\infty$. 

\begin{remark} If we consider the $j$th column of \eqref{eq:T_expand}, then we obtain 
\begin{align*}
T^j(\tilde \cW^\infty) &= \Pi_\infty \cK^j   \left( \cW_0^{-1}  \hat G * (\bar{\cW}^{N})^{j}+( \cW_0^{-1} \hat G \cW_0 ) \hat{*}\tilde \cW^{\infty,j} - \left((\tilde{\cW}^N+\tilde{\cW}^\infty) \hat{*} \mathcal A \right)^j\right).  
\end{align*}
The product $ (\tilde{\mathcal{W}}  \hat * \mathcal{A})^j$  may be simplified using the form of $ \mathcal{A}$ in \eqref{eq:SH_A_form} as 
\begin{align*}
({\tilde \cW} \hat{*} \mathcal A)^j_\alpha &= \sum_{i<j} 	(\tilde{\cW}^{i} \hat{*} a^{i,j} )_\alpha  = \sum_{i<j} \sum_{ 0 < |\beta | < | \alpha| }  {\tilde \cW}^{i}_{\alpha-\beta}   a^{i,j}_\beta = \sum_{i<j} \sum_{ \substack{ \beta \in \mathrm{Res}_{i , j}  \\	|\beta| \neq |0|, |\alpha|}}  	{\tilde \cW}^{i}_{\alpha-\beta}   a^{i,j}_\beta.
\end{align*}

Here, we were able to restrict the sum over $i$ to  $ i< j$, due to the upper triangular structure of \eqref{eq:SH_A_form}; we were able to restrict the sum over $ \beta $ to $\mathrm{Res}_{i , j} $, as that is the index set for which $a^{i,j}_\beta$ is nonzero. Since the set $\mathrm{Res}_{i , j}$ contains elements, if any, only of order $2$, it follows that  for $|\alpha| > 2$ we obtain 
\begin{align} \label{eq:ResonantProductExpansion}
({\tilde \cW} \hat{*} \mathcal A)^j_\alpha &	= \sum_{i<j} \sum_{ \substack{ \beta \in \mathrm{Res}_{i , j}   }}  	
{\tilde \cW}^{i}_{\alpha-\beta}   a^{i,j}_\beta .
\end{align}
Note that if $ j=1,2$ then  $ \mathrm{Res}_{i , j} = \emptyset $, whereby the sum vanishes, and $	T^j(\tilde \cW^\infty)$ only depends on the  $\tilde{\cW}^{j}$. This is in fact the nonresonant case, and like in \cite{vandenBergJames16}, these bundles can be independently solved for. On the other hand if $ j=3,4$, then $ \mathrm{Res}_{i , j} \neq \emptyset$ for $ i=1,2$. That is to say that $T^j(\tilde \cW^\infty)$ depends both on $\tilde \cW^{j}$ as well as $\tilde \cW^{i}$ for $ i=1,2$.
\end{remark}
 
Note that  $T$ acts as a self map  on the tail space $(X^\infty_{\delta, 2})^4$. Since solving the recursive relations to order $N$ gives a fixed point of $\Psi^N$, we only need to prove the existence of a fixed point of the map $T$ to prove there is a fixed point of the full map $\Psi$. We do this via the parameterized Newton-Kantorovich Theorem,  Theorem \ref{thm: Radii Polynomial approach for Contraction Mapping Theorem}. 

Given the data $\bar \cB^N \in  (X^N_{1,2})^{4\times 4}$, we wish to show that $T$ is a contraction in a small neighborhood of the origin,  $\overline{B_r(0)} \subset (X^\infty_{1,2})^{4\times 4}$. For this we need to compute bounds   $Y_0 > 0$ and $Z: (0, \infty) \to [0, \infty)$ such that
\[
\|T(\bar x) - \bar x \|_X  \leq Y_0, \qquad \sup_{x \in \overline{B_r(\bar x)}}\| DT(x)\|_{L(X)}  \leq Z(r).
\]
Note here that $ \bar{x} =0$. Further, since $T$ is affine linear, then $ Z(r) = Z$ is a constant. We prove lemmas before consolidating everything together. 

The first issue to address is that   our parameterization method is defined in terms of the coefficients $ \hat{G}$, which in turn depend on the coefficients $P$ of the stable manifold. By \S \ref{S:mflds} we can   compute finitely many coefficients of $P$ and bound the tail. Consequently, this will yield us finitely many of the coefficients of $\hat G$ and a bound on the remainder. 
 
Suppose $ P = P^N + P^ \infty \in (X^N_{\delta,2})^4 \oplus (X^\infty_{\delta,2})^4$ are the coefficients of the stable manifold. We define $ \hat G^N$ as the Taylor coefficients of $ DG( P^N(\sigma))$, and $ \hat {G}^\infty = \hat G - \hat G^N$.  We warn the reader that with this definition $ \hat G^N \neq \pi_N\hat G$, however it is that case that $ \pi_{2N} \hat G^{N} = \hat G^{N}$ and $ \pi_N \hat G^{\infty} =0$. This follows from the fact that the nonlinearity in SH is cubic, and hence $DG$ is quadratic.
 
\begin{lemma}\label{lem:ghatinfty} If $ \| P^\infty \| \leq \epsilon$ then $\| \hat G^\infty\| \leq   2\nnu \epsilon + 3( 2 \| p^1\| \epsilon + \epsilon^2)  =: \epsilon_\infty.$
\end{lemma}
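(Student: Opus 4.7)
The plan is to exploit the fact that in the Swift-Hohenberg vector field \eqref{E:exist2}, the Jacobian $DG(U)$ depends on $U$ only through a single entry, namely the $(4,1)$-entry, which equals $-1+f'(u_1) = -(1+\nmu)+2\nnu u_1 - 3u_1^2$. Consequently, the matrix-valued series $\hat G$ is constant in every slot except this one, where it equals $-(1+\nmu) + 2\nnu p^1 - 3\, p^1 \ast p^1$. The operator norm on $(\ell^1_{\delta,2})^{4\times 4}$ is the column-max of $\ell^1$-column-sums, so both $\|\hat G - \hat G^N\|$ and the desired bound reduce to the $\ell^1_{1,2}$ norm of a single sequence.

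First I would write $p^1 = (p^1)^N + (p^1)^\infty$ and expand the only difference that survives:
\begin{align*}
\hat G - \hat G^N &\;=\; 2\nnu\bigl(p^1 - (p^1)^N\bigr) \;-\; 3\bigl(p^1 \ast p^1 - (p^1)^N \ast (p^1)^N\bigr)\\
&\;=\; 2\nnu\,(p^1)^\infty \;-\; 3\bigl(2\,(p^1)^N \ast (p^1)^\infty + (p^1)^\infty \ast (p^1)^\infty\bigr),
\end{align*}
where the second equality uses the telescoping identity $a\ast a - b \ast b = 2\,b \ast (a-b) + (a-b)\ast(a-b)$ with $a = p^1$, $b = (p^1)^N$, together with commutativity of the Cauchy product.

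Next I would apply the triangle inequality together with the Banach-algebra property $\|b \ast \tilde b\|^1_{1,2} \le \|b\|^1_{1,2}\|\tilde b\|^1_{1,2}$ proved in \S\ref{S:analytic-framework}:
\begin{equation*}
\|\hat G^\infty\| \;\le\; 2\nnu\,\|(p^1)^\infty\|^1_{1,2} \;+\; 6\,\|(p^1)^N\|^1_{1,2}\,\|(p^1)^\infty\|^1_{1,2} \;+\; 3\,\|(p^1)^\infty\|^1_{1,2}^{\,2}.
\end{equation*}
Finally, because $\Pi_N$ and $\Pi_\infty$ are norm-nonincreasing projections with $(p^1)^N = \Pi_N p^1$ and $(p^1)^\infty = \Pi_\infty p^1$, we have $\|(p^1)^N\|^1_{1,2} \le \|p^1\|^1_{1,2} = \|p^1\|$ and $\|(p^1)^\infty\|^1_{1,2} \le \|P^\infty\| \le \epsilon$. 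Substituting gives
\begin{equation*}
\|\hat G^\infty\| \;\le\; 2\nnu\,\epsilon + 6\,\|p^1\|\,\epsilon + 3\,\epsilon^2 \;=\; 2\nnu\,\epsilon + 3\bigl(2\,\|p^1\|\,\epsilon + \epsilon^2\bigr),
\end{equation*}
as claimed. There is no real obstacle here; the only subtlety is keeping straight that $\hat G^N$ is defined as the Taylor coefficients of $DG(P^N(\sigma))$ (not as $\Pi_N \hat G$), which is why the telescoping identity above, rather than a direct projection estimate, is the right way to isolate $\hat G^\infty$.
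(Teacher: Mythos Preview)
Your proof is correct and follows essentially the same approach as the paper: both identify that $\hat G - \hat G^N$ lives entirely in the $(4,1)$ entry, expand that entry via the telescoping identity $p^1\ast p^1 - (p^1)^N\ast (p^1)^N = 2\,(p^1)^N\ast (p^1)^\infty + (p^1)^\infty\ast (p^1)^\infty$, and then apply the Banach-algebra inequality. You are slightly more explicit than the paper in justifying $\|(p^1)^N\|\le\|p^1\|$ and in noting that $\hat G^N$ is defined as $DG(P^N(\sigma))$ rather than $\Pi_N\hat G$, but the argument is the same.
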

 
\begin{proof}
	Recall that  $DG(U)$ is given by 
	\begin{align*}
		DG(U) & = \left( \begin{smallmatrix}0 & 1 & 0 & 0 \\ 0 & 0 & 1 & 0\\ 0 & 0 & 0 & 1 \\ -1+f'(u) & 0 & -2 & 0 \end{smallmatrix} \right),
		&
		f'(u) &=- \nmu+2\nnu u - 3u^2  .
	\end{align*}
	where $ u= (U)_1$. 
Hence, 
	\begin{align*}
		\hat{G}  = 
		\left( \begin{smallmatrix}
			0 & 1 & 0 & 0 \\ 
			0 & 0 & 1 & 0\\ 
			0 & 0 & 0 & 1 \\ 
			-1 - \nmu & 0 & -2 & 0
		\end{smallmatrix} \right)
		+ \left(
		2 \hat{\nu} p^1  -3 ( p^1 * p^1)
		\right) 
		\left(\begin{smallmatrix}
			0 & 0 & 0 & 0 \\ 
			0 & 0 & 0 & 0\\ 
			0 & 0 & 0 & 0 \\ 
			1& 0 & 0 & 0  \end{smallmatrix}\right)
	\end{align*}
where $ P = (p^1 , p^2,p^3,p^4)$ are the coefficients of the stable manifold.  Let $p^1 =  (p^1)^N + (p^1)^\infty$ and suppose that $ \| (p^1)^\infty \| \leq \epsilon$. Let us define 
	\begin{align*}
		\hat{G}^N  = 
		\left( \begin{smallmatrix}
			0 & 1 & 0 & 0 \\ 
			0 & 0 & 1 & 0\\ 
			0 & 0 & 0 & 1 \\ 
			-1 - \nmu & 0 & -2 & 0
		\end{smallmatrix} \right)
		+ \left(
		2 \hat{\nu} (p^1)^N  -3 ( (p^1)^N * (p^1)^N)
		\right) 
		\left(\begin{smallmatrix}
			0 & 0 & 0 & 0 \\ 
			0 & 0 & 0 & 0\\ 
			0 & 0 & 0 & 0 \\ 
			1& 0 & 0 & 0  \end{smallmatrix}\right).
	\end{align*}
Then we have that 
	\begin{align*}
	\hat {G}^\infty  =   \left(
		2 \hat{\nu} (p^1)^\infty  -3 ( 2 (p^1)^N * (p^1)^\infty+(p^1)^\infty * (p^1)^\infty)
		\right) 
		\left(\begin{smallmatrix}
			0 & 0 & 0 & 0 \\ 
			0 & 0 & 0 & 0\\ 
			0 & 0 & 0 & 0 \\ 
			1& 0 & 0 & 0  \end{smallmatrix}\right)
	\end{align*}
Hence $ \| \hat {G}^\infty  \| \leq 2\nnu \epsilon + 3( 2 \| p^1\| \epsilon + \epsilon^2) =2 \epsilon (\nnu + 3 \| p^1 \| ) +3 \epsilon^2$.	
\end{proof}

\begin{lemma} \label{prop:K_N_bundle} For $j \in \{1,\dots,4\}$ consider $ \Pi_\infty  \cK^j : (X^\infty_{\delta,2})^4 \to (X^\infty_{\delta,2})^4$ and define the constant 
\[
K_N^j :=
\begin{dcases}
\frac{1}{(N+1) \sqrt{\rho} \cos(\theta/2)  } & \mbox{ if } j=1,2 \\
\frac{1}{N \sqrt{\rho} \cos(\theta/2)  } & \mbox{ if } j=3,4
\end{dcases}.
\]
If $N>2$ then $\| \Pi_\infty  \cK^j \| \leq K_N^j$. 
\end{lemma}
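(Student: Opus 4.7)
The plan is to reduce the estimate to a diagonal-multiplier calculation on the tail, and then read off the bound from the explicit formulas for the eigenvalues $\mu_{1,2}^{\rs,\ru}$ in \eqref{E:B-infty-evals}.

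First I would observe that the resonance sets $\mathrm{Res}_{i,j}$ enumerated just after \eqref{eq:StildeExpand} consist entirely of multi-indices of order exactly $2$. Therefore, once $N>2$, every $\alpha=(m,n)$ appearing in the range of $\Pi_\infty$ (i.e.\ $m+n\ge N+1\ge 4$) lies outside $\mathrm{Res}_{i,j}$ for all $i,j$. Consequently the ``resonant branch'' of \eqref{eq:KappaDef} never activates, and $\Pi_\infty\kappa^{i,j}$ acts as the diagonal multiplier
\[
(\Pi_\infty\kappa^{i,j}b)_{m,n} \;=\; \frac{b_{m,n}}{m\mu_1^\rs + n\mu_2^\rs + \mu_j - \mu_i}\,\mathbf{1}_{\{m+n\ge N+1\}}.
\]
Because the $\|\cdot\|^1_{1,2}$ norm is weighted sum-of-absolute-values and the multiplier is diagonal, standard $\ell^1$ theory gives
\[
\|\Pi_\infty\kappa^{i,j}\|_{L(X^\infty_{1,2})} \;=\; \sup_{m+n\ge N+1}\frac{1}{|m\mu_1^\rs + n\mu_2^\rs + \mu_j - \mu_i|}.
\]
Next I would translate the column norm on $(X^\infty_{1,2})^4$ inherited from the $p=1$ operator norm on $(\ell^1_{1,2})^{4\times 4}$. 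Since $\mathcal{K}^j$ acts componentwise, a single-line computation gives $\|\Pi_\infty\mathcal{K}^j\| = \max_{1\le i\le 4}\|\Pi_\infty\kappa^{i,j}\|$.

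The core of the proof is a lower bound on $|m\mu_1^\rs + n\mu_2^\rs + \mu_j - \mu_i|$ uniformly over $(m,n)$ with $m+n\ge N+1$. Using $|z|\ge|\mathrm{Re}(z)|$ together with $\mathrm{Re}(\mu_1^\rs)=\mathrm{Re}(\mu_2^\rs)=-\sqrt{\rho}\cos(\theta/2)$ and $\mathrm{Re}(\mu_{1,2}^\ru)=+\sqrt{\rho}\cos(\theta/2)$, one computes
\[
\mathrm{Re}\bigl(m\mu_1^\rs + n\mu_2^\rs + \mu_j - \mu_i\bigr)
\;=\;-(m+n)\sqrt{\rho}\cos(\theta/2) + \mathrm{Re}(\mu_j)-\mathrm{Re}(\mu_i).
\]
The shift $\mathrm{Re}(\mu_j)-\mathrm{Re}(\mu_i)$ takes values in $\{0,\pm 2\sqrt{\rho}\cos(\theta/2)\}$, and I would analyze the four relevant sign patterns. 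For $j\in\{1,2\}$ the shift is either $0$ (when $i\in\{1,2\}$) or $-2\sqrt{\rho}\cos(\theta/2)$ (when $i\in\{3,4\}$); in both cases the real part is bounded above in absolute value from below by $(m+n)\sqrt{\rho}\cos(\theta/2)\ge(N+1)\sqrt{\rho}\cos(\theta/2)$, yielding the claimed $K_N^{1,2}$. For $j\in\{3,4\}$ the shift is either $0$ or $+2\sqrt{\rho}\cos(\theta/2)$, and the latter case (which only occurs for $i\in\{1,2\}$, i.e.\ precisely where the low-order resonances lived) is the main obstacle: a crude $|\mathrm{Re}|$ estimate only gives $(N-1)\sqrt{\rho}\cos(\theta/2)$. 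To reach the sharper $K_N^{3,4}=1/(N\sqrt{\rho}\cos(\theta/2))$ I would combine the real- and imaginary-part lower bounds, noting that the imaginary part $(n-m+2)\sqrt{\rho}\sin(\theta/2)$ (for the $j=3,i=1$ block) vanishes only on the line $m=n+2$, which restricted to $m,n\in\N$ with $m+n\ge N+1$ forces $m+n\ge N+2$ whenever $N$ is even; the odd-$N$ case is then handled by mixing the strictly positive $\sin^2(\theta/2)$ contribution at the adjacent lattice points into the quadratic $|z|^2$ estimate.

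Finally, taking $\max_i$ of the four per-$(i,j)$ bounds produces $K_N^j$, completing the proof. I would expect the imaginary-part refinement for the $(j,i)\in\{3,4\}\times\{1,2\}$ blocks to be the only nontrivial step; everything else is bookkeeping with the column norm and the explicit eigenvalue formulas.
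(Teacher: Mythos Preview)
Your overall strategy---diagonal multiplier on the tail, $\ell^1$ operator norm equals the sup of the scalar factors, then a real-part lower bound on $|m\mu_1^\rs+n\mu_2^\rs+\mu_j-\mu_i|$---is exactly the approach the paper takes: the paper simply writes down
\[
\|\Pi_\infty\cK^j\|=\sup_{m+n\ge N+1}\max_i\frac{1}{|m\mu_1^\rs+n\mu_2^\rs+\mu_j-\mu_i|}
\]
and refers back to the real-part estimate \eqref{eq:K_N_bound}. Your treatment of the $j\in\{1,2\}$ case matches this and is correct.

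Where your argument breaks down is the imaginary-part refinement for $j\in\{3,4\}$. You correctly observe that the pure real-part bound gives only $(N-1)\sqrt{\rho}\cos(\theta/2)$ in the worst block, but your proposed upgrade to $N\sqrt{\rho}\cos(\theta/2)$ cannot succeed. Take the block $(i,j)=(2,3)$: the expression is $(1-m)\sqrt{\rho}e^{i\theta/2}+(1-n)\sqrt{\rho}e^{-i\theta/2}$, with imaginary part $(n-m)\sqrt{\rho}\sin(\theta/2)$. For odd $N$, the lattice point $m=n=(N+1)/2$ lies on $m+n=N+1$ and has \emph{zero} imaginary part, so the modulus there is \emph{exactly} $(N-1)\sqrt{\rho}\cos(\theta/2)$. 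No ``mixing'' with adjacent lattice points can improve a pointwise supremum. The same phenomenon occurs for $(i,j)=(1,3)$ along $m=n+2$ and symmetrically for $j=4$. In other words, the constant $K_N^{3,4}$ as stated in the Lemma is actually too small for odd $N$; the real-part argument the paper invokes delivers $1/[(N-1)\sqrt{\rho}\cos(\theta/2)]$, not $1/[N\sqrt{\rho}\cos(\theta/2)]$, and that is sharp. So rather than a gap in your method, you have uncovered what appears to be an off-by-one in the Lemma's stated bound for $j=3,4$; your instinct that the crude real-part estimate ``only'' gives $N-1$ was correct, and you should stop there rather than try to close a gap that cannot be closed.
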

\begin{proof}
From \eqref{eq:KappaDef} we have the operator norm 
\begin{align*}
\| \Pi_\infty  \cK^j \| &=	\sup_{m+n \geq N + 1}   \max_{i, \rs/\ru} \left\{ \frac{1}{|\mu_i^{\rs/\ru} - m \mu_1^\rs - n \mu_2^\rs - \mu_j|} \right\}. 
\end{align*}
The result follows by a calculation analogous to the Lemma \ref{lemma: SH radii poly for mflds}, in particular equation \eqref{eq:K_N_bound}.
\end{proof}

\begin{lemma} \label{prop:Bundle_YBound} For $j \in \{1,\dots,4\}$, define 
\begin{align*}
Y^{j,a}_0 &:=		\sum_{N+1 \leq |\alpha| \leq 3N} \Big|
		\cK^j_{\alpha} \cW_0^{-1}    \left(\hat {G}^N * (\bar{\cW}^{N})^{j}	 \right)_\alpha  
		\Big| \delta^{|\alpha|} \\		
		Y^{j,b}_0 &:=
		K^j_N
		\| 
		\cW_0^{-1}  \| \epsilon_\infty \|  (\bar{\cW}^{N})^{j}	 \| 	\\
		Y^{j,c}_0 &:=	
		\begin{dcases}
			0 & \mbox{ if } j=1,2 \\
			\sum_{i=1,2}
			\sum_{\beta \in \mathrm{Res}_{i , j}} 
			|a^{i,j}_\beta| 
			\sum_{|\alpha| = N+1}^{N+2} \Big|
			\cK^j_\alpha   	({\tilde \cW}^{N})^{i}_{\alpha-\beta}   
			\Big| \delta^{|\alpha|} 
			 & \mbox{ if } j=3,4 \\
		\end{dcases}
\end{align*}
If we define  $ Y^{j}_0 = Y^{j,a}_0 + Y^{j,b}_0 + Y^{j,c}_0$,  then $\|T^j(0) \|  \leq Y_0^{j}$.
\end{lemma}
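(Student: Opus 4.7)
The plan is to write $T^j(0)$ as a sum of three explicit terms and bound each one in the $\ell^1_{\delta,2}$-type norm using tools already in place. Setting $\tilde\cW^\infty=0$ in the column version of \eqref{eq:T_expand} (the remark preceding this lemma) gives
\[
T^j(0) = \Pi_\infty \cK^j \Bigl( \cW_0^{-1}\bigl(\hat G * (\bar\cW^N)^j\bigr) - (\tilde\cW^N \hat * \mathcal A)^j \Bigr).
\]
Next I split the Jacobian as $\hat G = \hat G^N + \hat G^\infty$, as in the discussion leading up to Lemma \ref{lem:ghatinfty}, and apply the triangle inequality on the three resulting summands. This will produce the three contributions $Y_0^{j,a}$, $Y_0^{j,b}$, and $Y_0^{j,c}$ in that order.

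For the $\hat G^N$ piece, the key observation is that $\hat G^N$ is a polynomial of degree at most $2N$ in $\sigma$, because $DG$ is quadratic in its first coordinate and $P^N$ has degree $N$. Consequently $\hat G^N * (\bar\cW^N)^j$ is a polynomial of degree at most $3N$, and $\Pi_\infty$ discards everything of order $\leq N$. What remains is a finite sum over $N+1\leq|\alpha|\leq 3N$ that can be bounded componentwise (and later evaluated with interval arithmetic) by the absolute value of each coefficient weighted by $\delta^{|\alpha|}$; this is exactly $Y_0^{j,a}$. For the $\hat G^\infty$ piece I use that the Cauchy product is submultiplicative on $\ell^1_{\delta,2}$, that $\Pi_\infty\cK^j$ has operator norm bounded by $K_N^j$ by Lemma \ref{prop:K_N_bundle}, and that $\|\hat G^\infty\|\leq\epsilon_\infty$ by Lemma \ref{lem:ghatinfty}. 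Chaining these three estimates produces $Y_0^{j,b}$.

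For the $\mathcal A$ piece I invoke the resonance expansion \eqref{eq:ResonantProductExpansion}: every $\beta\in\mathrm{Res}_{i,j}$ with $i<j$ satisfies $|\beta|=2$, and $(\tilde\cW^N)^i_{\alpha-\beta}$ vanishes whenever $|\alpha-\beta|>N$. Combined with the $|\alpha|\geq N+1$ restriction from $\Pi_\infty$, only $|\alpha|\in\{N+1,N+2\}$ contribute, and only for $j=3,4$; for $j=1,2$ the resonance sets $\mathrm{Res}_{i,j}$ are empty and the term vanishes identically, which accounts for the case split in the definition of $Y_0^{j,c}$. Summing over the (at most) two surviving orders of $\alpha$ and over $i\in\{1,2\}$ and $\beta\in\mathrm{Res}_{i,j}$ yields the stated finite sum.

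The main (mild) obstacle is bookkeeping, not analysis. In particular, I must remember the warning preceding Lemma \ref{lem:ghatinfty} that $\hat G^N\neq\pi_N\hat G$, so the first piece genuinely has support extending to order $3N$ rather than $2N$; and I must track how the index range of each Cauchy product interacts with $\Pi_\infty$ to confirm that $Y_0^{j,a}$ and $Y_0^{j,c}$ are indeed finite sums. Once these index computations are in hand, the triangle inequality $\|T^j(0)\|\leq Y_0^{j,a}+Y_0^{j,b}+Y_0^{j,c}$ immediately gives the desired bound.
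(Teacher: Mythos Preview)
Your proposal is correct and follows essentially the same approach as the paper: split $\hat G = \hat G^N + \hat G^\infty$, bound the $\hat G^N$ piece as a finite sum over $N+1\le|\alpha|\le 3N$ (using that $\hat G^N$ has degree $\le 2N$), bound the $\hat G^\infty$ piece via $K_N^j$, $\|\cW_0^{-1}\|$, and $\epsilon_\infty$, and bound the resonant $\mathcal A$ piece using \eqref{eq:ResonantProductExpansion} and the fact that all resonances have order $2$. Your bookkeeping remarks about the index ranges and the case split for $j=1,2$ versus $j=3,4$ match the paper's reasoning exactly.
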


\begin{proof}
From \eqref{eq:ResonantProductExpansion} 	and expanding out $ \hat{G}= \hat{G}^N + \hat{G}^\infty$,
		\begin{align*}
			T^j(0) &= \Pi_\infty \cK^j   \left(
			\cW_0^{-1}  (\hat{G}^N + \hat{G}^\infty) * \bar{\cW}^{N,j}	
			- (\tilde{\cW}^N \hat{*} \mathcal{A})^j
			\right)  
		\end{align*}
We will obtain bounds $Y^{j,a}_0 ,Y^{j,b}_0 , Y^{j,c}_0$ such that 
	\begin{align*}
		\| \Pi_\infty \cK^j   \left(
		\cW_0^{-1}  \hat G^N * (\bar{\cW}^{N})^{j}		\right) \| & \leq Y^{j,a}_0	
		\\
		\| \Pi_\infty \cK^j   \left(
		\cW_0^{-1}  \hat G^\infty * (\bar{\cW}^{N})^{j}		\right) \| & \leq Y^{j,b}_0	
				\\
		\| \Pi_\infty \cK^j   \left( \tilde{\cW}^N \hat{*} \mathcal{A}	\right)^j\| & \leq Y^{j,c}_0	
	\end{align*}
The first bound may be computed rather explicitly:
	\begin{align*}
			\| \Pi_\infty \cK^j   \left(
		\cW_0^{-1}  \hat G^N * (\bar{\cW}^{N})^{j}		\right) \|  & \leq  \sum_{|\alpha| \geq N+1} \Big|
		\cK^j_{\alpha}\cW_0^{-1}    \left(\hat {G}^N * (\bar{\cW}^{N})^{j}	 \right)_\alpha  
		\Big| \delta^{|\alpha|}  
		\\
		 & = \sum_{N+1 \leq |\alpha| \leq 3N} \Big|
		\cK^j_{\alpha} \cW_0^{-1}    \left(\hat {G}^N *(\bar{\cW}^{N})^{j}	 \right)_\alpha  
		\Big| \delta^{|\alpha|} =: Y^{j,a}_0
	\end{align*}
Note that the sum terminates at order $3N$ because  $ \Pi_{2N}  \hat {G}^N = \hat {G}^N$. For next bound, using Lemmas \ref{lem:ghatinfty} and \ref{prop:K_N_bundle} we find
\begin{align*}
		\| \Pi_\infty \cK^j   \left(
	\cW_0^{-1}  \hat G^\infty * (\bar{\cW}^{N})^{j}		\right) \|  
	&\leq 
	\| \Pi_\infty \cK^j  \| 
	\| 
	\cW_0^{-1}  \| \| \hat G^\infty * (\bar{\cW}^{N})^{j}		 \| 
	\\
	 &\leq 
 K^j_N
	\| 
	\cW_0^{-1}  \| \epsilon_\infty \|  (\bar{\cW}^{N})^{j}		 \| 	=: Y^{j,b}_0.
\end{align*}
For the final bound, which involves the resonant terms, first note that by  \eqref{eq:ResonantProductExpansion} it follows that 
\begin{align*}
		\| \Pi_\infty \cK^j   \left( \tilde{\cW}^N \hat{*} A	\right)^j\| 
		&\leq 
		\sum_{|\alpha| \geq N+1} \Big|
		\cK^j_\alpha   \left( \tilde{\cW}^N \hat{*} A	\right)^j_\alpha
		\Big| \delta^{|\alpha|} 
		\\
				&\leq 
		\sum_{|\alpha| \geq N+1} \Big|
		\cK^j_\alpha   \sum_{i<j} \sum_{\beta \in \mathrm{Res}_{i , j}}  	({\tilde \cW}^{N})^{i}_{\alpha-\beta}   a^{i,j}_\beta
		\Big| \delta^{|\alpha|} 
\end{align*} 
Also note that 
\[
\{ | \beta |  : \beta \in 	\mathrm{Res}_{i , j}  \} = 
\begin{dcases}
	\{0\} & \mbox{if } j =1,2 \\
	\{2\} & \mbox{if } j=3,4
\end{dcases}
\]
Hence
\begin{align*}
	\| \Pi_\infty \cK^j   \left( \tilde{\cW}^N \hat{*} A	\right)^j\| 
	&\leq 
	 \sum_{i<j}
	  \sum_{\beta \in \mathrm{Res}_{i , j}} 
	\sum_{|\alpha| = N+1}^{N+2} \Big|
	\cK^j_\alpha   	({\tilde \cW}^{N})^{i}_{\alpha-\beta}   a^{i,j}_\beta
	\Big| \delta^{|\alpha|} 	=: Y^{j,c}_0 
\end{align*} 

\end{proof}

\begin{lemma} \label{prop:Bundle_ZBound}
		For $j \in \{1,\dots,4\}$, define 
	\begin{align*}	
		Z^{j,a} &:=	
		K_N^j 
		\sum_{1 \leq  |\alpha| \leq 2N }
		\left\|
		\cW_0^{-1}  \hat{
			G}^N_{\alpha} \cW_0 		 \right\| 	\\
		Z^{j,b} &:=	K_N^j  \epsilon_\infty  
		\|
		\cW_0^{-1} \|   \|  \cW_0 		\| \\
		Z^{j,c} &:=	
		\begin{dcases}
				0 & \mbox{ if } j=1,2 
				\\
				\left(
			\sum_{i=1,2} \sum_{\beta \in \mathrm{Res}_{i , j}}  |a^{i,j}_\beta |	 \right)
			K_{N+2}^j 
			& \mbox{ if } j=3,4 
		\end{dcases}
	\end{align*}
	If we define  $ Z^{j} = Z^{j,a} + Z^{j,b} + Z^{j,c}$,  
then  
\[
\sup_{\cB^\infty \in \overline{B_r(0)}}\| DT^j(\cB^\infty)\|   \leq  Z^j.
\]
\end{lemma}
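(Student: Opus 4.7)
My plan is to exploit the fact that $T$ is affine-linear in $\tilde\cW^\infty$, so $DT^j(\cB^\infty)$ is actually independent of $\cB^\infty$, which collapses the supremum over $\overline{B_r(0)}$ to a single operator-norm computation. Starting from the expanded form \eqref{eq:T_expand}, differentiating in $\tilde\cW^\infty$ kills the constant piece $\cK\odot(\cW_0^{-1}\hat G * \bar\cW^N)$ and leaves
\[
DT^j(\cB^\infty)\, h^\infty \;=\; \Pi_\infty \cK^j\!\left( (\cW_0^{-1}\hat G\,\cW_0)\,\hat{*}\,h^{\infty,j} \;-\; (h^\infty\,\hat{*}\,\mathcal A)^{*,j}\right),
\]
since $\mathcal A$ is fixed as a polynomial of degree at most two. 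Splitting $\hat G = \hat G^N + \hat G^\infty$, I will bound the three resulting pieces to produce $Z^{j,a}$, $Z^{j,b}$, and $Z^{j,c}$ respectively. Because the column-sum norm satisfies $\|h^{\infty,j}\| \leq \|h^\infty\|$, and the first two pieces involve only $h^{\infty,j}$, estimating each column of the output in terms of $\|h^\infty\|$ suffices to control $\|DT^j\|$.

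For $Z^{j,a}$, the key trick is the identity $(b\,\hat{*}\,h)_\alpha = (b*h)_\alpha - b_0 h_\alpha - b_\alpha h_0$ from \eqref{eq: star hat def}. Since $h^{\infty,j}_0 = 0$, we get $b\,\hat{*}\,h^{\infty,j} = (b-b_0)*h^{\infty,j}$ with $b = \cW_0^{-1}\hat G^N\cW_0$, so the Banach-algebra bound on $\ell^1_{1,2}$ yields $\|b\,\hat{*}\,h^{\infty,j}\| \leq \|b - b_0\|\cdot\|h^{\infty,j}\|$. The norm $\|b-b_0\|$ is exactly the sum in $Z^{j,a}$ restricted to $1 \leq |\alpha| \leq 2N$ (since $\hat G^N$ has degree $\leq 2N$), and Lemma \ref{prop:K_N_bundle} supplies $\|\Pi_\infty\cK^j\| \leq K_N^j$. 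For $Z^{j,b}$, I use the same reduction: since Lemma \ref{lem:ghatinfty} gives $\hat G^\infty_0 = 0$ and $\|\hat G^\infty\|\leq \epsilon_\infty$, the $\hat{*}$ collapses to $*$, submultiplicativity gives $\|\cW_0^{-1}\hat G^\infty\cW_0\| \leq \|\cW_0^{-1}\|\,\epsilon_\infty\,\|\cW_0\|$, and the $K_N^j$ factor then completes the bound.

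The third piece $Z^{j,c}$ requires more care and is the main obstacle. Using the explicit form \eqref{eq:SH_A_form} of $\mathcal A$, expansion \eqref{eq:ResonantProductExpansion} gives
\[
(h^\infty \,\hat{*}\,\mathcal A)^{*,j}_\alpha \;=\; \sum_{i<j}\sum_{\beta\in\mathrm{Res}_{i,j}} h^{\infty,*,i}_{\alpha-\beta}\,a^{i,j}_\beta.
\]
For $j = 1,2$ the resonance sets are empty and the term vanishes, giving $Z^{j,c} = 0$. For $j = 3,4$ every resonance has $|\beta| = 2$, and since $h^{\infty,*,i}_{\alpha-\beta} = 0$ for $|\alpha-\beta| \leq N$, the product is supported only for $|\alpha| \geq N+3$. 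On this tail range Lemma \ref{prop:K_N_bundle} gives the improved estimate $|\cK^j_\alpha| \leq K_{N+2}^j$ (rather than $K_N^j$), and the weighted $\ell^1$ norm of the single-term convolution evaluates to $|a^{i,j}_\beta|\,\delta^{|\beta|}\,\|h^{\infty,*,i}\|$, which with $\delta = 1$ reproduces the stated formula. Tracking this index shift from $N+1$ to $N+3$ — and thereby the passage from $K_N^j$ to $K_{N+2}^j$ — is the subtlety that makes the resonance contribution small enough to fit inside the radii polynomial; everything else is a bookkeeping application of the Banach algebra and submultiplicative bounds already set up in \S\ref{S:analytic-framework} and \S\ref{S:mflds}. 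Summing $Z^{j,a} + Z^{j,b} + Z^{j,c}$ then yields the desired bound on each column, hence on $\|DT^j\|$, uniformly in $\cB^\infty \in \overline{B_r(0)}$.
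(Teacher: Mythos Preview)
Your proposal is correct and follows essentially the same approach as the paper: both compute the Fr\'echet derivative explicitly (noting it is constant since $T$ is affine), split $\hat G = \hat G^N + \hat G^\infty$ to obtain the three pieces $Z^{j,a}$, $Z^{j,b}$, $Z^{j,c}$, and for the resonance term use the support shift to $|\alpha|\geq N+3$ to pick up the improved constant $K_{N+2}^j$. Your explicit use of the identity $(b\,\hat*\,h) = (b-b_0)*h$ when $h_0=0$ is a slightly cleaner justification of the $\hat*$ estimates than the paper's direct norm bounds, but the substance is the same.
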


\begin{proof}From \eqref{eq:T_expand} we compute the Frech\'et derivative of $T$ acting on a vector $ \mathcal{H}^{\infty} \in (X^\infty_{\delta,2})^{4\times4}$. 
	\begin{align*}
		DT( \cW^{\infty}) \mathcal{H}^\infty &= 
		\Pi_\infty \cK\odot \left(
(		\cW_0^{-1}  \hat{
G} \cW_0 )		\hat{*} \mathcal{H}^{\infty}
- \mathcal{H}^\infty \hat{*} A
		\right)  
	\end{align*}
Note that as $T$ is affine linear, then $	DT( \cW^{\infty})$ is constant in $ \cW^{\infty}$, and $DT( \cW^{\infty})=DT(0)$. Moreover  
\begin{align*}
	\|	DT( \cW^{\infty}) \| = \sup_{\| \mathcal{H}^\infty \| =1}  \|	DT( \cW^{\infty}) \mathcal{H}^\infty \| 
	 &= \sup_{\| \mathcal{H}^\infty \| =1}  
	 \max_{1 \leq j \leq 4}
	  \|	DT^j( 0 ) \mathcal{H}^\infty \|. 
\end{align*}
Fix a vector $ \mathcal{H}^{\infty} \in (X^\infty_{\delta,2})^{4\times4}$ and assume $\| \mathcal{H}^{\infty} \| =1$. 
Putting into components we obtain 
\begin{align*}
	DT^j( 0 ) \mathcal{H}^\infty &= 
	\Pi_\infty \cK^j   \left(
	(		\cW_0^{-1}  \hat{
		G} \cW_0 )		\hat{*} \mathcal{H}^{\infty,j}
	- (\mathcal{H}^\infty \hat{*} A)^j
	\right)  
\end{align*}
Expanding out $ \hat{G}= \hat{G}^N + \hat{G}^\infty$, we will obtain bounds $Z^{j,a} ,Z^{j,b} , Z^{j,c}$ such that 
\begin{align*}
	\| \Pi_\infty \cK^j   \left(
	\cW_0^{-1}  \hat G^N \cW_0\hat{*} \mathcal{H}^{\infty,j}	\right) \| & \leq Z^{j,a}	
	\\
	\| \Pi_\infty \cK^j   \left(
	\cW_0^{-1}  \hat G^\infty\cW_0 \hat{*} \mathcal{H}^{\infty,j}	\right) \| & \leq Z^{j,b} 
	\\
	\| \Pi_\infty \cK^j   \left( \mathcal{H}^{\infty} \hat{*} A	\right)^j\| & \leq Z^{j,c}	
\end{align*}
For the first part,   
\begin{align*}
\|	\Pi_\infty \cK^j   \left(
	(		\cW_0^{-1}  \hat{
		G}^N \cW_0 )		\hat{*} \mathcal{H}^{\infty,j}  \right) \|
	& \leq 
	 \|	\Pi_\infty \cK^j   \| \left\|
	(		\cW_0^{-1}  \hat{
	G}^N \cW_0 )		\hat{*} \mathcal{H}^{\infty,j}  \right\|
\\
	& \leq 
K_N^j 
\left( \sum_{1 \leq  |\alpha| \leq 2N }
\left\|
		\cW_0^{-1}  \hat{
	G}^N_{\alpha} \cW_0 		 \right\| 
\right) 
\, \|  \mathcal{H}^{\infty,j}   \|	\\
& \leq K_N^j 
\sum_{1 \leq  |\alpha| \leq 2N }
\left\|
\cW_0^{-1}  \hat{
	G}^N_{\alpha} \cW_0 		 \right\| 
=: Z^{j,a}
\end{align*}
For the second part, 
\begin{align*}
	\|	\Pi_\infty \cK^j   \left(
	(		\cW_0^{-1}  \hat{
		G}^\infty \cW_0 )		\hat{*} \mathcal{H}^{\infty,j}  \right) \|
	& \leq 
	\|	\Pi_\infty \cK^j   \| \left\|
	(		\cW_0^{-1}  \hat{
		G}^\infty \cW_0 )		\hat{*} \mathcal{H}^{\infty,j}  \right\|
	\\
	& \leq 
	K_N^j 
	\|
	\cW_0^{-1} \|  \| \hat{
		G}^{\infty} \| \|  \cW_0 		\|  \, \|  \mathcal{H}^{\infty,j}   \|	\\
	& \leq K_N^j  \epsilon_\infty  
	\|
	\cW_0^{-1} \|   \|  \cW_0 		\| 
	=: Z^{j,b} \end{align*}
For the third part, we expand out the resonant terms as per  \eqref{eq:ResonantProductExpansion} 
\begin{align*}
	\Pi_\infty \cK^j   \left(
	   \mathcal{H}^\infty \hat{*} A   \right)^j 
	&  = \sum_{| \alpha| \geq N+1 }\cK^j_\alpha    \left(
	\mathcal{H}^\infty \hat{*} A   \right)^j_\alpha \\
	& =   \sum_{| \alpha| \geq N+1 } \cK^j_\alpha  	\sum_{i<j} \sum_{\beta \in \mathrm{Res}_{i , j}}  	\mathcal{H}^{\infty,i}_{\alpha-\beta}  a^{i,j}_\beta \\
	&=  \sum_{i<j} \sum_{\beta \in \mathrm{Res}_{i , j}}  a^{i,j}_\beta 	  \sum_{| \alpha| \geq N+3 }  \cK^j_\alpha  	\mathcal{H}^{\infty,i}_{\alpha-\beta}   
\end{align*} 
We note that the sum may be taken over $ |\alpha| \geq N+3$, because the nonzero terms of $\mathcal{H}^{\infty}_{\alpha'}$ are all of order $ |\alpha' | \geq N+1$, and the resonant terms $\mathrm{Res}_{i , j}$, if any, are all of order $2$. If $j=1,2$ there are no resonant terms, so this sum is just zero. Otherwise  
\begin{align*}
	\| 	\Pi_\infty \cK^j   \left(
	\mathcal{H}^\infty \hat{*} A   \right)^j 
  \| &\leq \left \| 
		 \sum_{i<j} \sum_{\beta \in \mathrm{Res}_{i , j}}  a^{i,j}_\beta 	  \sum_{| \alpha| \geq N+3 }  \cK^j_\alpha  	\mathcal{H}^{\infty,i}_{\alpha-\beta}   \right\| \\
		 &\leq 
		 \left(
		 \sum_{i<j} \sum_{\beta \in \mathrm{Res}_{i , j}}  |a^{i,j}_\beta |	 \right)
		 K_{N+2}^j
		 \| 	\mathcal{H}^{\infty,i}   \|  
		 \\
		  &\leq 
		 \left(
		 \sum_{i=1,2} \sum_{\beta \in \mathrm{Res}_{i , j}}  |a^{i,j}_\beta |	 \right)
				 K_{N+2}^j  =: Z^{j,c},
\end{align*}
thus obtaining the desired bounds.   
\end{proof}

We have the following corollary that inserts the bounds into Theorem \ref{thm: Radii Polynomial approach for Contraction Mapping Theorem}. 
 
\begin{corollary}\label{prop: SH radii poly for bundle}
	
	Suppose $\bar \cB^N  \in (X^N_{\delta,2})^{4\times 4}$ is a fixed point of $ \Psi^N$. 
	Suppose that $P= P^N + P^\infty$ is a parameterization of the stable manifold and $ \| P^\infty \| < \epsilon$.  
	Let $T$ be defined as in \eqref{eq: tail operator T_bundle}. Suppose $ Y^j_0$ and a $Z^j$ are given as per Lemmas \ref{prop:Bundle_YBound} and \ref{prop:Bundle_ZBound} and define 
	\begin{align}
		Y_0 &= \max_{1 \leq j \leq 4} Y_0^j &
		Z &= \max_{1 \leq j \leq 4} Z^j 
	\end{align}
	If there exists an $r_0 > 0$ such that
	$\mathfrak p(r_0) < 0$, where
	\begin{equation}\label{def: radii poly Bundle}
		\mathfrak p(r) = Z r - r + Y_0,
	\end{equation}
	then there exists a unique $\cB^\infty_* \in \overline{B_{r_0}(0 )}$ so that $T(\cB^\infty_*) = \cB^\infty_*$. 
	Furthermore $ \bar \cB^N +\cB^\infty_*= \Psi( \bar \cB^N +\cB^\infty_*)$. 
\end{corollary}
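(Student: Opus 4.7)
The plan is to apply Theorem \ref{thm: Radii Polynomial approach for Contraction Mapping Theorem} directly to the map $T:(X^\infty_{\delta,2})^{4\times 4} \to (X^\infty_{\delta,2})^{4\times 4}$, with $\bar{x} = 0$, and then translate a fixed point of $T$ into a fixed point of the full operator $\Psi$. Since the norm on $(\ell^1_{\delta,2})^{4\times 4}$ is defined column-wise via $\|\cB\| = \max_{1\leq j \leq 4} \sum_{i=1}^4 \|b^{i,j}\|_{\ell^1_{\delta,2}}$, the column-by-column bounds produced in Lemmas \ref{prop:Bundle_YBound} and \ref{prop:Bundle_ZBound} assemble into bounds on $T$ by taking maxima over $j$.

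First I would verify the $Y_0$ hypothesis. Writing $T(0) = (T^1(0),\dots,T^4(0))$ and using the definition of the norm, Lemma \ref{prop:Bundle_YBound} gives
\begin{equation*}
\|T(0) - 0\| = \max_{1 \leq j \leq 4} \|T^j(0)\| \leq \max_{1 \leq j \leq 4} Y_0^j = Y_0,
\end{equation*}
so the hypothesis $\|T(\bar{x}) - \bar{x}\|_X \leq Y_0$ of Theorem \ref{thm: Radii Polynomial approach for Contraction Mapping Theorem} holds. Next I would verify the $Z(r)$ hypothesis. Because the matrix $\mathcal{A}$ is fixed (it is a polynomial in $\sigma$ determined by $\bar{\cB}^N$), equation \eqref{eq:T_expand} shows that $T$ is affine linear in $\cB^\infty$. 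Hence $DT(\cB^\infty)$ is independent of $\cB^\infty$, and Lemma \ref{prop:Bundle_ZBound} yields
\begin{equation*}
\sup_{\cB^\infty \in \overline{B_r(0)}} \|DT(\cB^\infty)\| \leq \max_{1 \leq j \leq 4} Z^j = Z,
\end{equation*}
so the function $Z(r) := Z$ (constant in $r$) satisfies the hypothesis.

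With both bounds established, Theorem \ref{thm: Radii Polynomial approach for Contraction Mapping Theorem} applies with the radii polynomial $\mathfrak{p}(r) = Zr - r + Y_0$. The assumption that $\mathfrak{p}(r_0) < 0$ for some $r_0 > 0$ then produces a unique fixed point $\cB^\infty_* \in \overline{B_{r_0}(0)}$ of $T$.

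Finally, I would promote this to a fixed point of $\Psi$ using the decomposition \eqref{E:separate_bundle}. By construction $\bar{\cB}^N = \Psi^N(\bar{\cB}^N)$, and by definition of $T$ in \eqref{eq: tail operator T_bundle} the identity $\cB^\infty_* = T(\cB^\infty_*) = \Psi^\infty(\bar{\cB}^N + \cB^\infty_*)$ holds. Adding these two identities and using $\Psi = \Psi^N \circ \Pi_N + \Psi^\infty$ (together with the observation that $\Pi_N(\bar{\cB}^N + \cB^\infty_*) = \bar{\cB}^N$), we obtain $\Psi(\bar{\cB}^N + \cB^\infty_*) = \bar{\cB}^N + \cB^\infty_*$, as desired. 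There is no substantive obstacle here; the entire argument is a bookkeeping exercise gluing the finite-order computation to the tail contraction, once Lemmas \ref{prop:Bundle_YBound}–\ref{prop:Bundle_ZBound} are in hand. The only subtle point to mention explicitly is the affine linearity of $T$, which is what allows us to use a constant $Z$ rather than a genuine polynomial in $r$.
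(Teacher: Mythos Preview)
Your proposal is correct and follows exactly the approach of the paper: apply Theorem \ref{thm: Radii Polynomial approach for Contraction Mapping Theorem} to $T$ with $\bar x = 0$, using Lemmas \ref{prop:Bundle_YBound} and \ref{prop:Bundle_ZBound} to supply the $Y_0$ and $Z$ bounds, and then use the decomposition \eqref{E:separate_bundle} to lift the tail fixed point to a fixed point of $\Psi$. The paper's proof is a one-line invocation of these ingredients; you have simply written out the details, including the helpful observation that the affine linearity of $T$ makes $Z(r)$ constant.
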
 

\begin{proof} This follows directly from Theorem \ref{thm: Radii Polynomial approach for Contraction Mapping Theorem}, the definition of $T$ in \eqref{eq: tail operator T_bundle}, and Lemmas \ref{prop:Bundle_YBound} and \ref{prop:Bundle_ZBound}. 	
\end{proof}

\begin{remark}
Note that the polynomial $\mathfrak p$ in \eqref{def: radii poly Bundle} is linear: $\mathfrak p(r) = -(1-Z)r + Y_0$. Thus, if $|Z| < 1$, then any choice of $r_0 > \frac{Y_0}{1-Z}$ will work. Furthermore, since $Z$ is proportional to $K_N^j$, which can be made as small as we like by choosing $N$ large, we can ensure a fixed point exists.  
\end{remark}


\section{Computer-assisted proofs and computational results} 
\label{S:CAP}

To summarize, in Section \ref{S:mflds} we described how the parameterization method may be used to rigorously compute local parameterizations of the (un)stable manifolds of the equilibrium at the origin in \eqref{E:exist}. In Section \ref{S:nonres methods} we described how such a parameterization may be used to compute $-L_{\mathrm{conj}}^-$, a bound on the smallest possible conjugate point. 
Finally in Section \ref{Ch:res bundles} we detailed a new methodology, based on the parameterization method, to rigorously compute resonant and non-resonant vector bundles attached to the stable manifold. 

Many of these theorems, such as Corollary \ref{prop: SH radii poly for bundle}, are formulated in an \emph{a posteriori} format; they state that, if certain explicitly computable bounds are satisfied (e.g. there exists some $r_0$ for which $\mathfrak p(r_0) < 0$), then the desired result holds. When combined with rigorous numerics, this yields computer-assisted proofs. While we can guarantee that there are sufficient conditions for which the theorem will hold (e.g. by computing sufficiently many terms in the Taylor expansion, or restricting the manifold to a sufficiently small neighborhood), we can do much better. We are able to show that these methods can produce highly accurate results for a large local chart of the manifolds, thus being able to be realistically applied in practice. 


\subsection{Example computation of  the (un)stable manifolds and  $L^-_{\mathrm{conj}}$}

In the following we describe the results of our implementation of computer-assisted proofs of the above described results. The code associated with this paper is available as open source at \cite{codesResonantBundles}, and it is flexible to work with various system parameters $\nmu , \nnu$ from \eqref{E:exist}. Computations are performed using MATLAB's 64-bit interval arithmetic via Intlab \cite{rump1999intlab}. 

In the following we describe an example computation where we took $\nmu = 0.2$ and $\nnu = 1.6$. The first step is to compute the parameterization of the (un)stable manifolds. The two important computational parameters at hand are $N$, the number of terms in the Taylor expansion (we fix $N=35$), and the scaling of the eigenvectors used as the first order terms in the parameterization method. The scaling of the first order terms are intimately tied to size of the domain on which the parameterization method is valid, see Remark \ref{rmk: scaling the coefficients}. For $DP(0) = [P_{10}, P_{01}]$, we chose the scaling of the eigenvectors as $\|P_{10}\|=\|P_{01}\| = 1/2$.  Also we fix $\delta = 1$, see Remark \ref{rmk: scaling the coefficients}. This scaling choice results in a large parameterization of the manifolds; see Figure \ref{fig:Manifold_Bundle}.   

Described in Table \ref{table:ManifoldBounds} are the bounds relavent to the hypothesis of Lemma \ref{lemma: SH radii poly for mflds}.  We also note that throughout we have rounded all bounds upward to three significant digits. In sum, we are able to achieve a computer-assisted proof which validates the hypothesis of Lemma \ref{lemma: SH radii poly for mflds}  using a radius of $ r_0 = 2.95 \cdot 10^{-16}$. The computation and results for the unstable manifold are virtually identical, which is not surprising since Hamiltonian systems are reversible. 

\begin{table}[H]
	\centering
	\begin{tabular}{|r|l|l|l|} 
		\hline
		&
		$Y_0$  &  $Z_1$  &  $Z_2(r) $  \\		
		\hline 
		stable manifold &
		$4.90 \cdot 10^{-18} $& 
		$		0.983  $& 
		$		2.16 + 0.899 r $ \\
		\hline 
		unstable manifold &
		$4.90 \cdot 10^{-18} $& 
		$		0.983  $& 
		$		2.16 + 0.899 r $ \\
		\hline
	\end{tabular}
	\caption{Bounds for Manifolds.}
	\label{table:ManifoldBounds}
\end{table}

We note that the primary difficulty in these bounds is computation of $Z_1$, which at a minimum is required to be less than one in order for us to have a contraction. The element of our proof that allows us to control $Z_1$ is $K_N$, which for this computer-assisted proof turns out to be $ K_N = 0.300$. While $ K_N = \mathcal{O}(1/N)$ as $N \to \infty$, this requires computing $ \mathcal{O}(N^2)$ coefficients in the parameterization, to speak nothing of the scaling of the computation time. A contributing factor to why $K_N$ is relatively large is due to its dependence on the real part of $ \mu^\rs$. In our estimate \eqref{eq:K_N_bound}, we had to account for cancelations in the imaginary part of the two complex eigenvalues, so our bound only depended on the real part. This is somewhat reminiscent of a small divisors problem. Furthermore, we not that as $\hat \mu \to 0 $, $\mathrm{Re}(\mu^\rs_1)  \to 0$, whereby $K_N \to \infty$, so we imagine the proof would be difficult in this scenario. This would also be very much expected in a case where the spectral gap is going to zero. 

After computing the unstable manifold we are able to calculate  $L_{\mathrm{conj}}^- $. Using  Theorem \ref{thm: compute L-}, and Proposition \ref{prop: L-_alt}, we are able to compute that a bound of $L_{\mathrm{conj}}^- = 47.1$ is sufficient to guarantee the nonexistence of conjugate points on the domain $(-\infty, -L^-_\mathrm{conj}]$. Admittedly, as detailed in Remark \ref{rem:levers}, this particular value is lacking context due to a freedom we have in scaling. To set $L_{\mathrm{conj}}^- $ in perspective consider Figure \ref{fig:Manifold_Bundle}, where the stable manifold $P(\sigma)$ is plotted with $ |\sigma | \leq 1$ (the plot of the unstable manifold is largely analogous).  If there is a pulse with $ \bm{ \varphi } (x)  =  P(\sigma)$, then there cannot be any conjugate points when $| \sigma|  \leq 3.43 \cdot 10^{-5}$. This domain is several orders of magnitude smaller than the region on which we can parameterize the (un)stable manifold, demonstrating the power of the parameterization method. Nevertheless, this is not an especially small bound for $\sigma$ (equivalently, not an especially large bound for $L_{\mathrm{conj}}^- $), which would cause us to search for conjugate points over a large region. 
 
It bears remarking upon the manner in which we have plotted  Figure \ref{fig:Manifold_Bundle}. While the origin has complex eigenvalues and the parameterization of the stable  manifold has complex coefficients, we are just interested in the real manifold; see Lemma \ref{lem:real-valued}. To that end, set $\sigma_1(s, t) = s + it$ and $\sigma_2(s,t) = s - it$. The real-valued stable manifold parameterization of order $N = 35$ is then given by $P^{(N)}: \C^2  \to \R^4 $ with
\begin{align*}
	P^{(N)}(\sigma_1, \sigma_2)  &= \sum_{|\alpha| \leq N} P_{\alpha}(s + it)^{\alpha_1}(s - it)^{\alpha_2}.
\end{align*}
We then plot the stable manifold of \eqref{E:exist} on the ball of radius one in parameter space, $B_1(0) \subset \R^2$, yielding a cloud of points $(x_1, \dots, x_4) \in \R^4$. 
However for plotting purposes  we need to restrict to a 3 dimensional projection. Furthermore, we make a change of variables into eigen-coordinates to better demonstrate the curvature of the manifold. That is, we transform into the basis  
\begin{align*}
e^\rs_{\mathrm{Re}} &= \mathrm{Re}( \hat V^\rs_1),& e^\rs_{\mathrm{Im}} &= \mathrm{Im}( \hat V^\rs_1),& e^\ru_{\mathrm{Re}}& = \mathrm{Re}( \hat V^\ru_1), &e^\ru_{\mathrm{Im}}& = \mathrm{Im}( \hat V^\ru_1)
\end{align*}
and then we have plotted just the first three components.  

\begin{figure}[H]
\centering \includegraphics[width=1\textwidth]{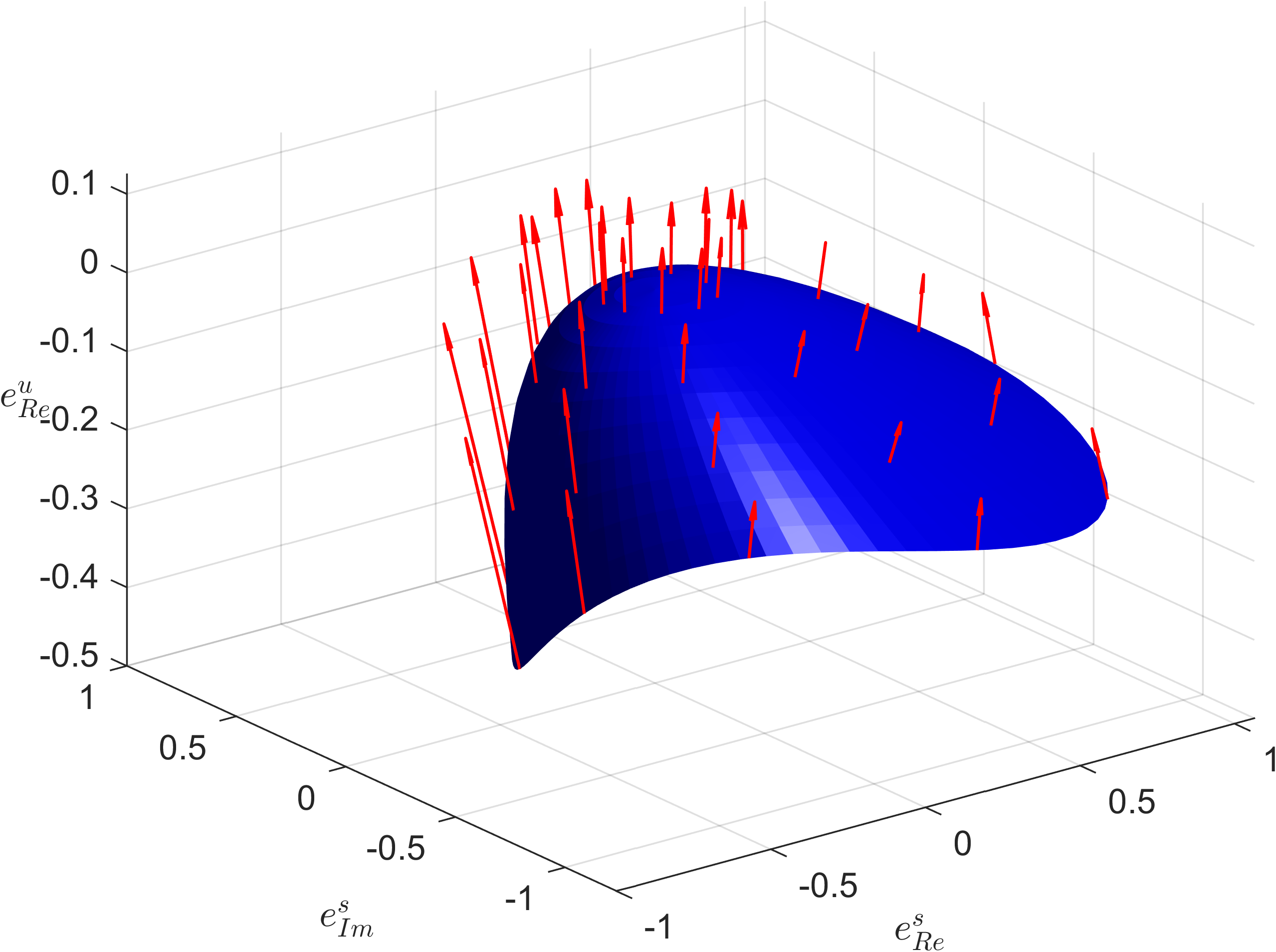}
\caption{Stable manifold with unstable bundle plotted in eigencoordinates.} \label{fig:Manifold_Bundle}
\end{figure}


\subsection{Example computation of the (un)stable bundles over the stable manifold}

After computing the stable manifold, we compute the attached stable and unstable bundles using the methodology described in Section \ref{Ch:res bundles}, again using Taylor series of order $N = 35$. Described in Table \ref{table:BundleBounds} are the bounds \changes{relevant} to the hypothesis of Corollary \ref{prop: SH radii poly for bundle}. Overall we obtain bounds of $Y_0 =  1.35 \cdot 10^{-9}$ and $ Z=0.752$, and thus we are able to achieve a computer-assisted proof that validates the hypothesis of Corollary \ref{prop: SH radii poly for bundle} using a radius of $r_0 =5.45 \cdot 10^{-9}$. 

\begin{table}[H]
	\centering
	\begin{tabular}{|r|l|l|l|l|l|l|} 
		\hline
		&
		$Y_0^{j,a}$  & $Y_0^{j,b}$  & $Y_0^{j,c}$ & $Z^{j,a}$  & $Z^{j,b}$  & $Z^{j,c}$  \\
		\hline 
		stable bundle
		&
	$	1.93 \cdot 10^{-11} $& 
$		4.14 \cdot 10^{-14} $& 
$		0 $&
$		0.677 $&
$		1.92 \cdot 10^{-15}$&
$		0$ \\
			\hline 
		unstable bundle
		&
		$	1.35 \cdot 10^{-9} $& 
		$	4.25 \cdot 10^{-14} $& 
		$	7.08 \cdot 10^{-12} $&
		$	0.697 $&
		$	2.03 \cdot 10^{-15}$&
		$	5.59 \cdot 10^{-2}$ \\
		\hline
	\end{tabular}
	\caption{Bounds for bundles.}
	\label{table:BundleBounds}
\end{table}

As perhaps expected, our estimates on the bundles are less accurate than those on the manifold, and the bounds on the unstable bundle are less accurate than those on the stable bundle. 
There are two bounds that prove especially difficult.  First, as with the parameterization of the manifolds, is the $Z$ bound, and in particular $ Z^a$. Like before, the reason for this is that the constants $K_N^j$  which are  supposed to control $Z$ are in practice only moderately small. For the parameters used in our computer-assisted proof, this turned out to be  $\max_{j} K_N^j = 0.131$.

The second obstacle is a somewhat disappointing $Y_0$ bound, which results from using interval arithmetic. Indeed, interval arithmetic was used to enclose the true coefficients at each finite order $ |\alpha| \leq N$. This was not a problem for computing the manifolds. But when computing the bundles, as the order of the bundle coefficients grew, so too did the size of these error bounds. For the coefficients of order $n \geq 23$, the error in the coefficients is comparable to their norm. This accumulation of rounding error is a well known phenomenon when using interval arithmetic. It is possible that an alternative validation approach which computes the approximate finite coefficients of the manifold and bundles, and then performs a contraction map in the entire space of coefficients, may prove even more efficacious. Nevertheless, even with our current implementation, we have developed a novel method for computing high order parameterizations of resonant vector bundles which are highly accurate over a large region of phase space.


\bibliographystyle{alpha}
\bibliography{paper2}

\end{document}